\documentclass[11pt,reqno]{amsart}
\usepackage[T1]{fontenc}
\usepackage[utf8]{inputenc}
\usepackage{lmodern,amsmath,amssymb,amsthm,mathtools,mathrsfs}
\usepackage[margin=1in]{geometry}
\usepackage{microtype,booktabs,array,enumitem}
\usepackage[hidelinks,unicode]{hyperref}
\setlist[enumerate]{leftmargin=2em,itemsep=3pt,topsep=4pt}
\allowdisplaybreaks[2]
\newtheorem{theorem}{Theorem}[section]
\newtheorem{maintheorem}{Theorem}

\newtheorem{proposition}[theorem]{Proposition}
\newtheorem{lemma}[theorem]{Lemma}
\newtheorem{corollary}[theorem]{Corollary}
\theoremstyle{definition}

\theoremstyle{remark}
\newtheorem{remark}[theorem]{Remark}
\newcommand{\A}{\mathbb A}\newcommand{\Gm}{\mathbb G_m}
\newcommand{\Q}{\mathbb Q}\newcommand{\R}{\mathbb R}\newcommand{\C}{\mathbb C}\newcommand{\Z}{\mathbb Z}\newcommand{\F}{\mathbb F}
\newcommand{\one}{\mathbbmone}\newcommand{\mathbbmone}{\mathbf 1}
\newcommand{\SH}{\mathrm{SH}}\newcommand{\DM}{\mathrm{DM}}
\newcommand{\CH}{\operatorname{CH}}\newcommand{\CW}{\widetilde{\operatorname{CH}}}
\newcommand{\GW}{\operatorname{GW}}\newcommand{\Witt}{\operatorname W}
\newcommand{\KM}{\mathbf K^{M}}\newcommand{\KMW}{\mathbf K^{MW}}
\newcommand{\Ish}{\mathbf I}\newcommand{\Wsh}{\mathbf W}
\newcommand{\Nis}{\mathrm{Nis}}\newcommand{\ret}{\mathrm{r\acute et}}
\newcommand{\gr}{\operatorname{gr}}\newcommand{\Spec}{\operatorname{Spec}}
\newcommand{\Hom}{\operatorname{Hom}}\newcommand{\Ext}{\operatorname{Ext}}\newcommand{\Cone}{\operatorname{Cone}}
\newcommand{\Ind}{\operatorname{Ind}}\newcommand{\Pic}{\operatorname{Pic}}
\newcommand{\rk}{\operatorname{rk}}\newcommand{\sig}{\operatorname{sign}}
\newcommand{\calA}{\mathcal A_k}\newcommand{\Free}{\mathbf Z_{\A^1}}
\newcommand{\la}{\langle}\newcommand{\ra}{\rangle}
\newcommand{\pf}[1]{\langle\!\langle #1\rangle\!\rangle}
\newcommand{\Esgn}{\mathcal E}
\newcommand{\Btil}{\widetilde B}
\newcommand{\AR}{\textup{(AR)}}\newcommand{\AS}{\textup{(AS)}}
\newcommand{\cl}{\operatorname{cl}}

\title{Algebraic vector bundles on cluster varieties}
\author{Haoyang Liu and Tianle Liu}
\date{}
\address{University of California, Santa Barbara}
\email{haoyangliu@ucsb.edu}
\address{University of Southern California}
\email{tianleli@usc.edu}
\subjclass[2020]{14J60, 14F42, 13F60, 14C25}
\keywords{Cluster varieties, algebraic vector bundles, finite cluster type, cellular $\mathbb A^1$-homology, Chow--Witt groups}
\begin{document}
\raggedbottom
\begin{abstract}
We study algebraic vector bundles on cluster varieties.
Over a perfect field of characteristic different from two, every
algebraic vector bundle is trivial on a cluster variety with principal
coefficients at an acyclic seed of irreducible finite Dynkin type.
The proof is based on integral contractions of cellular
$\mathbb A^1$-chain complexes and successive freezing.
We also construct smooth cluster varieties 
admitting indecomposable rank-two vector bundles, whose nontriviality
is detected by quadratic Euler classes. The cellular calculation also gives a criterion for stable freeness
and computes integral real cohomology and the images of cycle class maps.
\end{abstract}
\maketitle

\section{Introduction}\label{sec:intro}
For an affine variety, triviality of all algebraic vector bundles
is equivalent to freeness of all finitely generated projective modules
over its coordinate ring. A guiding problem is the generalized Serre
question: is every algebraic vector bundle on a topologically
contractible smooth affine complex variety trivial
\cite[Question~6 and Remark~7]{AOContractible}?

Every algebraic vector bundle on affine space is trivial by the
Quillen--Suslin theorem \cite{Quillen,Suslin}.
The generalized question has an affirmative answer in dimensions at
most two \cite{ADContractible}, for $\A^1$-contractible smooth affine
complex varieties \cite{AHWI}, and for topologically contractible
smooth rational affine complex threefolds \cite{LR}.

There are also examples of bundle triviality on varieties which are
not contractible. By a theorem of Murthy, every algebraic vector bundle
on $SL_2$ is trivial over any field, whereas $SL_2(\C)$ retracts onto
$SU(2)\simeq S^3$. We refer to \cite[Theorem~1.2]{NT} for this
result. The same reference records Swan's nontrivial rank-two vector
bundle on $SL_4(\C)$ in Theorem~1.3.

Cluster structures provide explicit coordinate systems on varieties
arising in Lie theory, including double Bruhat cells, open Richardson
varieties, and braid varieties \cite{BFZ,CGGLSS,GLS}.
Their exchange combinatorics also governs geometric invariants, as
the cohomology calculations of \cite{LS1,LS2} illustrate.
In this paper, we study the triviality problem for cluster varieties.
Throughout the paper all frozen variables are inverted,
and by a cluster variety we mean
$X=\operatorname{Spec}\mathcal A$, where $\mathcal A$ is a cluster
algebra of geometric type. Any seed gives a dense open torus
$\Gm^d$, and hence $k(X)=k(x_1,\ldots,x_d)$; in particular, every
such variety is rational
\cite[Section~1]{BFZ}.

We first consider cluster varieties with principal coefficients at
an acyclic seed of irreducible finite Dynkin type. Recall that in this
case the extended matrix is
$\widetilde B=\binom{B}{I_n}$, and the Cartan companion
$(2\delta_{ij}-|B_{ij}|)$ is an indecomposable finite-type Cartan
matrix. Our first main result is the following.

\begin{maintheorem}\label{thm:main-triviality}
Let $k$ be a perfect field of characteristic different from two.
Every algebraic vector bundle on a cluster variety with principal
coefficients at an acyclic seed of irreducible finite Dynkin type is
trivial.
\end{maintheorem}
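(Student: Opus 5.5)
The approach is the Asok--Fasel / Asok--Hoyois--Wendt machinery. By the affine representability theorem \cite{AHWI}, isomorphism classes of rank-$r$ bundles on a smooth affine $k$-variety $X$ are $[X,\mathrm{BGL}_r]_{\A^1}$. So it suffices to show two things. First, $X$ admits a cellular structure whose cellular $\A^1$-chain complex is, integrally, chain-homotopy equivalent to that of a very small model: ideally a point, or a product of tori $\Gm^{n}$ coming from the inverted frozen/coefficient variables. Second, $\mathrm{BGL}_r$ has no maps out of that model beyond the trivial ones. Concretely, I would show $X$ is built from $\Gm^n$ (the coefficient torus $\Spec k[y_1^{\pm1},\dots,y_n^{\pm1}]$) by cells whose contributions cancel in the cellular $\A^1$-chain complex. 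Then $X\to\Gm^n$ would be an $\A^1$-weak equivalence after suitable identifications, or at least induce isomorphisms on $\A^1$-homology sheaves. Every bundle on $\Gm^n$ is trivial, by Quillen--Suslin--Swan for Laurent polynomial rings. To transport this to $X$, I would use the Postnikov / obstruction theory for $\mathrm{BGL}_r$ over $\Gm^n$-relative complexes: the obstruction groups are $H^{i}(X,\pi_{i-1}^{\A^1}\mathrm{BGL}_r)$, and these vanish once the cellular complex of the pair $(X,\Gm^n)$ is acyclic.

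The inductive engine is successive freezing. For an acyclic seed of finite type, pick a sink (or source) vertex $j$ of the quiver. The exchange relation $x_jx_j'=y_j\prod x_i^{[b_{ij}]_+}+\prod x_i^{[-b_{ij}]_+}$ exhibits $X$ as the union of the open $\{x_j\neq0\}$ and a closed complement. The open piece is the cluster variety with $j$ frozen. By Berenstein--Fomin--Zelevinsky's acyclic presentation, this is again a principal-coefficient cluster variety of smaller (possibly reducible) finite type, times a $\Gm$. The closed complement $\{x_j=0\}$ is cut out by the vanishing of the binomial, and in principal coefficients $y_j$ is invertible, so it is isomorphic to an $\A^1$ (the $x_j'$ coordinate) times a lower-rank torus-type cluster variety. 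This gives a Gysin cofibre sequence. In cellular $\A^1$-homology, it adds a cell of the form $\Gm^{\wedge a}\wedge(\mathbb P^1)$ attached along a differential. I would compute this differential in terms of $\KMW$-classes of units. Because of the principal coefficient $y_j$, the attaching map should be multiplication by a unit in $\KMW_0=\GW$, namely $\la\pm1\ra$ or $1$. The pair of cells then cancels integrally, not just rationally, which is the meaning of ``integral contractions'' in the abstract. Inducting on $n$ via the Dynkin-diagram removal of leaves, and using irreducibility to ensure the reduced pieces remain of finite type, gives the contraction of the whole cellular complex down to the coefficient torus.

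The key steps in order are:
\begin{enumerate}
\item Set up the cellular $\A^1$-chain complex for $X$ via the stratification by vanishing of the initial cluster variables.
\item Prove the freezing cofibre sequence and identify strata.
\item Compute the differentials as $\GW$-units and perform the integral contraction.
\item Conclude, by obstruction theory and affine representability (char $\neq2$, perfect field needed for the $\A^1$-connectivity and Morel's computations), that every bundle is trivial. Since the stably trivial and unstable obstructions all lie in vanishing groups, Quillen--Suslin over the torus finishes.
\end{enumerate}

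The main obstacle is step 3. One must show the attaching differentials are \emph{units} in $\KMW_0(k)=\GW(k)$, integrally and in the Milnor--Witt sense. It is not enough to check ranks; the hyperbolic versus $\la1\ra$ distinction matters. This is precisely where the quadratic Euler class counterexamples arise in non-finite or non-principal cases. I would first verify it for type $A_1$ and $A_2$ by explicit computation of the residue of the exchange binomial, then argue that the leaf-removal induction keeps the relevant coefficient monomial a unit times a single $y_j$.
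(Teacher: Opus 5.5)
There is a genuine gap: the model you contract to is wrong, so the transport argument in your first paragraph and in step~4 cannot work. The cellular complex of $X$ is not integrally equivalent to that of the coefficient torus $\Gm^n$, nor to that of a point. Consequently the relative complex of the ``pair'' $(X,\Gm^n)$ is never acyclic. Note also that the frozen variables give a projection $X\to\Gm^n$, not a cellular inclusion, and the anticlique filtration starts from the open seed torus $\Gm^{2n}$.

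Already for $A_1$, the variety $X=\{xx'=1+y\}$ has cellular complex $(\Z\oplus\KMW_1\oplus\KMW_2)[0]$ by Theorem~\ref{thm:integral-path-vanishing}; equivalently $\Sigma^\infty X_+\simeq\one\oplus S^{1,1}\oplus S^{2,2}$. The Thom cell of $V(x)$ cancels only the $[x]$-summand of $\Gm^2$, and the $[x][y]$-summand survives. In general, Corollary~\ref{cor:top-weight} gives $\gr^W_{4n}H^{2n}(X(\C),\Q)=\Q(-2n)\neq0$, a class that $\Gm^n$ cannot carry.

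What is true, and what suffices, is weaker: the cellular homology is concentrated in low degrees, so that $H^q_{\Nis}(X,M)=0$ for $q\ge2$ and every strictly $\A^1$-invariant $M$. You also need $\Pic(X)=0$, since $BGL_r$ is not $\A^1$-simply connected. Together these kill all obstructions and ambiguities for maps to $BSL_r$. The paper phrases this last step through the Asok--Fasel splitting theorem.

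Step~3 is also not established. The attaching map of the closed piece $V(x_j)\cong\A^1\times Y$ is a matrix between sums of $\KMW_s$'s, not a single unit of $\GW(k)$. Its unit component comes from the normal parameter $x_j$, with a $[-1]$ correction, not from $y_j$. Complete cancellation of the Thom term requires split injectivity of the whole block. The paper proves this only for disjoint unions of paths with single arrows, by computing the cokernel of the weight-preserving first boundary explicitly.

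Your leaf-by-leaf induction would need such cancellation at every step in types $B$--$G$ as well. You have not shown this, and the paper does not claim it. If cancellation fails even once, you only get $\operatorname{cd}_{\A^1}\le1$ at that stage. Once such a piece occurs as a later closed piece $Y$, the localization bound becomes $1+\operatorname{cd}_{\A^1}(Y)=2$, and rank-two bundles are no longer controlled.

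The paper avoids this with a single non-path freezing step. It freezes one vertex $v$: the branch vertex in types $D$ and $E$, or a suitable vertex of the multiple edge in types $B$, $C$, $F_4$, $G_2$. This $v$ is chosen so that both $\Gamma\setminus v$ and $\Gamma\setminus N_\Gamma[v]$ are path forests. Then $D(x_v)$ and $Y$ both have $\operatorname{cd}_{\A^1}=0$ by the path contraction. The localization sequence then gives $\operatorname{cd}_{\A^1}(X)\le1$ without any injectivity requirement on the attaching map.
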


As Swan observed, Quillen's method also proves that every finitely
generated projective module over $k[t_1^{\pm1},\ldots,t_d^{\pm1}]$
is free, for any field $k$ \cite[p.~111]{SwanLaurent}.
Consequently, every algebraic vector bundle on a cluster variety is
trivial on each seed torus. To prove global triviality, one must choose
such a trivialization so that it extends across the complement of the
seed torus.
In characteristic zero, Gross--Hacking--Keel describe
principal-coefficient cluster $\mathcal A$-spaces, up to codimension
two, as open subsets of blowups of toric varieties
\cite[Theorem~3.9(i)]{GHK}.

An application is given by reduced Coxeter double Bruhat cells.
Let $G$ be a simple simply connected complex algebraic group and let
$c$ be a Coxeter element. Yang--Zelevinsky identify the reduced double
Bruhat cell
$L^{c,c^{-1}}$ with a cluster variety with principal coefficients
at an acyclic seed of the same Dynkin type \cite[Theorem~1.2]{YZ}.
Thus every algebraic vector bundle on $L^{c,c^{-1}}$ is trivial.

The varieties in Theorem~\ref{thm:main-triviality} are not
topologically contractible over $\C$. Indeed, if $d=\dim X>0$, the
initial torus gives
$\gr^W_{2d}H^d(X(\C),\Q)=\Q(-d)$, and this class survives the
stratification spectral sequence (Corollary~\ref{cor:top-weight}).
By complex Betti realization, it also obstructs unstable and stable
$\A^1$-contractibility \cite{LevineBetti}. The theorem therefore
complements the contractible case of the generalized Serre question.

To prove the theorem, we construct integral cellular
$\A^1$-chain homotopies for paths and then apply successive freezing.
This gives $H^q_{\Nis}(X,M)=0$ for every strictly
$\A^1$-invariant sheaf $M$ and every $q>1$. Combining this vanishing
with Picard vanishing and the splitting theorem of Asok--Fasel
\cite[Theorem~1.1]{AFICM}, we obtain triviality in every rank.

Over the fields of Theorem~\ref{thm:main-triviality}, one can also
understand the $SL_2$ example by the same obstruction theory.
The first-column projection $SL_2\to\A^2\setminus\{0\}$ is a
Zariski locally trivial $\mathbb G_a$-torsor, hence an
$\A^1$-equivalence \cite[Example~3.7]{AFICM}.
Purity at the origin gives $H^q_{\Nis}(SL_2,M)=0$ for $q>1$
and every strictly $\A^1$-invariant sheaf $M$; moreover $\Pic(SL_2)=0$.
The splitting theorem therefore recovers bundle triviality in this
case as well.

The argument applies more generally to other coefficient patterns,
arbitrary additional type $A$ components, and support graphs with
chordal complement. We give these extensions in
Section~\ref{sec:triviality}.

\subsection{An indecomposable vector bundle}
Hartshorne's conjecture predicts that every rank-two vector bundle
on $\mathbb P^N_{\C}$ with $N\ge7$ splits as a sum of line bundles
\cite[Conjecture~6.3]{Hartshorne}.
It provides one motivation for studying indecomposable vector bundles of
small rank on smooth affine varieties of large dimension.
Asok--Fasel--Hopkins construct such bundles on split affine quadrics
and Jouanolou devices of projective spaces
\cite[Introduction and Theorem~2.2.15]{AFHplocal}.
We will obtain examples of this phenomenon among cluster varieties.

The main example has dimension seven. Let $X_7$ be the cluster
variety with mutable graph consisting of the two triangles $125$
and $345$, with two arrows on each edge oriented by increasing
vertex labels. Its two frozen coefficient rows are $(1,1,0,0,1)$
and $(0,0,1,1,0)$. The resulting acyclic seed defines a smooth affine
sevenfold with equations \eqref{eq:sevenfold-presentation}.

\begin{maintheorem}\label{thm:main-counterexample}
Let $k$ be a perfect field of characteristic different from two.
The smooth seven-dimensional cluster variety $X_7/k$ admits an
indecomposable rank-two vector bundle $E$ satisfying
$E\oplus\mathcal O_{X_7}\simeq\mathcal O_{X_7}^3$.
\end{maintheorem}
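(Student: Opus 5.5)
We will construct $E$ as the kernel of a unimodular row and detect its nontriviality by a quadratic Euler class. The plan has four steps.

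\textbf{Step 1: the cellular computation.} We compute the cellular $\A^1$-chain complex of $X_7$ by successive freezing, as in Section~\ref{sec:triviality}, but now without being able to contract it. For the two triangles $125$ and $345$ with doubled arrows, the Cartan companion is not of finite type. We expect the complex to acquire a nonzero homology class in degree three, with coefficients in $\KMW_3$ (or a quotient such as $\Ish^3$ or $\Wsh$). The corresponding cohomology group should be $\CW^3(X_7)=H^3_{\Nis}(X_7,\KMW_3)$. Concretely, we would locate a closed subvariety $Z\subset X_7$ of codimension three, cut out by three functions $f_1,f_2,f_3$ coming from the two frozen rows, over which the freezing strata produce a copy of $\A^3\setminus\{0\}$ up to $\A^1$-homotopy. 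Its fundamental class generates a summand of $\CW^3(X_7)$ that is not hit by differentials and is not killed by hyperbolic classes.

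\textbf{Step 2: constructing $E$.} We choose a unimodular row $(a_1,a_2,a_3)$ over $\mathcal O(X_7)$ that is visible from the presentation \eqref{eq:sevenfold-presentation}, for instance the three exchange relations whose right-hand sides cannot vanish simultaneously. This row defines a morphism $u\colon X_7\to\A^3\setminus\{0\}$, and we set
\[
E=\ker\bigl(\mathcal O_{X_7}^3\xrightarrow{(a_1,a_2,a_3)}\mathcal O_{X_7}\bigr).
\]
Then $E\oplus\mathcal O_{X_7}\simeq\mathcal O_{X_7}^3$ automatically. This $E$ is the pullback along $u$ of the rank-two bundle on $\A^3\setminus\{0\}$, so $c_1(E)=0$ and $E$ has trivial determinant.

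\textbf{Step 3: indecomposability.} Since $\Pic(X_7)=0$, the Picard group is computed from the units and divisors of the cluster algebra in the same way as in the triviality proof. It follows that $E$ decomposes if and only if $E$ is trivial. Triviality of $E$ is equivalent to $(a_1,a_2,a_3)$ being completable, which in turn forces the Euler class $e(E)\in\CW^2(X_7)$ to vanish. Using the Asok--Fasel obstruction theory, the plan is to show that this Euler class, or more precisely the pullback $u^*$ of the generator of $\widetilde H^2(\A^3\setminus\{0\},\KMW_3)\cong\KMW_0$, is nonzero.

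\textbf{Step 4: the main obstacle.} The main obstacle is the degree bookkeeping in the last step. For a rank-two bundle with trivial determinant, the Euler class lives in $\CW^2(X_7)$, while a unimodular row of length three gives a class in $H^2(X_7,\KMW_3)$, and that class is exactly the Euler class after identification. So the real task is to show that $u^*[\A^3\setminus\{0\}]\neq0$ in $H^2_{\Nis}(X_7,\KMW_2)$. The first approach we would try is to pass to the Witt part, via real cohomology $H^2(X_7,\Ish^2)$ or the real realization, where Jacobson's comparison makes the computation topological. There we would show that the real points $X_7(\R)$ carry a nontrivial mod-signature class pairing with $u$. Alternatively, we would restrict along a closed embedding of a split quadric $Q_4$ or $Q_5$ inside $X_7$, obtained by freezing. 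On that quadric the pulled-back bundle is the known Asok--Fasel--Hopkins indecomposable bundle, whose Euler class is nonzero, and this forces $e(E)\neq0$ on $X_7$.
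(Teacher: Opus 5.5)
There is a genuine gap, and Step~1 also looks in the wrong degree. The mutable graph of $X_7$ (triangles $125$ and $345$ sharing $5$) has as independent sets only the singletons and $\{1,3\},\{1,4\},\{2,3\},\{2,4\}$. Hence $\alpha(\Gamma)=2$, the anticlique cellular complex has length two, $\operatorname{cd}_{\A^1}(X_7)\le2$, and $\CW^3(X_7)=0$. There is no degree-three class to find. The obstruction lives in $\CW^2(X_7)\cong\Witt(k)^2$; its two summands come from the square $1,3,2,4$ in the independence complexes of $\Gamma[\{1,2,3,4\}]$ and of $\Gamma$.

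More seriously, Steps~2--4 never prove the one thing the theorem needs: that some rank-two bundle has nonzero Euler class. Your row is not specified, and the nonvanishing of $u^*$ is left open as ``the main obstacle.'' Moreover, the Euler class of the kernel of a length-three row is, up to a unit of $\GW(k)$, $\eta$ times its class in $H^2(X_7,\KMW_3)$. So you would also need $\eta$ not to kill that class. Neither fallback closes this uniformly over the fields in the statement:
\begin{itemize}
\item Real realization sees nothing over nonreal fields such as $\bar k$, where the theorem still asserts existence.
\item A quadric restriction needs an explicit closed embedding with nonzero restriction, which is the original problem relocated. Over $\bar k$ it cannot work for $Q_4$: there the Euler class is detected by $c_2$, and $c_2$ of any bundle on $X_7$ vanishes because $\CH^2(X_7)=0$.
\end{itemize}

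The missing idea is to get existence from the cohomological-dimension bound rather than from an explicit row. You have $K_0(X_7)=\Z$ from \AS\ and $\operatorname{cd}_{\A^1}(X_7)\le2$. Therefore all higher obstructions and lifting ambiguities in the Postnikov tower of $BSL_2$ vanish. The Euler class is then a bijection $\operatorname{Vect}_2^{\mathrm{or}}(X_7)\to\CW^2(X_7)$ (Proposition~\ref{prop:bundle-range}). Surjectivity produces a bundle $E$ with any prescribed nonzero class, and $\CW^2(X_7)\ne0$ because $\la1\ra\ne0$ in $\Witt(k)$.

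Stabilization comes from the freeness of all rank-three bundles (since $3>\operatorname{cd}_{\A^1}$), not from a kernel construction. Indecomposability then follows from $\Pic(X_7)=0$, exactly as in your Step~3.

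Your unimodular-row picture can be salvaged along the same lines. Every rank-two bundle is then such a kernel. The map $[X_7,\A^3\setminus0]_{\A^1}\to H^2(X_7,\KMW_3)$ is bijective by the dimension bound. The map $\eta\colon H^2(X_7,\KMW_3)\to\CW^2(X_7)$ is an isomorphism by the weight argument in the proof of Theorem~\ref{thm:graph-bundles}; here $\Gamma$ is chordal, so the torsion-freeness hypothesis holds. But this still rests on the length-two cellular complex and the independence-complex computation of $\CW^2(X_7)$, which your Step~1 misses.
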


Theorem~\ref{thm:sevenfold-bundles} gives the full classification
of vector bundles on $X_7$. All positive-degree ordinary Chow
groups and the reduced $K_0$ group vanish. The Chow--Witt group
$\CW^2(X_7)$ is isomorphic to $W(k)^2$; the two summands come from
square cycles in induced independence complexes. Further examples,
including a six-dimensional product and variants associated with a
pentagon and a path, are discussed briefly in
Section~\ref{subsec:further-indecomposable-examples}.

\subsection{Cellular calculations and earlier work}
The proofs are based on a computation of the full cellular
Milnor--Witt differential, with its normal frames and quadratic
coefficients. For even exchange matrices, the calculation can be
expressed in terms of induced independence complexes. It determines
the Chow--Witt groups and, under an integral homology hypothesis,
the exact $\A^1$-cohomological dimension. We work in Morel--Sawant's
theory of cellular $\A^1$-homology \cite{MS}. Related calculations
for flag varieties, toric varieties, wonderful models, and
torus-enriched Bruhat complexes appear in
\cite{LL,LP,LPWonderful,LLTorus}.

Double braid varieties parametrize configurations of flags with
prescribed relative positions. Their coordinate rings carry really
full rank (RFR), sink-recurrent cluster structures, with frozen variables
inverted, for complex simple simply connected groups $G$ of every Dynkin type
\cite[Theorems~1.1 and~4.10, Corollary~6.7]{GLS}.
Here RFR means that the extended exchange matrix defines a split
injection of free abelian groups.
In particular, for opposite Borel subgroups $B_+,B_-$ of $G$ and
Weyl-group elements $v\le w$, the open Richardson variety
\[
 R^\circ_{v,w}=(B_-vB_+/B_+)\cap(B_+wB_+/B_+)
\]
is isomorphic to the braid variety
$X(\mathbf b(w)\mathbf b(v^{-1}w_0))$, where $w_0$ is the longest
element and $\mathbf b(a)$ is the positive braid lift of a reduced word for $a$
\cite[Theorem~3.14]{CGGLSS}. Its closure in $G/B_+$ is the
projective Richardson variety. Chow vanishing and stable freeness
hold throughout the RFR sink-recurrent class
\cite[Theorem~5.5 and Corollary~4.32]{Hlavinka}.
For double braid varieties of irreducible finite cluster type,
Corollary~\ref{cor:braid-bundles} gives triviality in every rank;
it also allows additional type $A$ components.

Under the anticlique rank and tameness condition \AR, we obtain a
component formula which characterizes Chow vanishing and stable
freeness; see Appendix~\ref{sec:component-chow}. To study the
individual vector bundles, we use the quadratic information in the
cellular differential.

In the appendices, we give the cellular differential and the path
contractions, together with the component and independence-complex
formulas. We also compute integral real cohomology and describe the
real cycle map as an inclusion of explicit signature lattices.
The same purity filtration gives mixed Tate motives for acyclic
seeds satisfying the anticlique saturation condition \AS.
For acyclic RFR seeds over $\Q$, combining this construction with
the known splitting of their mixed Hodge structures also gives
the rational motive decomposition of \cite[Theorem~4.29]{Hlavinka},
with multiplicities computed by the oriented cellular differential.
Appendix~\ref{sec:rational} explains this deduction and combines it
with real realization through the rational plus/minus decomposition,
as in \cite{LiuAbelian}.

The main text is organized as follows. In
Section~\ref{sec:geometry}, we describe the exchange geometry used
throughout the paper. In Section~\ref{sec:triviality}, we prove
Theorem~\ref{thm:main-triviality}. Sections~\ref{sec:bundles}
and~\ref{sec:sevenfold} contain the calculation and interpretation of
the obstruction in Theorem~\ref{thm:main-counterexample}.

\subsection{Conventions}
Schemes are separated and of finite type over the indicated field.
Except in the ordinary Chow calculation, the ground field is perfect
of characteristic different from two. We write $\CH$ for integral
Chow groups, $\CW$ for untwisted Chow--Witt groups, and
$\KM_r$, $\KMW_r$, $\Wsh$, and $\Ish^r$ for the unramified Milnor,
Milnor--Witt, Witt, and fundamental-ideal sheaves, with
$\Ish^r=\Wsh$ for $r\le0$. The contraction $M_{-1}$ is the kernel
of evaluation at $1$ from $M(-\times\Gm)$ to $M(-)$.
We use homological indexing for chain complexes. The symbol
$\SH(k)$ denotes the stable motivic infinity-category,
$\DM(k;\Q)$ the stable infinity-category of rational motives,
and $S^{a,b}=S_s^{a-b}\wedge\Gm^{\wedge b}$.
In these categories, cofibers and tensor products are understood in
the infinity-categorical sense; distinguished triangles lie in the
corresponding homotopy categories. For a motivic spectrum $E$, our grading is
$E^{a,b}(X)=\Hom_{\SH(k)}(\Sigma^\infty X_+,\Sigma^{a,b}E)$,
where $\Hom$ denotes morphisms in the homotopy category.

\subsection*{Acknowledgements}
The authors would like to thank Eugene Gorsky, Keyao Peng, Tonie Scroggin, and Lingyao Xie 
for helpful discussions. This material is based upon work supported by the National Science Foundation 
under Grant No. DMS-2424139, while the first author was in residence at the Simons Laufer 
Mathematical Sciences Institute in Berkeley, California, during the Fall 2026 semester. 

\section{Acyclic exchange geometry}\label{sec:geometry}
We begin with the exchange presentation and the geometry of its zero
strata. For an acyclic $d\times n$ extended matrix, the exchange equations are
\begin{equation}\label{eq:exchange-presentation}
 x_i x'_i=\prod_{a=1}^d x_a^{[\widetilde B_{ai}]_+}
          +\prod_{a=1}^d x_a^{[-\widetilde B_{ai}]_+}
 \quad(1\le i\le n),
\end{equation}
where $x_{n+1},\ldots,x_d$ are invertible and $[b]_+=\max(b,0)$.

\subsection{Independent exchange columns}\label{sec:independent-columns}
Let $\Btil$ be a $d\times n$ extended exchange matrix with acyclic
skew-symmetrizable principal part, with all frozen variables inverted.
We write $\Gamma$ for the undirected mutable graph, $\Ind(\Gamma)$ for its
independent sets (also called anticliques), and
$\alpha(\Gamma)=\max_{I\in\Ind(\Gamma)}|I|$.
For $I\in\Ind(\Gamma)$, let $B_I$ consist of the columns of $\Btil$
indexed by $I$, with the rows indexed by $I$ deleted. The two conditions
used below are the following:
\begin{description}
\item[\AR] \emph{Anticlique rank and tameness.}
Every $B_I$ has rank $|I|$, and its nonzero Smith invariant
factors are prime to $\operatorname{char}(k)$.
\item[\AS] \emph{Anticlique saturation.}
Every $B_I$ has rank $|I|$ and all its Smith invariant
factors equal one.
\end{description}
The tameness clause in \AR\ is vacuous in characteristic zero. Condition
\AS\ says that each independent set of exchange columns generates a
saturated sublattice; it implies \AR\ over every field. The empty set
satisfies both conditions.

Recall that an RFR matrix induces a split injection $\Z^n\hookrightarrow\Z^d$.
Each subcollection of its columns is therefore saturated. For an
independent set $I$, the deleted rows are zero on the retained columns,
so every acyclic RFR seed satisfies \AS. The converse fails both for
rational rank and for integral saturation; examples appear in
Appendix~\ref{sec:component-chow} and the sevenfold in
Section~\ref{sec:sevenfold}.

Let $A$ be the cluster algebra over $k$ with these inverted
coefficients, and put $X=\Spec A$.

For $I\in\Ind(\Gamma)$, set
\begin{equation}\label{cc:eq:torus-fiber}
 T_I=\{x\in\Gm^{[d]\setminus I}:\chi_i(x)=-1\text{ for }i\in I\},
 \qquad
 \chi_i=\prod_{a\notin I}x_a^{\Btil_{ai}}.
\end{equation}
Let $\mathcal O_I\subset X$ be the locally closed locus on which the original
mutable coordinates vanish exactly at $I$.

\begin{proposition}\label{cc:prop:geometry}
Under \AR, the variety $X$ is smooth and geometrically integral of
dimension $d$, only independent mutable zero sets occur, and
\[
 \mathcal O_I\cong\A^{|I|}\times T_I.
\]
If $e_1,\ldots,e_r$, where $r=|I|$, are the Smith factors of $B_I$, then
there is a noncanonical isomorphism
\begin{equation}\label{cc:eq:smith-stratum}
 T_I\cong E_I\times\Gm^{d-2|I|},\qquad
 E_I=\Spec\frac{k[z_1^{\pm1},\ldots,z_r^{\pm1}]}
 {(z_1^{e_1}-b_1,\ldots,z_r^{e_r}-b_r)},
 \quad b_i\in\{1,-1\}.
\end{equation}
Here $E_I$ is finite \'etale. The functions $x_i$, $i\in I$, are transverse
normal parameters along $\mathcal O_I$.
\end{proposition}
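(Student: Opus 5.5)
The plan is to work directly with the exchange presentation \eqref{eq:exchange-presentation} and stratify $X$ according to which initial mutable coordinates vanish. First I would invoke the theorem of Berenstein--Fomin--Zelevinsky for acyclic seeds \cite{BFZ}. It says that $A$ is generated by $x_1,\dots,x_n,x'_1,\dots,x'_n$ and the inverted frozen variables, and that the exchange relations generate the ideal of relations; with frozen variables inverted, this is the lower-bound presentation. Hence $X$ is the closed subscheme of $\A^{2n}\times\Gm^{d-n}$ cut out by the $n$ equations \eqref{eq:exchange-presentation}. Write $P_i^{\pm}=\prod_a x_a^{[\pm\Btil_{ai}]_+}$. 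Since the diagonal of $B$ is zero, neither monomial contains $x_i$.

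\emph{Only independent zero sets occur.} Let $S$ be the set of mutable indices with $x_i=0$ at some point, and suppose some connected component of $\Gamma|_S$ has at least two vertices. The orientation induced by $B$ on this component is acyclic, so the component has a source $i$. All $S$-neighbours $j$ of $i$ then have $\Btil_{ji}$ of the same sign. Consequently only one of $P_i^+,P_i^-$ involves variables indexed by $S$, and that monomial vanishes. The other monomial is a product of nonzero coordinates, so it is nonzero. But $x_ix_i'=0$, which forces $P_i^++P_i^-=0$, a contradiction. Thus $S\in\Ind(\Gamma)$.

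\emph{Structure of $\mathcal O_I$.} Fix $I\in\Ind(\Gamma)$. On $\mathcal O_I$ the coordinates $x_a$ with $a\notin I$ are units. For $i\notin I$, the equation determines $x'_i=(P_i^++P_i^-)/x_i$, so $x_i'$ is eliminated. For $i\in I$, we have $x_i=0$, while $x'_i$ is unconstrained. The equation becomes $P_i^++P_i^-=0$. Since $I$ is independent, this involves only units $x_a$ with $a\notin I$ and is equivalent to $\chi_i=-1$. This gives $\mathcal O_I\cong\A^{|I|}\times T_I$, with the $\A^{|I|}$ coordinates being the $x'_i$ for $i\in I$.

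Next, $T_I$ is the fibre over $(-1,\dots,-1)$ of the homomorphism $\Gm^{[d]\setminus I}\to\Gm^{I}$ whose character matrix is $B_I$. I would choose unimodular base changes of $\Z^{[d]\setminus I}$ and $\Z^I$ that bring $B_I$ to Smith form $\operatorname{diag}(e_1,\dots,e_r)$; this uses rank $r=|I|$ from \AR. In the new coordinates the equations read $z_j^{e_j}=b_j$, where each $b_j$ is a product of powers of $-1$, hence $\pm1$. The remaining $d-|I|-r=d-2|I|$ coordinates are free, which gives \eqref{cc:eq:smith-stratum}. Since the $e_j$ are prime to $\operatorname{char}k$ by tameness, $E_I$ is finite étale.

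\emph{Smoothness, dimension, integrality and transversality.} I would apply the Jacobian criterion at a point of $\mathcal O_I$, where $X$ is cut out by $n$ equations in $n+d$ variables.
\begin{itemize}
\item For $i\notin I$, the partial derivative of equation $i$ with respect to $x'_i$ is $x_i\neq0$.
\item For $i\in I$, I would use the derivatives in the unit directions $x_a$, $a\notin I$. Since $P_i^+=-P_i^-$ on the stratum, the differential of equation $i$ restricted to these directions is $P_i^-\cdot d\log\chi_i$, up to sign.
\item These $|I|$ rows have rank equal to the rank of $B_I$ modulo $\operatorname{char}k$. This rank is $|I|$ because the Smith factors are prime to $\operatorname{char}k$.
\end{itemize}
The Jacobian is block triangular after ordering the variables $x'_i$ ($i\notin I$) first, so it has rank $n$ and $X$ is smooth of pure dimension $d$.

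Every stratum $\mathcal O_I$ with $I\neq\emptyset$ has dimension $d-|I|<d$. Hence every irreducible component, which has dimension $d$, meets the open torus $\mathcal O_\emptyset\cong\Gm^d$. Since that torus is irreducible and $X$ is smooth, so that its components are disjoint, $X$ is integral. The same argument after base change to $\bar k$ gives geometric integrality.

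Finally, let $U_I$ be the open subset where the mutable $x_a$, $a\notin I$, are invertible. Then $\mathcal O_I$ is cut out in the smooth variety $U_I$ by the $|I|$ functions $x_i$, $i\in I$, and it is smooth of codimension exactly $|I|$. Hence these functions form a regular sequence with independent differentials along $\mathcal O_I$, so they are transverse normal parameters.

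The main obstacle I expect is the first step: making sure the BFZ presentation applies in our generality, namely skew-symmetrizable principal part, arbitrary frozen rows, and an arbitrary field. If it does not apply verbatim, I would verify directly that the exchange ideal is prime of the right dimension. To do this I would use the smoothness and irreducibility argument above for the scheme defined by the equations, and then note that this scheme is the spectrum of the upper-bound algebra, which contains $A$.
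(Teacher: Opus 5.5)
Your proposal is correct. Three of its steps match the paper's proof exactly: the source argument ruling out edges, the elimination giving $\mathcal O_I\cong\A^{|I|}\times T_I$, and the Smith-form description of $T_I$. The routes differ for smoothness, integrality and transversality.

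\emph{The paper's route.} It inverts the mutable coordinates outside $I$ and rescales the $x'_i$ by units, obtaining the local model $x_iy_i=1+\chi_i$ over $S=\Gm^{[d]\setminus I}$. It recognizes this as the base change of the character map $S\to\Gm^I$, which is smooth under \AR, along $(\A^2\setminus\{xy=1\})^I\to\Gm^I$. Transversality of the $x_i$ is then visible in the product model. Integrality is obtained separately: the cluster algebra is a domain with dense initial torus, and BFZ's standard monomial basis carries the presentation to field extensions.

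\emph{Your route.} You apply the Jacobian criterion to the global presentation. Your block-triangular computation, in which the rows $i\in I$ restrict to $P_i^-\,d\log\chi_i$ on the unit directions, is the infinitesimal form of the paper's smoothness of the character map. You then deduce integrality from smoothness together with the count $\dim\mathcal O_I=d-|I|$. This count forces every component, each disjoint from the others, to meet the irreducible open torus.

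\emph{What each buys.} Your route is more self-contained. It shows that the scheme cut out by the exchange equations is integral over $k$ and over $\bar k$, without using that the cluster algebra is a domain. This is exactly what your fallback needs. To finish that fallback you would still have to identify this scheme with $\Spec A$, for instance by normality and a codimension-two comparison with the adjacent-seed tori. The paper's local product model, in turn, is the form reused later: for scalar coefficients in Lemma~\ref{lem:AS-framed-freezing}, for the factor $\A^2\setminus\{xy=1\}$ in Lemma~\ref{lem:edgeless-strict-vanishing}, and for the framed purity maps.

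\emph{One point to make explicit.} In your transversality step, state that the \emph{scheme-theoretic} zero locus of $(x_i)_{i\in I}$ in $U_I$ is $\A^{|I|}\times T_I$. Your elimination of the $x'_j$, $j\notin I$, already shows this, and the regular-immersion criterion requires it.
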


\begin{proof}
By \cite[Theorem~1.20 and Corollary~1.21]{BFZ}, the acyclic seed
gives the presentation \eqref{eq:exchange-presentation} of $A$, with
all frozen variables inverted. The anticlique stratification in this
presentation is the one used in \cite{LS2}. The standard monomial
basis shows that this presentation is preserved by field extension.
Over each extension the cluster algebra is a
domain with dense initial torus $\Gm^d$, so $X$ is geometrically
integral of dimension $d$.

We next describe the possible zero sets. Suppose the mutable zero set
contains an edge. The induced orientation is
acyclic and has a source in a nontrivial connected component. At that
source, one exchange monomial vanishes and the other does not. The
exchange equation contradicts the vanishing of the mutable coordinate.
Hence the zero set is independent. For independent $I$, the coordinates
outside $I$ are units, the exchange equations at $I$ give exactly
\eqref{cc:eq:torus-fiber}, and the mutated coordinates indexed by $I$ are
free. This proves the description of $\mathcal O_I$.

To describe the character fiber, we put $B_I$ into Smith form by integer
row and column operations. These identify
the character fiber with \eqref{cc:eq:smith-stratum}. The transformed
right-hand sides are products of powers of $-1$, so $b_i\in\{1,-1\}$.
Tameness makes each power equation \'etale. Thus $T_I$ is a disjoint union
of split tori over finite separable extensions of $k$; its finite \'etale
scheme of connected components is canonical, although the displayed
product decomposition is not.

It remains to prove smoothness and identify the normal parameters.
Near a point with mutable zero set $I$, we invert all
original mutable coordinates outside $I$. Their mutated coordinates are
then determined. Rescaling the remaining mutated coordinates by units
gives the equations
\[
 x_i y_i=1+\chi_i\quad(i\in I)
 \quad\text{over }S=\Gm^{[d]\setminus I}.
\]
The character map $S\to\Gm^I$ is smooth under \AR. The local model is its
base change along
\[
 \bigl(\A^2\setminus\{xy=1\}\bigr)^I\longrightarrow\Gm^I,
 \qquad (x_i,y_i)_i\longmapsto(x_i y_i-1)_i.
\]
This base change is smooth over the smooth source of the displayed map.
The equations $x_i=0$
are transverse coordinate equations there and remain so after smooth
base change. They give the asserted normal parameters and codimension
$|I|$.
\end{proof}

We fix an ordering of the coordinates $x_i$. This gives a framing of the normal
bundles in the anticlique filtration whose open terms $\Omega_r$ consist
of zero sets of cardinality at most $r$. Under \AS, each $E_I$ is
$\Spec k$, so every $T_I$ is geometrically connected and is a split torus.

\subsection{Stable classes and Chow groups}
The stratum description also determines the stable classes of vector
bundles under the saturation hypothesis.
\begin{proposition}\label{prop:kzero}
Under \AS, the rank map gives $K_0(X)=\Z$ and every algebraic vector
bundle on $X$ is stably free. Moreover $\CH^{>0}(X)=0$, and the same
assertions hold after every extension of the ground field.
\end{proposition}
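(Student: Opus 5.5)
The plan is to use the stratification of Proposition~\ref{cc:prop:geometry} by the locally closed strata $\mathcal O_I\cong\A^{|I|}\times T_I$, $I\in\Ind(\Gamma)$. Under \AS\ each $T_I$ is a split torus $\Gm^{d-2|I|}$. We will filter $X$ by the open subsets $\Omega_r$, where $\Omega_r$ is the union of the strata with $|I|\le r$, so that $\Omega_0=\Gm^d$ is the initial torus and $\Omega_{\alpha(\Gamma)}=X$. For both $K_0$ and Chow groups we then use localization sequences along the closed complements $\Omega_r\setminus\Omega_{r-1}=\bigsqcup_{|I|=r}\mathcal O_I$. This complement is smooth, of codimension $r$ in $\Omega_r$, and is a disjoint union because zero sets of equal cardinality are disjoint.

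For the Chow groups, the localization sequence reads
\[
\CH^{*-r}\Bigl(\textstyle\bigsqcup_{|I|=r}\mathcal O_I\Bigr)\to\CH^*(\Omega_r)\to\CH^*(\Omega_{r-1})\to0 .
\]
By homotopy invariance, $\CH^*(\A^{r}\times\Gm^{m})=\CH^*(\Gm^m)$, and this is $\Z$ in degree $0$ and zero in positive degrees. The strata therefore contribute only fundamental classes $[\overline{\mathcal O_I}]$ in $\CH^r(\Omega_r)$. For $r\ge1$ we must show that each such class dies. The transverse normal parameters from Proposition~\ref{cc:prop:geometry} give the way to do this: on $\Omega_r$, the divisor of the function $x_i$ for $i\in I$ is supported on the union of closures of strata containing $i$. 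Since $\CH^*(\Omega_0)=\CH^*(\Gm^d)$ is concentrated in degree $0$, we can argue inductively that every positive-degree class on $\Omega_r$ is generated by the classes $[\overline{\mathcal O_I}]$ with $1\le|I|\le r$.

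I then aim to show that $[\overline{\mathcal O_I}]$ is the intersection product $\prod_{i\in I}\operatorname{div}(x_i)$. By transversality this product equals the class of the closure of the locus $\{x_i=0,\ i\in I\}$, which is the union of strata $J\supseteq I$ and is closed. Each $\operatorname{div}(x_i)$ is principal on $X$, so the product is zero. More precisely, the element $\operatorname{div}(x_i)=[\{x_i=0\}]$ vanishes in $\CH^1(X)$, and the locus $\{x_i=0,\ i\in I\}$ is an iterated transverse intersection of principal Cartier divisors. Its class is therefore $\prod_i c_1(\mathcal O)=0$. Induction on $r$ over the generators then gives $\CH^{>0}(X)=0$.

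This identification is the step I expect to be the main obstacle. The point is that $\{x_i=0,\ i\in I\}$ is exactly $\overline{\mathcal O_I}$, reduced and of the expected codimension, which follows from the smooth local model $x_iy_i=1+\chi_i$. If $\Gamma$ has no edges inside $I$, the same local model shows that the intersections are proper.

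For $K_0$ we run the same filtration with the localization sequence $G_0(Z)\to K_0(\Omega_r)\to K_0(\Omega_{r-1})\to0$. Here $K_0(\A^r\times\Gm^m)=\Z$ by homotopy invariance and the fundamental theorem, since $K_0$ of a Laurent ring over a field is $\Z$. The images of these classes are the classes $[\mathcal O_{\overline{\mathcal O_I}}]$, which are Koszul complexes on $(x_i)_{i\in I}$ and hence vanish when the $x_i$ are global functions forming a regular sequence. It follows that $K_0(X)=\Z$ via rank. Since $X$ is affine, a projective module $P$ of rank $n$ has $[P]=[A^n]$, so $P\oplus A^m\cong A^{n+m}$, and $P$ is stably free. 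The argument uses only \AS, the presentation, and these regular sequences, and all of these are preserved by base change. The statements therefore hold after every field extension, with the split torus strata remaining split tori.
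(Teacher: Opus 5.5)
Your proof is correct. The $K_0$ computation and the deduction of stable freeness are the same as in the paper: localization along $\Omega_\bullet$, Koszul vanishing of $[\mathcal O_{\mathcal O_I}]$ in $G_0(\Omega_r)$, and projective complements. For $\CH^{>0}(X)=0$, however, you take a different route.

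The paper does not run the Chow localization sequence directly. It cites either the cellular Milnor $K$-theory computation of Proposition~\ref{prop:acyclic-chow}, where $e_J\mapsto\{x_j\}_{J\setminus\{j\}}$ is an explicit right inverse of the last differential, or the Rost cycle-module component formula of Theorem~\ref{cc:thm:chow}. You instead localize in Chow groups along the same filtration. You then kill each new generator as the zero scheme of the regular section $(x_i)_{i\in I}$ of a trivial rank-$r$ bundle, that is, as $c_r(\mathcal O^r)\cap[\Omega_r]=0$.

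Both arguments rest on the same geometric fact: $x_j$, restricted to the closure of $\mathcal O_{J\setminus\{j\}}$ in $\Omega_{|J|}$, has divisor exactly $[\mathcal O_J]$. Your version is the literal Chow analogue of the Koszul step. It is self-contained and works over any field without cellular or cycle-module machinery. The paper's route gives more. It yields a cochain-level splitting that is reused in the Milnor--Witt computations. Under \AR\ with disconnected $T_I$, it also gives the component formula, whereas your top-Chern-class argument would only kill the sum of the component classes.

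Finally, you can bypass the step you flag as the main obstacle. Prove the vanishing in $\CH^r(\Omega_r)$, which is all the induction needs. There the zero scheme of $(x_i)_{i\in I}$ is exactly the smooth stratum $\mathcal O_I$. You then never need to identify $V(x_i:i\in I)$ with $\overline{\mathcal O_I}$ on all of $X$, which would require an extra dimension count for irreducibility.
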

\begin{proof}
We compute along the anticlique filtration. For $|I|=r>0$, the stratum $\mathcal O_I$ is the closed zero scheme
of the regular sequence $(x_i)_{i\in I}$ in the smooth open
$\Omega_r$. Its $G_0$ group is $\Z$, generated by its structure
sheaf, since $\mathcal O_I\cong\A^r\times\Gm^{d-2r}$.
The Koszul resolution gives
\[
 [\mathcal O_{\mathcal O_I}]
 =\sum_{a=0}^r(-1)^a\binom ra[\mathcal O_{\Omega_r}]=0.
\]
Localization for the union of strata with $|I|=r$ therefore gives
$G_0(\Omega_r)\cong G_0(\Omega_{r-1})$. Induction from the initial
torus proves $K_0(X)=G_0(X)=\Z$ by rank.

To deduce stable freeness, put $R=\Gamma(X,\mathcal O_X)$. If $[P]=r[R]$ for a projective
$R$-module, there is a projective $Q$
with $P\oplus Q\cong R^r\oplus Q$; adding a projective complement
of $Q$ then proves stable freeness.
The Chow vanishing follows from the residue calculation in
Proposition~\ref{prop:acyclic-chow}, or over an arbitrary field from
Theorem~\ref{cc:thm:chow}. The same geometry applies after field
extension.
\end{proof}
Appendix~\ref{sec:component-chow} gives the necessary and sufficient
connected-component criterion under \AR.

\section{Triviality by successive freezing}\label{sec:triviality}
\label{subsec:freezing-triviality}

Throughout this section $k$ is perfect of characteristic different
from two. All frozen variables are inverted. A path with single arrows
means a skew-symmetric path matrix whose nonzero entries are $1$ and
$-1$. A disjoint union of such paths may include isolated vertices and
the empty graph. During deletion we allow nonzero scalar coefficients
in the two terms of each exchange equation.

For a smooth variety $Y$, write $\operatorname{cd}_{\A^1}(Y)\le h$
if $H^q_{\Nis}(Y,M)=0$ for every strictly $\A^1$-invariant sheaf
$M$ and every $q>h$. We first recall the cellular calculation needed
in the argument. Under \AS,
the anticlique filtration gives a bounded projective complex in the
category $Ab_{\A^1}(k)$ of strictly invariant sheaves, with terms
\[
 C_0(X)=\mathbf H^{\A^1}_0(\Gm^d),\qquad
 C_p(X)=\bigoplus_{|I|=p}\KMW_p\otimes_{\A^1}
       \mathbf H^{\A^1}_0(T_I)\quad(p>0).
\]
The sum runs over independent sets, and $\otimes_{\A^1}$ is the
tensor product in that category.
By Theorem~\ref{thm:acyclic-model}, $\Hom(C_*(X),M)$ computes
$H^*_{\Nis}(X,M)$. Its differential is a sum over
$I\subset I\cup\{j\}$, obtained by taking the residue in $x_j$,
restricting the remaining torus characters, and including the change
of normal frame.
For a disjoint union of paths with single arrows,
Theorem~\ref{thm:integral-path-vanishing} contracts this complex
to degree zero. In particular such varieties have
$\operatorname{cd}_{\A^1}=0$, for all coefficient rows satisfying
\AS\ and with the scalar coefficients allowed above.

The following lemma describes the open and closed pieces arising from
freezing, together with the hypotheses inherited by those pieces.
We write $N_\Gamma[v]$ for the closed neighborhood of $v$, consisting
of $v$ and all its neighbors.

\begin{lemma}\label{lem:AS-framed-freezing}
Let $X$ have an acyclic exchange presentation satisfying \AS, and let
$v$ be a mutable vertex of its support graph $\Gamma$. Then
$U=D(x_v)$ is obtained by freezing $v$ and satisfies \AS. Moreover,
$D=V(x_v)$ is a smooth divisor with normal parameter $x_v$, and
\[
 D\cong\A^1\times Y,
\]
where $Y$ has an acyclic exchange presentation satisfying \AS, with
principal matrix obtained by restricting that of $X$ to
$V(\Gamma)\setminus N_\Gamma[v]$. Scalar coefficients can occur in
this presentation. Moreover,
$\operatorname{Pic}(X)=0$.
\end{lemma}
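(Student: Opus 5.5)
The proof splits into three parts: the open piece $U$, the divisor $D$, and the Picard group.

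\emph{The open piece.} On $D(x_v)$ the exchange relation at $v$ expresses $x'_v$ as a Laurent monomial expression in the other coordinates. So $U=\Spec A[x_v^{-1}]$ is the cluster variety whose extended matrix is $\Btil$ with column $v$ deleted and row $v$ kept as a frozen row. By \cite[Corollary~1.21]{BFZ}, applied to the acyclic seed, this localization agrees with the cluster algebra of the frozen seed; alternatively, one can check it directly from the presentation \eqref{eq:exchange-presentation} and the standard monomial basis. The new support graph is $\Gamma\setminus\{v\}$, so its independent sets are those of $\Gamma$ avoiding $v$. For such an $I$, the matrix $B_I$ of the frozen seed coincides with the old $B_I$: the columns are the same, and the deleted rows are still exactly the rows in $I$, because row $v$ was never deleted. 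Hence \AS\ is inherited verbatim.

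\emph{The divisor.} By Proposition~\ref{cc:prop:geometry}, $x_v$ is a transverse normal parameter along every stratum containing $v$, so $D=V(x_v)$ is smooth of codimension one. I would compute $D$ directly from the equations.
\begin{enumerate}
\item Setting $x_v=0$ in \eqref{eq:exchange-presentation}, the only zero sets that occur are independent, so every neighbor $u\in N_\Gamma(v)$ has $x_u\neq0$ on $D$.
\item The relation at $v$ becomes $0=M_v^++M_v^-$. Both sides are monomials in units (the neighbors and the frozen variables), so this relation cuts out a character fiber $\chi_v=-1$.
\item The relation at each neighbor $u$ determines $x'_u$, since $x_u$ is a unit.
\item The variable $x'_v$ is free, and this gives the $\A^1$ factor.
\item The remaining coordinates, indexed by $V(\Gamma)\setminus N_\Gamma[v]$, satisfy their exchange relations. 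In these relations $x_v$ is set to $0$ and the neighbors' $x_u$ are units. Vertices adjacent to $v$ appear with $x_v$ exponent zero on at least one side, and a monomial containing $x_v$ would vanish. Acyclicity should ensure no remaining vertex is adjacent to $v$, since those are exactly the excluded ones. So each relation is the restricted principal part with frozen rows enlarged by the neighbors, up to the constraint $\chi_v=-1$.
\end{enumerate}
The torus $\{\chi_v=-1\}$ in the variables $(x_u)_{u\in N(v)}$ together with the frozen variables is, by \AS\ for $I=\{v\}$, a split torus. Choosing a Smith basis for the column $v$ (restricted off row $v$) splits off one coordinate equal to $-1$. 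The remaining coordinates become frozen variables for $Y$. Rewriting the exchange monomials of $Y$ in this basis introduces the signs $\pm1$; these are the scalar coefficients allowed in the statement.

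\emph{The main obstacle} is checking \AS\ for $Y$. An independent set $J$ of $\Gamma\setminus N[v]$ gives the independent set $J\cup\{v\}$ of $\Gamma$. The matrix $B_{J\cup\{v\}}$ is block triangular after the base change above: column $v$ is supported on the neighbor and frozen rows and is unimodular there. So $B_J$ for $Y$ equals the quotient of $B_{J\cup\{v\}}$ by that saturated column. Saturation of the whole lattice spanned by the columns in $J\cup\{v\}$ then gives saturation of the image modulo column $v$. I would carry this out as a Smith-form argument, in the same way as the proof of Proposition~\ref{cc:prop:geometry}.

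\emph{Picard group.} I would induct on the number of mutable vertices, with the base case being a torus, where $\Pic=0$. Since $X$ is smooth, the localization sequence gives
\[
 \Z[D]\to\Pic X\to\Pic U\to0 .
\]
By induction $\Pic U=0$, because $U$ satisfies \AS\ with fewer mutable vertices. So $\Pic X$ is generated by $[D]$. Finally, $D=\operatorname{div}(x_v)$ is principal, because $x_v$ is a regular function vanishing to order one exactly along $D$. Hence $\Pic X=0$. For the induction to close, the inductive statement must cover presentations with scalar coefficients; the argument is unaffected by them. Alternatively, the vanishing can be read off from $\CH^1=0$ in Proposition~\ref{prop:kzero}.
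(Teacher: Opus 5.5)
Your proposal is correct and follows the paper's proof. The open piece is handled by deleting column $v$. The closed piece comes from inverting the neighbors of $v$, eliminating their partners, keeping $x'_v$ free, and cutting the frozen torus by the primitive relation $\chi_v=-1$. Your block-triangular argument for \AS\ on $Y$ is the paper's identification $(M/\Z b_v)/\langle\bar b_i\rangle\cong M/\langle b_v,b_i\rangle$, where $M$ is the lattice of rows other than $v$.

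The only variation is in proving $\operatorname{Pic}(X)=0$. You induct over successive freezings, implicitly using that $D\cong\A^1\times Y$ is irreducible and that $\operatorname{div}(x_v)=[D]$. The paper instead localizes once from the initial torus along all the irreducible divisors $V(x_i)$; this is the same divisor-localization argument. One small correction: in step (5), the remaining vertices are non-adjacent to $v$ by definition, so acyclicity is not needed there.
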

\begin{proof}
We first allow for the scalar coefficients introduced by freezing.
The local calculation of Proposition~\ref{cc:prop:geometry} applies
unchanged when the exchange-character equations are $\chi_i=c_i$ for
$c_i\in k^\times$: a primitive independent family of characters
extends to a torus-coordinate basis, and its fiber is a translated
split torus. This proves smoothness and the asserted normal framing.
The same strata have dimensions $d-|I|$, so the initial torus is
dense and the smooth $d$-dimensional variety is geometrically integral.
Freezing only removes a column, and therefore preserves \AS.

To identify the closed piece, note that every neighbor of $v$ is
invertible on $D$. We eliminate the corresponding exchange partners
and retain $x'_v$ as the free affine coordinate. Put
$J=V(\Gamma)\setminus N_\Gamma[v]$ and let $L$ be the character
lattice of the torus whose coordinates are the neighbors of $v$
and the original frozen variables. Put $M=L\oplus\Z^J$, and write
$b_i\in M$ for the exchange column indexed by $i\in J\cup\{v\}$,
with the $v$th row omitted. In particular, $b_v\in L$.
The equation at $v$ imposes the primitive character relation
$\chi_v=c$ in this torus, for a nonzero scalar $c$. Thus the remaining
frozen torus has character lattice $L/\Z b_v$.
For every independent $I\subseteq J$, the columns indexed by
$I\cup\{v\}$ form a primitive independent family in $M$. Consequently
\[
 (M/\Z b_v)/\langle\bar b_i:i\in I\rangle
 \cong M/\langle b_v,b_i:i\in I\rangle
\]
is free, and the images $\bar b_i$ are independent. This proves
\AS\ for $Y$. The principal matrix of $Y$ is the restriction of
that of $X$ to $J$, since $b_v$ has no coordinates in $J$. Choosing coordinates on the translated torus
accounts for the allowed scalar coefficients.

For Picard vanishing, recall that the complement of the initial torus
is the union of the
divisors $V(x_i)$. Each has a single irreducible dense singleton
stratum and multiplicity one in $\operatorname{div}(x_i)$.
Divisor localization from the initial torus, whose Picard group is
zero, therefore gives $\operatorname{Pic}(X)=0$.
\end{proof}

We can now apply purity with the displayed framing. For every strictly
$\A^1$-invariant sheaf $M$, this gives an exact sequence containing
\begin{equation}\label{eq:AS-framed-localization}
 H^{q-1}_{\mathrm{Nis}}(Y,M_{-1})\longrightarrow
 H^q_{\mathrm{Nis}}(X,M)\longrightarrow
 H^q_{\mathrm{Nis}}(U,M)\longrightarrow
 H^q_{\mathrm{Nis}}(Y,M_{-1}).
\end{equation}
It follows that
\[
 \operatorname{cd}_{\A^1}(X)\le
 \max\{\operatorname{cd}_{\A^1}(U),
       1+\operatorname{cd}_{\A^1}(Y)\}.
\]
If $Y$ has cohomological dimension zero, restriction is an
isomorphism in every degree at least two.

\begin{lemma}\label{lem:edgeless-strict-vanishing}
An acyclic presentation satisfying \AS\ with no mutable edges has
$\A^1$-cohomological dimension zero, also with nonzero scalar
coefficients.
\end{lemma}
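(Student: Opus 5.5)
Since $\Gamma$ has no mutable edges, every subset of the mutable vertices is independent. The plan is to induct on the number $n$ of mutable vertices, freezing one vertex at a time. For $n=0$, $X$ is a split torus $\Gm^d$, or a translated torus once scalar coefficients are allowed. A split torus has $\A^1$-cohomological dimension zero: by repeated application of the contraction formula, $H^q_{\Nis}(\Gm^d,M)$ is a direct sum of $H^q_{\Nis}(\Spec k, M_{-j})$ over iterated contractions, and these groups vanish for $q>0$ because $\Spec k$ is a point for the Nisnevich topology. A translated torus $\{\chi=c\}$ with primitive characters is again a split torus after a change of coordinates, as noted in the proof of Lemma~\ref{lem:AS-framed-freezing}, so the base case holds with scalar coefficients too.

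For the inductive step, I choose a mutable vertex $v$ and apply Lemma~\ref{lem:AS-framed-freezing}. The open piece $U=D(x_v)$ is the presentation with $v$ frozen. It still satisfies \AS, has no mutable edges, and has $n-1$ mutable vertices, so $\operatorname{cd}_{\A^1}(U)=0$ by induction. Because $v$ has no neighbours, the closed piece is $D\cong\A^1\times Y$, where $Y$ has principal matrix restricted to $V(\Gamma)\setminus\{v\}$. Thus $Y$ is again edgeless, satisfies \AS, and has $n-1$ mutable vertices, so $\operatorname{cd}_{\A^1}(Y)=0$ as well. The bound following \eqref{eq:AS-framed-localization} then only gives $\operatorname{cd}_{\A^1}(X)\le1$, so the argument must handle degree one separately.

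The main obstacle is the vanishing of $H^1$, i.e.\ showing that the residue map $H^0(U,M)\to H^0(Y,M_{-1})$ in \eqref{eq:AS-framed-localization} is surjective; the degrees $q\ge2$ already vanish by the sequence. I would establish surjectivity directly. On $U$ the exchange equation at $v$ reads $x_vx_v'=\chi_v+c$ with $x_v$ invertible, so $U$ contains the coordinate $x_v$ as a unit. Write $Y$ as the fiber $\chi_v=-c$ inside the torus of the remaining coordinates. Choose a lift $\tilde Y\subset U$ of $Y$ as the locus where the non-$v$ data agree with $Y$ and $x_v$ is free in $\Gm$; by \AS\ the character $\chi_v$ is primitive, so this is possible after a torus coordinate change, and it is compatible with the framing given by $x_v$. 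Then $\tilde Y\simeq\Gm\times Y$ maps to $U$, and the class $[x_v]\cdot\beta$ for $\beta\in M_{-1}(Y)$ would have residue $\beta$, by the standard residue computation on $\Gm\times Y$ near $x_v=0$.

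To turn this into an honest section on all of $U$, I would use the retraction $U\to\Gm\times Y'$ given by $x_v$ and the remaining coordinates, where $Y'$ is the open piece of the base. If extending across $U$ fails, the fallback is the cellular description: by Theorem~\ref{thm:acyclic-model}, the complex $C_*(X)$ for an edgeless graph is the tensor product over the mutable vertices of two-term complexes $\mathbf H^{\A^1}_0(\Gm)\to\KMW_1\otimes\ldots$, one for each $x_v$. Each factor has differential given by the residue of $x_v$ followed by inclusion of a primitive fiber. Under \AS\ this differential is a split surjection, because $\mathbf H^{\A^1}_0(\Gm)\cong\Z\oplus\KMW_1$ and the residue hits the $\KMW_1$ summand isomorphically. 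So each factor is contractible onto degree zero, and their tensor product is concentrated in degree zero, which gives $\operatorname{cd}_{\A^1}(X)=0$.
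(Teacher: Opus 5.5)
Your proposal has a genuine gap. You are right that freezing $v$ only gives $\operatorname{cd}_{\A^1}(X)\le1$, and right that everything reduces to surjectivity of the residue $H^0(U,M)\to H^0(Y,M_{-1})$. However, neither of your arguments for that surjectivity is complete, and both rest on the same unproved step.

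\textbf{The residue argument.} The lift $\tilde Y=\{\chi_v=-c\}\cap U\cong\Gm\times Y$ is a \emph{closed} subvariety of $U$, so classes on it do not extend to $U$. To pull back $[x_v]\beta$ and compute its residue by naturality, you need a morphism $X\to\A^1\times Y$ that restricts to $U\to\Gm\times Y$ and has $V(x_v)$ as the preimage of $\{0\}\times Y$. Your ``retraction $U\to\Gm\times Y'$'' is never constructed.

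\textbf{The cellular fallback.} Theorem~\ref{thm:acyclic-model} does not say that $C_*(X)$ is a tensor product over the mutable vertices. That is only available once $X$ is known to be a product, via the cellular product formula \cite[Lemma~2.31]{MS}. Also, the factor for one vertex is $\KMW_1\to\Free[\Gm^2]$, since its stratum torus is a point, and there is an extra factor $\Free[\Gm^{m-n}]$.

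\textbf{The missing idea} is the global product structure, which \AS\ supplies at once. With no mutable edges, each $\chi_v$ is a character of the frozen torus alone. Applying \AS\ to the independent set of all $n$ mutable vertices says these characters form a primitive independent family in $\Z^m$, so they extend to a basis. Change frozen coordinates accordingly, and absorb the common frozen monomial and the scalars into $x'_v$ and $t_v$. This gives $X\cong S^n\times\Gm^{m-n}$ with $S=\A^2\setminus\{xy=1\}$.

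This is the paper's first step. It then shows that vanishing for $T$ passes to $\Gm\times T$, by localization for $\{0\}\times T\subset\A^1\times T$, and to $S\times T$, by localization for $\{xy=1\}\times T\subset\A^2\times T$. In the second sequence $S\times T$ is the \emph{open complement}. So $H^q(S\times T,M)$ is bounded by $H^q(T,M)$ and $H^q(\Gm\times T,M_{-1})$, and no degree is lost. This is how the paper avoids the degree-one problem that arises when $X$ is the ambient space, as in your freezing sequence. Once you have the product decomposition, your cellular fallback also goes through, and so does your residue argument, since the projection $X\to\A^1_{x_v}\times Y$ now exists.
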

\begin{proof}
Let $n$ and $m$ be the numbers of mutable and frozen variables,
respectively. In the absence of mutable edges, all mutable columns
are frozen characters and form a primitive
independent family. A change of frozen coordinates and scalar
rescaling identify the variety with
\[
 S^n\times\Gm^{m-n},\qquad S=\A^2\setminus\{xy=1\}.
\]
We verify that both factors preserve the required vanishing.
If $T$ has cohomological dimension zero, localization for
$\{0\}\times T\subset\A^1\times T$ proves the same for
$\Gm\times T$: the two groups bounding its positive-degree
cohomology are $H^q(T,M)$ and $H^q(T,M_{-1})$.
Localization for $\{xy=1\}\times T\subset\A^2\times T$ then
proves the assertion for $S\times T$. Both normal line bundles are
framed by their displayed equations. Induction starting with
$T=\operatorname{Spec}k$ proves the result.
\end{proof}

\begin{theorem}\label{thm:ordered-freezing-triviality}
Let $X$ have an acyclic seed satisfying \AS. Suppose its mutable
vertices have an ordering $v_1,\ldots,v_n$ for which the principal
matrix on
\[
 P_i=\{v_j:j>i,\ \{v_i,v_j\}\notin E(\Gamma)\}
\]
is a disjoint union of paths with single arrows for every $i$.
Then $\operatorname{cd}_{\A^1}(X)\le1$, and every algebraic
vector bundle on $X$ is free.
\end{theorem}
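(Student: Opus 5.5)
We argue by induction on the number $n$ of mutable vertices, freezing $v_1,\ldots,v_n$ in order. For $0\le i\le n$, let $X_i$ be the variety obtained from $X$ by freezing $v_1,\ldots,v_i$, so that $X_0=X$ and $X_{i}=D(x_{v_i})\subset X_{i-1}$. By Lemma~\ref{lem:AS-framed-freezing}, each $X_i$ again has an acyclic presentation satisfying \AS, possibly with scalar coefficients. The closed complement $V(x_{v_i})\subset X_{i-1}$ is $\A^1\times Y_i$, where $Y_i$ satisfies \AS. Its principal matrix is the restriction of the principal matrix of $X_{i-1}$ to the mutable vertices of $X_{i-1}$ not in $N_\Gamma[v_i]$. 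Those mutable vertices are $v_i,\ldots,v_n$. Freezing does not change the principal matrix on the remaining vertices, so this set is exactly $P_i$. By hypothesis the principal matrix of $Y_i$ is therefore a disjoint union of paths with single arrows. Theorem~\ref{thm:integral-path-vanishing} then gives $\operatorname{cd}_{\A^1}(Y_i)=0$, since that result allows scalar coefficients and any coefficient rows satisfying \AS. The edge case where $P_i$ is empty is also covered by Lemma~\ref{lem:edgeless-strict-vanishing}.

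The freezing inequality from \eqref{eq:AS-framed-localization} gives
\[
\operatorname{cd}_{\A^1}(X_{i-1})\le\max\{\operatorname{cd}_{\A^1}(X_i),1\}.
\]
The homotopy invariance $H^*(\A^1\times Y_i,M_{-1})=H^*(Y_i,M_{-1})$ is already built into that sequence. The terminal variety $X_n$ has no mutable vertices. It is then a split torus $\Gm^d$ (up to translation, since the scalars are harmless), and it has $\operatorname{cd}_{\A^1}=0$ by Lemma~\ref{lem:edgeless-strict-vanishing}. Descending induction on $i$ yields $\operatorname{cd}_{\A^1}(X)\le1$.

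For bundle triviality, we use three inputs:
\begin{itemize}
\item $X$ is a smooth affine variety over a perfect field of characteristic not two (Proposition~\ref{cc:prop:geometry});
\item $H^q_{\Nis}(X,M)=0$ for every strictly $\A^1$-invariant sheaf $M$ and every $q\ge2$;
\item $\Pic(X)=0$ (Lemma~\ref{lem:AS-framed-freezing}).
\end{itemize}
We then apply the splitting theorem \cite[Theorem~1.1]{AFICM}. We use it in the form that, for a smooth affine $X$ of $\A^1$-cohomological dimension at most one, every vector bundle of rank $r$ splits off a free summand of rank $r-1$. The obstructions to splitting are Euler classes, which lie in $H^{r}_{\Nis}(X,\KMW_r)$ or $H^r_{\Nis}(X,\pi^{\A^1}_{r-1})$ for $r\ge2$, and these groups vanish. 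Thus every bundle is isomorphic to $L\oplus\mathcal O^{r-1}$, with $L=\det E$. Since $L$ is trivial because $\Pic(X)=0$, every bundle is free.

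The main obstacle is the precise form of the Asok--Fasel input. For ranks above $\dim X$, Serre splitting suffices. For lower ranks, one needs that all the obstruction groups in the Moore--Postnikov tower of $BGL_{r-1}\to BGL_r$ are of the form $H^{q}_{\Nis}(X,M)$ with $q\ge2$ and $M$ strictly $\A^1$-invariant, possibly twisted by $\det E$. Since $\det E$ is trivial, this covers them. Checking this is the point where care is needed, but it is exactly the statement quoted in the introduction.
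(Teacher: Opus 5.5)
Your proposal is correct and follows the paper's proof: successive freezing via Lemma~\ref{lem:AS-framed-freezing}, identifying the closed piece's principal matrix with $P_i$ so that Theorem~\ref{thm:integral-path-vanishing} gives $\operatorname{cd}_{\A^1}(Y_i)=0$, and the localization sequence \eqref{eq:AS-framed-localization} to get the bound one, followed by Picard vanishing and the Asok--Fasel splitting theorem to split off trivial line bundles down to rank one. The only differences are cosmetic: you unroll the induction down to the initial torus, whereas the paper lets $D(x_{v_1})$ inherit the remaining ordering, and the paper also remarks that the splitting argument applies over finite perfect fields.
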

\begin{proof}
We argue by induction, freezing $v_1$ first. The open subset
$U=D(x_{v_1})$ inherits the remaining ordering.
By Lemma~\ref{lem:AS-framed-freezing}, we have
$V(x_{v_1})\cong\A^1\times Y$, where $Y$ satisfies \AS\ and has
principal matrix obtained by restricting that of $X$ to $P_1$.
Its cohomological dimension is zero by
Theorem~\ref{thm:integral-path-vanishing}. The induction hypothesis and
\eqref{eq:AS-framed-localization} therefore give the bound one.

It remains to pass from this vanishing to vector bundles.
Lemma~\ref{lem:AS-framed-freezing} gives Picard vanishing.
The splitting theorem \cite[Theorem~1.1]{AFICM} splits off a trivial
line bundle from every vector bundle of rank greater than one. Repeated
splitting reduces to a line bundle, which is trivial. This argument applies
over finite perfect fields as well as infinite fields.
\end{proof}

A graph is chordal if it has no induced cycle of length at least four.

\begin{corollary}\label{cor:cochordal-triviality}
Let $X$ have an acyclic seed satisfying \AS. If the complement of
its mutable support graph is chordal, every algebraic vector bundle
on $X$ is free. In particular this applies to
complete multipartite support graphs.
\end{corollary}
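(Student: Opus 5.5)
We will deduce the corollary from Theorem~\ref{thm:ordered-freezing-triviality} by choosing the ordering from a perfect elimination ordering of the complement graph $\overline\Gamma$. Recall that a finite graph is chordal exactly when it admits a perfect elimination ordering: an ordering $v_1,\ldots,v_n$ of its vertices such that, for every $i$, the later neighbors $\{v_j: j>i\}$ of $v_i$ in $\overline\Gamma$ form a clique in $\overline\Gamma$. We take this ordering of the mutable vertices.

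With this choice, $P_i$ is precisely the set of later neighbors of $v_i$ in $\overline\Gamma$, since $\{v_i,v_j\}\notin E(\Gamma)$ means $\{v_i,v_j\}\in E(\overline\Gamma)$. Because these later neighbors form a clique in $\overline\Gamma$, the set $P_i$ is independent in $\Gamma$. Hence the principal matrix restricted to $P_i$ is zero. A zero matrix is a disjoint union of isolated vertices, which the paper explicitly counts as a disjoint union of paths with single arrows, including the empty case.

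So the hypothesis of Theorem~\ref{thm:ordered-freezing-triviality} holds for every $i$, and freeness follows. Here the residual pieces $Y$ are edgeless, so one could even invoke Lemma~\ref{lem:edgeless-strict-vanishing} directly in place of Theorem~\ref{thm:integral-path-vanishing}.

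For complete multipartite $\Gamma$, the complement is a disjoint union of complete graphs. Such a graph has no induced cycle of length at least four, so it is chordal and the first part applies.

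The only real point to check is the translation between the perfect elimination property of $\overline\Gamma$ and the sets $P_i$. This is the step I would verify carefully, in particular that the direction of "later" in the ordering matches the definition of $P_i$; everything else is immediate.
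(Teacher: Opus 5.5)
Your proof is correct and matches the paper's: take a perfect elimination ordering of $\overline\Gamma$, note that each $P_i$ is the set of later $\overline\Gamma$-neighbours of $v_i$ and hence independent in $\Gamma$, and apply Theorem~\ref{thm:ordered-freezing-triviality}, where Lemma~\ref{lem:edgeless-strict-vanishing} suffices in place of the path contraction. The direction of ``later'' checks out, since that theorem freezes $v_1$ first, its $Y$ has principal matrix on $P_1$, and a perfect elimination ordering makes each $v_i$ simplicial in the subgraph induced on $\{v_i,\ldots,v_n\}$.
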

\begin{proof}
We use a perfect elimination ordering of the complement graph.
Its later neighbors form a clique, so the sets $P_i$ in
Theorem~\ref{thm:ordered-freezing-triviality} are independent.
In this case the proof uses only
Lemma~\ref{lem:edgeless-strict-vanishing}, without the path
calculation. The complement of a complete multipartite graph is
a disjoint union of complete graphs and is chordal.
\end{proof}

\begin{corollary}\label{cor:irreducible-finite-triviality}
Let $X$ have an acyclic seed of irreducible finite Dynkin type
whose extended matrix satisfies \AS. Every algebraic vector
bundle on $X$ is free. In particular the assertion holds for
every RFR cluster variety of irreducible finite cluster type,
and for every cluster variety with principal coefficients at
an acyclic seed of irreducible finite type.
\end{corollary}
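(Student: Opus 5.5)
The plan is to apply Theorem~\ref{thm:ordered-freezing-triviality} directly. This requires an ordering $v_1,\ldots,v_n$ of the mutable vertices for which every set $P_i$ induces a disjoint union of paths with single arrows. The support graph $\Gamma$ of an irreducible finite Dynkin type is a tree: a path in types $A,B,C,F,G$, and a tree with exactly one trivalent vertex in types $D,E$. The ordering only has to remove the obstructions to being a single-arrow path, namely the branch vertex or the multiply laced edge.

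\textbf{Choice of ordering.} Choose $v_1$ as follows, and order the remaining vertices arbitrarily.
\begin{enumerate}
\item In types $D_n$ and $E_{6,7,8}$, take $v_1$ to be the trivalent vertex.
\item In types $B_n$ and $C_n$, with diagram $1-2-\cdots-(n-1)=n$, take $v_1=n-1$.
\item In type $F_4$, with diagram $1-2=3-4$, take $v_1=2$.
\item In types $A_n$ and $G_2$, any ordering is acceptable.
\end{enumerate}

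\textbf{Checking the sets $P_i$.} For $i=1$, the set $P_1$ is $\Gamma\setminus N_\Gamma[v_1]$. For $i\ge2$, the set $P_i$ is contained in $\Gamma\setminus\{v_1\}$. In each case $\Gamma\setminus\{v_1\}$ is a disjoint union of paths containing no multiply laced edge:
\begin{enumerate}
\item In types $D$ and $E$, removing the branch vertex leaves three simply laced paths.
\item In type $B_n/C_n$, removing $n-1$ leaves the path $1,\ldots,n-2$ and the isolated vertex $n$.
\item In type $F_4$, removing $2$ leaves $\{1\}$ and $\{3,4\}$.
\item In type $G_2$, the two vertices are never both present in any $P_i$.
\end{enumerate}
An induced subgraph of a disjoint union of paths is again a disjoint union of paths, so every $P_i$ has the required shape. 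It remains to see that each such piece carries single arrows. On a simply laced edge of an acyclic skew-symmetrizable matrix, the entries satisfy $B_{ij}B_{ji}=-1$ with $B_{ij},B_{ji}\in\Z$, so they are $\pm1$. The restricted principal matrix is therefore a skew-symmetric path matrix with entries $\pm1$, whatever the orientation. This settles the combinatorial hypothesis of Theorem~\ref{thm:ordered-freezing-triviality}, which then gives freeness whenever \AS\ holds.

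\textbf{The two special cases.} For principal coefficients, $\widetilde B=\binom{B}{I_n}$ is RFR, so \AS\ holds by the remark in Section~\ref{sec:independent-columns}. For an RFR cluster variety of irreducible finite cluster type, I would use the following facts in order.
\begin{enumerate}
\item The cluster algebra with frozen variables inverted does not depend on the seed.
\item RFR is preserved by mutation, since matrix mutation is given by integral unimodular changes of basis.
\item By the finite type classification, some seed in the mutation class is acyclic, for instance bipartite, with Cartan companion of the given irreducible type.
\end{enumerate}
At that acyclic seed the RFR property again yields \AS, and the first part applies.

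The main point needing care is the single-arrow condition. The multiply laced edge in types $B$, $C$, $F$ and $G$ must never survive into any $P_i$, and this is exactly what placing an endpoint of that edge first achieves. A smaller point is checking that RFR transports across mutation to the chosen acyclic seed.
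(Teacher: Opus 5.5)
Your proposal is correct and follows the paper's proof. You put first a vertex whose deletion leaves a disjoint union of single-arrow paths, so that every $P_i$ is an induced subgraph of such a forest, and apply Theorem~\ref{thm:ordered-freezing-triviality}; you then handle the RFR and principal-coefficient cases through mutation invariance of RFR, the existence of an acyclic Dynkin seed, and the identity coefficient block. The only difference is cosmetic: in types $B_n$ and $C_n$ you delete the non-terminal endpoint $n-1$ of the double edge, whereas the paper deletes the terminal vertex $n$, and both choices work.
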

\begin{proof}
We choose an ordering to which the preceding theorem applies.
Type $A$ is already a path with single arrows. In types $D$ and
$E$, we remove the unique branching vertex. In types $B$ and $C$,
we remove the terminal vertex incident to the multiple edge. In
type $F_4$, we remove either vertex incident to the multiple edge;
in type $G_2$, either vertex may be removed. In each case both the graph
with that vertex removed and the graph with its closed
neighborhood removed are disjoint unions of paths with single
arrows. Placing that vertex first and ordering the remaining
vertices arbitrarily gives the required ordering.

For the remaining assertions, recall that RFR is invariant under
exchange-matrix mutation, which is given
by left and right unimodular transformations. A cluster algebra
of irreducible finite type has an acyclic Dynkin seed, so the
second assertion follows. Principal coefficients give an
identity block and hence RFR. 
\end{proof}

This proves Theorem~\ref{thm:main-triviality}.
For type $A$, the stronger conclusion
$\operatorname{cd}_{\A^1}(X)=0$ follows directly from the path
contraction; the uniform bound for all irreducible types is one.

\begin{corollary}\label{cor:reduced-coxeter-bundles}
Let $G$ be a simple simply connected complex algebraic group and $c$
a Coxeter element in its Weyl group. Every algebraic vector bundle
on the reduced double Bruhat cell $L^{c,c^{-1}}$ is trivial.
\end{corollary}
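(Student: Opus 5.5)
The plan is to reduce Corollary~\ref{cor:reduced-coxeter-bundles} to Corollary~\ref{cor:irreducible-finite-triviality} via the Yang--Zelevinsky identification, and then descend from the cluster variety to the complex variety.

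First, by \cite[Theorem~1.2]{YZ}, the coordinate ring of $L^{c,c^{-1}}$ is isomorphic over $\C$ to a cluster algebra with principal coefficients at an acyclic seed. The exchange matrix $B$ of this seed is determined by the Coxeter element $c$, and its Cartan companion is the Cartan matrix of $G$. Since $G$ is simple, the Cartan matrix is indecomposable of finite type, so the seed is of irreducible finite Dynkin type. I will check that the YZ conventions match ours:
\begin{enumerate}
\item the frozen variables are inverted;
\item the extended matrix has the form $\binom{B}{I_n}$, up to a sign convention on $B$.
\end{enumerate}
A global sign change $B\mapsto -B$ only swaps the two monomials in each exchange relation \eqref{eq:exchange-presentation}, so it yields the same presentation. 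Principal coefficients give an identity block, hence RFR, hence \AS.

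Second, I apply Corollary~\ref{cor:irreducible-finite-triviality} over $k=\C$, which is perfect of characteristic zero. It says every finitely generated projective module over $\C[L^{c,c^{-1}}]$ is free. Here $L^{c,c^{-1}}$ is affine: it is realized in \cite{YZ} as a closed subvariety of a double Bruhat cell, cut out by explicit regular functions, and the isomorphism with $\operatorname{Spec}$ of the cluster algebra is an isomorphism of affine varieties. Hence algebraic vector bundles on it are exactly the projective modules over its coordinate ring.

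The main obstacle is bookkeeping rather than mathematics. One must confirm that the identification in \cite{YZ} is an isomorphism of schemes with the full cluster algebra, with frozen variables inverted, and not just with an upper cluster algebra or a localization that differs from it. For acyclic seeds, \cite[Corollary~1.21]{BFZ} shows the cluster algebra equals the upper cluster algebra and has the presentation \eqref{eq:exchange-presentation}. Therefore any of these variants yields the same ring, and the corollary follows.
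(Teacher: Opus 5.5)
Your proposal is correct and is the paper's argument: the paper proves this corollary in one line by applying Theorem~\ref{thm:main-triviality} (that is, Corollary~\ref{cor:irreducible-finite-triviality} over $k=\C$) to the principal-coefficient cluster structure of \cite[Theorem~1.2]{YZ}, and your extra checks on affineness and on the exchange presentation are sound bookkeeping. One small slip: negating $B$ while keeping the coefficient rows $I_n$ changes the exchange relations rather than just swapping their two monomials; this is harmless, because $\binom{-B}{I_n}$ is again principal coefficients at an acyclic seed of the same irreducible finite type, so the corollary still applies.
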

\begin{proof}
The result follows by applying Theorem~\ref{thm:main-triviality} to
the principal-coefficient cluster structure of \cite[Theorem~1.2]{YZ}.
\end{proof}

\subsection{Further consequences of freezing}
The same ordering argument allows additional path components, even
when their frozen coefficients are shared.

\begin{corollary}\label{cor:finite-type-with-A-factors}
Let $X$ have an acyclic seed satisfying \AS\ whose Cartan companion
is of finite type. If at most one connected component is not of
type $A$, every algebraic vector bundle on $X$ is trivial.
The frozen coefficient rows may couple different components.
\end{corollary}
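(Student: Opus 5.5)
The plan is to apply Theorem~\ref{thm:ordered-freezing-triviality} with an ordering adapted to the unique non-type-$A$ component. Write the mutable support graph as $\Gamma=\Gamma_0\sqcup\Gamma_1\sqcup\cdots\sqcup\Gamma_s$. Here $\Gamma_0$ is the possible component of finite type other than $A$, and each $\Gamma_j$ with $j\ge1$ is a type $A$ path. Since the Cartan companion is of finite type, every edge of a type $A$ component carries single arrows, so each $\Gamma_j$ with $j\ge1$ is a path with single arrows in the sense of Section~\ref{sec:triviality}. If there is no non-type-$A$ component, the whole principal matrix is already a disjoint union of such paths. In that case Theorem~\ref{thm:integral-path-vanishing} directly gives $\operatorname{cd}_{\A^1}(X)=0$, and the bundle statement follows as in the proof of Theorem~\ref{thm:ordered-freezing-triviality}: Picard vanishing from Lemma~\ref{lem:AS-framed-freezing}, together with the splitting theorem \cite[Theorem~1.1]{AFICM}. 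Alternatively, any ordering works in this case.

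In the main case, let $w$ be the vertex of $\Gamma_0$ selected in the proof of Corollary~\ref{cor:irreducible-finite-triviality}. This is the branching vertex in types $D,E$, the terminal vertex on the multiple edge in types $B,C$, and a vertex on the multiple edge in types $F_4,G_2$. Put $v_1=w$ and order the remaining vertices arbitrarily. We must check the hypothesis of Theorem~\ref{thm:ordered-freezing-triviality} for every $i$. For $i=1$, the set $P_1$ is $V(\Gamma)\setminus N_\Gamma[w]$. Its induced principal matrix is the disjoint union of $\Gamma_0\setminus N[w]$, which is a union of single-arrow paths by the case check in Corollary~\ref{cor:irreducible-finite-triviality}, and the untouched paths $\Gamma_1,\ldots,\Gamma_s$. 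For $i\ge2$, the set $P_i$ omits $w$. Its induced subgraph is therefore an induced subgraph of $(\Gamma_0\setminus w)\sqcup\Gamma_1\sqcup\cdots\sqcup\Gamma_s$. The graph $\Gamma_0\setminus w$ is a union of single-arrow paths by the same case check, and an induced subgraph of a disjoint union of single-arrow paths is again such a union. The principal matrix restricts entrywise, so arrow multiplicities are preserved.

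The coupling of components through shared frozen rows enters only through \AS\ and the allowed scalar coefficients. Theorem~\ref{thm:ordered-freezing-triviality} and Theorem~\ref{thm:integral-path-vanishing} are stated for arbitrary coefficient rows satisfying \AS, and Lemma~\ref{lem:AS-framed-freezing} propagates \AS\ to the open and closed pieces. Consequently no independence of coefficients between components is needed. The step I would check most carefully is exactly this point: the path contraction of Theorem~\ref{thm:integral-path-vanishing} must apply to a disjoint union of paths whose frozen rows are shared. Its stated hypotheses only require \AS\ for the whole extended matrix, so I expect it to go through. If it did not, I would instead freeze one path component at a time, using the localization sequence \eqref{eq:AS-framed-localization} in the same way as in the proof of Theorem~\ref{thm:ordered-freezing-triviality}. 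Granting this, Theorem~\ref{thm:ordered-freezing-triviality} yields $\operatorname{cd}_{\A^1}(X)\le1$ and freeness of every vector bundle.
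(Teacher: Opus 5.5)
Your proposal is correct and follows the paper's proof. You place the special vertex of the non-type-$A$ component (from Corollary~\ref{cor:irreducible-finite-triviality}) first, list the rest arbitrarily, check that $P_1$ and every later $P_i$ induce disjoint unions of single-arrow paths, and apply Theorem~\ref{thm:ordered-freezing-triviality}. Your worry about shared frozen rows is already settled by Theorem~\ref{thm:integral-path-vanishing}: its proof treats arbitrary coefficient rows satisfying \AS\ by normalizing them to the standard product form, so the fallback of freezing path components one at a time is not needed.
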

\begin{proof}
If all components have type $A$, the path theorem applies directly.
Otherwise we choose the vertex of the unique remaining component used
in the proof of Corollary~\ref{cor:irreducible-finite-triviality}.
Deleting it, or its closed neighborhood, leaves a disjoint union
of paths with single arrows, including all the original type $A$
components. With this vertex first, the others may be listed in any
order, and Theorem~\ref{thm:ordered-freezing-triviality} applies.
\end{proof}

\begin{corollary}\label{cor:braid-bundles}
Let $X$ be a complex double braid variety whose cluster algebra is
of finite type, with at most one non-$A$ connected component in its
Dynkin diagram.
Then every algebraic vector bundle on $X$ is trivial.
In particular, this holds for open Richardson varieties with such
a cluster structure.
\end{corollary}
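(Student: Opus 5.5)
The plan is to reduce Corollary~\ref{cor:braid-bundles} to Corollary~\ref{cor:finite-type-with-A-factors}. That corollary needs three inputs: an acyclic seed, condition \AS, and a Cartan companion of finite type with at most one non-type-$A$ component. I will take the cluster structure on the double braid variety from \cite[Theorems~1.1 and~4.10, Corollary~6.7]{GLS}. There the coordinate ring is an RFR cluster algebra with all frozen variables inverted, which matches the convention $X=\Spec\mathcal A$ fixed in the introduction.

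First, I produce an acyclic seed. By hypothesis the cluster algebra is of finite type. By the Fomin--Zelevinsky classification, its mutation class of principal parts then contains a seed whose principal part is an orientation of the Dynkin diagram, possibly disconnected. Mutating the extended matrix $\Btil$ along the corresponding sequence gives an acyclic seed of the same cluster algebra. The Dynkin types of its components are the components of the given finite type, so at most one of them is not of type $A$.

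Second, I verify \AS\ at this seed. As recalled in the proof of Corollary~\ref{cor:irreducible-finite-triviality}, mutation of the extended matrix acts by left and right unimodular transformations. RFR, meaning a split injection $\Z^n\hookrightarrow\Z^d$, is therefore preserved, so the acyclic seed is still RFR. Section~\ref{sec:independent-columns} shows that every acyclic RFR seed satisfies \AS. Corollary~\ref{cor:finite-type-with-A-factors} then applies over $k=\C$, which is perfect of characteristic zero, and every algebraic vector bundle on $X$ is trivial. The frozen rows may couple the components; that corollary explicitly allows this.

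For open Richardson varieties, I use \cite[Theorem~3.14]{CGGLSS}, which identifies $R^\circ_{v,w}$ with the braid variety $X(\mathbf b(w)\mathbf b(v^{-1}w_0))$, a special double braid variety. Its cluster structure is the one above, so the same argument applies whenever that structure has finite type with at most one non-$A$ component.

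The main obstacle is the first step. I must make sure that the finite-type hypothesis on the double braid cluster algebra really yields an acyclic seed of the algebra whose coordinate ring is that of $X$. Two issues need checking. One is that the relevant notion is mutation-finiteness of the principal part, which is coefficient-independent. The other is that the upper cluster algebra used in \cite{GLS} agrees with the cluster algebra, so that the acyclic exchange presentation \eqref{eq:exchange-presentation} of \cite{BFZ} describes $X$. For this I would cite that acyclic seeds with RFR (coprime) extended matrices give $\mathcal A=\mathcal U$ by \cite[Theorem~1.18 and Corollary~1.21]{BFZ}, together with the equality $\mathcal U=\mathbb C[X]$ from \cite{GLS}.
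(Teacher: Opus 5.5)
Your proposal is correct and follows the paper's proof. The paper gets RFR from \cite[Corollary~6.7]{GLS}, an acyclic Dynkin seed from finite type, preservation of RFR under mutation (hence \AS), then applies Corollary~\ref{cor:finite-type-with-A-factors} and handles the Richardson case via \cite[Theorem~3.14]{CGGLSS}. Your extra check that the acyclic presentation actually describes $\C[X]$, via $\mathcal A=\mathcal U$ for acyclic coprime seeds in \cite{BFZ}, is a point the paper leaves implicit.

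One small correction to your last paragraph: the coefficient-independent notion you need is finite cluster type, characterized in \cite{FZII} by the existence of a seed with finite-type Cartan companion. It is not mutation-finiteness of the principal part, which is strictly weaker (it includes affine types) and would not yield a Dynkin seed.
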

\begin{proof}
The cluster structure is RFR by \cite[Corollary~6.7]{GLS}, and finite
type supplies an acyclic Dynkin seed \cite{FZII}. Since mutation
preserves RFR, this seed satisfies \AS, and
Corollary~\ref{cor:finite-type-with-A-factors} applies.
The identification with braid varieties gives the Richardson case
\cite[Theorem~3.14]{CGGLSS}.
\end{proof}

Partial freezing can also preserve a nonzero obstruction. Let $X$
have an acyclic seed satisfying \AS, and let $M$ be strictly
$\A^1$-invariant. If vertices $v_1,\ldots,v_s$ are frozen
successively and, at each step, the variety $Y$ of
Lemma~\ref{lem:AS-framed-freezing} has a disjoint union of paths with
single arrows as its mutable graph, then \eqref{eq:AS-framed-localization} gives
\[
 H^q_{\Nis}(X,M)\xrightarrow{\sim}
 H^q_{\Nis}(D(x_{v_1}\cdots x_{v_s}),M)\qquad(q\ge2).
\]
Indeed, each such $Y$ has cohomological dimension zero.
This isomorphism allows further bundle questions to be
reduced to smaller mutable graphs.

\section{Euler classes and vector bundles of small rank}\label{sec:bundles}
\subsection{Euler classes in cohomological dimension two}
Proposition~\ref{prop:kzero} gives stable freeness under \AS.
We now use cohomological bounds to classify vector bundles of small
rank and determine when adding one trivial line bundle makes them
free. Throughout this section $k$ is perfect of characteristic
different from two, and all cluster varieties have their frozen
variables inverted.
Recall that for a smooth affine variety $Y$, an orientation of a rank-$r$ vector
bundle $E$ is an isomorphism
$\theta:\det(E)\xrightarrow{\sim}\mathcal O_Y$.
We write $\operatorname{Vect}^{\mathrm{or}}_r(Y)$ for isomorphism classes
of such pairs $(E,\theta)$, with isomorphisms required to preserve
the orientation. The Euler class of an oriented rank-$r$ vector
bundle takes values in the untwisted Chow--Witt group:
\[
 e(E,\theta)\in\CW^r(Y)=H^r_{\Nis}(Y,\KMW_r).
\]

\begin{proposition}\label{prop:bundle-range}
Let $Y$ be a smooth affine variety with $K_0(Y)=\Z$ by rank and $\operatorname{cd}_{\A^1}(Y)\leq h$.
Every vector bundle on $Y$ of rank greater than $h$ is free.
If $h\leq2$, the Euler class gives a bijection
\[
\operatorname{Vect}^{\mathrm{or}}_2(Y)\xrightarrow{\sim}
H^2_{\Nis}(Y,\KMW_2)=\CW^2(Y).
\]
The set of unoriented rank-two vector bundles is the quotient by the action of $\Gamma(Y,\mathcal O)^\times$, where $u$ acts on the Euler class by $\la u\ra$.
Every rank-two vector bundle in this case becomes free after taking
the direct sum with a trivial line bundle.
In particular, if $h\leq1$, every vector bundle on $Y$ is free.
\end{proposition}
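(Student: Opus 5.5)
The plan is to use Morel's $\A^1$-homotopy classification of vector bundles on smooth affine varieties together with Moore--Postnikov obstruction theory, exactly in the form used by Asok--Fasel. By the affine representability theorem (Morel, Asok--Hoyois--Wendt), $\operatorname{Vect}_r(Y)=[Y,BGL_r]_{\A^1}$ and $\operatorname{Vect}^{\mathrm{or}}_r(Y)=[Y,BSL_r]_{\A^1}$. The $\A^1$-homotopy sheaves involved are strictly $\A^1$-invariant for $i\ge2$; $\pi_1^{\A^1}$ is $\Gm$ (for $BGL_r$) or trivial (for $BSL_r$). Every obstruction to lifting and every difference class in the Postnikov tower therefore lies in some $H^{q}_{\Nis}(Y,M)$ with $M$ strictly $\A^1$-invariant, possibly twisted by a line bundle. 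Since $\Pic(Y)=0$, which follows from $K_0(Y)=\Z$ via the determinant, the twists are trivial. The hypothesis $\operatorname{cd}_{\A^1}(Y)\le h$ then kills every group in degree $q>h$.

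\textbf{Step 1: ranks greater than $h$.} For rank $r>h$, I would use the stabilization map $BGL_{r}\to BGL_{r+1}$. Its $\A^1$-homotopy fiber is $\A^{r+1}\setminus0$, which is $\A^1$-$(r-1)$-connected. The obstructions to lifting and to uniqueness of lifts along this map therefore lie in $H^{q}(Y,M)$ with $q\ge r+1>h$, or $q\ge r>h$, respectively, and all of these vanish. Hence $\operatorname{Vect}_r(Y)\to\operatorname{Vect}_{r+1}(Y)$ is a bijection for $r>h$; this is exactly the splitting and cancellation theorem of \cite[Theorem~1.1]{AFICM}. Iterating up to the stable range gives $\operatorname{Vect}_r(Y)\cong\widetilde K_0(Y)\oplus\Z$ restricted to rank $r$, which is a single point since $K_0(Y)=\Z$. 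So every such bundle is free. The final assertion for $h\le1$ then follows: ranks at least $2$ are free by this step, and line bundles are free because $\Pic(Y)=0$.

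\textbf{Step 2: rank two with $h\le2$.} Here I would use $BSL_2$. Its first nontrivial homotopy sheaf is $\pi_1^{\A^1}(SL_2)=\KMW_2$, so $BSL_2$ is $\A^1$-$1$-connected with $\pi_2^{\A^1}=\KMW_2$. The first $k$-invariant-free stage of the Postnikov tower is $K(\KMW_2,2)$, and the map $[Y,BSL_2]\to H^2(Y,\KMW_2)$ is the Euler class, by Asok--Fasel. The higher Postnikov stages have fibers $K(\pi_i,i)$ with $i\ge3$. Their lifting obstructions lie in $H^{i+1}$ and their ambiguities in $H^{i}$, all with $i\ge3>h$, so they vanish. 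The truncation map is thus a bijection $\operatorname{Vect}^{\mathrm{or}}_2(Y)\cong\CW^2(Y)$. For the unoriented classification, $\Pic(Y)=0$ means every rank-two bundle is orientable, and the orientations form a torsor under $\Gamma(Y,\mathcal O)^\times$. Changing $\theta$ by $u$ multiplies the Euler class by $\la u\ra$, which is the standard behaviour of Chow--Witt Euler classes under rescaling the orientation. Stable freeness after adding $\mathcal O$ is Step 1 applied to rank $3>h$ (using $h\le2$).

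\textbf{Main obstacle.} The main difficulty is bookkeeping: checking that the hypothesis $\operatorname{cd}_{\A^1}\le h$, which is stated only for untwisted Nisnevich cohomology, covers all the coefficient sheaves that appear. These include the $\pi_1$-twisted local systems in the $BGL_r$ tower. My first move is to reduce to the oriented $BSL_r$ towers, where $\pi_1$ is trivial, using $\Pic(Y)=0$. I would then quote \cite{AFICM} for the identification of the obstruction groups.
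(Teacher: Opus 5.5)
Your proposal is correct and follows essentially the paper's route. Both arguments use affine representability and the relative Postnikov tower for stabilization with fiber $\A^{r+1}\setminus 0$, deduce freeness from the stable freeness forced by $K_0(Y)=\Z$, and classify rank two via the Postnikov tower of $BSL_2$ with $\pi_2^{\A^1}(BSL_2)=\KMW_2$ and the $\la u\ra$ orientation action. Your detour through $BGL_r$, with twists trivialized by $\Pic(Y)=0$, is equivalent to the paper's direct use of the simply connected $BSL_{r+1}$.

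One point to tighten: the first Postnikov map $[Y,BSL_2]_{\A^1}\to H^2_{\Nis}(Y,\KMW_2)$ agrees with the Chow--Witt Euler class only up to a unit of $\GW(k)$. This does not affect bijectivity.
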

\begin{proof}
We first establish freeness above the cohomological dimension. The $K_0$
hypothesis implies stable freeness by the projective-complement argument
of Proposition~\ref{prop:kzero}; taking determinants also gives $\Pic(Y)=0$.
Affine representability identifies oriented rank-$r$ vector bundles with
$[Y,BSL_r]_{\A^1}$ for $r\geq2$, over finite fields as well as infinite
fields \cite[Theorem~2.5]{AHWIII}.
For $r\geq2$ the homotopy fiber of $BSL_r\to BSL_{r+1}$ is
$\A^{r+1}\setminus0$, which is $(r-1)$-connected. Its relative Postnikov
tower has uniqueness groups beginning in degree $r$ and existence
obstructions beginning in degree $r+1$, with strictly invariant
coefficient sheaves. The base $BSL_{r+1}$ is simply connected, so these
coefficients are untwisted. If $r>h$, all these groups vanish, and
stabilization is a bijection on maps from $Y$. Since the bundle is
stably free, repeated stabilization shows that it is already free in
rank $r$. The rank-one assertion follows from $\Pic(Y)=0$.

For the classification in rank two, recall that $BSL_2$ is simply
connected and has first nonzero homotopy sheaf $\pi_2^{\A^1}(BSL_2)=\KMW_2$.
Its second Postnikov map defines the obstruction Euler class.
By \cite[Remark~5.2.2 and Theorem~5.3.2]{AFEuler}, this agrees
with the Chow--Witt Euler class up to multiplication by a unit
in $\GW(k)$.
When $h\leq2$, the higher uniqueness groups and all existence
obstructions vanish, giving the displayed bijection. These Postnikov
identifications and the determinant action are the obstruction-theoretic
construction of Euler classes used in \cite[Sections~2 and~6]{AFThreefolds}.
The finite Krull dimension of $Y$ ensures convergence of the tower for
maps from $Y$; the smaller bound $h$ annihilates the indicated groups.
Changing a determinant trivialization by $u$ multiplies the Euler class
by $\la u\ra$. Finally, rank three is greater than $h$, so the direct sum
with a trivial line bundle is free.
\end{proof}

\subsection{Even exchange matrices}\label{subsec:even-coefficients}
We apply the preceding proposition to a class for which the cellular
differential gives explicit Chow--Witt groups.
Let $B$ be an acyclic skew-symmetric $n\times n$ matrix with even
entries, and let $\Gamma$ be its support graph. Choose a partition
$[n]=V_1\sqcup\cdots\sqcup V_q$ into nonempty cliques, and write
$c(i)=a$ for $i\in V_a$. Define the coefficient matrix by
$C_{ai}=1$ if $i\in V_a$, and $C_{ai}=0$ otherwise. Let $X_{B,c}$
be the cluster variety of $\Btil=\binom B C$, with all $q$ frozen
variables inverted. An independent set meets each clique at most
once, so its frozen rows contain an identity submatrix. Thus \AS\
holds and $\dim X_{B,c}=n+q$. Principal coefficients correspond to
the partition into singletons.

For $T\subseteq[n]$, write $\Ind(\Gamma[T])$ for the simplicial
complex of independent sets of the induced graph. We use augmented
reduced homology, with
$\widetilde H_{-1}(\Ind(\Gamma[\varnothing]);\Z)=\Z$, and put
\[
w_c(T)=\prod_{a\notin c(T)}(1+2^{|V_a|}),\qquad
\lambda(\Gamma)=\max\{p\ge0:
\widetilde H_{p-1}(\Ind(\Gamma[T]);\Z)\ne0
\text{ for some }T\}.
\]
The torsion-freeness condition is
\begin{equation}\label{eq:graph-TF}
\widetilde H_j(\Ind(\Gamma[T]);\Z)\text{ is free abelian for every }T\subseteq[n],\ j\in\Z.
\tag{TF}
\end{equation}

\begin{corollary}\label{cor:even-graph-summary}
Over a perfect field $k$ of characteristic different from two, for
every perfect extension $K/k$ and every $p>0$ there is an isomorphism
\begin{equation}\label{eq:even-summary-cw}
\CW^p((X_{B,c})_K)\simeq
\bigoplus_{T\subseteq[n]}
\widetilde H^{p-1}(\Ind(\Gamma[T]);\Witt(K))^{\oplus w_c(T)}.
\end{equation}
If \eqref{eq:graph-TF} holds, then
\[
\operatorname{cd}_{\A^1}((X_{B,c})_K)=\lambda(\Gamma).
\]
For principal coefficients, the formula becomes
\begin{equation}\label{eq:principal-even-cw}
\CW^p((X_{B,c})_K)\simeq
\bigoplus_{T\subseteq[n]}
\widetilde H^{p-1}(\Ind(\Gamma[T]);\Witt(K))^{\oplus3^{n-|T|}}.
\end{equation}
\end{corollary}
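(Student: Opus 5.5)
We compute $\CW^p=H^p_{\Nis}(X,\KMW_p)$ from the cellular complex of Theorem~\ref{thm:acyclic-model}, which applies because \AS\ holds for $X_{B,c}$. Apply $\Hom(C_*(X),\KMW_p)$. The term in cellular degree $r$ is then
\[
 \bigoplus_{|I|=r}\Hom(\KMW_r\otimes_{\A^1}\mathbf H^{\A^1}_0(T_I),\KMW_p)
 =\bigoplus_{|I|=r}\KMW_{p-r}(T_I),
\]
and we only need its degree $p$ cohomology. The torus $T_I$ is split of dimension $n+q-2|I|$. Its coordinates are the mutable variables outside $I$ together with the frozen variables $x_a$ for $a\notin c(I)$. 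The frozen variables $x_{c(i)}$, $i\in I$, are solved for by $\chi_i=-1$; this is possible because the frozen row of $c(i)$ contains the identity entry. So each term is a sum of Milnor--Witt groups of $K$ indexed by subsets $S$ of the torus coordinates, of the form $\KMW_{p-r-|S|}(K)$ times units.

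The key observation is that the exchange columns are even. The residue in $x_j$ of a symbol $[x_j]$ is $1$. When the remaining characters $\chi_i$ are restricted, they become expressions involving $x_j^{2m}$, and the resulting coefficients are of the form $[u^2]$ or $\la u^2\ra$. In $\KMW$ these lie in the hyperbolic/$\eta$-divisible part, since $[u^2]=[u](2+\eta[u])$ up to $h$. I would therefore split the complex via the fiber square relating $\KMW$ to $\KM$ and $\Ish$. On the $\KM$ side the even coefficients make the differentials multiplication by $2$ or $0$, and that part contributes only to ordinary Chow, which Proposition~\ref{prop:kzero} shows vanishes. On the Witt side, after inverting nothing and working in $\Wsh=\Ish^{\le0}$, the degree-$p$ part becomes a complex of $\Witt(K)$-modules. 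This yields the isomorphism with the $\Wsh$-cohomology part in \eqref{eq:even-summary-cw}.

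Next we organize the Witt complex. A generator is the choice of an independent set $I$ together with a squarefree monomial basis on $T_I$. For the mutable coordinates $x_j$, $j\notin I$, the basis element is $1$ or $[x_j]$. Let $T$ be the set of $j$ with $[x_j]$ absent. The differential adds $j\in T$ to $I$: the residue of $1$ is $0$, and the factor $[x_j]$ is consumed. Fixing $T$, the mutable part becomes the simplicial cochain complex of $\Ind(\Gamma[T])$, with sign given by the normal frame ordering, shifted by one because $I=\varnothing$ is the augmentation. The frozen coordinates $x_a$ with $a\notin c(T)$ contribute choices $1,[x_a]$, together with the relations from the clique $V_a$. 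Multiplicities must be tracked carefully. The count $w_c(T)=\prod(1+2^{|V_a|})$ should come from the following: each clique $V_a$ disjoint from $T$ contributes the frozen choices combined with the $2^{|V_a|}$ monomials in its mutable coordinates, which pair off into an acyclic (contractible) part except for a free rank $1+2^{|V_a|}$ piece. I would verify this by a direct cancellation (Gaussian elimination) lemma on each clique block, and I expect this bookkeeping to be the main obstacle, since $\eta$ and $\la-1\ra$ signs must cancel exactly. Principal coefficients then give $1+2=3$ per singleton clique, which is \eqref{eq:principal-even-cw}.

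For the cohomological dimension, repeat the same decomposition with an arbitrary strictly $\A^1$-invariant $M$. Cohomology in degree $q$ is $\bigoplus_T\widetilde H^{q-1}(\Ind(\Gamma[T]);N)$ for suitable contractions $N$ of $M$. Under \eqref{eq:graph-TF}, universal coefficients give vanishing for $q>\lambda(\Gamma)$. For the equality, take $M=\KMW_q$ with $q=\lambda(\Gamma)$: freeness yields a nonzero $\Witt(K)$ summand in \eqref{eq:even-summary-cw}.
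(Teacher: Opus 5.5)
Your argument for $\operatorname{cd}_{\A^1}((X_{B,c})_K)=\lambda(\Gamma)$ has a genuine gap. The decomposition into simplicial complexes does not hold for an arbitrary strictly $\A^1$-invariant $M$.

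\begin{itemize}
\item It works for $\Wsh$ only because the hyperbolic form $h=\la1\ra+\la-1\ra$ acts by zero there.
\item On the face adding $i$, the solved frozen coordinate is $-t_{c(i)}=\prod_{j\notin J}x_j^{-B_{ji}}$. By $[u^{2a}]=ah[u]$, its symbol restricts to $-h\sum_j(B_{ji}/2)[x_j]$.
\item This term deletes a frozen symbol and inserts a mutable one, so it couples summands with different frozen sets. It is nonzero, for example, for Milnor $K$-theory coefficients, where $h=2$.
\item Already for $B=\bigl(\begin{smallmatrix}0&2\\-2&0\end{smallmatrix}\bigr)$ with the single clique $V_1=\{1,2\}$, one gets $H^1(X,M)\simeq M_{-2}(K)/hM_{-2}(K)$, not $M_{-2}(K)$. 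For $M=\KM_2$ this is $\Z/2$ rather than $\Z$.
\item In general the cellular homology sheaves contain kernels and cokernels of $ah$ on $\KMW_s$ (Theorem~\ref{thm:even-scalar}).
\end{itemize}

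What survives is an associated-graded statement. The coupling strictly lowers the number of frozen symbols, while the pure contractions preserve it. Renormalize generators by powers of $g=\la-1\ra$; on $M_{-s}(K)$ the frame and commutation factors act through $g$ and $\epsilon=-g$, not as signs. After this, the graded pieces of the resulting finite filtration are augmented simplicial cochain complexes with coefficients in the groups $M_{-s}(K)$. Under \eqref{eq:graph-TF}, universal coefficients kill these above degree $\lambda(\Gamma)$ for every coefficient group. Induction along the filtration then gives the upper bound. The paper gets the same result by integral unit cancellation of the pure part, noting that the $h$-terms cannot disturb the pivots (Lemma~\ref{lem:even-reduction}).

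For the Chow--Witt formula your outline matches the paper's. However, the multiplicity step is missing its key point, and the cancellation you anticipate does not occur.

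\begin{itemize}
\item Take $z_a=-t_a$ as frozen coordinates. On the face adding $i$ one then has $z_{c(i)}=\prod_j x_j^{-B_{ji}}$ with even exponents. So $\la z_{c(i)}\ra-1$ restricts to $0$ in Witt coefficients. With $t_a$ instead, one gets $\la-1\ra-1=-2$, which is not zero in general.
\item Hence the Witt complex is an honest direct sum over pairs $(S,F)$. Here $F$ is the set of frozen symbols present, and $S$ consists of $I$ together with the vertices whose mutable symbol is present.
\item It is this set, not its complement as in your definition of $T$, that the differential preserves, since only a symbol that is present can be contracted.
\item Contraction at a vertex of $c^{-1}(F)$ vanishes. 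So the $(S,F)$ block is exactly the shifted augmented cochain complex of $\Ind(\Gamma[S\setminus c^{-1}(F)])$.
\item For fixed $T$ this forces $F\subseteq[q]\setminus c(T)$ and $S=T\cup A$ with $A\subseteq c^{-1}(F)$ arbitrary. That gives $\sum_F2^{|c^{-1}(F)|}=w_c(T)$ blocks.
\item Monomials with mutable symbols on part of $V_a$ but no symbol $\la z_a\ra-1$ do not pair off. They belong to blocks with larger $T$.
\end{itemize}

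Finally, the fiber square for $\KMW_p$ gives a long exact sequence, not a splitting of the complex. Your claim that the Milnor-side differentials are multiplication by $2$ or $0$ is also wrong: the pure contractions have unit coefficients (Proposition~\ref{prop:acyclic-chow}). The reduction should be $\CW^p\simeq H^p(\Ish^p)\simeq H^p(\Wsh)$, as in Theorem~\ref{thm:universal-witt}.

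\begin{itemize}
\item For the first isomorphism, use Chow vanishing in the Hornbostel--Wendt fiber-product description.
\item For the second, note that the $\Ish^p$ and $\Wsh$ complexes differ in degrees $\ge p-1$ only by $I(K)\subset\Witt(K)$ in the constant degree-$(p-1)$ coefficients, which have zero residue.
\end{itemize}
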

\begin{proof}
Theorem~\ref{thm:even-graph-CW} gives the Chow--Witt formula.
Under the homology hypothesis, Theorem~\ref{thm:even-cd} gives the
cohomological dimension. For the partition into singletons, each
vertex outside $T$ contributes $1+2=3$ to $w_c(T)$.
\end{proof}

Under \eqref{eq:graph-TF}, put $\lambda=\lambda(\Gamma)$.
Theorem~\ref{thm:graph-bundles} gives freeness above rank
$\lambda$ and in odd rank $\lambda$. In positive even rank
$\lambda$, the Euler class gives a bijection from oriented bundle classes to
$\CW^\lambda(X_{B,c})$;
forgetting orientation gives the orbits of the full global unit
group, computed in Proposition~\ref{prop:even-unit-action}.
In characteristic zero, Corollary~\ref{cor:graph-later-ranks}
extends odd-rank freeness and even-rank Euler classification to
$r\ge4$ with $\lambda\le r+2$, and gives freeness in rank three
when $\lambda\le4$.
Corollary~\ref{cor:graph-stabilization-realization} proves
$E\oplus\mathcal O\cong\mathcal O^{r+1}$ in wider ranges and,
for the stated even ranks, gives surjectivity of the Euler-class
map: every class in $\CW^r(X_{B,c})$ is realized by an oriented vector
bundle.

\section{A seven-dimensional example}\label{sec:sevenfold}
We now construct the example in the introduction and determine its
vector bundles using the Euler class.
Let $\Gamma$ consist of the two triangles $125$ and $345$, sharing
the vertex $5$. Take $B_{ij}=2$ for $i<j$ on the edges
$12,34,15,25,35,45$, with all other upper-triangular entries zero,
and use the coefficient cliques
\[
V_1=\{1,2,5\},\qquad V_2=\{3,4\},\qquad
C=\begin{pmatrix}1&1&0&0&1\\0&0&1&1&0\end{pmatrix}.
\]
With $t_1,t_2$ inverted, the resulting cluster variety $X_7=X_{B,c}$
has exchange presentation
\begin{equation}\label{eq:sevenfold-presentation}
\begin{aligned}
x_1y_1&=t_1+x_2^2x_5^2, &
x_2y_2&=t_1x_1^2+x_5^2,\\
x_3y_3&=t_2+x_4^2x_5^2, &
x_4y_4&=t_2x_3^2+x_5^2,\\
x_5y_5&=t_1x_1^2x_2^2x_3^2x_4^2+1.
\end{aligned}
\end{equation}
Condition \AS\ holds by the clique construction, so
Proposition~\ref{cc:prop:geometry} shows that $X_7$ is smooth and
geometrically integral of dimension seven. The reduction of $\Btil$
modulo two has rank $2<5$ over $\F_2$, so $\Btil$ is not RFR.

\begin{theorem}\label{thm:sevenfold-bundles}
Let $k$ be a perfect field of characteristic different from two.
The cluster variety $X_7/k$ satisfies
\[
\operatorname{cd}_{\A^1}(X_7)=2,\qquad
K_0(X_7)=\Z,\qquad \CH^{>0}(X_7)=0.
\]
The Euler class and forgetting orientation give bijections
\begin{equation}\label{eq:sevenfold-classification}
\operatorname{Vect}^{\mathrm{or}}_2(X_7)\simeq\CW^2(X_7)\simeq\Witt(k)^2,
\qquad
\operatorname{Vect}_2(X_7)\simeq\Witt(k)^2/k^\times,
\end{equation}
where $u\in k^\times$ acts simultaneously on both coordinates by
multiplication by $\la u\ra$. The zero orbit corresponds to the trivial vector bundle,
and every nonzero orbit represents an indecomposable rank-two
vector bundle. Every rank-two vector bundle $E$ satisfies
\[
E\oplus\mathcal O_{X_7}\simeq\mathcal O_{X_7}^3,
\]
and every vector bundle of positive rank other than two is free. These statements
hold after every perfect field extension of $k$.
Over an algebraically closed field, rank-two vector bundles have exactly
four isomorphism classes: the trivial vector bundle and three
indecomposable vector bundles.
\end{theorem}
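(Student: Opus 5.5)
We plan to reduce everything to Corollary~\ref{cor:even-graph-summary} and Proposition~\ref{prop:bundle-range}. The support graph $\Gamma$ is the bowtie: triangles $125$ and $345$ sharing $5$. The first step is to compute, for every $T\subseteq[5]$, the reduced homology of $\Ind(\Gamma[T])$.

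The vertex $5$ is adjacent to everything, so it is an isolated point of $\Ind(\Gamma[T])$ whenever $5\in T$. Hence for $T\ni5$ the complex is a disjoint union of a point and $\Ind(\Gamma[T\setminus5])$. For $T\subseteq\{1,2,3,4\}$ the graph consists of at most the two disjoint edges $12$ and $34$, and $\Ind$ of a disjoint union is a join. Thus $\Ind$ of an edge is $S^0$, and $\Ind(\Gamma[\{1,2,3,4\}])=S^0*S^0$ is the square $S^1$. These are exactly the ``square cycles'' of the introduction.

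The complexes obtained are joins of spheres and points, and each such join is either contractible or a sphere. Adding the isolated point $5$ produces a disjoint union with a point, which only contributes to $\widetilde H_0$. All homology is therefore free, so \eqref{eq:graph-TF} holds. The nonzero groups $\widetilde H_{p-1}$ occur only for $p\le2$, with $p=2$ realized by $T=\{1,2,3,4\}$ and by $T=[5]$ (since $S^1\sqcup\mathrm{pt}$ has $\widetilde H_1=\Z$). Hence $\lambda(\Gamma)=2$, and Corollary~\ref{cor:even-graph-summary} gives $\operatorname{cd}_{\A^1}(X_7)=2$.

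For $\CW^2$, formula \eqref{eq:even-summary-cw} retains only $T=\{1,2,3,4\}$ and $T=[5]$, each contributing one copy of $\Witt(k)$. For $T=\{1,2,3,4\}$ the set $c(T)$ contains both cliques, so $w_c(T)=1$; the same holds for $T=[5]$. This gives $\CW^2\cong\Witt(k)^2$.

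Next, $K_0=\Z$ and $\CH^{>0}=0$ follow from Proposition~\ref{prop:kzero}, since \AS\ holds. Proposition~\ref{prop:bundle-range} with $h=2$ then gives the following:
\begin{enumerate}
\item freeness in ranks $\ge3$, and in rank one from $\Pic=0$;
\item the Euler bijection $\operatorname{Vect}^{\mathrm{or}}_2\simeq\CW^2$;
\item stable freeness $E\oplus\mathcal O\simeq\mathcal O^3$.
\end{enumerate}

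For the unoriented classification we need the global unit group. Units of the domain are monomials in $t_1,t_2$ times $k^\times$. The expected key point is that $\la t_a\ra$ acts trivially on $\CW^2$. In the cellular description, each summand is carried by strata on which $t_a$ restricts to a character on the torus $T_I$. The relevant coefficient, after the residue, is a square in the Witt group: the exponents of $B$ are even, and the fiber equations make $t_a$ equal to $-$(even monomials). This is what Proposition~\ref{prop:even-unit-action} should record, and I would cite it. The remaining action is then by $k^\times$ through $\la u\ra$ on both coordinates simultaneously. I expect verifying this unit-action computation, with its signs and frames, to be the main obstacle.

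Indecomposability follows from a short argument. A decomposable rank-two bundle is $L_1\oplus L_2$ with both line bundles trivial, hence trivial, so every nonzero orbit is indecomposable. The zero Euler class corresponds to the trivial bundle by the bijection.

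Over algebraically closed $k$ we have $\Witt(k)=\Z/2$ and $k^\times$ acts trivially, so there are four classes. Since the cellular inputs are stable under perfect base change, all statements hold after every perfect extension.
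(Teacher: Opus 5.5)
Your overall route is the paper's. You compute the induced independence complexes, read off $\CW^2(X_7)\simeq\Witt(k)^2$ from Corollary~\ref{cor:even-graph-summary}, and then apply Propositions~\ref{prop:kzero}, \ref{prop:bundle-range} and~\ref{prop:even-unit-action}. For $\operatorname{cd}_{\A^1}(X_7)=2$ the paper argues more cheaply: $\alpha(\Gamma)=2$, so the cellular complex has length two, and equality follows from $\CW^2\neq0$. Your route through \eqref{eq:graph-TF} and $\lambda(\Gamma)=2$ is also correct.

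The one real error is the unit action. It is \emph{not} true that $\la t_a\ra$ acts trivially on $\CW^2(X_7)$; it acts by $-1$ on both summands, and your own heuristic shows this.
\begin{itemize}
\item Every independent pair $\{1,3\},\{1,4\},\{2,3\},\{2,4\}$ meets both $V_1$ and $V_2$. So on each degree-two stratum the exchange equations give $t_a=-(\text{square})$; for example, $t_1=-x_2^2x_5^2$ on $I=\{1,3\}$.
\item Hence $\la t_a\ra=\la-1\ra$ on every degree-two cochain, and $\la-1\ra=-1$ in $\Witt(k)$. This is not the identity unless $2\Witt(k)=0$.
\item There are no independent triples and multiplication by units commutes with the differential, so this action passes to $H^2=\CW^2$.
\end{itemize}
In the language of Proposition~\ref{prop:even-unit-action}, it is $\la z_a\ra=\la-t_a\ra$ that acts by $v\mapsto v+r_a(v)$. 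Here $r_a=0$ because removing either clique from $\{1,2,3,4\}$ or $[5]$ leaves an independence complex with vanishing $\widetilde H^1$; this is the step the paper uses.

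The conclusion $\operatorname{Vect}_2(X_7)\simeq\Witt(k)^2/k^\times$ survives, but for a different reason than the one you give. The constant unit $-1\in k^\times$ already acts by $\la-1\ra=-1$, so $t_1^{\Z}t_2^{\Z}$ adds nothing to the $k^\times$-orbits. With this correction, the rest of your argument goes through as written: indecomposability from $\Pic=0$, the zero orbit being the trivial bundle, and four classes over $\bar k$.
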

\begin{proof}
We begin with the Chow--Witt calculation. The independence complex
of $\Gamma$ is the square with cyclic
vertex order $1,3,2,4$, disjoint from the vertex $5$. The only
induced independence complexes with nonzero first reduced
cohomology are those on
\[
T_0=\{1,2,3,4\},\qquad T_1=[5].
\]
Each contributes one copy of $\Witt(k)$. Both sets meet both
coefficient cliques, so $w_c(T_0)=w_c(T_1)=1$.
Corollary~\ref{cor:even-graph-summary} gives
$\CW^2(X_7)\simeq\Witt(k)^2$.

To apply the bundle classification, observe that $\alpha(\Gamma)=2$,
so the cellular complex displayed in
Section~\ref{sec:triviality} has length two. Hence
$\operatorname{cd}_{\A^1}(X_7)\le2$, and the nonzero group $\CW^2(X_7)$
gives equality. Proposition~\ref{prop:kzero} gives $K_0(X_7)=\Z$
and positive Chow vanishing. Proposition~\ref{prop:bundle-range}
then supplies the oriented classification, freeness in the other
ranks, and freeness after taking the direct sum with a trivial line
bundle in rank two.

We next compute the unit action in order to forget the orientation.
On each of the
two summands of $\CW^2(X_7)\simeq\Witt(k)^2$, the difference between $\la-t_a\ra$ and the
identity is the restriction to
$\Ind(\Gamma[T_i\setminus V_a])$, by
Proposition~\ref{prop:even-unit-action}. Removing either clique
destroys the square, so this restriction is zero in degree one.
Thus $\la-t_a\ra$ acts identically and $\la t_a\ra$ acts by
simultaneous multiplication by $-1$ on $\Witt(k)^2$.
The same proposition gives
\[
\mathcal O(X_7)^\times
=k^\times\times t_1^{\Z}\times t_2^{\Z}.
\]
The constant $-1$ already induces this action, so the orientation
orbits are precisely the scalar orbits in
\eqref{eq:sevenfold-classification}.

It remains to identify the indecomposable bundles. Since $\Pic(X_7)=0$,
a decomposable rank-two vector bundle is free.
Every nonzero Euler class remains nonzero under the unit action
and therefore represents an indecomposable vector bundle. These classes
exist over every stated field, since $\la1\ra\ne0$ in $\Witt(k)$.
The calculation commutes with perfect field extension. Over an
algebraically closed field, $\Witt(k)=\Z/2$ and all scalars are
squares, giving the four asserted classes.
\end{proof}

This proves Theorem~\ref{thm:main-counterexample}.
The nonfree vector bundles have vanishing ordinary Chern classes; their
nontriviality is detected by the quadratic Euler class.

\subsection{Further examples}
\label{subsec:further-indecomposable-examples}
The preceding construction has several variants.
The disjoint union of two edges, each with weight two and its own
coefficient clique, gives the product of two cluster threefolds.
This six-dimensional variety has $\CW^2\simeq\Witt(k)$ and
$\operatorname{Vect}_2\simeq\Witt(k)/k^\times$.

The square of the fourfold in Lam--Speyer
\cite[Section~6.3, Table~2, left]{LS1} gives an eight-dimensional
example. After reversing the three mutable indices, the fourfold has the
presentation
\begin{equation}\label{eq:LS-fourfold}
 X=\{x_1x'_1=t+x_2x_3,\quad x_2x'_2=x_1+x_3,\quad
       x_3x'_3=x_1x_2+1\},\qquad t\ne0.
\end{equation}
For $Z=X\times X$, Lemma~\ref{lem:LS-square} and
Proposition~\ref{prop:bundle-range} give
\[
 \CW^2(Z)\simeq\Witt(k),\qquad
 \operatorname{Vect}_2(Z)\simeq\Witt(k)/k^\times,
\]
where the action is $q\mapsto\la\lambda\ra q$.
Over an algebraically closed field $Z$ therefore has exactly one
nonfree rank-two vector bundle.

For another example, consider the even pentagon $C_5$. The coefficient cliques
$\{1,2\},\{3,4\},\{5\}$ give an eightfold with the same
classification. Principal coefficients give a tenfold, again with
Chow--Witt group $\CW^2\simeq\Witt(k)$. In both cases, the full pentagon is
the only induced subgraph whose independence complex has nonzero first
reduced cohomology. Its contribution has multiplicity one; removing a
coefficient clique kills that cohomology.

The principal-coefficient even path $P_5$ is another tenfold, with
$\CW^2\simeq\Witt(k)^4$. Its contributing vertex sets are
$\{1,2,4,5\}$ and $[5]$, with multiplicities three and one in
\eqref{eq:principal-even-cw}. The full independence complex collapses
onto a square, and all induced complexes have free integral homology
concentrated in degrees at most one. Corollary~\ref{cor:even-graph-summary}
therefore gives cohomological dimension two.
In each of these examples, the orbits of nonzero classes in $\CW^2$ under the
unit action give indecomposable rank-two vector bundles. Their direct
sums with a trivial line bundle are free, and every vector bundle of
positive rank other than two is free.

These examples, including $X_7$, satisfy \AS\ but their displayed
seeds fail the ordering condition of
Theorem~\ref{thm:ordered-freezing-triviality}.
For $X_7$, the six-dimensional product and the even path $P_5$,
an induced subgraph consists of two disjoint multiple edges;
its earliest vertex in any ordering has the opposite edge in its $P_i$.
For the even pentagon, $P_1$ contains the multiple edge between the
first vertex's two nonneighbors; for the square of the fourfold
\eqref{eq:LS-fourfold}, it contains the triangle from the other factor.

More generally, tensoring the cellular splitting of
Lemma~\ref{lem:LS-square} computes the Chow--Witt groups of every
power $X^m$ of the fourfold \eqref{eq:LS-fourfold} and expresses the cohomology
groups governing Postnikov obstructions and lifting ambiguities for
oriented vector bundles in terms of contractions of strictly
$\A^1$-invariant coefficient sheaves evaluated at $k$.

\appendix
\section{Cellular complexes and their differentials}\label{sec:strict}\label{sec:acyclic}
In this appendix we construct the cellular complexes used in the main text and compute their differentials. Throughout, $k$ is perfect and $\operatorname{char}k\neq2$. We write $\calA=Ab_{\A^1}(k)$ for the abelian category of strictly $\A^1$-invariant Nisnevich sheaves, denote its tensor product by $\otimes_{\A^1}$, and put $\Free[T]=\mathbf H^{\A^1}_0(T)$.

We use homological indexing for chain complexes: a sheaf $A[r]$ is concentrated in degree $r$. Cohomology of $\Hom(P_*,M)$ has the usual cohomological indexing.

\subsection{Strictness and mapping cones}
Let $J:D(\calA)\to D_{\A^1}(k)$ be the functor obtained by viewing a complex of strictly invariant sheaves as a complex of Nisnevich sheaves and then applying $\A^1$-localization. For $D\in D_{\A^1}(k)$, a strict resolution is an object $P\in D(\calA)$ with a map $D\to J(P)$ inducing
\begin{equation}\label{eq:strict-up}
\Hom_{D(\calA)}(P,E)\xrightarrow{\sim}
\Hom_{D_{\A^1}(k)}(D,J(E))
\end{equation}
for all $E\in D(\calA)$. We use the bounded-below form of the definition and the sheaf-and-shift test of \cite[Definition 2.33 and Lemma 2.35]{MS}.

\begin{lemma}\label{lem:cone}
Suppose $A\to B\to C\to A[1]$ is a distinguished triangle of bounded-below $\A^1$-complexes. If $A$ and $C$ have bounded projective strict resolutions $P_A,P_C$, then $B$ has a bounded projective strict resolution
\[
P_B=\Cone(P_C[-1]\longrightarrow P_A).
\]
The derived arrow is the one induced by $C[-1]\to A$ and the strict universal properties.
\end{lemma}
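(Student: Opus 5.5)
The plan is to realize the cone construction at the level of $D(\calA)$ and then check the universal property \eqref{eq:strict-up} by a five-lemma argument.

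\emph{Step 1: the comparison arrow.} The triangle gives a map $C[-1]\to A$. Compose it with the structure map $A\to J(P_A)$ to get an element of $\Hom_{D_{\A^1}(k)}(C[-1],J(P_A))$. By the strict universal property of $P_C$, shifted by $[-1]$, this element corresponds to a unique morphism $f:P_C[-1]\to P_A$ in $D(\calA)$. Since $P_C$ is a bounded complex of projectives, $\Hom_{D(\calA)}(P_C[-1],P_A)$ equals homotopy classes of chain maps. We may therefore choose $f$ as an honest chain map and set $P_B=\Cone(f)$. This is again a bounded complex of projectives.

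\emph{Step 2: the structure map.} Since $J$ is exact, it carries the triangle $P_C[-1]\to P_A\to P_B\to P_C$ to a distinguished triangle in $D_{\A^1}(k)$. By construction the square relating $C[-1]\to A$ and $J(P_C[-1])\to J(P_A)$ commutes. Completing the morphism of triangles yields a map $B\to J(P_B)$ compatible with the structure maps of $A$ and $C$.

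\emph{Step 3: the universal property.} For $E\in D(\calA)$, apply $\Hom_{D(\calA)}(-,E)$ to the $P$-triangle and $\Hom_{D_{\A^1}(k)}(-,J(E))$ to the original triangle. This gives two long exact sequences and a map between them. The maps at the $A$ and $C$ positions, in all shifts, are isomorphisms by \eqref{eq:strict-up}, since $E[i]$ is again in $D(\calA)$. The five lemma then gives the isomorphism at $B$. Boundedness below of $B$ is automatic, which keeps us inside the bounded-below form of the definition in \cite[Definition 2.33]{MS}. Alternatively, one can check the sheaf-and-shift test of \cite[Lemma 2.35]{MS} directly for $E=M[i]$.

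\emph{Main obstacle.} The delicate point is commutativity of the comparison ladder. The arrows coming from the $P$-triangle must match those from the original triangle under the identifications \eqref{eq:strict-up}. This includes the connecting maps, whose naturality depends on the completing map chosen in Step 2.

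For the squares at $A$ and $C$, commutativity follows from the way $f$ was defined. For the square at the connecting morphisms $A[1]\leftarrow C$, it follows from the identity $J(f)\circ\eta_C=\eta_A\circ(C[-1]\to A)$. The remaining squares come from the morphism of triangles.

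If the non-uniqueness of the completing map is a problem, there is a more rigid route. The bijection \eqref{eq:strict-up} is natural in $E$ and is given by precomposition with the structure maps, so it suffices to check that precomposition with $B\to J(P_B)$ fits in the ladder. This holds for \emph{any} completion, because exactness at $B$ only uses the two outer commuting squares. The five lemma needs the neighboring squares to commute, but it does not need uniqueness of the middle map.
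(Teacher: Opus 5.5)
Your proposal is correct and follows essentially the same route as the paper. Both arguments obtain the chain map from the universal property of $P_C$, complete a morphism of triangles to get $B\to J(P_B)$, and apply the five lemma to the two long exact Hom sequences. The paper runs this on the sheaf-and-shift test, and your direct test against all $E$ works equally well because either class of test objects is closed under shifts. Your worry about the completing map is harmless, as you note: any completion gives a morphism of triangles and hence a commuting ladder.
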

\begin{proof}
Composing $C[-1]\to A\to J(P_A)$ and applying \eqref{eq:strict-up} to $C[-1]$, we obtain a unique derived arrow $P_C[-1]\to P_A$. Bounded projectivity allows us to represent this arrow by a chain map. The axioms for morphisms of distinguished triangles then give the comparison $B\to J(P_B)$. To verify strictness, we test this comparison on a sheaf $M$ and every shift. Strictness identifies the outer terms in the long exact Hom sequences of the two triangles, and the five lemma identifies the middle terms. The sheaf-and-shift test therefore proves strictness. Finally, the terms of the cone are finite sums of the given projective terms.
\end{proof}
This is the representability argument of \cite[Proposition 2.37]{MS}, applied to a general purity triangle.

\subsection{Torus and Thom terms}
\begin{lemma}\label{lem:torus-strict}
For $T=\Gm^t$, $H^q_{\Nis}(T,M)=0$ for every $M\in\calA$ and $q>0$. The sheaf $\Free[T]$ is projective. For $r\geq1$,
\[
\bigl(\KMW_r\otimes_{\A^1}\Free[T]\bigr)[r]
\]
is a projective strict resolution of $\widetilde C^{\A^1}_*(S^{2r,r}\wedge T_+)$. For $r=0$ the resolution is $\Free[T]$ in degree zero.
\end{lemma}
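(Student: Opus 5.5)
The proof has three parts, each resting on standard inputs from Morel's theory and Morel--Sawant \cite{MS}.

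\emph{Vanishing of higher cohomology of a split torus.} I argue by induction on $t$, using the localization (purity) sequence for the closed immersion $\{0\}\times T'\subset\A^1\times T'$ with $T=\Gm\times T'$. The normal bundle is framed by the coordinate, so purity gives
\[
H^{q-1}_{\Nis}(T',M_{-1})\to H^q_{\Nis}(\A^1\times T',M)\to H^q_{\Nis}(\Gm\times T',M)\to H^q_{\Nis}(T',M_{-1}).
\]
By $\A^1$-invariance, $H^q_{\Nis}(\A^1\times T',M)\cong H^q_{\Nis}(T',M)$. For $q>0$ both outer groups vanish by induction, since $M_{-1}$ is again strictly $\A^1$-invariant. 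The one degree needing care is $q=1$. There the term $H^1(\A^1\times T',M)$ vanishes, and the map $H^1(\Gm\times T',M)\to H^0(T',M_{-1})\to H^2(\A^1\times T',M)=0$ is a map into a group we must check. Better: the purity sequence continues as $\dots\to H^1(\A^1\times T')\to H^1(\Gm\times T')\to H^1(T',M_{-1})=0$, so $H^1(\Gm\times T')=0$. This is exactly the argument already used in Lemma~\ref{lem:edgeless-strict-vanishing}, with base case $T=\Spec k$.

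\emph{Projectivity of $\Free[T]$.} By the universal property of $\mathbf H_0^{\A^1}$ and Yoneda, $\Hom_{\calA}(\Free[T],M)=M(T)=H^0_{\Nis}(T,M)$. Exactness of $\Hom(\Free[T],-)$ on $\calA$ then follows from the long exact sequence of Nisnevich cohomology together with $H^1_{\Nis}(T,M)=0$ from the first step. More generally, strictness of $\Free[T]$ as a resolution of $\widetilde C^{\A^1}_*(T_+)$ says $\Hom_{D_{\A^1}}(C_*(T),M[q])=H^q(T,M)$, which vanishes for $q>0$. So $\Free[T]$ in degree zero is a strict resolution, and this settles the case $r=0$.

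\emph{Thom terms, $r\ge1$.} I would iterate the identification $\widetilde C^{\A^1}_*(S^{2,1}\wedge W)\simeq\widetilde C^{\A^1}_*(W)\otimes\Free(\Gm)[1]$ and use Morel's computation $\mathbf H_0^{\A^1}(\Gm^{\wedge r})=\KMW_r$. Concretely, $H^q(S^{2r,r}\wedge T_+,M)$ is computed by purity as $H^{q-r}(T,M_{-r})$. This vanishes unless $q=r$, where it equals $M_{-r}(T)$. On the other side, $\Hom(\KMW_r\otimes_{\A^1}\Free[T],M)=\Hom(\Free[T],\underline{\Hom}(\KMW_r,M))=M_{-r}(T)$, using $\underline\Hom(\KMW_r,M)=M_{-r}$. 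So both sides agree in every degree, and the sheaf-and-shift test of \cite[Lemma~2.35]{MS} gives strictness. Projectivity follows from the same adjunction, since $M\mapsto M_{-r}$ is exact on $\calA$.

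\emph{Main obstacle.} The hard step is making the comparison map $\widetilde C_*(S^{2r,r}\wedge T_+)\to(\KMW_r\otimes\Free[T])[r]$ genuinely induce these isomorphisms naturally in $M$, rather than merely matching abstract groups. For that I would construct the map via the truncation $\widetilde C_*(S^{2r,r}\wedge T_+)\to\mathbf H^{\A^1}_r[r]$, using the $(r-1)$-connectivity of the $\A^1$-chain complex.
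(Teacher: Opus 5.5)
Your proposal is correct. The first step, localization for $\{0\}\times T'\subset\A^1\times T'$ with induction on $t$ uniformly in $M$, is the argument the paper cites. One slip is harmless but worth fixing. No degree needs special care, and the map $H^1_{\Nis}(\Gm\times T',M)\to H^0_{\Nis}(T',M_{-1})$ you first write does not occur in the sequence. The groups flanking $H^q_{\Nis}(\Gm\times T',M)$ are $H^q_{\Nis}(\A^1\times T',M)\cong H^q_{\Nis}(T',M)$ and $H^q_{\Nis}(T',M_{-1})$. Both vanish for every $q>0$, which is what your ``Better'' sentence says.

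The routes diverge at projectivity.

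\emph{The paper's route.} It splits $M(T)\cong\bigoplus_{A\subseteq[t]}M_{-|A|}(k)$ by evaluation at $1$ and successive contractions, and uses that contraction and evaluation at a field are exact on $\calA$. This does not need the cohomology vanishing. It also yields directly the decompositions \eqref{eq:free-torus}--\eqref{eq:thom-torus}, on which the later matrix and Witt computations rest.

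\emph{Your route.} You derive exactness of $M\mapsto M(T)$ from $H^1_{\Nis}(T,-)=0$ and exactness of the inclusion of $\calA$ into Nisnevich sheaves. This is quicker once step one is in hand, but gives no explicit splitting.

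\emph{The Thom term.} The paper simply cites \cite[Example~2.36]{MS}. You instead verify the sheaf-and-shift test by hand. For $q\neq r$ both sides vanish, one by purity plus step one and the other by projectivity. So only $q=r$ needs a genuine comparison map, and the truncation of the $(r-1)$-connected complex onto its first $\A^1$-homology sheaf supplies it. That sheaf is identified with $\KMW_r\otimes_{\A^1}\Free[T]$ by Yoneda, since both represent $M\mapsto M_{-r}(T)$ naturally in $M$. So the obstacle you anticipated is resolved by exactly the construction you propose.
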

\begin{proof}
A split torus is cohomologically trivial for strictly invariant coefficients, by the contraction and localization calculation; see \cite[Remarks 2.10 and 2.26(1), Example 2.36]{MS}. More explicitly, evaluation at $1$ and successive contractions split $M(T)$ into evaluations of contractions of $M$ at $k$. Both contraction and evaluation at a field are exact on $\calA$. Therefore $\Free[T]$, and the object representing $M\mapsto M_{-r}(T)$, are projective. The latter object is $\KMW_r\otimes_{\A^1}\Free[T]$ for $r\geq1$. The Thom-space computation in \cite[Example 2.36]{MS} gives the shift $r$ and its strict comparison.
\end{proof}
A choice of torus coordinates gives the decompositions
\begin{align}
\Free[\Gm^t]&\simeq\Z\oplus\bigoplus_{a=1}^t(\KMW_a)^{\oplus\binom ta},\label{eq:free-torus}\\
\KMW_p\otimes_{\A^1}\Free[\Gm^t]&\simeq
\bigoplus_{a=0}^t(\KMW_{p+a})^{\oplus\binom ta}\quad(p\geq1).\label{eq:thom-torus}
\end{align}
The degree-zero summand of \eqref{eq:free-torus} is $\Z$, not $\KMW_0$.

We use Lemma~\ref{lem:cone} for the purity triangle of a smooth
principal divisor $V(f)\cong\A^1\times Y$ with open complement $U$.
Its normal frame is $f$, and the chain model is the cone of the
attaching map from the Thom resolution of $Y$, shifted down once,
to the resolution of $U$. This description retains the actual
attaching map, which will be contracted in Appendix~\ref{app:path-vanishing}.

\subsection{The anticlique filtration}
Recall that $\Omega_p=\bigcup_{|I|\le p}\mathcal O_I$ and that
\begin{equation}\label{eq:strata}
\mathcal O_I\cong\A^{|I|}\times T_I.
\end{equation}
Proposition~\ref{cc:prop:geometry} identifies the ordered functions
$(x_i)_{i\in I}$ as normal frames whenever \AR\ holds.

\begin{proposition}\label{prop:AR-coefficients}
Under \AR, cohomology with a strictly $\A^1$-invariant sheaf $M$ is
computed by the purity cochain complex with terms
\[
C^p(X,M)=\bigoplus_{|I|=p}M_{-p}(T_I).
\]
In particular, $\operatorname{cd}_{\A^1}(X)\le\alpha(\Gamma)$.
\end{proposition}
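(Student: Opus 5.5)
We will compute $H^*_{\Nis}(X,M)$ by descending the anticlique filtration $\varnothing\subset\Omega_0\subset\Omega_1\subset\cdots\subset\Omega_{\alpha(\Gamma)}=X$. By Proposition~\ref{cc:prop:geometry}, only independent zero sets occur, so the $\Omega_p$ exhaust $X$. Each difference $\Omega_p\setminus\Omega_{p-1}=\bigsqcup_{|I|=p}\mathcal O_I$ is a closed smooth subscheme of the smooth open $\Omega_p$, of pure codimension $p$. Its normal bundle is framed by the ordered parameters $(x_i)_{i\in I}$. The distinct $\mathcal O_I$ with $|I|=p$ are disjoint in $\Omega_p$, because a point in the closure of two of them would have a zero set of size greater than $p$.

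\textbf{Purity step.} For each $p$ we use the Morel purity (Gysin) triangle for the framed closed immersion $Z_p=\bigsqcup_{|I|=p}\mathcal O_I\hookrightarrow\Omega_p$. Because the normal bundle is trivialized, it gives a long exact sequence
\[
\cdots\to H^{q-p}_{\Nis}(Z_p,M_{-p})\to H^q_{\Nis}(\Omega_p,M)\to H^q_{\Nis}(\Omega_{p-1},M)\to H^{q-p+1}_{\Nis}(Z_p,M_{-p})\to\cdots.
\]
Since $\mathcal O_I\cong\A^{p}\times T_I$, homotopy invariance gives $H^*(Z_p,M_{-p})=\bigoplus_{|I|=p}H^*(T_I,M_{-p})$.

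\textbf{Vanishing on the strata.} We need $H^{>0}_{\Nis}(T_I,N)=0$ for strictly invariant $N$. By \eqref{cc:eq:smith-stratum}, $T_I\cong E_I\times\Gm^{d-2|I|}$ with $E_I$ finite étale. The space $E_I$ is a disjoint union of spectra of finite separable extensions $L/k$, which are perfect since $k$ is. On each piece we apply Lemma~\ref{lem:torus-strict} over the base $L$, using the fact that the restriction of a strictly invariant sheaf to $\mathrm{Sm}_L$ is again strictly invariant. Alternatively, one can take iterated contractions along the torus coordinates and evaluate at the field $L$. Either way, each $T_I$ contributes only in degree $0$, namely $M_{-p}(T_I)$.

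\textbf{Assembly.} The filtration then gives a spectral sequence (equivalently, an iterated cone in the sense of Lemma~\ref{lem:cone}) with $E_1^{p,q}=\bigoplus_{|I|=p}H^q(T_I,M_{-p})$, which is concentrated on the row $q=0$. It therefore degenerates at $E_2$. The $d_1$ differential, given by composing the connecting maps of consecutive triangles, defines the purity cochain complex $C^p(X,M)=\bigoplus_{|I|=p}M_{-p}(T_I)$, and its cohomology computes $H^*_{\Nis}(X,M)$. Since $C^p=0$ for $p>\alpha(\Gamma)$, we get $\operatorname{cd}_{\A^1}(X)\le\alpha(\Gamma)$.

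The main obstacle is the non-split strata under \AR: we must check that the non-split finite étale factor $E_I$ introduces no higher cohomology, and that the framed purity isomorphism is compatible with restriction to components defined over extension fields. We would handle both by working with Weil restriction/pushforward along the finite étale map $E_I\to\Spec k$, which is exact on Nisnevich sheaves.
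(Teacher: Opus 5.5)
Your proposal is correct and follows essentially the same argument as the paper. The paper also filters by $\Omega_\bullet$, removes the affine factors by homotopy invariance, and kills higher cohomology of $T_I$ by viewing it as a disjoint union of split tori over finite separable extensions. It then reads off the framed purity spectral sequence, which has a single row. The concern in your last paragraph is already settled by your restriction-to-$\mathrm{Sm}_L$ argument: purity is applied over $k$ with the globally defined frame $(x_i)_{i\in I}$, so no separate compatibility over the extension fields is needed.
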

\begin{proof}
Every $T_I$ is a finite disjoint union of split tori over finite
separable extensions of $k$. Evaluation on these fields is
Nisnevich-acyclic, and contraction together with split localization
proves $H^q_{\Nis}(T_I,N)=0$ for $q>0$ and every strictly invariant $N$.
Since affine factors do not change cohomology, the purity spectral sequence
for $\Omega_\bullet$ has only its degree-zero stratum row.
The framing of the codimension-$p$ normal bundle identifies its coefficients
with $M_{-p}$, so the resulting row complex proves the assertion.
\end{proof}

We now assume \AS. In this case $T_I$ is a split torus of dimension $d-2|I|$,
and the same construction is represented by a bounded projective
cellular complex.

\begin{theorem}\label{thm:acyclic-model}
Under \AS, the filtration $\Omega_\bullet$ is an oriented cellular structure in the sense of Morel--Sawant. Its chain groups are
\[
C_0^\Sigma(X)=\Free[\Gm^d],\qquad
C_p^\Sigma(X)=\bigoplus_{\substack{I\in\Ind(\Gamma)\\ |I|=p}}
\KMW_p\otimes_{\A^1}\Free[\Gm^{d-2p}]\quad(p>0).
\]
Its differential is the purity boundary, computed in Theorem~\ref{thm:full-differential} below. This complex is a strict resolution, and
$H^q_{\Nis}(X,M)=0$ for $q>\alpha(\Gamma)$ and all strictly invariant $M$.
Moreover, $\Sigma^\infty X_+$ is finite cellular.
\end{theorem}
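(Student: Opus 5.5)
We argue by induction on $p$ along the anticlique filtration $\Omega_0\subset\Omega_1\subset\cdots\subset\Omega_{\alpha(\Gamma)}=X$, using Lemma~\ref{lem:cone} at each step. By Proposition~\ref{cc:prop:geometry}, under \AS\ every $T_I$ is a split torus $\Gm^{d-2|I|}$, so each stratum is $\mathcal O_I\cong\A^{|I|}\times\Gm^{d-2|I|}$. For the base case, $\Omega_0$ is the initial torus $\Gm^d$. Lemma~\ref{lem:torus-strict} gives its projective strict resolution $\Free[\Gm^d]$ in degree zero, and this is $C_0^\Sigma(X)$.

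For the inductive step, $\Omega_{p-1}\subset\Omega_p$ is open, and its closed complement is the disjoint union $Z_p=\bigsqcup_{|I|=p}\mathcal O_I$. This union is disjoint because distinct independent sets of the same size have disjoint zero loci. Each $\mathcal O_I$ is smooth of codimension $p$, with normal bundle framed by the ordered parameters $(x_i)_{i\in I}$ from Proposition~\ref{cc:prop:geometry}. Homotopy purity then gives a cofiber sequence
\[
\Omega_{p-1,+}\to\Omega_{p,+}\to\bigvee_{|I|=p}\mathrm{Th}(N_I)\simeq\bigvee_{|I|=p}S^{2p,p}\wedge(T_I)_+ .
\]
Here the framing trivializes the Thom spaces, and the $\A^1$-factor is contracted. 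Applying $\widetilde C^{\A^1}_*$ produces a distinguished triangle. Lemma~\ref{lem:torus-strict} supplies the bounded projective strict resolution $\bigoplus_{|I|=p}(\KMW_p\otimes_{\A^1}\Free[T_I])[p]$ of the third term. Lemma~\ref{lem:cone} then gives a bounded projective strict resolution of $\Omega_p$ as the cone of the connecting map, shifted down once, into the resolution of $\Omega_{p-1}$.

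We must check that this cone is the cellular complex itself, not merely something quasi-isomorphic to it. This is the Morel--Sawant criterion for an oriented cellular structure, and we expect it to be the main obstacle. Concretely, the new Thom resolution sits in degree exactly $p$, while the resolution of $\Omega_{p-1}$ is concentrated in degrees $\le p-1$. We will therefore need to know that $\Hom_{D(\calA)}$ from the degree-$(p-1)$ shifted Thom term into the lower complex is computed by a single chain map into the degree-$(p-1)$ term. Two facts should give this. First, all terms are projective. Second, $H^q_{\Nis}(\Omega_{p-1}, M_{-p})$ restricted from the strata is computed by the filtration; equivalently, this is the cellular condition $\widetilde H^{\A^1}_i(\Omega_p/\Omega_{p-1})=0$ for $i\ne p$, which holds by Lemma~\ref{lem:torus-strict}. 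The cone then has terms $C_j^\Sigma$ in degree $j\le p$. Its differential in degree $p$ is the composite of the connecting map with the quotient onto the previous layer, which is the purity boundary; its explicit form is deferred to Theorem~\ref{thm:full-differential}. The orientation data are precisely the fixed ordered frames.

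Taking $p=\alpha(\Gamma)$ yields a strict resolution of $\widetilde C^{\A^1}_*(X_+)$ concentrated in degrees $0,\dots,\alpha(\Gamma)$. Strictness identifies $H^q_{\Nis}(X,M)$ with $H^q\Hom(C_*^\Sigma(X),M)$, which vanishes for $q>\alpha(\Gamma)$. This is consistent with Proposition~\ref{prop:AR-coefficients}. For finite cellularity, the same cofiber sequences in $\SH(k)$ exhibit $\Sigma^\infty X_+$ as an iterated extension of finitely many spectra $\Sigma^{2p,p}\Sigma^\infty(\Gm^{d-2p})_+$. Each of these splits, via $\Sigma^\infty\Gm_+\simeq\mathbf 1\oplus\Sigma^{1,1}\mathbf 1$, into finitely many spheres, so $\Sigma^\infty X_+$ is finite cellular.
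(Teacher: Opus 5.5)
Your proposal is correct and is essentially the paper's argument. The paper checks the hypotheses of Morel--Sawant's Propositions~2.27 and~2.37, whose construction is exactly your induction along the framed purity triangles using Lemma~\ref{lem:cone}, and it proves finite cellularity by the same splitting of the Thom cofibers of split tori into spheres. The step you flag as the main obstacle is automatic: a derived map from a projective complex concentrated in degree $p-1$ into a projective complex concentrated in degrees at most $p-1$ is a single chain map with no homotopies. You should drop your ``equivalently'' gloss, because Lemma~\ref{lem:torus-strict} gives only a strict resolution, not the vanishing of $\widetilde H^{\A^1}_i(\Omega_p/\Omega_{p-1})$ for $i>p$.
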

\begin{proof}
Proposition~\ref{cc:prop:geometry} and the split-torus acyclicity establish the hypotheses of \cite[Propositions 2.27 and 2.37]{MS}. Their construction gives the displayed terms and the universal coefficient assertion, and the length of this complex is the maximal independent-set size. The same framed purity cofibers are suspensions of split tori and hence finite sums of motivic spheres. Induction along the filtration therefore proves finite cellularity.
\end{proof}

\subsection{The complete cellular differential}
To compute the differential, we first fix the residue conventions. Put $g=\la-1\ra$ and $\epsilon=-g$. We use left residues, normalized by
$\partial_u([u]\alpha)=\overline\alpha$ when $\alpha$ is unramified at $u=0$.
For successive regular immersions we order the new normal direction before
the previous normal frame.  Thus, if $I=\{i_1<\cdots<i_p\}$ and
$J=I\cup\{j\}$, comparison of $(j,i_1,\ldots,i_p)$ with the increasing
order on $J$ acts by $g^{\nu(I,j)}$, where
\[
\nu(I,j)=\#\{i\in I:i<j\}.
\]
This fixes the purity and contraction conventions throughout this
subsection.  The Milnor--Witt residue and commutativity conventions are
those of \cite[Lemma 1.6 and Theorem 1.7]{Fasel}.

For $I\in\Ind(\Gamma)$ write
\[
T_I=\left\{(x_a)_{a\notin I}\in\Gm^{[d]\setminus I}:
 \chi_i(x)=-1\ (i\in I)\right\},\qquad
\chi_i(x)=\prod_{a\notin I}x_a^{\Btil_{ai}}.
\]
The projection $\mathcal O_I\simeq\A^{|I|}\times T_I\to T_I$
forgets the free mutated coordinates.  If $J=I\cup\{j\}$ is independent,
put
\[
S_{I,j}=\left\{(x_a)_{a\notin J}\in\Gm^{[d]\setminus J}:
 \prod_{a\notin J}x_a^{\Btil_{ai}}=-1\ (i\in I)\right\}.
\]
Independence gives $\Btil_{ji}=0$ for $i\in I$, so that
$T_I=\Gm\times S_{I,j}$, with first coordinate $x_j$, and the additional equation
$\chi_j=-1$ defines a closed immersion
$\iota_{I,j}:T_J\hookrightarrow S_{I,j}$.
For any strictly $\A^1$-invariant sheaf $N$, write
\[
\rho_{I,j}:N(\Gm\times S_{I,j})\longrightarrow N_{-1}(S_{I,j})
\]
for the contraction projection in the splitting at $x_j=1$, with the
left-residue normalization just specified.

\begin{theorem}\label{thm:full-differential}
Let $k$ be perfect of characteristic different from two, and let $X$ be
the acyclic cluster variety of a seed satisfying \AS. For every strictly
$\A^1$-invariant sheaf $M$, the cochain complex
$\Hom_{\calA}(C_*^\Sigma(X),M)$ has terms
\[
C^p(X,M)=\bigoplus_{|I|=p}M_{-p}(T_I)
\]
and differential
\begin{equation}\label{eq:full-cellular-differential}
(d^pc)_J=\sum_{j\in J}g^{\nu(J\setminus\{j\},j)}
 \iota_{J\setminus\{j\},j}^{*}
 \rho_{J\setminus\{j\},j}(c_{J\setminus\{j\}}).
\end{equation}
The action of $g$ is its canonical action on the contracted sheaf
$M_{-p-1}$.  Formula~\eqref{eq:full-cellular-differential}, natural in
$M$, determines every differential of $C_*^\Sigma(X)$.
\end{theorem}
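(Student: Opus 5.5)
The cochain terms are already supplied by Theorem~\ref{thm:acyclic-model} and Proposition~\ref{prop:AR-coefficients}. What remains is to identify each component $C^p\to C^{p+1}$, indexed by a pair $I\subset J=I\cup\{j\}$, with the composite in \eqref{eq:full-cellular-differential}. First I will argue that only such pairs contribute. In the Morel--Sawant construction the differential is the connecting map of the purity triangle for $\Omega_p\subset\Omega_{p+1}$, composed with the strict identifications of the strata. By Proposition~\ref{cc:prop:geometry}, the closure of $\mathcal O_I$ inside $\Omega_{p+1}$ meets only the strata $\mathcal O_J$ with $J\supset I$ and $|J|=p+1$. Hence the boundary from $I$ to $J$ can be computed in a neighbourhood of $\mathcal O_J$, after inverting every mutable coordinate outside $J$.

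The second step is to compute that local piece. In the local model of Proposition~\ref{cc:prop:geometry}, the coordinates $x_i$ for $i\in J$ and the free mutated coordinates $y_i$ satisfy $x_iy_i=1+\chi_i$. Near $\mathcal O_J$, the stratum $\mathcal O_I$ is the locus $x_i=0$ for $i\in I$ with $x_j\neq0$. Projecting away the affine factors, which does not change $M$-cohomology, identifies the relevant piece with the purity situation for the pair $(T_I,\,T_J)$ through $S_{I,j}$. Concretely, $T_I=\Gm\times S_{I,j}$ with coordinate $x_j$, and its closure in the $x_j$-direction meets $\mathcal O_J$ exactly where $x_j=0$. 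On that locus the equation at $j$ forces $\chi_j=-1$, which is the closed immersion $\iota_{I,j}$. The connecting map of the one-codimensional purity triangle for $\{x_j=0\}\subset\A^1_{x_j}\times S_{I,j}$, with normal frame $x_j$, is the residue at $x_j=0$. Applied to classes on $\Gm\times S_{I,j}$ and restricted to the $\A^1$-invariant part, this residue is the contraction projection $\rho_{I,j}$ with the left-residue normalization $\partial_u([u]\alpha)=\overline\alpha$. Restricting along the remaining fibre then gives $\iota^*_{I,j}$. I will also check that $\rho_{I,j}$ and the Thom identification $M_{-p}\to M_{-p-1}$ are compatible. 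This follows by applying the same residue with $M$ replaced by $M_{-p}$, using that contraction commutes with the framed purity isomorphism.

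The third step, which I expect to be the main obstacle, is the sign and frame bookkeeping. The frame produced by the successive immersion is $(x_j,x_{i_1},\dots,x_{i_p})$, whereas $C^{p+1}$ uses the increasing order on $J$. I will show that a transposition of adjacent normal parameters acts on $M_{-p-1}$ by $\epsilon=-\la-1\ra$. I will try to derive this from the Milnor--Witt commutativity rule $[a][b]=\epsilon[b][a]$ in \cite{Fasel}, applied to the Thom classes. Moving $x_j$ past the $\nu(I,j)$ smaller indices then gives a factor $\epsilon^{\nu}$. The remaining task is to reconcile this with the stated $g^{\nu}$. The discrepancy $(-1)^\nu$ must be absorbed by the ordering convention in the stated purity isomorphisms, namely that the new direction is placed before the previous frame. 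I will fix this convention so that the composite of two purity isomorphisms carries no extra sign, and then verify that $d^{p+1}d^p=0$ as a consistency check on the resulting signs.

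Finally, naturality in $M$ of every ingredient (the residue, contraction, restriction and the action of $g$) lets the Yoneda lemma in $\calA$ recover the chain-level differential of $C_*^\Sigma(X)$ from these cochain formulas. This uses the projectivity in Lemma~\ref{lem:torus-strict}, which proves the last sentence of the statement.
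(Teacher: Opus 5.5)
Your outer structure matches the paper's. Closure relations leave only the blocks $I\subset J=I\cup\{j\}$. Each block is a codimension-one purity boundary in $x_j$. A frame comparison contributes a quadratic factor, and Yoneda in $\calA$ recovers the sheaf maps. However, two inner steps fail as written.

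The first is your local identification, which is false. Let $W_{I,j}$ be the locus $x_i=0$ ($i\in I$) inside the open set where every $x_a$ with $a\notin J$ is invertible. Its open part $x_j\neq0$ is $\mathcal O_I$. Along $x_j=0$, however, the relation $x_jx_j'=m_+(1+\chi_j)$, with $m_+$ a unit, forces $\chi_j=-1$ and makes $x_j'$ a new free coordinate. So the divisor is $\mathcal O_J\cong\A^{p+1}\times T_J$, not $\{0\}\times S_{I,j}$. Thus $W_{I,j}$ is an affine modification of $\A^p\times\A^1_{x_j}\times S_{I,j}$, not that product. There is also no common affine factor to project away, since the two strata carry $\A^p$ and $\A^{p+1}$. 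So the boundary of $(W_{I,j},\mathcal O_J)$ is not literally the residue on $\A^1_{x_j}\times S_{I,j}$ followed by ``restriction along the fibre''. You need a morphism of pairs, transverse to the divisor, and naturality of framed purity.

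The paper's device is the slice $s:\A^1_u\times T_J\to W_{I,j}$, given by $x_j=u$, $x_j'=0$, and $x_i'=0$ for $i\in I$. It is regular because $\chi_j=-1$ on $T_J$, and it satisfies $x_j\circ s=u$. On closed strata it is the zero section, an $\A^1$-equivalence. On open strata it is $h_{I,j}=\mathrm{id}\times\iota_{I,j}$. Naturality then gives $\iota_{I,j}^*\rho_{I,j}$ with no extra unit. Your version could instead be rescued by the map $W_{I,j}\to\A^p\times\A^1_{x_j}\times S_{I,j}$ forgetting $x_j'$. It is an isomorphism over $x_j\neq0$, and the scheme-theoretic preimage of $\{x_j=0\}$ is $\mathcal O_J$. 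Either way, some such map has to be supplied.

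The second problem is the sign step, which uses the wrong swap. Changing the normal frame by a matrix $A$ changes the Thom identification by $\la\det A\ra$. Equivalently, $\A^2/(\A^2\setminus0)\simeq\mathbb P^1\wedge\mathbb P^1$, where the swap is $(-1)\epsilon=g$; the extra $-1$ comes from the simplicial circles. So moving $x_j$ past $\nu(I,j)$ earlier parameters gives $\la(-1)^{\nu(I,j)}\ra=g^{\nu(I,j)}$ directly, and nothing needs to be absorbed.

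The rule $[a][b]=\epsilon[b][a]$ governs something else: interchanging two contractions, i.e.\ the swap on $\Gm\wedge\Gm$. That is where $\epsilon$ enters the paper's check of $d^2=0$. Around a square, the two paths differ by $\epsilon$ in the order of contraction and by one in the exponent of $g$, and $g+\epsilon=0$. Your factor $\epsilon^{\nu}$ cannot be repaired by a convention. The discrepancy $(-1)^{\nu}$ multiplies to $-1$ around every square, so no rescaling of the summands induces it. With $\epsilon^{\nu}$, the two paths from $I$ to $I\cup\{j,\ell\}$ would contribute equal rather than opposite terms, so $d^2\neq0$ already on the class $[x_j][x_\ell]$. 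Your proposed consistency check would therefore refute that bookkeeping rather than confirm it.
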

\begin{proof}
Fix $J=I\cup\{j\}$. We work in the open set where $x_a\ne0$ for
$a\notin J$ and impose $x_i=0$ for $i\in I$. We denote the resulting
smooth scheme by $W_{I,j}$.  Its open part $x_j\ne0$ is
$\mathcal O_I$, and its closed divisor $x_j=0$ is $\mathcal O_J$.
The usual acyclic presentation gives a morphism
\[
s:\A^1_u\times T_J\longrightarrow W_{I,j}
\]
by setting $x_j=u$, $x'_j=0$, $x'_i=0$ for $i\in I$, and taking all
remaining original coordinates from $T_J$.  The exchange relations for
the vertices in $J$ hold because $J$ is independent and $\chi_i=-1$ on
$T_J$; for the other vertices they determine the mutated coordinates,
since their original coordinates are invertible.  Thus $s$ is a regular
morphism, including at $u=0$.

The inverse image of $\mathcal O_J$ is $\{0\}\times T_J$, and the map
on normal line bundles is the identity in the parameters $u$ and $x_j$.
On closed strata, $s$ is the zero section
$T_J\to\A^{p+1}\times T_J$, an $\A^1$-equivalence.  On open strata,
after forgetting the affine factors, it is the map
\begin{equation}\label{eq:torus-face-map}
h_{I,j}:\Gm\times T_J\longrightarrow T_I,
\qquad x_j=u,\quad x_a=t_a\quad(a\notin J).
\end{equation}
Naturality of purity for this transverse morphism of pairs identifies
the adjacent boundary with contraction in $u$, followed by restriction
to $T_J$. The resulting map is $\iota_{I,j}^*\rho_{I,j}$. Adding the previous
normal directions gives the frame $(j,i_1,\ldots,i_p)$, whose comparison
with the increasing frame on $J$ contributes exactly
$g^{\nu(I,j)}$. Since no exchange monomial changes the normal parameter
$x_j$, there is no further unit factor. The closure relation for
the strata makes every other block zero, and the formula follows.

The quadratic signs also allow us to verify directly
that $d^2=0$.  If $j<\ell$ are both added to $I$, the independent-set
condition makes the two restrictions commute.  Interchanging two
contractions acts by $\epsilon$, since interchanging the reduced
factors of $\Gm\wedge\Gm$ acts by $\epsilon$ on $\KMW_2$.
The exponents of $g$ along the two paths differ by one.  Their sum is
therefore a common factor times $g+\epsilon=0$.  In the basic square,
the two iterated residues of $[x_j][x_\ell]$ are $1$ and $\epsilon$,
and their orientation factors are $g$ and $1$, respectively.
Finally, the construction and formula are natural in $M$; Yoneda's
lemma in $\calA$ determines the underlying sheaf morphisms uniquely.
\end{proof}

\begin{corollary}\label{cor:full-cellular-matrix}
After choosing torus coordinates on each $T_I$, the entire cellular
complex is a finite matrix of morphisms between the summands in
\eqref{eq:free-torus}--\eqref{eq:thom-torus}.  Its entries are computable
by integer linear algebra and Milnor--Witt symbol identities from
$\Btil$.  They can be chosen in the graded subring of
$K_*^{\mathrm{MW}}(k)$ generated by $\eta$ and $[-1]$.
\end{corollary}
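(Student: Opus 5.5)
The plan is to make each block of Theorem~\ref{thm:full-differential} explicit after choosing coordinates, and check that everything reduces to integer data and a few universal symbols. First, for each independent $I$, put $B_I$ into Smith form, which is all ones under \AS. This gives an explicit monomial isomorphism $T_I\cong\Gm^{d-2|I|}$. Because of the sign $\chi_i=-1$, the isomorphism may involve a translation by a point with coordinates $\pm1$. Composing with \eqref{eq:free-torus}--\eqref{eq:thom-torus}, each $C_p^\Sigma(X)$ becomes a finite sum of copies of $\KMW_m$ (and $\Z$ in degree zero). Each block $\iota^*_{I,j}\rho_{I,j}$ then becomes a morphism $\KMW_{p+a}\to\KMW_{p+1+b}$ in $\calA$, i.e.\ an element of $K^{MW}_{b-a+1}(k)$ after Yoneda, or a map out of $\Z$ in degree zero. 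The point is that $\Hom_{\calA}(\KMW_m,\KMW_n)=K^{MW}_{n-m}(k)$, and \eqref{eq:free-torus} identifies maps out of $\Z$ similarly.

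Second, I will compute these entries on the universal test sheaf, i.e.\ on symbols. The torus summands correspond to symbols $[y_{a_1}]\cdots[y_{a_s}]$ in the chosen coordinates of $S_{I,j}\times\Gm$. The contraction $\rho_{I,j}$ in the coordinate $x_j$ picks out the summands containing $[x_j]$. Up to the convention $\partial_u([u]\alpha)=\bar\alpha$, and after moving $[x_j]$ to the front at a cost of powers of $\epsilon$, it returns the remaining symbol. This step only involves $\eta$, $\epsilon=-\la-1\ra$, and the basis change. The restriction $\iota^*$ along $T_J\subset S_{I,j}$ is given by an integer monomial map between tori, translated by $\pm1$. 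So the remaining work is to express the pullback of $[y]$ along $y\mapsto \pm z_1^{m_1}\cdots z_r^{m_r}$ in terms of the basis symbols.

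Third, I will expand the pullbacks using the Milnor--Witt identities $[ab]=[a]+[b]+\eta[a][b]$, $[a^m]=m_\epsilon[a]$, and $[-a]=[-1]+\la-1\ra[a]$. Products of these expansions stay in the subring generated by $\eta$, $[-1]$ and the coordinate symbols. This is because $[a][a]=[a][-1]$, and the $\epsilon$-graded commutativity only introduces $\la-1\ra=1+\eta[-1]$. Extracting basis coefficients therefore yields entries in $\Z[\eta,[-1]]\subset K_*^{MW}(k)$. The integers come from the exponents $m_i$, i.e.\ from integer linear algebra on $\Btil$. Together with the factor $g^{\nu}=\la-1\ra^{\nu}$, this proves the claim.

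The main obstacle I expect is the degree-zero term. There the summand $\Z$ is not $\KMW_0$, so the maps from $\Free[\Gm^d]$ must be handled via the unit. I will also need to check that the repeated $\eta[a][a]$ reductions terminate in the basis. I would handle both by working with the explicit projective description of $\Free[\Gm^t]$ from Lemma~\ref{lem:torus-strict}, and reducing monomials by induction on their number of repeated letters.
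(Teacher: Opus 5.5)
Your plan is essentially the paper's proof. Smith-form coordinates (with a $\pm1$ base point) make every face map $h_{I,j}$ a signed Laurent monomial map, and each block is identified by Yoneda with a Milnor--Witt element. That element is computed by expanding pulled-back symbols with $[ab]=[a]+[b]+\eta[a][b]$, $[a^r]=r_\epsilon[a]$, $[a]^2=[a][-1]$ and $\epsilon$-commutativity, then taking the $u$-residue and multiplying by $g^{\nu(I,j)}$.

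One bookkeeping correction, which also removes your anticipated degree-zero obstacle. The homological differential runs from the $\KMW_{p+1+|B|}$ summand of $C_{p+1,J}$ to the $\KMW_{p+|A|}$ summand of $C_{p,I}$, so each entry lies in $K^{\mathrm{MW}}_{|A|-|B|-1}(k)$, which is what your symbol computation actually outputs. Likewise, the constant $\Z$ summand of $C_0$ only receives maps $\KMW_m\to\Z$ with $m\ge1$, and these vanish because $\Z_{-m}=0$.
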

\begin{proof}
Saturation permits an integral splitting of every character lattice.
An integral splitting sends $(-1)_{i\in I}$ to a base point of $T_I$
whose coordinates are all $\pm1$.  Smith normal form consequently gives
torus coordinates in which each map \eqref{eq:torus-face-map} is a
signed Laurent monomial map.

We describe the entries explicitly, keeping track of multiplication
orders. Choose ordered coordinates $z_1,\ldots,z_{d-2p}$ on $T_I$ and
$w_1,\ldots,w_{d-2p-2}$ on $T_J$. Let $A$ and $B$ index subsets of
these coordinates, and put $z_A=\prod_{a\in A}^{\mathrm{increasing}}[z_a]$.
The entries are defined by the expansion
\begin{equation}\label{eq:full-matrix-entry}
 g^{\nu(I,j)}\rho_u\bigl(h_{I,j}^*z_A\bigr)
   =\sum_B\left(\prod_{b\in B}^{\mathrm{increasing}}[w_b]\right)
      m_{I,A;J,B}.
\end{equation}
This expansion uses the torus contraction decomposition, with constant coefficients
on the right. Each coefficient satisfies
\[
m_{I,A;J,B}\in K^{\mathrm{MW}}_{|A|-|B|-1}(k)
\]
and is the matrix entry from the $B$-summand
$\KMW_{p+1+|B|}$ of $C_{p+1,J}$ to the $A$-summand
$\KMW_{p+|A|}$ of $C_{p,I}$.  Indeed, one obtains this entry by applying
\eqref{eq:full-cellular-differential} with $M=\KMW_{p+|A|}$ to the
identity on the indicated target summand.  For $a,b\geq1$, the universal
property gives
\[
\Hom_{\calA}(\KMW_a,\KMW_b)
 \simeq(\KMW_b)_{-a}(k)=K^{\mathrm{MW}}_{b-a}(k);
\]
the element on the right acts by right multiplication.  All source
indices here are positive.  The exceptional target $\Z\subset C_0$
has zero incoming block, since $\Z_{-a}=0$ for $a>0$; it is not treated
as $\KMW_0$.

To compute \eqref{eq:full-matrix-entry}, we substitute the signed Laurent
monomials and use the identities $[ab]=[a]+[b]+\eta[a][b]$ and
$[a^r]=r_\epsilon[a]$, where
\[
r_\epsilon=\sum_{v=0}^{r-1}g^v\quad(r\geq0),\qquad
r_\epsilon=-g\sum_{v=0}^{-r-1}g^v\quad(r<0),
\]
and reduce repeated symbols using
$[a]^2=[a][-1]$. We then move the $[u]$ factor to the left by
$\epsilon$-commutativity and take its residue. This requires only
finitely many operations, in which the only constants are $1$ and $-1$.
Consequently, the procedure computes every entry over the claimed coefficient ring.

There is also a useful description of each block in adapted coordinates.
The columns indexed by $J$ are saturated in $\Z^{[d]\setminus J}$.
After quotienting by the columns indexed by $I$, the character $\chi_j$
is therefore primitive.  Together with the independent coordinate $x_j$,
it extends to a character basis of $T_I$.  The resulting coordinates have
the form $(z,w,t)$, with $z=x_j$, $w=\chi_j$, and $t$ coordinates on
$T_J=\{w=-1\}\subset S_{I,j}$.  In these coordinates the face map is
$(u,t)\mapsto(u,-1,t)$.  Its reduced two-coordinate block is
\begin{equation}\label{eq:basic-MW-block}
\begin{split}
\Delta:\KMW_1&\longrightarrow
\Z\oplus\KMW_1\oplus\KMW_1\oplus\KMW_2,\\
v&\longmapsto(0,v,0,v[-1]),
\end{split}
\end{equation}
where the target coordinates are ordered as $1,z,w,z\wedge w$.
Tensoring with the remaining torus and Thom factors, with the specified
frame comparison, gives the corresponding full block.
\end{proof}

The torus-coordinate calculation is related to Liu--Peng's cubical formulas
\cite[Example 2.20 and Proposition 2.23]{LP}. Liu--Liu compute boundary
maps in all degrees for basic affine spaces by retaining torus translations and
Milnor--Witt frame degrees \cite[Theorem 5.12]{LLTorus}. The transverse
slice in Theorem~\ref{thm:full-differential} supplies the geometric boundary
map for cluster strata, including the identity and $[-1]$ components
of \eqref{eq:basic-MW-block}.

The coefficient realizations retain different parts of this
formula.  In Milnor $K$-theory, $\eta=0$ and $g=1$, so the differential
is left symbol contraction without an additional incidence sign.
Under normalized real signature, $g$ acts by $-1$ and a framed
contraction is the difference of the two normal-side values.
Thus \eqref{eq:full-cellular-differential} gives precisely
\eqref{eq:sign-differential}.  Rational Hodge realization kills both
$\eta$ and $[-1]$, whereas signature records their real values.
Integral Milnor $K$-theory retains the symbol $\{-1\}$.

\subsection{The last Milnor \texorpdfstring{$K$}{K}-theory differential}
Under \AS, the differential into degree $p$ of the cellular cochain complex with coefficients $\KM_p$ has an explicit right inverse for every $p>0$. We construct it using the quadratic frame convention fixed above. Put
\[
C_M^{\Sigma,\bullet}(X,p)=\Hom_{\calA}(C_*^\Sigma(X),\KM_p).
\]
Its degree-$r$ term is $\bigoplus_{|I|=r}H^0(\mathcal O_I,\KM_{p-r})$. We retain the normal frames from the ordered mutable coordinates.

\begin{proposition}\label{prop:acyclic-chow}
Suppose that $k$ is perfect of characteristic different from two, and let $X$ be the acyclic cluster variety of a seed satisfying \AS. For every $p>0$,
\begin{align*}
C_M^{\Sigma,p-1}(X,p)&=\bigoplus_{|I|=p-1}\mathcal O(\mathcal O_I)^\times,\\
C_M^{\Sigma,p}(X,p)&=\bigoplus_{|J|=p}\Z e_J,\qquad
C_M^{\Sigma,r}(X,p)=0\quad(r>p).
\end{align*}
For each independent set $J$ of size $p$, choose $j\in J$ and put $I=J\setminus\{j\}$. With the convention of Theorem~\ref{thm:full-differential}, one has
\begin{equation}\label{eq:explicit-chow-residue}
d^{p-1}\{x_j\}_I=e_J.
\end{equation}
Here $\{x_j\}_I$ is supported in the summand indexed by $I$. Thus $e_J\mapsto\{x_j\}_I$ defines a right inverse to $d^{p-1}$, and $\CH^p(X)=0$.
\end{proposition}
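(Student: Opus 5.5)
I would specialize Theorem~\ref{thm:full-differential} to $M=\KM_p$ and read off the terms from the contraction calculus of Milnor $K$-theory. First, $(\KM_p)_{-r}=\KM_{p-r}$. Under \AS\ each $T_I$ is a split torus and $\mathcal O_I\cong\A^{|I|}\times T_I$, so $\A^1$-invariance gives $H^0(\mathcal O_I,\KM_{p-r})=\KM_{p-r}(T_I)$. This is $0$ for $r>p$ and $\Z$ for $r=p$; write $e_J$ for the generator $1$. For $r=p-1$ it is $\KM_1(T_I)=\mathcal O(T_I)^\times=\mathcal O(\mathcal O_I)^\times$, where the last identification is again pullback along the affine projection. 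This gives the three displayed descriptions.

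Next I would compute $d^{p-1}$ on $\{x_j\}_I$ using \eqref{eq:full-cellular-differential}. The class $c$ is supported only in the summand $I$, and we evaluate at a target $J'$ of size $p$. The component at $J'$ is nonzero only if $I\subset J'$, so $J'=I\cup\{j'\}$ with $j'\notin I$. The block at $J'$ is $\iota^*_{I,j'}\rho_{I,j'}$, applied to $\{x_j\}$ viewed on $T_I=\Gm\times S_{I,j'}$ with first coordinate $x_{j'}$. In Milnor $K$-theory we have $g=1$ and $\eta=0$, so no sign factor appears, and $\rho_{I,j'}$ is the left residue in the coordinate $x_{j'}$. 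If $j'=j$, the normalization $\partial_u([u])=1$ gives $\rho(\{x_j\})=1\in\KM_0(S_{I,j})=\Z$, and restricting to $T_J$ leaves $1$. So the $J$-component is $e_J$. If $j'\neq j$, then $x_j$ is a function on the factor $S_{I,j'}$, pulled back along the projection. Indeed $j\notin I\cup\{j'\}$, so $x_j$ is one of the torus coordinates $x_a$ with $a\notin I\cup\{j'\}$. The contraction projection in the splitting at $x_{j'}=1$ therefore kills it, and every other component vanishes. This proves \eqref{eq:explicit-chow-residue}.

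Finally, choose one $j=j(J)$ for each $J$. The map $e_J\mapsto\{x_{j(J)}\}_{J\setminus\{j(J)\}}$ then composes with $d^{p-1}$ to the identity on $C_M^{\Sigma,p}$. Hence $d^{p-1}$ is surjective, and $H^p(X,\KM_p)=0$. By Theorem~\ref{thm:acyclic-model}, $\Hom(C^\Sigma_*,\KM_p)$ computes $H^*_{\Nis}(X,\KM_p)$, and Bloch's formula identifies $H^p_{\Nis}(X,\KM_p)$ with $\CH^p(X)$. This gives $\CH^p(X)=0$ for $p>0$.

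The main obstacle is the off-diagonal vanishing for $j'\neq j$. One has to be sure that the chosen coordinate splitting $T_I=\Gm\times S_{I,j'}$ really makes $x_j$ a pullback from $S_{I,j'}$, rather than an expression that picks up a power of $x_{j'}$. The definition of $S_{I,j'}$ as a subvariety of the torus on coordinates $x_a$, $a\notin I\cup\{j'\}$, together with independence ($\Btil_{j'i}=0$ for $i\in I$), settles this: $x_j$ is literally one of those coordinates.
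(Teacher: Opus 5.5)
Your proposal is correct and follows essentially the same route as the paper. Both specialize the full cellular differential to $\KM_p$, note that the orientation factor acts trivially on Milnor $K$-theory, and find residue $1$ at $J$ and zero at every other $I\cup\{j'\}$. The paper justifies the off-diagonal vanishing by the order of $x_j$ along the adjacent strata; your observation that $x_j$ is a coordinate on $S_{I,j'}$, hence killed by the contraction projection, is the same fact. Both then conclude by split surjectivity and Bloch's formula.
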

\begin{proof}
The displayed terms follow from $\KM_1=\Gm$, $\KM_0=\Z$, the vanishing of negative Milnor $K$-theory, and geometric integrality of the strata. Since $j\notin I$, the coordinate $x_j$ is invertible on $\mathcal O_I$. The cellular differential on this summand is the divisor residue along adjacent strata $\mathcal O_{I\cup\{\ell\}}$. The simple normal-crossings coordinates give
\[
\operatorname{ord}_{\mathcal O_{I\cup\{\ell\}}}(x_j)
=\begin{cases}1,&\ell=j,\\0,&\ell\ne j.\end{cases}
\]
In the second case, $x_j$ remains invertible on the target stratum. All components of the differential whose target does not contain $I$ are zero. Moreover, the quadratic orientation factor in \eqref{eq:full-cellular-differential} acts as one on Milnor $K$-theory. These observations give \eqref{eq:explicit-chow-residue} with the indicated positive sign. Extending the assignment $\Z$-linearly gives a right inverse and hence proves split surjectivity. Finally, the Bloch formula $\CH^p(X)=H^p_{\Nis}(X,\KM_p)$, applied to the anticlique resolution, proves the vanishing assertion.
\end{proof}

Since RFR implies \AS, this calculation recovers the acyclic case of
Hlavinka's Chow vanishing theorem \cite[Theorem~5.5]{Hlavinka}.
For the full RFR sink-recurrent class, the freezing--deletion decomposition
\cite[Propositions~4.13 and~4.16]{Hlavinka} and Chow localization give the
same vanishing by induction: higher codimensions follow from the smaller
pieces, while the remaining codimension-one generator is the principal
divisor of the sink variable.

With coefficients $\KMW_p$, the degree-$p$ term is
$\bigoplus_{|J|=p}H^0(\mathcal O_J,\KMW_0)$, and the terms above
degree $p$ involve Witt sheaves. The Milnor--Witt differential above
retains these quadratic terms. Appendix~\ref{sec:witt} computes its
Witt specialization and the resulting positive-degree Chow--Witt
groups, applied in Section~\ref{sec:sevenfold}.

\subsection{The oriented Hodge realization}
We continue to assume \AS\ and allow the principal matrix to be skew-symmetrizable.
The purity construction gives a Gysin complex, which in the skew-symmetric
RFR case recovers the complex of Lam--Speyer \cite{LS2}.
We describe its residue convention and derive a consequence for contractibility.
The stabilized graded purity piece at level $p$ is
\begin{equation}\label{eq:graded-purity}
\bigvee_{|I|=p} S^{2p,p}\wedge(\mathcal O_I)_+
\simeq\bigvee_{|I|=p}\ \bigvee_{a=0}^{d-2p}
(S^{2p+a,p+a})^{\vee\binom{d-2p}{a}}.
\end{equation}

\begin{theorem}\label{thm:gysin}
For $k\subseteq\C$, the Hodge realization of the same purity filtration gives
\[
E_1^{p,q}=\bigoplus_{|I|=p}H^{q-p}(\mathcal O_I,\Q)(-p)
\ \Longrightarrow\ H^{p+q}(X,\Q).
\]
For skew-symmetric RFR seeds this is the Gysin spectral sequence of Lam--Speyer \cite{LS2}; the same construction applies under \AS. Its $d_1$ is the adjacent-stratum residue and $E_2=E_\infty$. Write $G_{\mathrm{Gys}}^{p,s}=\bigoplus_{|I|=p}H^{s-p}(\mathcal O_I,\Q)(-p)$ with this residue differential. Consequently,
\[
\gr^W_{2s}H^j(X,\Q)\simeq H^{j-s}(G_{\mathrm{Gys}}^{\bullet,s}).
\]
\end{theorem}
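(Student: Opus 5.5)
The plan is to realize the anticlique filtration $\Omega_0\subset\Omega_1\subset\cdots\subset\Omega_{\alpha(\Gamma)}=X$ in Betti cohomology with its mixed Hodge structure, and then read off the spectral sequence from purity on each stratum. By Proposition~\ref{cc:prop:geometry}, applied under \AS, each difference $\Omega_p\setminus\Omega_{p-1}=\bigsqcup_{|I|=p}\mathcal O_I$ is a smooth closed subvariety of the smooth open $\Omega_p$, of pure codimension $p$. Its normal bundle is framed by the ordered functions $(x_i)_{i\in I}$.

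\textbf{Step 1: the $E_1$ page.} I would use the long exact sequences of the pairs $(\Omega_p,\Omega_{p-1})$. These are sequences of mixed Hodge structures by Deligne's theory. Via the Thom isomorphism for the framed normal bundle, they give
\[
H^{*}_{\mathcal O_I}(\Omega_p)\simeq H^{*-2p}(\mathcal O_I,\Q)(-p).
\]
The exact couple of the filtration then produces a spectral sequence with the stated $E_1^{p,q}$, converging to $H^{p+q}(X,\Q)$. Equivalently, this spectral sequence is the Hodge realization of the graded purity pieces \eqref{eq:graded-purity}. All its differentials are morphisms of mixed Hodge structures, because Gysin maps become such morphisms after the Tate twist.

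\textbf{Step 2: identifying $d_1$.} The $d_1$ from $\mathcal O_I$ to $\mathcal O_J$ with $J=I\cup\{j\}$ is the connecting map of the pair $(W_{I,j},\mathcal O_J)$. I would pull it back along the transverse slice $s:\A^1_u\times T_J\to W_{I,j}$ from the proof of Theorem~\ref{thm:full-differential}. By naturality of purity, this reduces $d_1$ to the residue in $u=x_j$ followed by restriction along $\iota_{I,j}$. In rational Hodge realization, $\eta$ and $[-1]$ die and $g$ becomes $1$. The differential is therefore the Betti realization of \eqref{eq:full-cellular-differential}, which is the adjacent-stratum residue with the sign $(-1)^{\nu(I,j)}$ coming from the ordered frame. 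In the skew-symmetric RFR case, the strata, the frames and the residue maps coincide with those of Lam--Speyer \cite{LS2}, so the two spectral sequences agree.

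\textbf{Step 3: degeneration and weights.} Since $\mathcal O_I\cong\A^p\times\Gm^{d-2p}$, the group $H^m(\mathcal O_I,\Q)$ is pure Tate of weight $2m$. Hence $E_1^{p,q}$ is pure of weight $2(q-p)+2p=2q$. The differential $d_r$ maps $E_r^{p,q}$ to $E_r^{p+r,q-r+1}$, whose weight is $2(q-r+1)$. For $r\ge2$ this is a morphism between pure structures of different weights, so it vanishes, and $E_2=E_\infty$. The induced filtration on $H^j(X)$ has graded pieces $E_\infty^{j-s,s}$, which are pure of weight $2s$. By strictness of the weight filtration, these graded pieces are exactly $\gr^W_{2s}H^j(X,\Q)$. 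Since $G^{p,s}_{\mathrm{Gys}}=E_1^{p,s}$ with the $d_1$ of Step 2, this gives $\gr^W_{2s}H^j\simeq H^{j-s}(G^{\bullet,s}_{\mathrm{Gys}})$.

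\textbf{Main obstacle.} I expect the hardest step to be Step 2: making the compatibility of the Hodge realization of the cellular purity boundary with the classical Gysin residue precise, including the frame sign. The first thing I would try is to verify it on the basic block \eqref{eq:basic-MW-block} and then tensor with the torus factors.
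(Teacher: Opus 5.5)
Your plan matches the paper's proof. Purity on the anticlique strata gives $E_1$, and since $E_1^{p,q}$ is pure of weight $2q$, every $d_r$ with $r\ge2$ vanishes. Your Step~3 spells out the strictness argument the paper compresses into ``consequently'': the abutment filtration equals $W$ because its graded pieces have distinct weights. For $d_1$ the paper does not rerun the transverse slice. It writes $H^*(\mathcal O_I,\Q)$ as the exterior algebra on $d\log x_a$, $a\notin I$, modulo $\alpha_i=\sum_a\Btil_{ai}\,d\log x_a$ for $i\in I$. It then takes $d_1$ to be left contraction $\theta_1+d\log x_j\wedge\theta_2\mapsto\theta_2$, which descends because $\Btil_{ji}=0$ for $i\in I$. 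This is the Hodge realization of \eqref{eq:full-cellular-differential}, so your route reaches the same formula, except that you have one sign wrong.

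You say correctly that $g=\la-1\ra$ realizes to $1$, so the frame factor $g^{\nu(I,j)}$ contributes nothing. There is therefore no incidence sign $(-1)^{\nu(I,j)}$. The only signs are the Koszul signs of left contraction, which come from $\epsilon\mapsto-1$. The contractions for distinct $j$ already anticommute, so inserting $(-1)^{\nu(I,j)}$ breaks $d_1^2=0$. For independent $j<\ell$, your signs would send $d\log x_j\wedge d\log x_\ell$ on the initial torus under $d_1^2$ to $-2$ rather than $0$ on $\mathcal O_{\{j,\ell\}}$.

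The comparison with Lam--Speyer is also not a literal coincidence of residue maps. They extract $d\log x_j$ on the right, so in bidegree $(p,s)$ their differential differs from left contraction by $(-1)^{s-p-1}$. The two complexes are identified only after rescaling degree $p$ by $(-1)^{p(s-1)-p(p-1)/2}$. Neither correction affects your degeneration and weight argument, but both are needed to pin down the explicit complex $G_{\mathrm{Gys}}^{\bullet,s}$ in the statement.
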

\begin{proof}
Purity for support in a complex codimension-$p$ smooth stratum shifts cohomology by $2p$ and twists by $-p$, giving the stated first page. Since $H^{q-p}(\mathcal O_I)$ has weight $2(q-p)$, every summand has weight $2q$. A differential $d_r$ has target weight $2(q-r+1)$ and therefore vanishes for $r\geq2$ by strictness of the weight filtration.

The $d_1$ maps are the following residues, agreeing with
\cite[Propositions 3.11 and 5.6]{LS2} in their skew-symmetric setting.
Put $\alpha_i=\sum_a\Btil_{ai}\,d\log x_a$. The cohomology of $\mathcal O_I$ is the exterior algebra on $d\log x_a$, $a\notin I$, modulo the relations $\alpha_i=0$, $i\in I$. If $J=I\cup\{j\}$ is independent, the differential is given by left contraction:
\[
\theta=\theta_1+d\log x_j\wedge\theta_2\longmapsto\theta_2
\quad\text{in the quotient for }J.
\]
Here $\theta_1$ and $\theta_2$ do not contain $d\log x_j$. Independence gives $\widetilde B_{ji}=0$ for $i\in I$, so contraction descends through the existing relations. The contractions anticommute, including after the further quotient, and hence give a differential. Lam--Speyer use right extraction; in bidegree $(p,s)$ their differential differs from this one by $(-1)^{s-p-1}$. Multiplication in degree $p$ by $(-1)^{p(s-1)-p(p-1)/2}$ identifies the two complexes. This proves the comparison and fixes the matrix convention used below.
\end{proof}

\begin{corollary}\label{cor:top-weight}
If $X/\C$ is an acyclic cluster variety satisfying \AS\ of dimension
$d>0$, then $\gr^W_{2d}H^d(X(\C),\Q)=\Q(-d)$.
Consequently $X$ is neither topologically contractible nor stably
$\A^1$-contractible, even after rationalization.
\end{corollary}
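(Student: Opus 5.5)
We compute the top weight piece with Theorem~\ref{thm:gysin}. Taking $s=j=d$ gives
\[
\gr^W_{2d}H^d(X,\Q)\simeq H^{0}(G_{\mathrm{Gys}}^{\bullet,d}).
\]
So we need the cohomology in degree $p=0$ of the complex whose degree-$p$ term is $\bigoplus_{|I|=p}H^{d-p}(\mathcal O_I,\Q)(-p)$.

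By Proposition~\ref{cc:prop:geometry}, under \AS\ we have $\mathcal O_I\simeq\A^{p}\times\Gm^{d-2p}$. Its rational cohomology is an exterior algebra in degrees at most $d-2p$. Hence $H^{d-p}(\mathcal O_I)=0$ as soon as $d-p>d-2p$, that is, for every $p>0$. The complex $G_{\mathrm{Gys}}^{\bullet,d}$ is therefore concentrated in degree zero. Its only term is $H^d(\Gm^d,\Q)=\Q(-d)$, spanned by $d\log x_1\wedge\cdots\wedge d\log x_d$. There are no differentials in or out, so $\gr^W_{2d}H^d(X,\Q)=\Q(-d)$. This uses only that $T_I$ is a torus of dimension $d-2|I|$, which \AS\ guarantees.

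For the consequences:
\begin{itemize}
\item Since $d>0$, the class $\Q(-d)$ lies in positive degree. So $H^d(X(\C),\Q)\neq0$, and $X(\C)$ is not contractible.
\item Stable $\A^1$-contractibility would give $\Sigma^\infty X_+\simeq\one$ in $\SH(\C)$, even after rationalization.
\item Complex Betti realization sends this to the rational spectrum of a point, as in \cite{LevineBetti}. Then the reduced rational singular cohomology of $X(\C)$ would vanish, contradicting the nonzero class above.
\item Unstable $\A^1$-contractibility implies stable contractibility, so it is excluded as well.
\end{itemize}

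The only real point to check is the degree bound on $H^*(\mathcal O_I)$. It relies on the strata being products of affine spaces and split tori of the stated dimension, and that is exactly the content of the stratum description under \AS. The rest is bookkeeping with the weight indexing of Theorem~\ref{thm:gysin}.
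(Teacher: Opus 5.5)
Your proposal is correct and follows the paper's own argument. Both read off the row $s=d$ of the Gysin complex from Theorem~\ref{thm:gysin}, kill the terms with $p>0$ by the dimension count $d-2p<d-p$, and keep only the initial-torus term $\Q(-d)$; the nonzero class then rules out topological contractibility directly and stable (rational) $\A^1$-contractibility via complex Betti realization \cite{LevineBetti}.
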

\begin{proof}
In the row $q=d$ of the spectral sequence, the initial torus gives
$E_1^{0,d}=\Q(-d)$. For $p>0$, a stratum torus has dimension
$d-2p<d-p$, so $E_1^{p,d}=0$. Theorem~\ref{thm:gysin} proves
the asserted weight-graded group. Its nonzero reduced rational
cohomology obstructs topological contractibility and, by complex
Betti realization \cite{LevineBetti}, stable motivic contractibility.
\end{proof}

\subsection{The square of a cluster fourfold}
We return to the fourfold in Lam--Speyer
\cite[Section~6.3, Table~2, left]{LS1}.
\begin{lemma}\label{lem:LS-square}
Let $X$ be \eqref{eq:LS-fourfold}, over a perfect field of characteristic
different from two. For every strictly $\A^1$-invariant sheaf $M$,
\[
 H^1_{\Nis}(X,M)\simeq M_{-2}(k),\qquad
 H^2_{\Nis}(X\times X,M)\simeq M_{-4}(k).
\]
Moreover $K_0(X\times X)=\Z$ and
$\operatorname{cd}_{\A^1}(X\times X)=2$.
The two frozen coordinates act trivially on
$\CW^2(X\times X)\simeq\Witt(k)$.
\end{lemma}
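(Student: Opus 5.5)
The plan is to make the cellular complex of the fourfold $X$ in \eqref{eq:LS-fourfold} fully explicit, split it, and then tensor. First I check the hypotheses. The mutable graph of \eqref{eq:LS-fourfold} is the triangle on $\{1,2,3\}$ with single arrows and an acyclic orientation, and there is one frozen row, from $t$. The independent sets are therefore $\varnothing$ and the three singletons, so $\alpha(\Gamma)=1$. For \AS, each $B_{\{i\}}$ is a single column with row $i$ deleted. These columns are $(\pm1,\pm1,1)$, $(\pm1,\pm1,0)$ and $(1,1,0)$, all primitive, so \AS\ holds. Theorem~\ref{thm:acyclic-model} then gives a two-term complex
\[
C_0=\Free[\Gm^4]\longleftarrow C_1=\bigoplus_{i=1}^3\KMW_1\otimes_{\A^1}\Free[\Gm^2].
\]
By \eqref{eq:free-torus}--\eqref{eq:thom-torus}, $C_0=\Z\oplus(\KMW_1)^4\oplus(\KMW_2)^6\oplus(\KMW_3)^4\oplus\KMW_4$ and $C_1=(\KMW_1)^3\oplus(\KMW_2)^6\oplus(\KMW_3)^3$.

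The key step is to compute the matrix of $d$ by Corollary~\ref{cor:full-cellular-matrix}, using for each $i$ the adapted coordinates $(z,w,t)=(x_i,\chi_i,\dots)$ and the basic block \eqref{eq:basic-MW-block}. The rank count suggests the target statement. By integral row and column operations (automorphisms of sums of $\KMW_a$'s), I would show that $C_*$ is chain homotopy equivalent to $A_0\oplus\KMW_2[1]$, with $A_0$ in degree zero. In weights $1$ and $3$, the three incoming copies should map isomorphically onto summands of $C_0$. In weight $2$, I expect the $6\times6$ block to have corank one. Its image should be a direct summand of rank five, complementary to a single $\KMW_2$ in $C_1$ with zero differential. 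The residue picture supports this: the three classes $[x_j][x_k]$ from the triangle's edges give only a rank-five constraint, by the same mechanism as a $1$-cycle in an independence-type complex. The $[-1]$ entries in \eqref{eq:basic-MW-block} are off-diagonal in the weight filtration, so I would handle them by triangular changes of basis. This explicit splitting is the main obstacle. I would attempt it by first doing the $\KM$ (i.e.\ $\eta=0$) and Witt (signature) reductions separately to locate the surviving summand, and then lifting to an integral contraction. Once the splitting is in hand, $H^1_{\Nis}(X,M)=\Hom(\KMW_2,M)=M_{-2}(k)$.

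For $X\times X$, the product of two \AS\ seeds with disjoint blocks is again an acyclic \AS\ seed. Its strata are products of strata with compatible frames, so its cellular complex is $C_*(X)\otimes_{\A^1}C_*(X)$, up to the frame sign $g^\nu$, which only changes the identification by automorphisms. Tensoring the splitting, the degree-two part is $\KMW_2\otimes_{\A^1}\KMW_2=\KMW_4$ with zero differentials. This gives $H^2_{\Nis}(X\times X,M)\simeq M_{-4}(k)$. Taking $M=\KMW_2$ gives $\CW^2(X\times X)\simeq K^{\mathrm{MW}}_{-2}(k)=\Witt(k)\neq0$. Since $\alpha=2$, Theorem~\ref{thm:acyclic-model} bounds the cohomological dimension by two, so $\operatorname{cd}_{\A^1}=2$. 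Proposition~\ref{prop:kzero} gives $K_0(X\times X)=\Z$.

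For the frozen action, $\la t\ra=1+\eta[t]$, and $[t]$ acts by cup product on the cellular complex through the $t$-coordinate of the torus factors. I would choose the splitting so that the surviving $\KMW_2$ generator of each factor is represented in coordinates $(x_j,\chi_j)$ that do not involve $t$. Multiplication by $[t]$ then lands in summands of higher torus weight, and those summands lie in the contractible part after the splitting. Hence $\eta[t]$ acts by zero on $\CW^2$, and both frozen coordinates act trivially. Verifying that the contraction can be chosen compatibly with this $t$-grading is the second delicate point.
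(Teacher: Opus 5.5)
Your outline is the paper's: explicit two-term complex, cancellation to $(\Z\oplus\bigoplus_{r=1}^4\KMW_r)[0]\oplus\KMW_2[1]$, tensoring with $\KMW_2\otimes_{\A^1}\KMW_2\simeq\KMW_4$, Proposition~\ref{prop:kzero}, and the surviving generator for the unit action. Your predicted ranks are also right. The gap is that the step carrying the whole content of the lemma, the existence of that contraction over $\KMW$, is only predicted, and the tactic you propose for it rests on a false premise.

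Your plan to absorb off-weight entries by triangular changes of basis assumes they are only the $[-1]$'s of \eqref{eq:basic-MW-block}. That block describes each face in its own adapted coordinates $(x_j,\chi_j,\dots)$. To assemble $d$ you need one decomposition of $C_0=\Free[\Gm^4]$, and passing between the three adapted systems produces $\eta$-entries. Index the summands of the $j$th stratum by subsets $A$ of its torus coordinates $(a,b)=(x_2,x_3),(x_1,t),(x_1,t)$. In coordinates $(x_1,x_2,x_3,t)$ the faces are $(u,a,b)\mapsto(u,a,b,-ab)$, $(a,u,-a,b)$ and $(a,-a^{-1},u,b)$. The identity $[-ab]=[-1]+g[a]+g[b]+g\eta[a][b]$ then makes the weight-three summand $(1,ab)$ hit the weight-two summand $x_1t$ with coefficient $g\eta$, while $(1,\varnothing)$ hits the same summand with $[-1]$. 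So entries both raise and lower weight, and $d$ is not triangular for the weight filtration. Products such as $\eta[-1]=g-1$ could a priori turn a unit pivot into a nonunit during elimination. Reducing separately to $\KM$ and to Witt coefficients can locate a candidate class, but it does not yield an integral $\KMW$-contraction.

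What closes the gap is an explicit elimination order, as in the paper. Cancel $(j,\varnothing)$ against $x_j$, then $(2,b),(3,b)$ against $x_2t,x_3t$, then $(1,ab),(2,ab),(3,ab)$ against $x_1x_2x_3$, $x_1x_2t$, $x_1x_3t$. At each step the pivot column has no other remaining incoming entry, so no other entry changes, and the pivots are $1,1,1,1,1,1,-g,-g$. The remaining rows $(1,a),(1,b),(2,a),(3,a)$ then meet only the columns $x_1x_2,x_1x_3,x_1t,x_2x_3$. This is a $4\times4$ block with entries in $\{0,\pm1,\pm g\}$, and three further unit pivots leave the single cycle $v=-g(1,a)+g(1,b)-(2,a)+(3,a)$.

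With $v$ explicit, your ``second delicate point'' disappears. Represent the class by a cochain on the product of the two strata $x_3=0$, where the frozen variables are torus coordinates; the paper uses $[a][c]\alpha$. Then $\la t\ra=1+\eta[t]$ only adds a component containing $[t]$. That component pairs to zero with $v\otimes v$ because $v$ has no $(3,ab)$ component.
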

\begin{proof}
The extended matrix has a unit minor in rows $2,3,4$, so the seed
is RFR. The three codimension-one strata have torus coordinates
$(a,b)=(x_2,x_3),(x_1,t),(x_1,t)$, respectively.
Theorem~\ref{thm:full-differential} computes the boundary from the maps
\[
 (u,a,b)\longmapsto(u,a,b,-ab),\quad
 (u,a,b)\longmapsto(a,u,-a,b),\quad
 (u,a,b)\longmapsto(a,-a^{-1},u,b),
\]
with target coordinates $(x_1,x_2,x_3,t)$.
We index the summands of $C_1$ by $(j,A)$, where
$j=1,2,3$ and $A=\varnothing,a,b,ab$, and those of $C_0$
by subsets of $\{1,2,3,4\}$, with $4$ denoting $t$.
To simplify the transpose boundary, we cancel $(j,\varnothing)$ against column
$j$, then $(2,b),(3,b)$ against columns $24,34$, and finally
$(1,ab),(2,ab),(3,ab)$ against columns $123,124,134$.
The respective pivots are $1,1,1,1,1,1,-g,-g$, where
$g=\la-1\ra$. The only remaining nonzero block, on rows
$(1,a),(1,b),(2,a),(3,a)$ and columns $12,13,14,23$, is
\[
 \begin{pmatrix}
 1&0&g&0\\
 0&1&g&0\\
 -g&0&0&g\\
 0&-g&0&g
 \end{pmatrix}.
\]
Canceling its first two pivots leaves columns $(1,1)^{\mathsf T}$
and $g(1,1)^{\mathsf T}$, so one further unit cancellation gives
\[
 C_*^\Sigma(X)\simeq
 \left(\Z\oplus\bigoplus_{r=1}^4\KMW_r\right)[0]
 \oplus\KMW_2[1].
\]
All cancellations respect the Milnor--Witt degrees and define
chain homotopies. The surviving $\KMW_2$ embeds into the original
$C_1$, in the order just specified, by
\[
 v=(0,-g,g,0;\ 0,-1,0,0;\ 0,1,0,0)^{\mathsf T}.
\]
The cellular product formula \cite[Lemma~2.31]{MS} tensors these
homotopies. Since
$\KMW_2\otimes_{\A^1}\KMW_2\simeq\KMW_4$
\cite[Remark~2.45]{MS}, applying $\Hom_{\calA}(-,M)$ gives the
two cohomology formulas and the upper bound on cohomological dimension.
Taking $M=\KMW_2$ gives the nonzero group $\Witt(k)$ and hence equality
in the bound. The assertion about stable classes follows by applying
Proposition~\ref{prop:kzero} to the product seed.

It remains to determine the action of units. Write $(a,b)=(x_1,t)$ and $(c,d)=(y_1,s)$ on the third stratum
in the two factors. Under the chosen splitting, a class
$\alpha\in\Witt(k)$ is represented there by $[a][c]\alpha$.
Multiplication by $\la t\ra=1+\eta[b]$ adds only a component
involving $ab$ in the first factor, which evaluates to zero on
$v\otimes v$ because $v_{(3,ab)}=0$; the same argument applies to
$s$. Finally the units are $k^\times t^{\Z}s^{\Z}$: on the initial
torus every unit is a Laurent monomial, and valuation along each
mutable divisor kills its corresponding exponent. Thus the full
orientation action is scalar multiplication by $\la\lambda\ra$.
\end{proof}

\section{Integral contractions for paths}
\label{app:path-vanishing}

In this appendix we prove the coefficient vanishing used in
Theorem~\ref{thm:ordered-freezing-triviality} by constructing integral chain homotopies.
These take place in the category of strictly $\A^1$-invariant
sheaves. We write $K_j=\mathbf K_j^{MW}$ for $j>0$; the constant
summand in weight zero is $\Z$.

\begin{theorem}\label{thm:integral-path-vanishing}
Let $k$ be perfect of characteristic different from two. Suppose
an acyclic exchange presentation satisfies \AS, has all its $m$
frozen variables inverted, and its principal matrix is a disjoint
union of paths with single arrows. Nonzero scalar coefficients
are allowed in the exchange equations. Then its variety $X$ has
$\A^1$-cohomological dimension zero.

More precisely, if the connected components have $n_1,\ldots,n_c$
vertices and $q$ of the $n_i$ are odd, put
\[
 P(z)=(1+z)^{m-q}
 \prod_{n_i\ {\rm even}}(1+z^2+\cdots+z^{n_i})
 \prod_{n_i\ {\rm odd}}(1+z+\cdots+z^{n_i+1})
       =\sum_j b_jz^j.
\]
Then $m\ge q$, and a cellular chain complex of $X$ has an
integral chain-homotopy equivalence
\[
 C_*^\Sigma(X)\simeq
 \left(\Z\oplus\bigoplus_{j>0}K_j^{\oplus b_j}\right)[0].
\]
\end{theorem}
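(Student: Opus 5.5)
The plan is to induct on the number of mutable vertices, freezing one endpoint of a path component at each step. We build the chain model of $X$ from the purity triangle of Lemma~\ref{lem:cone}, and show that the attaching map is a split monomorphism onto a direct summand of degree-zero terms. Then the cone is again concentrated in degree zero, and the Poincaré polynomial of its $\KMW$-summands satisfies an explicit recursion.

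\emph{Base case.} With no mutable vertices, $X=\Gm^m$. By \eqref{eq:free-torus} its complex is $\Z\oplus\bigoplus_a(\KMW_a)^{\binom ma}$ in degree zero, matching $P(z)=(1+z)^m$.

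\emph{Inductive step: the decomposition.} Let $v$ be an endpoint of a path component with $n$ vertices. By Lemma~\ref{lem:AS-framed-freezing}, $U=D(x_v)$ satisfies \AS\ and has that component shortened to $n-1$ vertices and $m+1$ frozen variables. The closed divisor is $D=V(x_v)\cong\A^1\times Y$, where $Y$ has the component shortened to $n-2$ vertices. The frozen lattice of $Y$ is $L/\Z b_v$, with $L$ spanned by the $m$ original frozen characters and the unique neighbour of $v$ (none if $n=1$). So $Y$ has $m$ frozen variables when $n\ge2$, and $m-1$ when $n=1$. Scalar coefficients arise here, which is why the statement allows them.

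\emph{Inductive step: the cone.} By induction, $U$ and $Y$ have chain models concentrated in degree zero. Lemma~\ref{lem:cone} then gives
\[
C_*^\Sigma(X)\simeq\Cone\bigl(\KMW_1\otimes_{\A^1}C_*^\Sigma(Y)\longrightarrow C_*^\Sigma(U)\bigr),
\]
with the source in degree one before the shift, i.e.\ a two-term complex. If the attaching map is a split monomorphism onto $\KMW$-summands, the cone contracts to degree zero, and the polynomials satisfy $P_X=P_U-zP_Y$. Using $\KMW_1\otimes\KMW_j\simeq\KMW_{j+1}$ (as in \cite[Remark~2.45]{MS}), I would check this identity on the product formula. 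For $n=1$ it reads $(1+z)^{m+1}-z(1+z)^{m-1}=(1+z)^{m-1}(1+z+z^2)$. For general $n$ it reduces to the two-step recursions for $1+z^2+\cdots+z^n$ and $1+z+\cdots+z^{n+1}$, taking into account that $q$ changes parity between the component of $U$ and that of $Y$. The inequality $m\ge q$ should follow along the way: $m\ge1$ is forced when $n=1$ (the single column must be a nonzero frozen character), and it propagates through the recursion. I would double-check this against the \AS\ rank condition.

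\emph{Main obstacle: split injectivity of the attaching map.} At chain level, Theorem~\ref{thm:full-differential} and the basic block \eqref{eq:basic-MW-block} show that in adapted coordinates $(z,w,t)=(x_v,\chi_v,t)$ the attaching map sends a summand to its $z$-component with coefficient $1$, plus a component in $z\wedge w$ with coefficient $[-1]$. So before any homotopy, it is unitriangular with identity pivots onto the summands of $\Free[\text{torus of }U]$ containing $[x_v]$.

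The difficulty is that we attach along the inductive contractions of $U$ and $Y$, not the raw cellular complexes. First I would show that the contraction of $U$ can be chosen compatibly with the coordinate $x_v$. Since $v$ is frozen in $U$ and is an endpoint, the inductive freezing for $U$ can be performed away from $v$, so $[x_v]$ splits off as a tensor factor of $\Free[\Gm]$ that the contraction preserves. Then the identity pivots survive the transport of the attaching map, and Gaussian elimination, ordered by Milnor--Witt degree, gives integral cancellations. If the $[-1]$ terms spoil the factorization, I would instead order the summands by the degree in $[x_v]$ and show that the transported map is still triangular with unit diagonal.

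Once the contraction of $C_*^\Sigma(X)$ to degree zero is established, $\Hom_{\calA}(-,M)$ has no positive-degree cohomology, because the degree-zero terms are projective by Lemma~\ref{lem:torus-strict}. This gives $\operatorname{cd}_{\A^1}(X)=0$.
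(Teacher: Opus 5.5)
Your architecture (freeze an endpoint, replace the purity cone by a two-term complex using the inductive contractions of $U$ and $Y$, and check $P_X=P_U-zP_Y$) matches the paper's induction. Your polynomial bookkeeping and your derivation of $m\ge q$ from $Y$ are also fine.

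\textbf{Gap 1: the splitting of $[x_v]$.} The gap is the step you flag yourself, and the mechanism you propose for it is false. In $U=D(x_v)$ the coordinate $x_v$ still occurs in the exchange equation of the neighbour of $v$. So $[x_v]$ is not a tensor factor that a contraction of $U$ can preserve.

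Take $x_1y_1=t+x_2$, $x_2y_2=x_1+x_3$, $x_3y_3=x_2+1$ and $v=1$. The stratum $x_3=0$ of $U$ has $x_2=-1$. Its weight-two Thom summand indexed by $[x_1]$ maps to $[x_1][x_3]$ with a unit and to no other weight-two summand of $C_0(U)$. Hence $[x_1][x_3]$ dies in the cokernel of the weight-preserving first boundary of $U$, and it survives no contraction of $U$. In the paper's normal form $D(x_1)\cong Z_{n-1}\times\Gm^{(t)}$, it is $t$, not $x_1$, that splits off.

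So the unit pivots of \eqref{eq:basic-MW-block} get rewritten through the homotopy of $U$. Nothing in your argument shows that the transported attaching map stays split injective, and reordering by $[x_v]$-degree does not address this.

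\textbf{Gap 2: weight control.} Consider a general \AS\ presentation: arbitrary orientation, frozen rows coupling components, and the noncanonical frozen torus $L/\Z b_v$ of $Y$. There, stratum characters are products of several coordinates. The identity $[ab]=[a]+[b]+\eta[a][b]$ then creates weight-lowering $\eta$-entries.

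As a result, elimination ordered by Milnor--Witt degree is not available, and unit diagonal entries are not enough. For example, with $\rho=[-1]$, the map $(a,b)\mapsto(a+b\rho,\,a\eta+b)$ on $\KMW_2\oplus\KMW_1$ has Schur complement $1-\eta\rho=2-\la-1\ra$. This is not a unit in $\GW(\R)$, so the map is not invertible over $\R$.

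\textbf{How the paper handles both points.}
\begin{enumerate}
\item It first reduces to products of the linear models $Y_{2a}$, $Z_n$ and $\Gm^{m-q}$, using RFR, rescaling, source/sink mutations and frozen coordinate changes.
\item In their minimal coordinates, all cellular entries lie in $R=\Z[g,\rho]/(g^2-1,(1+g)\rho)$ with $g=\la-1\ra$. So no entry lowers weight.
\item It proves unimodularity not by tracking pivots but by computing the cokernel of the weight-preserving first boundary independently, via the binary-word slide argument. This cokernel is free of rank one over $A=\Z[g]/(g^2-1)$ in each relevant weight.
\item Since this cokernel is a homotopy invariant, each weight-preserving column $A\to A^2$ of the contracted cone must be a split injection.
\item The full block is then inverted as an invertible part plus a nilpotent weight-raising part, by a finite geometric series.
\end{enumerate}
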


\begin{proof}
We first consider the linearly oriented presentations
\[
 x_i y_i=x_{i-1}+x_{i+1}\quad(1\le i\le n).
\]
For $n=2a$ and $x_0=x_{2a+1}=1$, denote the variety by $Y_{2a}$.
For arbitrary $n$, take $x_0=t\in\Gm$ and $x_{n+1}=1$, and
denote the resulting variety by $Z_n$. Set $Y_0=\operatorname{Spec}k$
and $Z_0=\Gm$. We establish
\begin{equation}\label{eq:minimal-path-contractions}
 \begin{split}
 C_*^\Sigma(Y_{2a})&\simeq
  (\Z\oplus K_2\oplus\cdots\oplus K_{2a})[0],\\
 C_*^\Sigma(Z_n)&\simeq
  (\Z\oplus K_1\oplus\cdots\oplus K_{n+1})[0].
 \end{split}
\end{equation}

To keep track of the contractions, we give $K_j$ weight $j$ and write
\[
 g=\langle-1\rangle,\quad \rho=[-1],\quad\epsilon=-g,
 \qquad R=\Z[g,\rho]/(g^2-1,(1+g)\rho),
 \qquad |g|=0,\quad |\rho|=1.
\]
This ring acts through Milnor--Witt theory, with
$g=1+\eta\rho$. On a stratum $\mathcal O_I$, the exchange equations
at vertices $i\in I$ identify the nonzero $x$-coordinates by
$x_{i+1}=-x_{i-1}$ or set them to $\pm1$.
We choose one coordinate from each equivalence class not fixed to a constant.
Every remaining $x$-coordinate is then a signed chosen coordinate or a constant.
For a mutable vertex $j\notin I$ with $I\cup\{j\}$ independent,
the class of $x_j$ is a singleton, since $j$ is not adjacent to any vertex of $I$.
Thus $x_j$ itself can be used as the normal parameter for this boundary. Theorem~\ref{thm:full-differential}, together with
\[
 [-u]=\rho+g[u],\qquad [u]^2=\rho[u],\qquad
 [u][v]=\epsilon[v][u],
\]
shows that the complete cellular matrices have entries in $R$,
homogeneous of degree $l-j$ from weight $j$ to weight $l$.
In particular they do not lower weight. Products of distinct
free coordinates are not used in these minimal coordinates;
the $\eta[u][v]$ term for such a product is therefore absent.
The signed restriction maps commute around each two-face, and
$g+\epsilon=0$ supplies cancellation of the normal-order factors.
Thus the chain identity holds already over the formal ring $R$.

We begin by computing the cokernel of the weight-preserving first boundary
over $A=\Z[g]/(g^2-1)$. The change of signs
$x_i=(-1)^{\lfloor i/2\rfloor}z_i$ makes each interior zero
equation identify $z_{i-1}$ with $z_{i+1}$.
For $J=\{j_1<\cdots<j_r\}$, write $e_J=e_{j_1}\cdots e_{j_r}$ for the square-free symbol corresponding to $[z_{j_1}]\cdots[z_{j_r}]$, with $e_\varnothing=1$.
The singleton stratum at an interior mutable vertex $i$ has
one common neighbor coordinate. Omitting or selecting it gives,
up to an invertible common factor, the relations
\[
 e_i e_J=0,\qquad
 \epsilon e_{i-1}e_i e_J+e_i e_{i+1}e_J=0,
 \qquad J\cap\{i-1,i,i+1\}=\varnothing.
\]
At an endpoint with constant outer coordinate only the first
relation occurs. In $Z_n$, the initial coordinate $z_0$ is frozen
and has no boundary relation. These are all the relations, since
the boundary into degree zero comes precisely from singleton
strata. Repeated symbols contribute a factor $\rho$ and hence
no additional weight-preserving relation.

We encode $e_J$ by a binary word whose entry at position $i$ is $1$ if $i\in J$ and $0$ otherwise. The second relation slides
$110$ to $011$ with coefficient $g$, and replacing $e_J$ by
$g^{\lfloor(\sum_{j\in J}j)/2\rfloor}e_J$ makes the coefficient of every slide
equal to one. A word with $v>0$ vacancies has a unique expression
\[
 1^{a_0}0\,1^{a_1}0\cdots0\,1^{a_v},\qquad a_i\ge0,
\]
including empty runs at both ends and between successive vacancies.
A slide decreases one run length by two and increases an adjacent run length
by two. At fixed total weight, two words are connected by slides exactly when
their run-length vectors have the same parity: after subtracting these parities
and dividing by two, adjacent transfers connect all nonnegative vectors with
the same sum. If an odd run contains only mutable positions, transfers can
reduce it to a singleton, whose symbol is killed by the first relation. For $Y_{2a}$ this kills
every odd run, including both ends. For $Z_n$, only the initial
run can be odd without being killed: it can be reduced to the
single frozen position $0$. Thus the sole possible surviving
parity vector is all zero for $Y_{2a}$, and has only $a_0$
possibly odd for $Z_n$.
It follows that one class survives in every even weight for $Y_{2a}$ and
in every weight for $Z_n$, with the fully selected word supplying
the top class. Since each relation either identifies classes by a unit or kills
an entire component, the cokernels are free over $A$:
\begin{equation}\label{eq:path-first-cokernels}
 \operatorname{coker}d_1^0(Y_{2a})=A\{0,2,\ldots,2a\},
 \qquad
 \operatorname{coker}d_1^0(Z_n)=A\{0,1,\ldots,n+1\}.
\end{equation}
Here the braces record weights. The formal weight-zero $A$ is replaced
by $\Z$ in the sheaf complex and never participates in a pivot.

We now construct the full chain homotopies by induction. The
case $Z_0$ is the torus splitting. For $Z_1$ the open subset $D(x_1)$ is
$\Gm^2$ and the closed divisor is $\A^1$ with $t=-1$.
The Thom $K_1$ has an identity component in the $x_1$ target,
so it cancels and leaves $\Z\oplus K_1\oplus K_2$.
For $n\ge2$, freezing the first vertex gives
\[
 D(x_1)\cong Z_{n-1}\times\Gm^{(t)},\qquad
 V(x_1)\cong\A^1\times Z_{n-2}.
\]
The endpoint parameter for the $Z_{n-1}$ factor is $x_1$, whereas
for the $Z_{n-2}$ factor it is $-t=x_2$; $y_1$ is the free affine
coordinate. These identifications use only signed coordinates.
In the induction, we require the contractions to have entries in $R$
and not to lower weight. Tensoring with the torus and applying
the Thom shift preserve these properties. The contractions obtained
by induction therefore replace the cellular cone by an actual two-term
complex over $R$, concentrated in homological degrees one and zero.
Its weight-preserving quotient remains chain-homotopy equivalent to
that of the original complex.

The degree-one term has one summand in each weight $1,\ldots,n$.
The degree-zero term has two summands in each of these weights and
one summand in each of weights zero and $n+1$.
By \eqref{eq:path-first-cokernels}, each weight-preserving column
$A\to A^2$ has cokernel $A$ and is a split injection. Indeed a
quotient row $(a,b)$ is unimodular, its kernel is generated by
$(-b,a)$, and the original column surjects onto this kernel;
its scalar coefficient in that generator must be a unit.
Using these pivots, we obtain a full square block equal to its
invertible weight-preserving part plus a strictly weight-raising
matrix. Multiplying by the inverse of the weight-preserving part gives
a matrix of the form $1+N$, with $N$ nilpotent. A finite geometric series
then gives an inverse to the block.
Cancelling this block leaves precisely one target in each weight.
The resulting homotopies again belong to $R$ and do not lower
weight, proving the assertion for $Z_n$.

For $Y_{2a}$ the open complement and closed divisor are, respectively,
$Z_{2a-1}$ and $\A^1\times Y_{2a-2}$, after constant sign changes.
The contracted cone has one source in each odd weight
$1,3,\ldots,2a-1$ and one target in each weight $0,\ldots,2a$.
Equation~\eqref{eq:path-first-cokernels} makes every
weight-preserving odd-weight scalar a unit. The same finite
triangular cancellation proves \eqref{eq:minimal-path-contractions}.
No specialization of $g$ or $\rho$ has been made.

It remains to reduce general orientations and frozen coefficients.
For a path forest, we choose bipartitions $P,Q$, with $P$ larger
on each odd component. The matrix $B_{Q,P}$ has full row rank
and a unimodular maximal minor, supplied by the consecutive
matching along each path. Integral basis changes put it into
$(I\ 0)$. Eliminating these unit pivot blocks in the full extended
matrix leaves exactly the frozen matrix on the one residual
column of each odd component. The same residual matrix occurs
when only the independent column family $P$ is retained.
Condition \AS\ makes it primitive and independent. Thus the
whole extended matrix is RFR.

We first remove the nonzero scalar coefficients. The character
homomorphism with exponent columns
$\widetilde B$ is split surjective, so the prescribed exchange
ratios have a $k$-rational preimage. Rescaling mutable and frozen
coordinates by that preimage, and the exchange partners by
the resulting common constants, makes both coefficients one.
This requires no extraction of roots. Source and sink mutations
then orient each path linearly \cite[Proposition~9.2]{FZII}.
Such mutations preserve RFR and the cluster variety.

The mutable row lattice of an even linear path is all of
$\Z^{n_i}$, whereas for an odd path it is the saturated kernel
of pairing with $(1,0,1,\ldots,1)$. The quotient for the forest
is therefore $\Z^q$. RFR says that the frozen rows generate this quotient.
Let $C_{\mathrm{std}}$ be the frozen matrix with one endpoint coefficient
for each odd component, as in $Z_{n_i}$, and zero rows for the other frozen
coordinates. We choose frozen coordinates so that the rows of the
transformed matrix $C_1$ have the same images in the quotient as the
corresponding rows of $C_{\mathrm{std}}$. Thus
$C_1=C_{\mathrm{std}}+UB$ for an integral matrix $U$. Let $u_a$ be
the columns of $U$. Substituting $x_a=t^{-u_a}X_a$ in each exchange
equation transforms the two frozen exponent vectors into
\[
 a_i=(C_1)_{+,i}-\sum_a[B_{ai}]_+u_a,\qquad
 b_i=(C_1)_{-,i}-\sum_a[-B_{ai}]_+u_a.
\]
Their difference is $(C_{\mathrm{std}})_i$. With the
coordinatewise minimum $\ell_i=\min(a_i,b_i)$, set
$Y_i=t^{-u_i-\ell_i}y_i$. This gives the standard
exchange equation for $C_{\mathrm{std}}$. We normalize a frozen torus basis
change in the same way, by factoring out the common
frozen monomial in its two exchange terms. Both directions
multiply mutable coordinates and their partners only by frozen
units, and therefore remain regular on all zero strata.
We obtain an actual isomorphism
\[
 X\cong\prod_{n_i\ {\rm even}}Y_{n_i}\times
         \prod_{n_i\ {\rm odd}}Z_{n_i}\times\Gm^{m-q}.
\]
This change of coordinates is used only after the minimal
contractions are proved; it need not preserve their weights
or frames. The cellular product formula
\cite[Lemma~2.31]{MS} tensors the actual chain homotopies
and gives the polynomial $P(z)$. This tensoring of chain homotopies
does not require flatness of projective strictly invariant sheaves.
Finally, applying $\operatorname{Hom}(-,M)$ proves the required vanishing
for every strictly invariant coefficient sheaf $M$.
\end{proof}
\section{Connected components, Chow groups, and stable freeness}\label{sec:component-chow}
Ordinary Chern classes give the first obstruction to stable freeness.
We therefore compute the Chow groups before passing to quadratic coefficients.
Throughout this appendix, the seed is acyclic and satisfies \AR\ over
an arbitrary field. The finite set $\pi_0(T_I)$ denotes the
connected components over that field, rather than geometric points.

\subsection{Maps on connected components}

Let $J=I\cup\{j\}$ be independent. Since $\Btil_{ji}=0$ for all $i\in I$,
the coordinate $x_j$ is a free torus factor:
\[
 T_I=(\Gm)_{x_j}\times S_{I,j}.
\]
The additional equation $\chi_j=-1$ embeds $T_J$ into $S_{I,j}$. It
therefore induces a map on finite \'etale component schemes and on their
sets of connected components over $k$:
\[
 \pi_0(T_J)\longrightarrow\pi_0(T_I).
\]
Write
\begin{equation}\label{cc:eq:component-pullback}
 R_{I,J}:\Z^{\pi_0(T_I)}\longrightarrow\Z^{\pi_0(T_J)}
\end{equation}
for the pullback of integer-valued functions. The column indexed by
$C\in\pi_0(T_I)$ is the indicator function of the inverse image of $C$
in $\pi_0(T_J)$, so its entries are $0$ and $1$. Since these maps pull back
component functions rather than push forward cycles, no extension-degree
factors occur.

\begin{theorem}\label{cc:thm:chow}
Assume \AR. For every $p>0$, there is an isomorphism
\begin{equation}\label{cc:eq:chow-formula}
 \CH^p(X)\cong
 \bigoplus_{\substack{J\in\Ind(\Gamma)\\|J|=p}}Q_J,
 \qquad
 Q_J=\operatorname{coker}\left(
 \bigoplus_{j\in J}\Z^{\pi_0(T_{J\setminus\{j\}})}
 \xrightarrow{\ \sum_jR_{J\setminus\{j\},J}\ }
 \Z^{\pi_0(T_J)}\right).
\end{equation}
Moreover, $\CH^0(X)=\Z$ and $\CH^p(X)=0$ for $p>\alpha(\Gamma)$, where
$\alpha(\Gamma)$ is the largest cardinality of an independent set.
\end{theorem}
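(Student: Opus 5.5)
We compute $\CH^p(X)=H^p_{\Nis}(X,\KM_p)$ with the purity cochain complex of Proposition~\ref{prop:AR-coefficients}, taking $M=\KM_p$. That proposition holds under \AR\ over a perfect field. Over an arbitrary field we instead use the niveau (Rost cycle) complex for the anticlique filtration $\Omega_\bullet$. Its filtration pieces are the strata $\mathcal O_I\cong\A^{|I|}\times T_I$, with normal parameters $(x_i)_{i\in I}$ from Proposition~\ref{cc:prop:geometry}. Homotopy invariance of Chow groups and the localization sequences are available over any field, so the argument is again localization along $\Omega_r$. In degree $p$ the relevant terms are
\[
C^{p}=\bigoplus_{|J|=p}H^0(\mathcal O_J,\KM_0)=\bigoplus_{|J|=p}\Z^{\pi_0(T_J)},\qquad
C^{p-1}=\bigoplus_{|I|=p-1}\mathcal O(\mathcal O_I)^\times,
\]
and $C^{r}=0$ for $r>p$, since negative Milnor $K$-theory vanishes. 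Hence $\CH^p(X)=\operatorname{coker}(d^{p-1})$, and the vanishing for $p>\alpha(\Gamma)$ is immediate because $C^p=0$ there. For $\CH^0$, the scheme $X$ is integral by Proposition~\ref{cc:prop:geometry}, so $\CH^0(X)=\Z$.

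The main step is to identify the image of $d^{p-1}$ inside each summand $\Z^{\pi_0(T_J)}$. We show that it equals the sum of the images of the maps $R_{J\setminus\{j\},J}$ in \eqref{cc:eq:component-pullback}. The differential is the divisor residue from $\mathcal O_I$ to the adjacent strata $\mathcal O_{I\cup\{\ell\}}$, with sign conventions irrelevant in Milnor $K$-theory (cf.\ the proof of Proposition~\ref{prop:acyclic-chow}). Fix $I=J\setminus\{j\}$. A unit on $\mathcal O_I$ pulls back to a unit on $T_I=(\Gm)_{x_j}\times S_{I,j}$, componentwise. On each component $C\in\pi_0(T_I)$, which is a split torus over a finite separable extension, every unit has the form $\lambda\,x_j^{m}\cdot u'$. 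Here $u'$ is a unit independent of the coordinate $x_j$, and $\lambda$ is a constant.

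The residue along $x_j=0$ is then computed as follows. The order of vanishing is $m$ on every component of $T_J$ lying over $C$, because $x_j$ is a transverse normal parameter by Proposition~\ref{cc:prop:geometry}. The factor $u'$ restricts to a unit and contributes nothing. So the residue of $x_j^{m}$ restricted to $C$ (and $1$ elsewhere) is $m$ times the indicator of the preimage of $C$ in $\pi_0(T_J)$. This is exactly $m\,R_{I,J}(\mathbf 1_C)$. Conversely, the elements $x_j\cdot\mathbf 1_C$ realize all the indicator columns, so the image of the $I$-summand in the $J$-summand is precisely $\operatorname{im}R_{I,J}$. Residues into strata $I\cup\{\ell\}\ne J$ land in other summands, and the $J$-summand receives contributions only from the sets $J\setminus\{j\}$, by the closure relations. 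Summing over $j\in J$ gives the cokernel $Q_J$, and the direct sum over $J$ gives \eqref{cc:eq:chow-formula}.

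The expected obstacle is the unit description on non-split components. One needs the argument to work over an arbitrary field, where $T_I$ may contain tori over extensions $k'/k$. Over such a component, the units are $k'^\times$ times characters. One must check that the component of $T_J$ over $C$ still has $x_j$-valuation equal to the exponent $m$, with no degree factor. This holds because the residue map is pullback of components, not pushforward of cycles, as remarked before the theorem.

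A second point is the case $p=1$, where $C^0$ is $\mathcal O(\Gm^d)^\times$. It requires $\pi_0(T_\varnothing)$ to be a single point, which holds since the initial torus is split and connected.
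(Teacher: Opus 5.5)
Your route is the paper's: Rost's cycle complex filtered by $\Omega_\bullet$, collapse to the row complex, and a residue computation for units on $T_I$ using the units equal to $x_j$ on one component and $1$ elsewhere. However, the passage from per-summand images to $\operatorname{coker}(d^{p-1})\cong\bigoplus_J Q_J$ does not follow as you wrote it. Knowing that the projection of $\operatorname{im}(d^{p-1})$ to each $J$-summand is $\sum_{j\in J}\operatorname{im}R_{J\setminus\{j\},J}$ does not make the image equal to the direct sum of these projections. A unit such as $x_jx_\ell$ on $T_I$ has residues in both the $I\cup\{j\}$ and $I\cup\{\ell\}$ summands. What must be excluded is that the image consists only of such mixed relations, and you only remark that residues at $I\cup\{\ell\}\ne J$ ``land in other summands''.

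The missing observation is that your test unit $x_j\mathbf 1_C$ has zero residue along every $\mathcal O_{I\cup\{\ell\}}$ with $\ell\neq j$. This holds because $j\notin I\cup\{\ell\}$, so $x_j$ is invertible near that stratum. Then each generator of the image is concentrated in a single summand. Together with your containment for arbitrary units, this gives $\operatorname{im}(d^{p-1})=\bigoplus_J\sum_{j\in J}\operatorname{im}R_{J\setminus\{j\},J}$. The paper obtains this decoupling from the simultaneous factorization $T_I=\Gm^{A(I)}\times Y_I$, where $A(I)$ is the set of mutable vertices with no edge to $I$. Units then have the unique form $u_C\prod_{j\in A(I)}x_j^{a_{C,j}}$, with exponents independent in $C$ and $j$, so no relation mixes distinct $J$.

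A smaller point concerns the collapse. Homotopy invariance of Chow groups only kills the terms $\CH^q(\mathcal O_I)$, $q>0$, in total degree $p$. To conclude that $\CH^p(X)$ is the cokernel of the single differential $d^{p-1}$, you also need $A^q(\mathcal O_I;K^M_*,q+1)=0$ for $q>0$. Otherwise higher differentials from these $K_1$-level groups could further cut down $\bigoplus_J\Z^{\pi_0(T_J)}$. The paper obtains this, as the vanishing $A^q(\mathcal O_I;K^M_*,s)=0$ for all $q>0$ and all $s$, from Rost's homotopy invariance and localization for cycle complexes. It applies these to split tori over the finite separable fields occurring in $E_I$.
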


For $p=1$ the formula is
$\Pic(X)=\bigoplus_i\Z^{|\pi_0(T_{\{i\}})|-1}$.
In characteristic zero this case agrees with the divisor-class-group
calculation of Garcia Elsener--Lampe--Smertnig
\cite[Theorems A and 5.5]{GELS}; the result above also determines the Chow groups
in all higher codimensions.

\begin{proof}
We work over the arbitrary ground field with Rost's Milnor $K$-theory
cycle complex. For a smooth equidimensional $Y$, write
\[
C^a(Y;K^M_*,s)=\bigoplus_{y\in Y^{(a)}}K^M_{s-a}(\kappa(y)),
\qquad A^a(Y;K^M_*,s)=H^a C^\bullet(Y;K^M_*,s).
\]
Its differential is given by residues and norms. In particular,
$A^p(X;K^M_*,p)=\CH^p(X)$ by the definition of rational equivalence;
see \cite[Section 5 and Remark 5.1]{Rost}.
Localization and affine-bundle invariance
\cite[Section 5 and Proposition 8.6]{Rost} give
\begin{equation}\label{cc:eq:acyclic}
A^q(\mathcal O_I;K^M_*,s)=0\qquad(q>0)
\end{equation}
for every $s$. Indeed, the vanishing holds for a field and hence for each finite
separable field occurring in $E_I$. Affine factors do not change these
groups, and localization for $Y\times\{0\}\subset Y\times\A^1$
proves the assertion inductively for split torus factors. The same
vanishing then holds for their finite disjoint unions.

We set $\Omega_{-1}=\varnothing$, put $Z_r=X\setminus\Omega_{r-1}$,
and filter $C^\bullet(X;K^M_*,p)$
by cycles supported on $Z_r$. The associated graded complex in
filtration degree $r$, reindexed by $q=a-r$, is
\[
\bigoplus_{|I|=r}C^q(\mathcal O_I;K^M_*,p-r).
\]
Its cohomology is zero for $q\ne0$. Thus the filtration spectral
sequence has a single row, with no higher differentials or extension
ambiguities. Its row complex is
\begin{equation}\label{cc:eq:cochains}
C^r(X,p)=\bigoplus_{\substack{I\in\Ind(\Gamma)\\|I|=r}}
A^0(T_I;K^M_*,p-r),
\qquad H^p(C^\bullet(X,p))=\CH^p(X).
\end{equation}
The cycle-module calculation is valid over an arbitrary base field
under \AR.

To compute the last differential, note that there are no terms above degree $p$
and that the final two terms are
\[
 C^{p-1}=\bigoplus_{|I|=p-1}\mathcal O(T_I)^\times,
 \qquad
 C^p=\bigoplus_{|J|=p}\Z^{\pi_0(T_J)}.
\]
The differential is the divisor residue between adjacent strata. There
is no quadratic orientation factor: the Milnor--Witt orientation action
maps to $1$ in Milnor $K$-theory. Ordinary incidence sign conventions, if
used, do not change any of the images below.

For a fixed $I$, let $A(I)$ be the set of mutable vertices outside $I$ with
no edges to $I$. None of the characters $\chi_i$, $i\in I$, involves
any coordinate $x_j$ with $j\in A(I)$. There is therefore a simultaneous
factorization
\[
 T_I=\Gm^{A(I)}\times Y_I.
\]
Each connected component $C$ of $Y_I$ is a split torus over a finite
separable field extension of $k$. Consequently, every unit on
$\Gm^{A(I)}\times C$ has a unique expression
\begin{equation}\label{cc:eq:unit-expression}
 u_C\prod_{j\in A(I)}x_j^{a_{C,j}},
 \qquad u_C\in\mathcal O(C)^\times,\quad a_{C,j}\in\Z.
\end{equation}
At the boundary indexed by $J=I\cup\{j\}$, the factor $u_C$ and all
$x_\ell$ with $\ell\ne j$ remain invertible. The residue of
\eqref{cc:eq:unit-expression} equals $a_{C,j}$ on each component of
$T_J$ whose image in $Y_I$ lies in $C$.

The integers $a_{C,j}$ are independent for different $C$ and $j$.
Equivalently, the unit on $T_I$ equal to $x_j$ on
$\Gm^{A(I)}\times C$ and to $1$ on all other components has residue
at $J$ equal to the indicator function of the components of $T_J$
mapping to $C$, and zero at every other stratum of codimension $p$.
Thus the image of
$C^{p-1}\to C^p$ is the direct sum, over $J$, of the images appearing in
\eqref{cc:eq:chow-formula}. In particular, no relation mixes distinct $J$,
which proves the asserted cokernel and direct-sum formulas. The remaining
claims follow from the dense initial torus and the absence of strata
in codimension greater than $\alpha(\Gamma)$.
\end{proof}

\subsection{Connected components and stable classes}

\begin{theorem}\label{cc:cor:fixed}
Under \AR, the following are equivalent:
\begin{enumerate}
\item $\CH^p(X)=0$ for every $p>0$;
\item $T_I$ is connected over $k$ for every $I\in\Ind(\Gamma)$;
\item every finite \'etale algebra defining $E_I$ is a field;
\item the rank map $K_0(X)\to\Z$ is an isomorphism;
\item every algebraic vector bundle on $X$ is stably free.
\end{enumerate}
\end{theorem}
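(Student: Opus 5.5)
The plan is to prove the cycle (2)$\Leftrightarrow$(3), then (2)$\Rightarrow$(4)$\Rightarrow$(5)$\Rightarrow$(1)$\Rightarrow$(2). The first equivalence is immediate from Proposition~\ref{cc:prop:geometry}. By \eqref{cc:eq:smith-stratum}, $T_I\cong E_I\times\Gm^{d-2|I|}$, and a split torus factor over $k$ does not change the set of connected components. Hence $\pi_0(T_I)=\pi_0(E_I)$, and $T_I$ is connected exactly when the finite \'etale algebra defining $E_I$ is a field.

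For (2)$\Rightarrow$(4), I will repeat the $G_0$ localization argument of Proposition~\ref{prop:kzero}, now under \AR. If $T_I$ is connected, it is a split torus over a finite separable field $L_I$. The stratum $\mathcal O_I\cong\A^{|I|}\times T_I$ then has $G_0(\mathcal O_I)=K_0(L_I)=\Z$, generated by its structure sheaf; this uses homotopy invariance and localization for Laurent extensions of a field. The Koszul resolution along the regular normal sequence $(x_i)_{i\in I}$ shows that this generator maps to $0$ in $G_0(\Omega_r)$. Induction from the initial torus gives $K_0(X)=G_0(X)=\Z$ by rank.

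The step (4)$\Rightarrow$(5) is the projective-complement argument already given in Proposition~\ref{prop:kzero}. For (5)$\Rightarrow$(1), stable freeness gives $K_0(X)=\Z$ by rank, so every class in $\widetilde K_0(X)$ vanishes. I expect this to be the main obstacle, because $\CH^p(X)$ with integral coefficients does not simply embed in $K_0$. To handle it, I will not pass through Chern characters. Instead I will use the explicit generators from Theorem~\ref{cc:thm:chow}: $\CH^p(X)$ is generated by closures of components $C$ of strata $\mathcal O_J$. The topological filtration on $G_0$ agrees with the Chow groups modulo $(p-1)!$-torsion phenomena, so I would rather argue directly. The cleanest route is probably to show (5)$\Rightarrow$(2) by contradiction, as follows.

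Suppose $T_J$ is disconnected for some $J$ of minimal size $p$. Then $Q_J\ne0$ in \eqref{cc:eq:chow-formula}, since the proper subsets of $J$ have connected strata and the image of their contributions has rank at most $|J|$. A finer count shows the image is generated by the constant functions, using that each $T_{J\setminus\{j\}}$ is connected by minimality. So $Q_J\cong\Z^{|\pi_0(T_J)|-1}\ne0$, and it is torsion-free. Next I will run the $G_0$ localization sequence in codimension $p$. By minimality, $\Omega_{p-1}$ has $G_0=\Z$, so $G_0(\Omega_p)$ contains $\bigoplus_{|J|=p}\Z^{\pi_0(T_J)}$ modulo the images from $K_1(\Omega_{p-1})$. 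The boundary $K_1(\Omega_{p-1})\to G_0$ of the $p$-strata factors through units and their divisor residues, exactly as in the proof of Theorem~\ref{cc:thm:chow}, plus Koszul relations identifying all components with signs. The resulting quotient surjects onto $Q_J\ne0$ modulo the single relation coming from the rank. This produces a nonzero class in $\widetilde K_0(\Omega_p)$. Extending it to $X$ requires surjectivity of $G_0(X)\to G_0(\Omega_p)$, which holds by localization, so $\widetilde K_0(X)\ne0$, contradicting (5).

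Finally, (1)$\Rightarrow$(2) follows from Theorem~\ref{cc:thm:chow}: take a disconnected $T_J$ with $|J|$ minimal. By the computation above, $Q_J\ne0$, so $\CH^{|J|}(X)\ne0$. The delicate points are the minimality bookkeeping, which shows the lower strata contribute only constants, and the compatibility of the $K_1$ boundary with the divisor residue.
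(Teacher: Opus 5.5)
Your treatment of (2)$\Leftrightarrow$(3), (2)$\Rightarrow$(4)$\Rightarrow$(5)$\Rightarrow$(4), and (1)$\Rightarrow$(2) matches the paper. In particular, the last of these correctly uses a minimal $J$, diagonal image, and $Q_J\cong\Z^{c_J-1}$. However, once you replace (5)$\Rightarrow$(1) by (5)$\Rightarrow$(2), no implication leads back into (1). You still need (2)$\Rightarrow$(1). This is immediate from Theorem~\ref{cc:thm:chow}, since every target is then $\Z$ and each predecessor map sends $1$ to $1$.

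The genuine gap is in (5)$\Rightarrow$(2). The localization sequence is fine, and so is lifting a rank-zero class from $G_0(\Omega_p)$ to $X$. What is unjustified is the claim that $\partial\bigl(K_1(\Omega_{p-1})\bigr)\subseteq\bigoplus_{|J|=p}\Z^{\pi_0(T_J)}$ consists only of divisor residues of units on the $(p-1)$-strata, ``exactly as in the proof of Theorem~\ref{cc:thm:chow}''. That proof runs in Rost's Milnor $K$-theory cycle complex. There the strata have no higher cycle-module cohomology, so the filtration spectral sequence has a single row. Quillen $K_1(\Omega_{p-1})$ is assembled from all lower strata. For $p\ge3$ it contains classes not supported on the $(p-1)$-strata, namely lifts of classes in $H^{p-r}(\mathcal K_{p-r+1})$ with $r\ge2$. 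Their boundaries are the higher differentials $d_r$ of the Brown--Gersten--Quillen spectral sequence into $E^{p,-p}=\CH^p$. These are not residue maps, and nothing in your argument controls them. The ``Koszul relations'' do not help: the Koszul complex only kills the class of the whole stratum, i.e.\ the diagonal, which is already a residue. Since you need the rank-zero part of $G_0(\Omega_p)$ to \emph{surject} onto $Q_J$, you need $\operatorname{im}\partial$ to be no larger than the $Q_J$-relations, and that is exactly what is unproved.

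The standard input that controls these differentials is the one you set aside. The kernel of $\CH^p(X)\to\gr^pK_0(X)$ is $(p-1)!$-torsion (Riemann--Roch without denominators), or equivalently, after rationalization, the Chern character is an isomorphism. At the minimal $p$, every $Q_J$ with $|J|=p$ is free, so torsion images vanish, and with this input your argument closes. The paper does precisely this. It observes that $Q_J$ is a nonzero free direct summand of $\CH^{|J|}(X)$, which therefore survives $\otimes\Q$, contradicting $\operatorname{ch}:K_0(X)_\Q\cong\CH^*(X)_\Q$ when $K_0(X)=\Z$. An alternative is $c_p([\mathcal O_Z])=(-1)^{p-1}(p-1)![Z]\neq0$ for $Z$ the closure of one component of $\mathcal O_J$, as noted after Corollary~\ref{cor:AS-products}.
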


\begin{proof}
If all strata are connected, then for every nonempty independent set $J$,
the target in the presentation of $Q_J$ is $\Z$, and any one predecessor
map takes $1$ to $1$. Hence all cokernels
vanish. Conversely, choose an independent $J$ of minimal cardinality
for which $c_J=|\pi_0(T_J)|>1$. All its proper predecessors are connected,
so each map into $\Z^{c_J}$ has the same diagonal image. Therefore
\[
 Q_J=\Z^{c_J}/\Z(1,\ldots,1)\cong\Z^{c_J-1}\ne0.
\]
Theorem~\ref{cc:thm:chow} identifies this group with a direct summand of
$\CH^{|J|}(X)$, rather than merely a quotient or an associated graded group.
The equivalence with the third condition follows from
\eqref{cc:eq:smith-stratum}.

Suppose now that every $T_I$ is connected. Then
$\mathcal O_I\cong(\A^{|I|}\times\Gm^{d-2|I|})_{L_I}$ for a finite
separable field $L_I/k$, so $G_0(\mathcal O_I)=\Z$, generated by its
structure sheaf. For nonempty $I$, the regular sequences $(x_i)_{i\in I}$
give zero pushforward in $G_0$, exactly as in the Koszul and
localization argument of Proposition~\ref{prop:kzero}. Induction
along the filtration gives $K_0(X)=\Z$ by rank, and the
projective-complement argument there proves stable freeness.
Conversely, stable freeness of all vector bundles
implies that the rank map on $K_0$ is an isomorphism.

Finally, if a stratum is disconnected, choose $J$ of smallest
cardinality as above. The resulting nonzero free summand of
$\CH^{|J|}(X)$ survives rationalization. The rational Chern character
isomorphism for smooth quasi-projective varieties
\cite[Corollary 18.3.2]{Fulton} contradicts $K_0(X)=\Z$.
Thus all five conditions are equivalent.
\end{proof}

\begin{corollary}\label{cc:cor:universal}
Under \AR, the following are equivalent:
\begin{enumerate}
\item $\CH^p(X_K)=0$ for every $p>0$ and every field extension $K/k$;
\item $\CH^p(X_{\bar k})=0$ for every $p>0$;
\item condition \AS\ holds;
\item every vector bundle on $X_K$ is stably free for every field
extension $K/k$.
\end{enumerate}
It suffices to check \AS\ for maximal independent sets.
\end{corollary}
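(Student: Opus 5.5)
The plan is to prove the equivalences in the cycle (3)$\Rightarrow$(4)$\Rightarrow$(2)$\Rightarrow$(3) and (1)$\Leftrightarrow$(3) by applying Theorem~\ref{cc:cor:fixed} over every extension field. The basic observation is that the Smith data of each $B_I$ do not depend on the field. After base change to $K$, Proposition~\ref{cc:prop:geometry} still gives $T_I\cong E_I\times\Gm^{d-2|I|}$, and the finite \'etale algebra $E_I\otimes K$ is $\prod_i K[z]/(z^{e_i}-b_i)$ up to the identification in \eqref{cc:eq:smith-stratum}. Note that \AR\ over $k$ implies \AR\ over $K$, since tameness depends only on the characteristic.

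For (3)$\Rightarrow$(1) and (3)$\Rightarrow$(4), observe that \AS\ forces all $e_i=1$. Then each $E_I=\Spec K$ is connected over every $K$. Theorem~\ref{cc:cor:fixed}, applied over $K$, gives both Chow vanishing and stable freeness; this is also Proposition~\ref{prop:kzero}.

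The implications (1)$\Rightarrow$(2) and (4)$\Rightarrow$(2) are immediate: take $K=\bar k$, and in case (4) use Theorem~\ref{cc:cor:fixed} over $\bar k$ to pass from stable freeness to Chow vanishing.

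The substantive step is (2)$\Rightarrow$(3). Over $\bar k$, the condition ``$E_I$ is a field'' (condition (3) of Theorem~\ref{cc:cor:fixed}) forces $E_I$ to be a single point. Under \AR, however, $E_I\otimes\bar k$ has exactly $\prod_i e_i$ geometric points. Indeed, tameness makes each $z^{e_i}-b_i$ separable, so it has $e_i$ distinct roots in $\bar k$. Hence $\prod_i e_i=1$ for every $I$, which together with rank $|I|$ is \AS. Here I must check that the count of points of $E_I$ is intrinsic, i.e.\ independent of the noncanonical decomposition. It is: it equals the order of the torsion subgroup of the cokernel of $B_I$, which is the Cartier dual of the finite group scheme $\ker(\Gm^{[d]\setminus I}\to\Gm^I)$ modulo its torus part.

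For the final sentence, suppose $I\subseteq I'$ with $I'$ independent and $B_{I'}$ saturated. I will show $B_I$ is saturated, which reduces \AS\ to maximal independent sets. The columns of $B_I$ are a subset of the columns of $B_{I'}$, with the extra rows indexed by $I'\setminus I$ restored. By independence those rows vanish on the columns indexed by $I$: $\Btil_{ji}=0$ for $i,j\in I'$. So $B_I$ is $B_{I'}$ restricted to a subset of columns, padded with zero rows. A subset of a basis-extendable family of columns is still basis-extendable, and zero rows do not affect this. Hence $B_I$ has rank $|I|$ and Smith factors equal to one. The main obstacle I expect is making the geometric-point count rigorous and independent of coordinates; the Cartier-dual description above is the approach I would try first.
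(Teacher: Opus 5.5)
Your proposal is correct and is essentially the paper's argument: over $\bar k$ the scheme $E_I$ has $\prod_\ell e_\ell$ components, so Theorem~\ref{cc:cor:fixed} applied over each extension gives the equivalences, since \AR\ and \AS\ are insensitive to base change. The final assertion follows because the rows restored when passing from a maximal independent set to a subset vanish on the retained columns. Two minor remarks. First, the algebra of $E_I\otimes K$ is the tensor product $\bigotimes_\ell K[z_\ell]/(z_\ell^{e_\ell}-b_\ell)$, not a product of algebras; your point count and your conclusion $E_I=\Spec K$ under \AS\ already use the tensor product. Second, the count you flagged as the main obstacle is automatically intrinsic, since it is the number of connected components of $(T_I)_{\bar k}$.
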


\begin{proof}
Over $\bar k$, the finite \'etale scheme $E_I$ has exactly
$\prod_\ell e_\ell$ connected components. Thus all strata are geometrically
connected if and only if every Smith factor equals $1$.
Theorem~\ref{cc:cor:fixed} proves the equivalence, since \AS\ is unchanged
by field extension. Every subset of a saturated independent collection
of columns remains saturated. Passing from a maximal independent set to
a subset restores only rows that are zero on the retained columns.
Applying Theorem~\ref{cc:cor:fixed} over each extension gives the stable-freeness criterion, and the preceding observation proves the final assertion.
\end{proof}

\begin{corollary}\label{cc:cor:first}
Suppose \AR\ holds and $k$ is algebraically closed. For each independent
$I$, let
\[
 \Delta_I=\prod_\ell e_\ell
 =\gcd\{\text{maximal minors of }B_I\},
 \qquad \Delta_\varnothing=1.
\]
If some $\Delta_I>1$, put
$r=\min\{|I|:\Delta_I>1\}$. Then
\begin{equation}\label{cc:eq:first-nonzero}
 \CH^p(X)=0\quad(0<p<r),\qquad
 \CH^r(X)\cong
 \bigoplus_{\substack{I\in\Ind(\Gamma)\\|I|=r}}
 \Z^{\Delta_I-1}.
\end{equation}
\end{corollary}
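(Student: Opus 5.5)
The plan is to read off the corollary from Theorem~\ref{cc:thm:chow}, using the Smith-form description \eqref{cc:eq:smith-stratum} to count connected components over $\bar k=k$. Over an algebraically closed field of characteristic prime to every $e_\ell$ (tameness in \AR), the finite \'etale scheme $E_I=\Spec k[z^{\pm1}]/(z_\ell^{e_\ell}-b_\ell)$ splits into exactly $\prod_\ell e_\ell=\Delta_I$ points. Hence $|\pi_0(T_I)|=\Delta_I$. The identification of $\prod_\ell e_\ell$ with the gcd of maximal minors of $B_I$ is the standard Smith normal form fact, valid because $B_I$ has full column rank $|I|$ under \AR.

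First I treat $0<p<r$. Every independent $J$ with $|J|=p$ has $\Delta_J=1$, so $T_J$ is connected and $\Z^{\pi_0(T_J)}=\Z$. Each predecessor $J\setminus\{j\}$ (nonempty, since $p\ge1$, or empty with $\Delta_\varnothing=1$) is also connected. The map $R_{J\setminus\{j\},J}$ sends the constant function $1$ to the indicator of the full preimage, which is $1$ in $\Z$. Thus $Q_J=0$ and $\CH^p(X)=0$ by \eqref{cc:eq:chow-formula}.

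For $p=r$, fix $J$ with $|J|=r$. All proper predecessors $J\setminus\{j\}$ have cardinality $r-1<r$ and so are connected. Each $R_{J\setminus\{j\},J}$ is then the pullback of a function on a one-point set. Its image is the diagonal $\Z(1,\ldots,1)$, since the map $\pi_0(T_J)\to\pi_0(T_{J\setminus\{j\}})$ is a map to a point. Therefore
\[
Q_J=\Z^{\Delta_J}/\Z(1,\ldots,1)\cong\Z^{\Delta_J-1}.
\]
This is zero when $\Delta_J=1$, so the formula also covers those summands. Summing over $J$ via \eqref{cc:eq:chow-formula} gives \eqref{cc:eq:first-nonzero}. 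This is the same minimal-cardinality argument used in the proof of Theorem~\ref{cc:cor:fixed}.

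The only step needing care is the component count $|\pi_0(E_I)|=\Delta_I$. Concretely, I need the Smith-form coordinate change in Proposition~\ref{cc:prop:geometry} to be integral and unimodular, so that it preserves the number of components. I also need each equation $z^{e}=\pm1$ to have exactly $e$ distinct roots in $k$. This uses the tameness clause of \AR\ and algebraic closedness, the latter guaranteeing that all roots are rational so that components over $k$ coincide with geometric points. I expect no further obstacle.
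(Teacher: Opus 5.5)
Your proposal is correct and follows the paper's own argument. All strata below cardinality $r$ are connected, so the cokernels $Q_J$ vanish there, and at cardinality $r$ every predecessor image is the diagonal, which gives $Q_J\cong\Z^{\Delta_J-1}$ through Theorem~\ref{cc:thm:chow}. Your extra check that $|\pi_0(T_I)|=\Delta_I$ over an algebraically closed field, using the Smith form and tameness, is the same count the paper uses in the proof of Corollary~\ref{cc:cor:universal}.
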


\begin{proof}
All strata of codimension less than $r$ are connected, so their cokernels
vanish. If $|I|=r$, every predecessor is connected and the target has
$\Delta_I$ components. Its relation subgroup is the diagonal copy of
$\Z$, so the result follows from Theorem~\ref{cc:thm:chow}.
\end{proof}

\begin{remark}\label{cc:rem:AR-scope}
Beyond the class satisfying \AR, Chow vanishing also occurs for the
coefficient-free rank-one seed $B=(0)$ over a field of characteristic
zero: its equation is $xx'=2$, so $X\cong\Gm$.
\end{remark}

\begin{corollary}\label{cor:AS-products}
Under \AS, $X$ is linear and, for every finite-type $k$-scheme $S$,
flat pullback is an isomorphism
\[
\CH_j(S)\xrightarrow{\sim}\CH_{j+d}(X\times S).
\]
\end{corollary}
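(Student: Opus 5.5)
The plan is to prove both assertions by induction along the anticlique filtration $\Omega_\bullet$, using the stratum description $\mathcal O_I\cong\A^{|I|}\times T_I$ of Proposition~\ref{cc:prop:geometry}. Under \AS\ every $T_I$ is a split torus $\Gm^{d-2|I|}$. Each $\mathcal O_I$ is therefore a product of affine space and a split torus. Such a variety is linear, in the sense of Totaro and Jannsen: it admits a stratification by pieces of the form $\A^a\times\Gm^b$, each built from affine spaces by complements. Linear varieties are stable under such finite stratifications. Since the $\Omega_r$ exhaust $X$ by adding closed strata that are disjoint unions of linear pieces, $X$ is linear.

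For the Chow statement I will first prove the product property for $\A^a\times\Gm^b$ with $a+b=e$. Homotopy invariance gives $\CH_j(S)\cong\CH_{j+a}(\A^a\times S)$. For a torus factor I use the localization sequence
\[
\CH_{j}(\{0\}\times W)\to\CH_{j+1}(\A^1\times W)\to\CH_{j+1}(\Gm\times W)\to0 .
\]
Via homotopy invariance, the first map is pushforward of the zero section. That map is zero because the zero section is rationally equivalent to the difference of the divisors of $t$ and $1$, and the latter is empty. This gives $\CH_{j+1}(\Gm\times W)\cong\CH_{j+1}(\A^1\times W)\cong\CH_j(W)$, induced by flat pullback. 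Iterating yields an isomorphism $\CH_j(S)\to\CH_{j+e}(\mathcal O_I\times S)$ for every stratum, given by flat pullback.

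I then induct over $\Omega_r$. For each $r$ there is the localization sequence
\[
\bigoplus_{|I|=r}\CH_{j+d-r}(\mathcal O_I\times S)\to\CH_{j+d}(\Omega_r\times S)\to\CH_{j+d}(\Omega_{r-1}\times S)\to0 ,
\]
together with the flat pullbacks from $\CH_j(S)$. The strata with $r>0$ have dimension $d-r$, so flat pullback sends $\CH_j(S)$ to $\CH_{j+d-r}(\mathcal O_I\times S)$. I must show that the pushforward of these classes into $\Omega_r\times S$ vanishes. Given that, the restriction map is injective, and surjectivity of flat pullback follows by induction from the initial torus. The vanishing holds because $\mathcal O_I\times S$ is the zero scheme of the regular sequence $(x_i)_{i\in I}$ in $\Omega_r\times S$, and this sequence is pulled back from $\Omega_r$. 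So the pushforward of $\pi^*\beta$ equals $\prod_{i\in I}c_1(\mathcal O)\cap\pi_{\Omega_r}^*\beta$, which is zero. Equivalently, $x_i$ is a global function whose divisor on the relevant open is exactly the stratum closure, as in the Koszul argument of Proposition~\ref{prop:kzero}.

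Injectivity then follows from the commuting square with the isomorphism for $\Omega_{r-1}$. Surjectivity follows because every class in $\CH_{j+d}(\Omega_r\times S)$ restricts to a pulled-back class on $\Omega_{r-1}\times S$. The difference is then supported on strata, where it is pulled back from $S$ and hence zero by the vanishing just shown.

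I expect the main obstacle to be the vanishing of the pushforward from a stratum whose closure in $\Omega_r$ is not a complete intersection of the chosen equations. The resolution should be that, inside $\Omega_r$, the stratum $\mathcal O_I$ is closed and equals $V(x_i:i\in I)$ scheme-theoretically, by the transversality in Proposition~\ref{cc:prop:geometry}. Refined Gysin maps for the regular embedding then give the identity above, compatibly with flat pullback from $S$.
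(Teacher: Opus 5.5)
Your proof is correct, but it takes a different route from the paper.

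The paper's argument is three lines. It gets linearity of $X$ from the stratification, as in your first paragraph. It takes $\CH_*(X)=\Z[X]$ from the component formula of Theorem~\ref{cc:thm:chow}. It then applies Totaro's integral Chow K\"unneth theorem for linear schemes, under which the exterior product with $[X]$ is flat pullback.

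You instead prove the K\"unneth statement directly for this $X$ by induction over $\Omega_\bullet$. The torus case comes from localization. The key input is the vanishing of pushforwards from strata. Inside $\Omega_r$, each $\mathcal O_I$ is the zero scheme of the regular section $(x_i)_{i\in I}$ of a trivial bundle of rank $r\ge1$. Hence $\iota_*\iota^!\alpha=c_r(\mathcal O^{r})\cap\alpha=0$, and $\iota^!\pi_{\Omega_r}^*\beta=\pi^*\beta$. This is the Chow-theoretic analogue of the Koszul argument in Proposition~\ref{prop:kzero}.

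What each route buys:
\begin{itemize}
\item Your route is self-contained. It proves injectivity and surjectivity at once; in fact it shows that restriction to $\Gm^d\times S$ is an isomorphism.
\item It does not use linearity for the Chow assertion.
\item At $S=\Spec k$ it reproves the Chow vanishing under \AS\ without the cycle-module computation.
\item The paper's route is shorter and places the result in Totaro's general framework.
\end{itemize}

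It is worth making explicit where \AS\ enters your argument. Connectedness of each $T_I$ makes the whole zero scheme of $(x_i)_{i\in I}$ in $\Omega_r$ a single stratum, all of whose classes are pullbacks from $S$. With disconnected strata, the pushforward of one component need not vanish, and indeed the Chow groups can be nonzero.

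There is one indexing slip. In the localization sequence the stratum term should be $\CH_{j+d}(\mathcal O_I\times S)$, not $\CH_{j+d-r}(\mathcal O_I\times S)$, and its classes are flat pullbacks from $\CH_{j+r}(S)$. Your vanishing argument applies to pullbacks of classes in every degree, and the stratum isomorphism holds in all degrees, so the logic is unaffected.
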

\begin{proof}
Each anticlique stratum is an affine space times a split torus and is therefore
linear. It follows that their finite stratified union $X$ is also linear.
Theorem~\ref{cc:thm:chow} gives $\CH_*(X)=\Z[X]$.
Totaro's integral Chow K\"unneth theorem for linear schemes
\cite[Section 3, Proposition 1]{TotaroL} gives the assertion.
\end{proof}

Suppose some $T_I$ is disconnected over $k$, and let $r$ be the least
cardinality of such an independent set $I$. The proof of
Theorem~\ref{cc:cor:fixed} produces a vector bundle that is not stably
free. More concretely, choose such an $I$ with $|I|=r$, and let $Z$
be the closure in $X$ of one connected component of $\mathcal O_I$.
Then $[\mathcal O_Z]\in K_0(X)$ has rank zero and satisfies
$\operatorname{ch}_r([\mathcal O_Z])=[Z]\ne0$ rationally.
A finite projective resolution and a projective complement represent
this class as $[E]-\rk(E)[\mathcal O_X]$ for a vector bundle $E$.
Since all lower positive Chow groups vanish,
\[
c_r(E)=(-1)^{r-1}(r-1)![Z]\ne0
\]
in the free group $\CH^r(X)$; see \cite[Example 15.3.6]{Fulton}.

\subsection{Beyond the saturated and full-rank cases}\label{cc:sec:examples}
The following coefficient-free matrix satisfies \AS\ and has
deficient column rank:
\[
 \begin{pmatrix}0&1&1\\-1&0&1\\-1&-1&0\end{pmatrix}.
\]
It has rank two, but its only nonempty independent sets are singletons
and its columns are primitive. Over a perfect field of characteristic
different from two, all vector bundles on its smooth cluster threefold are
free by Corollary~\ref{cor:cochordal-triviality}.
Primitivity of individual columns, however, is insufficient.
For two isolated mutable vertices and frozen block
$\left(\begin{smallmatrix}1&1\\1&-1\end{smallmatrix}\right)$,
in characteristic different from two, the singleton strata are
connected and the joint stratum has two rational components.
The component formula gives $\CH^1=0$ and $\CH^2=\Z$, so not every
vector bundle is stably free.

Connectedness over the ground field must also be distinguished
from geometric connectedness. For the coefficient-free
$2$-Kronecker surface over $\Q$, each singleton stratum is
$\A^1_{\Q(\sqrt{-1})}$, with
$T_{\{i\}}\cong\Spec\Q(\sqrt{-1})$.
Thus $K_0=\Z$ and $\CH^{>0}=0$ over $\Q$,
whereas over $\bar\Q$ the Picard group is $\Z^2$.

\subsection{Component groups over an algebraic closure}
\label{cc:sec:lattice}
Over $\bar k$, the component maps in the Chow formula have the
following lattice description. For each independent set $I$, put
\[
 A_I=\operatorname{coker}
 (B_I^{\mathsf t}:\Z^{[d]\setminus I}\longrightarrow\Z^I).
\]
For a fixed independent set $J$, choose a point of $T_J$ to
translate all $T_I$, $I\subseteq J$, to character kernels.
Set $L_I=\operatorname{coker}(B_I)$. The geometric components are
$\Hom(\operatorname{tors}L_I,\Q/\Z)$, after compatible choices of
roots of unity; dualizing the free presentation of $L_I$ identifies
this group with $\Ext^1(L_I,\Z)=A_I$. Naturality identifies the map
from $A_J$ to $A_I$ with coordinate projection. Since the restored rows
are zero on the columns of $I$, this projection is well defined
and surjective. Consequently, $Q_J$ is the quotient of $\Z^{A_J}$ by the
indicators of fibers of the projections $A_J\to A_{J\setminus\{j\}}$.
Translation choices only permute the component labels.

For example, three isolated vertices with frozen block
$\left(\begin{smallmatrix}1&1&1\\0&2&0\\0&0&2\end{smallmatrix}\right)$
give $\CH^1=0$, $\CH^2=\Z^3$, and $\CH^3=\Z/2$ in
characteristic different from two. The last stratum has four
rational components. The fibers of the three maps
$\pi_0(T_{\{1,2,3\}})\to\pi_0(T_{\{i,j\}})$, $1\le i<j\le3$,
are precisely the six two-element subsets of $\pi_0(T_{\{1,2,3\}})$. The indicators of these subsets generate the even-sum
sublattice of $\Z^4$, which proves the last assertion. This example shows that the integral
component formula can retain torsion lost by rationalization.

\subsection{Vector bundles for connected strata}
\begin{corollary}\label{cor:component-bundle-range}
Let $k$ be perfect of characteristic different from two, and let $X$
be an acyclic cluster variety satisfying \AR. Suppose every $T_I$ is
connected over $k$. Every vector bundle on $X$ of rank greater than
$\alpha(\Gamma)$ is free. If $\alpha(\Gamma)\le2$, the Euler class gives
\[
\operatorname{Vect}^{\mathrm{or}}_2(X)\simeq\CW^2(X),
\qquad
\operatorname{Vect}_2(X)\simeq
\CW^2(X)/\Gamma(X,\mathcal O_X)^\times,
\]
and every rank-two vector bundle becomes free after taking the direct
sum with a trivial line bundle.
If $\alpha(\Gamma)\le1$, all vector bundles are free.
\end{corollary}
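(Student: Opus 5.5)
The plan is to reduce the corollary to Proposition~\ref{prop:bundle-range} with $Y=X$ and $h=\alpha(\Gamma)$. That proposition needs three inputs: $X$ is smooth and affine, the rank map gives $K_0(X)=\Z$, and $\operatorname{cd}_{\A^1}(X)\le\alpha(\Gamma)$.

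Smoothness comes from Proposition~\ref{cc:prop:geometry}, which applies under \AR. Affineness holds because $X=\Spec A$.

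For $K_0$, the hypothesis that every $T_I$ is connected over $k$ is condition (2) of Theorem~\ref{cc:cor:fixed}. Its equivalence with condition (4) gives $K_0(X)=\Z$ by rank. The same theorem also gives stable freeness and $\CH^{>0}(X)=0$, so in particular $\Pic(X)=0$. This step uses only \AR, not \AS, which is exactly why the theorem was proved in that generality.

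For the cohomological dimension, Proposition~\ref{prop:AR-coefficients} shows that under \AR\ the purity cochain complex $C^p(X,M)=\bigoplus_{|I|=p}M_{-p}(T_I)$ computes $H^*_{\Nis}(X,M)$. Its length is $\alpha(\Gamma)$, so $\operatorname{cd}_{\A^1}(X)\le\alpha(\Gamma)$.

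With the three inputs in place, Proposition~\ref{prop:bundle-range} gives freeness in every rank greater than $h=\alpha(\Gamma)$. When $h\le2$, it gives the Euler-class bijection $\operatorname{Vect}^{\mathrm{or}}_2(X)\simeq\CW^2(X)$. It identifies unoriented bundles with the orbits of $\Gamma(X,\mathcal O_X)^\times$ acting through $\la u\ra$. It also shows that $E\oplus\mathcal O$ is free for every rank-two bundle $E$. When $h\le1$, it gives freeness in all ranks.

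The main obstacle is to check that Proposition~\ref{prop:bundle-range} genuinely applies in this setting. Two points need care.

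\begin{itemize}
\item Its proof uses affine representability \cite{AHWIII} and Postnikov obstruction theory, and I must confirm that these need only smoothness, affineness, perfection of $k$ with characteristic not two, and the stated cohomological bound. In particular, they must not require split-torus strata or \AS. The strata under \AR\ are tori over finite separable extensions, but Proposition~\ref{prop:AR-coefficients} already converts them into the needed vanishing of higher cohomology.
\item The unit action must be the orbit action of the full unit group $\Gamma(X,\mathcal O_X)^\times$, with no further restriction, and I would verify this directly from the proof of Proposition~\ref{prop:bundle-range}.
\end{itemize}
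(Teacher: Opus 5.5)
Your proposal is correct and is essentially the paper's proof: Theorem~\ref{cc:cor:fixed} gives $K_0(X)=\Z$, Proposition~\ref{prop:AR-coefficients} gives $\operatorname{cd}_{\A^1}(X)\le\alpha(\Gamma)$, and Proposition~\ref{prop:bundle-range} then yields every assertion. The two points you flag need no separate check. Proposition~\ref{prop:bundle-range} is stated for an arbitrary smooth affine $Y$ with exactly these two hypotheses, and it already describes unoriented classes as orbits of the full unit group $\Gamma(Y,\mathcal O)^\times$.
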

\begin{proof}
Theorem~\ref{cc:cor:fixed} gives $K_0(X)=\Z$, and
Proposition~\ref{prop:AR-coefficients} gives the required bound on cohomological
dimension. The assertions now follow from Proposition~\ref{prop:bundle-range}.
\end{proof}
Under \AS, the integer Witt complex of Theorem~\ref{thm:universal-witt}
computes the Chow--Witt group $\CW^2(X)$ in this statement. For connected nonsplit
strata under \AR, its terms instead involve coefficients over the
finite separable fields occurring in $E_I$; one cannot replace them
by the constant tensor factor $W(k)$ used in the split case.

\section{The integer Witt complex and Chow--Witt groups}\label{sec:witt}\label{subsec:universal-witt}
The Witt specialization of the cellular differential is defined over $\Z$.
We will use this to compute the positive Chow--Witt groups, including their
possible torsion, by the same finite complex over every perfect field of
characteristic different from two.
Throughout this section, $\Btil$ is an acyclic seed satisfying \AS, with skew-symmetrizable principal part, and $X_k$ denotes its cluster variety over such a field $k$.

We begin with $V=\F_2^d$, write $z^a$ for the basis element of $\Z[V]$ indexed by $a\in V$, and denote the coordinate vectors and exchange columns modulo two by $e_i$ and $b_i$.
For $I\in\Ind(\Gamma)$, we define
\begin{equation}\label{eq:universal-stratum}
R_I=\Z[V]/(z^{e_i}-1,\ z^{b_i}+1:i\in I),
\qquad
C^p_{\Witt}(\Btil;\Z)=\bigoplus_{\substack{I\in\Ind(\Gamma)\\|I|=p}}R_I.
\end{equation}
The sign in $z^{b_i}+1$ records the equation $\chi_i=-1$ on the stratum and is part of the integral calculation.
If $J=I\cup\{j\}$ is independent, let $q_{I,J}:R_I\to R_J$ be the quotient map and let $\nu_J(j)$ be the number of elements of $J$ smaller than $j$.
The $j$th coordinate $a_j\in\{0,1\}$ of $a\in V$ defines the face map
\begin{equation}\label{eq:universal-residue}
\delta_{I,J}(z^a)=(-1)^{\nu_J(j)}a_jq_{I,J}(z^a).
\end{equation}
We take the sum of these face maps as the differential of $C^\bullet_{\Witt}(\Btil;\Z)$.

\begin{theorem}\label{thm:universal-witt}
The groups in \eqref{eq:universal-stratum} form a finite complex of finite free abelian groups, with
\[
\rk R_I=2^{d-2|I|}.
\]
For every $q\geq0$ there are isomorphisms, compatible with extensions of the base field,
\begin{equation}\label{eq:universal-witt-cohomology}
H^q_{\Nis}(X_k,\Wsh)
\simeq H^q\bigl(C^\bullet_{\Witt}(\Btil;\Z)\otimes_{\Z}\Witt(k)\bigr).
\end{equation}
For every $p>0$, the change-of-coefficient maps induce isomorphisms
\begin{equation}\label{eq:universal-CW}
\CW^p(X_k)\xrightarrow{\sim}H^p_{\Nis}(X_k,\Ish^p)
\xrightarrow{\sim}H^p_{\Nis}(X_k,\Wsh).
\end{equation}
In particular, the same integer complex computes all positive Chow--Witt groups, and depends only on $\Ind(\Gamma)$ and $\Btil\bmod2$.
\end{theorem}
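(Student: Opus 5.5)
The plan is to specialize the cellular cochain complex of Theorem~\ref{thm:full-differential} to $M=\Wsh$, then compare $\KMW_p$, $\Ish^p$ and $\Wsh$ in positive degree.

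\textbf{Witt specialization.} Since $\Wsh_{-1}=\Wsh$, the cochain terms are $C^p(X,\Wsh)=\bigoplus_{|I|=p}\Wsh(T_I)$. For a split torus $T=\Gm^t$, the splitting \eqref{eq:free-torus} together with $\eta[u]=\la u\ra-1$ identifies $\Wsh(T)$ with $\Witt(k)[\Z^t/2\Z^t]$: the classes $\la u_1^{a_1}\cdots u_t^{a_t}\ra$ for $a\in\F_2^t$ form a $\Witt(k)$-basis. The bases must be chosen so that the identification is integral. For $T_I$, inside $\Gm^{[d]\setminus I}$ cut out by $\chi_i=-1$, the form $\la\chi_i\ra$ equals $\la-1\ra$ on $T_I$. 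I would therefore describe $\Wsh(T_I)$ as the quotient of $\Witt(k)[V]$ by the relations $z^{e_i}=1$ (the coordinate $x_i$ is deleted) and $z^{b_i}=\la-1\ra$.

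Here is the first subtlety. The paper's integral ring uses $z^{b_i}=-1$, which matches the Witt relation $\la-1\ra=-1$ in $\Witt(k)$, since $\la1\ra+\la-1\ra=0$ there. So $\Wsh(T_I)\cong R_I\otimes\Witt(k)$. The rank $2^{d-2|I|}$ follows because, under \AS, the vectors $e_i$ and $b_i$ for $i\in I$ are independent modulo two. This holds because $B_I$ has Smith factors one, so the columns remain independent mod $2$, and the $e_i$ rows are deleted. Hence the relations cut a free $\Z[\F_2^d]$-quotient down to a group algebra of $\F_2^{d-2|I|}$ twisted by a sign character, which is free of that rank.

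\textbf{Differential.} Next I would compute the differential from \eqref{eq:full-cellular-differential}. On Witt sheaves, the contraction $\rho$ in the coordinate $x_j$ sends $\la x_j\ra\alpha$ to $\alpha$ and sends $\alpha$ (independent of $x_j$) to $0$, up to the residue normalization. This gives exactly the factor $a_j$. Restriction $\iota^*$ is the quotient $q_{I,J}$, and $g=\la-1\ra$ acts as $-1$, giving $(-1)^{\nu}$.

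\textbf{Chow--Witt comparison.} For \eqref{eq:universal-CW} I would use the exact sequences $0\to\Ish^{p+1}\to\Ish^p\to\KM_p/2\to0$ and the fiber square $\KMW_p=\Ish^p\times_{\Ish^p/\Ish^{p+1}}\KM_p$, each evaluated through the cellular complex.
\begin{itemize}
\item The Milnor part is handled by Proposition~\ref{prop:acyclic-chow}: its split surjectivity shows that $H^p(\KM_p)=0$ and, tensored mod $2$, that $H^p(\KM_p/2)=0$. The five-lemma on the fiber square then gives $\CW^p\cong H^p(\Ish^p)$.
\item To pass from $\Ish^p$ to $\Wsh$, I would use $\Ish^p\subset\Ish^{p-1}\subset\cdots\subset\Wsh$ with quotients $\KM_r/2$. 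These require $H^p(\KM_r/2)=0$ and $H^{p+1}(\KM_r/2)=0$ for $r<p$. The second vanishing should follow from the cellular term in degree $p+1$, namely $\bigoplus(\KM_{r-p-1}/2)(T_J)$, which is zero for $r<p+1$. The first needs the last-differential surjectivity of Proposition~\ref{prop:acyclic-chow} generalized to weights $r<p$, where the degree-$p$ term already vanishes since $r-p<0$.
\end{itemize}
I expect the main obstacle to be matching residue and sign conventions so that the Witt differential is exactly \eqref{eq:universal-residue}. This includes treating the constant $\Z$ summand correctly in degree $0$, where $\Wsh(\Gm^d)$ still has the full rank $2^d$.
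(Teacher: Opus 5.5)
Your identification of the Witt cochain complex matches the paper's argument: the torus decomposition, the relations $z^{e_i}=1$ and $z^{b_i}=-1$, the rank count from independence modulo two under \AS, and the residue giving $a_j$ with sign $(-1)^{\nu}$. You should still check directly on $R_I$ that $\delta_{I,J}$ is well defined, since the $j$th coordinate vanishes on every $e_i$ and $b_i$ with $i\in I$. You should also check that $\delta^2=0$ over $\Z$, because $\Witt(k)$ may have torsion.

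\textbf{The gap is in your passage from $\Ish^p$ to $\Wsh$.} Consider the long exact sequence of $0\to\Ish^{r+1}\to\Ish^r\to\KM_r/2\to0$. Surjectivity of $H^p(X,\Ish^{r+1})\to H^p(X,\Ish^r)$ follows from $H^p(X,\KM_r/2)=0$. Injectivity, however, requires the connecting map $H^{p-1}(X,\KM_r/2)\to H^p(X,\Ish^{r+1})$ to vanish. The condition $H^{p+1}(X,\KM_r/2)=0$ that you list is irrelevant; it is what you would need if $\KM_r/2$ were the kernel rather than the cokernel. So injectivity of $H^p(X,\Ish^p)\to H^p(X,\Wsh)$ is not proved, and this is where the real content lies:
\begin{itemize}
\item For $r\le p-2$, the group $H^{p-1}(X,\KM_r/2)$ vanishes for cellular-degree reasons.
\item For $r=p-1$ and $p\ge2$, it is $\CH^{p-1}(X)/2$, which vanishes only because of Proposition~\ref{prop:acyclic-chow}.
\item For $p=1$, it is $H^0(X,\Z/2)=\Z/2\neq0$. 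Here you must use that the rank map from $H^0(X,\Wsh)$ is onto.
\end{itemize}
The paper avoids long exact sequences and works at the cochain level. Since $(\Ish^p)_{-q}=\Wsh$ for $q\ge p$, the two cellular complexes agree in degrees $\ge p$. In degree $p-1$ they differ only by $I(k)\subseteq\Witt(k)$ in the constant coefficients, and constants have zero residue. Hence $\ker d^p$ and $\operatorname{im}d^{p-1}$ coincide.

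\textbf{A smaller omission of the same kind affects your first isomorphism.} Compare the long exact sequences of $0\to\Ish^{p+1}\to\KMW_p\to\KM_p\to0$ and $0\to\Ish^{p+1}\to\Ish^p\to\KM_p/2\to0$. To get injectivity of $\CW^p(X)\to H^p(X,\Ish^p)$, the four lemma needs $H^{p-1}(X,\KM_p)\to H^{p-1}(X,\KM_p/2)$ to be surjective. The facts $\CH^p(X)=0$ and $H^p(X,\KM_p/2)=0$ alone do not give this. The surjectivity does follow from the right inverse $s$ in Proposition~\ref{prop:acyclic-chow}:
\begin{enumerate}
\item Lift a mod-two cocycle to a cochain $c$.
\item Write $dc=2y$ with $y$ in the free group $\bigoplus_{|J|=p}\Z e_J$.
\item Replace $c$ by $c-2s(y)$, which is a cocycle with the same reduction.
\end{enumerate}
The paper instead combines Hornbostel--Wendt's fiber-product description of $\CW^p$, valid when Chow groups have no $2$-torsion, with positive Chow vanishing.
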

\begin{proof}
We first check that the stratum groups are free. The vectors $e_i,b_i$, $i\in I$, are linearly independent in $V$.
Indeed, projection to the coordinates indexed by $I$ kills the $b_i$ by independence of $I$, while detecting the coefficients of the $e_i$. The remaining coefficients vanish because \AS\ makes the selected columns independent modulo two.
Their span therefore admits the sign character $e_i\mapsto1$, $b_i\mapsto-1$.
A complementary subspace of $V$ identifies $R_I$ with the integral group algebra of an elementary abelian group of rank $d-2|I|$, proving freeness and the rank formula.

For $j\notin I$ with $I\cup\{j\}$ independent, the $j$th coordinate vanishes on every old relation vector $e_i,b_i$.
It follows that \eqref{eq:universal-residue} respects the defining signed relations of $R_I$.
For two distinct vertices added to $I$, the coordinate projections commute, as do the quotient maps, while the two incidence signs are opposite.
Consequently $\delta^2=0$.

We now compare this complex with Witt cohomology. Successive evaluation at $1$ and contraction on a split torus $T$ give
\[
\Wsh(T)=\Witt(k)[M/2M],
\]
where $M$ is its character lattice and the character $t^a$ corresponds to $\la t^a\ra$.
This is the torus decomposition underlying \cite[Section 3.3]{MS}.
On the stratum indexed by $I$, the relations on these classes are precisely those in \eqref{eq:universal-stratum}: normal coordinates are omitted and $\la\chi_i\ra=\la-1\ra=-1$.
Hence $R_I\otimes\Witt(k)\simeq\Wsh(T_I)$.
The second Witt residue of $\la x_j^a u\ra$, with $u$ unramified along $x_j=0$, is zero when $a$ is even and $\la u\ra$ when $a$ is odd.
With the ordered normal frame, the full cellular differential is therefore \eqref{eq:universal-residue} tensored with $\Witt(k)$.
The cellular cohomology theorem gives \eqref{eq:universal-witt-cohomology}, compatibly with field extension.

To compare the coefficient sheaves, we choose coordinates $t_1,\ldots,t_s$ on a stratum torus and set $\pf{t_A}=\prod_{a\in A}(1-\la t_a\ra)$.
The triangular change from character classes to Pfister monomials gives
\begin{align*}
\Wsh(T)&=\Witt(k)\oplus\bigoplus_{\varnothing\ne A\subseteq[s]}\Witt(k)\pf{t_A},\\
\Ish(T)&=I(k)\oplus\bigoplus_{\varnothing\ne A\subseteq[s]}\Witt(k)\pf{t_A}.
\end{align*}
The cellular complexes for $\Ish^p$ and $\Wsh$ coincide from degree $p$ onward.
In degree $p-1$, the only difference in each stratum term is the inclusion
$I(k)\subseteq\Witt(k)$ in the constant coefficient group.
Constant Witt classes have zero residue, so the two complexes have the same image in degree $p$.
This proves the second isomorphism in \eqref{eq:universal-CW}.
The first follows from the Milnor--Witt coefficient square and \cite[Proposition 2.11]{HW}: when the Chow groups have no $2$-torsion, the comparison identifies $\CW^p(X)$ with the fiber product of $H^p(X,\Ish^p)$ and a subgroup of $\CH^p(X)$ over $\CH^p(X)/2$. Positive Chow vanishing from Corollary~\ref{cc:cor:universal} reduces this fiber product to $H^p(X,\Ish^p)$.
\end{proof}

\begin{corollary}\label{cor:universal-witt-UCT}
Let $A_p=H^p(C^\bullet_{\Witt}(\Btil;\Z))$.
For $p>0$ one has $A_p\simeq\CW^p(X_{\R})$ and a natural short exact sequence
\begin{equation}\label{eq:universal-witt-UCT}
0\longrightarrow A_p\otimes_{\Z}\Witt(k)
\longrightarrow\CW^p(X_k)
\longrightarrow\operatorname{Tor}^{\Z}_1(A_{p+1},\Witt(k))
\longrightarrow0.
\end{equation}
It splits as a sequence of abelian groups, without a preferred splitting.
If Smith normal form gives
\[
A_p=\Z^{r_p}\oplus\bigoplus_i\Z/a_i,
\qquad
(A_{p+1})_{\mathrm{tors}}=\bigoplus_j\Z/b_j,
\]
then, noncanonically,
\begin{equation}\label{eq:universal-witt-Smith}
\CW^p(X_k)\simeq\Witt(k)^{r_p}
\oplus\bigoplus_i\Witt(k)/a_i\Witt(k)
\oplus\bigoplus_j\Witt(k)[b_j],
\end{equation}
where $\Witt(k)[b]=\ker(b:\Witt(k)\to\Witt(k))$.
\end{corollary}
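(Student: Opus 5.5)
The plan is to deduce everything from Theorem~\ref{thm:universal-witt} together with the universal coefficient theorem for a finite complex of free abelian groups. By that theorem, $C^\bullet=C^\bullet_{\Witt}(\Btil;\Z)$ is a bounded complex of finitely generated free abelian groups. For every $p>0$ and every perfect $k$ of characteristic different from two it also gives
\[
\CW^p(X_k)\simeq H^p(C^\bullet\otimes_\Z\Witt(k)).
\]
The first step is the algebraic universal coefficient theorem for the complex $C^\bullet$, which has free terms and cohomological grading. For any abelian group $N$ it gives a natural short exact sequence
\[
0\to H^p(C^\bullet)\otimes N\to H^p(C^\bullet\otimes N)\to\operatorname{Tor}_1^\Z(H^{p+1}(C^\bullet),N)\to0.
\]
This sequence splits, noncanonically. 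I will derive it from the Künneth sequence for the tensor product of $C^\bullet$ with $N$ in degree zero. The only index check is that the Tor term comes from $H^{p+1}$, because the differential raises degree.

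Next I take $N=\Witt(k)$ and obtain \eqref{eq:universal-witt-UCT}. Naturality in $k$ follows because the comparison isomorphisms in Theorem~\ref{thm:universal-witt} are compatible with field extension.

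To identify $A_p$ with $\CW^p(X_\R)$, I specialize to $k=\R$. There $\Witt(\R)=\Z$ via the signature, and this group is torsion free. So the Tor term vanishes and $A_p\otimes\Z=A_p$. The point to verify is that $\R$ is perfect of characteristic zero, so the theorem applies, and that the isomorphism $\Witt(\R)\cong\Z$ is the one compatible with $\la-1\ra\mapsto-1$. This sign has already been absorbed into the integer complex.

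The Smith form statement \eqref{eq:universal-witt-Smith} then follows by computation. First, $\Z/a\otimes\Witt(k)=\Witt(k)/a\Witt(k)$. Second, $\operatorname{Tor}_1(\Z/b,\Witt(k))=\Witt(k)[b]$. Third, the free part of $A_{p+1}$ contributes no Tor. Combined with the splitting, these give the stated decomposition.

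The main obstacle is bookkeeping rather than mathematics. One must make sure that the isomorphism \eqref{eq:universal-CW} holds for all $p>0$ uniformly; it uses positive Chow vanishing under \AS, which holds over every perfect field. One must also make sure that the identification $R_I\otimes\Witt(k)\simeq\Wsh(T_I)$ commutes with the differentials, so that the universal coefficient sequence applies to the actual cellular complex and not only degreewise. Both facts are contained in Theorem~\ref{thm:universal-witt}, so I expect no genuine difficulty.
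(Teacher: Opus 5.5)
Your proposal is correct and follows essentially the same route as the paper. Like the paper, you apply the universal coefficient theorem to the finite free complex of Theorem~\ref{thm:universal-witt}, specialize to $k=\R$ with $\Witt(\R)=\Z$ to identify $A_p$ with $\CW^p(X_\R)$, and read off \eqref{eq:universal-witt-Smith} from $\Z/a\otimes\Witt(k)=\Witt(k)/a\Witt(k)$ and $\operatorname{Tor}_1^\Z(\Z/b,\Witt(k))=\Witt(k)[b]$. Your concern about how $\la-1\ra$ behaves under $\Witt(\R)\cong\Z$ is unnecessary, since only the abelian group structure of the coefficients enters the universal coefficient theorem.
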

\begin{proof}
The universal coefficient theorem applies to the finite free complex of Theorem~\ref{thm:universal-witt}. Together with $\Witt(\R)=\Z$, it gives the asserted exact sequence.
Smith normal form then gives \eqref{eq:universal-witt-Smith}.
Thus the torsion of the next cohomology group contributes in each intermediate degree; in the highest degree, this contribution is absent.
\end{proof}

To pass from oriented bundles to bundles, we also need the unit action, which is computed by the same matrices.
Every unit on $X_k$ restricts to a Laurent monomial on the initial torus.
For each mutable $i$, its valuation on the open stratum $x_i=0$ forces the exponent of $x_i$ to vanish, since $x_i$ has order one there and all other initial coordinates remain units.
Thus every global unit is a constant times a frozen Laurent monomial.
For $u=\lambda x^f$, multiplication by $\la u\ra$ is represented on $R_I\otimes\Witt(k)$ by multiplication by $z^f$ and the scalar $\la\lambda\ra$.
These maps commute with the differential, because frozen monomials are unramified in every normal direction.
Their induced cohomology action can therefore be computed before choosing the noncanonical splitting in \eqref{eq:universal-witt-Smith}.

\begin{remark}\label{rem:universal-I-powers}
The construction also computes every $\Ish^j$-coefficient complex.
In a Pfister basis, this amounts to replacing the $A$th coefficient of its degree-$p$ stratum summand by $I^{\max(j-p-|A|,0)}(k)\subseteq\Witt(k)$ and restricting the same integer differential.
This follows from the evaluation--contraction decomposition of $\Ish^{j-p}(T_I)$.
In particular $\Ish^j\to\Wsh$ induces an isomorphism on $H^q$ whenever $q\geq j$ and $q>0$.
\end{remark}

\section{Even exchange matrices and independence complexes}\label{sec:even-graphs}

We now give a description of the Witt complex in terms of the mutable graph
for a class of coefficient patterns that includes principal coefficients
and many seeds that are not RFR. This description computes the Chow--Witt
groups together with the unit action needed to forget orientations.
Under a torsion-freeness condition on independence complexes, we also
reduce the full cellular complex and determine the
cohomological dimension for all strictly invariant coefficient sheaves.

Throughout this appendix $k$ is perfect of characteristic different from
two. Let $B$ be an acyclic skew-symmetric $n\times n$ integer matrix
whose entries are even, and let $\Gamma$ be its mutable support graph.
We choose a partition
\[
[n]=V_1\sqcup\cdots\sqcup V_q
\]
into nonempty cliques, and write $c(i)=a$ for $i\in V_a$. We define
$C_{ai}=1$ if $i\in V_a$ and $C_{ai}=0$ otherwise. We write
$X_{B,c}$ for the cluster variety of $\Btil=\binom B C$, with all
frozen variables $t_1,\ldots,t_q$ inverted, and put $d=n+q$.
Principal coefficients correspond to the partition into singletons.
An independent set meets each $V_a$ at most once, so its selected frozen
rows contain an identity submatrix. Consequently \AS\ holds, even when
$\Btil$ has deficient rank. Theorem~\ref{cc:cor:fixed} and
Corollary~\ref{cc:cor:universal} give
\begin{equation}\label{eq:even-stable}
K_0(X_{B,c})=\Z,\qquad
\CH^p((X_{B,c})_L)=0\quad(p>0)
\end{equation}
for every field extension $L/k$.

For a vertex set $T$, we regard $\Ind(\Gamma[T])$ as a simplicial complex,
including its empty simplex, and use augmented reduced chains. In
particular $\widetilde H_{-1}(\Ind(\Gamma[\varnothing]);\Z)=\Z$.
We impose the torsion-freeness condition \eqref{eq:graph-TF}
only when stated.
We use the notation
\begin{align}
\beta_j(T)&=\rk_{\Z}\widetilde H_j(\Ind(\Gamma[T]);\Z),\nonumber\\
\lambda(\Gamma)&=\max\{p\ge0:
\widetilde H_{p-1}(\Ind(\Gamma[T]);\Z)\ne0
\text{ for some }T\},\label{eq:graph-lambda}\\
w_c(T)&=\prod_{a\notin c(T)}(1+2^{|V_a|}),\qquad
N_p(\Gamma,c)=\sum_{T\subseteq[n]}w_c(T)\beta_{p-1}(T).
\label{eq:graph-multiplicity}
\end{align}
The empty set makes the maximum well defined. The torsion-freeness
condition is not needed for the following Chow--Witt calculation.

\subsection{Chow--Witt groups and changes of orientation}
For $S\subseteq[n]$ and $F\subseteq[q]$, we put
$T(S,F)=S\setminus c^{-1}(F)$ and define, for $p>0$,
\begin{equation}\label{eq:graph-uncompressed}
\mathcal M_p(\Gamma,c;k)=
\bigoplus_{S\subseteq[n],\,F\subseteq[q]}
\widetilde H^{p-1}(\Ind(\Gamma[T(S,F)]);\Witt(k)).
\end{equation}
The superscript denotes ordinary simplicial cohomology with constant
coefficients in the abelian group $\Witt(k)$; it retains the coefficient
effects of integral torsion in the independence complex.

\begin{theorem}\label{thm:even-graph-CW}
For every $p>0$ there are $\Witt(k)$-module isomorphisms
\begin{align}
\CW^p(X_{B,c})&\simeq\mathcal M_p(\Gamma,c;k)\nonumber\\
&\simeq\bigoplus_{T\subseteq[n]}
\widetilde H^{p-1}(\Ind(\Gamma[T]);\Witt(k))^{w_c(T)}.
\label{eq:graph-CW}
\end{align}
They are compatible with field extension, using the indicated coordinate
and normal-frame choices. If \eqref{eq:graph-TF} holds, choosing integral
homology bases gives a noncanonical isomorphism
\[
\CW^p(X_{B,c})\simeq\Witt(k)^{N_p(\Gamma,c)}.
\]
For principal coefficients, $w_c(T)=3^{n-|T|}$.
\end{theorem}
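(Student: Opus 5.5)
The plan is to start from Theorem~\ref{thm:universal-witt}. It identifies $\CW^p(X_{B,c})$ for $p>0$ with $H^p$ of $C^\bullet_{\Witt}(\Btil;\Z)\otimes\Witt(k)$. This complex depends only on $\Ind(\Gamma)$ and on $\Btil\bmod 2$. Since $B$ is even, $\Btil\bmod2=\binom{0}{C}$, so the exchange column $b_i$ reduces to the frozen coordinate vector $f_{c(i)}$. Thus
\[
R_I=\Z[V]/(z^{e_i}-1,\ z^{f_{c(i)}}+1:i\in I).
\]
Split $V=\F_2^n\oplus\F_2^q$ and write a monomial as $z^{(a,g)}$, with $a\subseteq[n]$ and $g\subseteq[q]$. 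In $R_I$, each mutable coordinate in $I$ is set to $1$, and each frozen coordinate in $c(I)$ is set to $-1$. A basis of $R_I$ is therefore given by pairs with $a\cap I=\varnothing$ and $g\cap c(I)=\varnothing$. The face map $\delta_{I,J}$ kills a basis monomial unless $a_j=1$; if $a_j=1$, it sends the monomial to $\pm$ its image, with $a_j$ dropped to $0$ and $z^{f_{c(j)}}$, if present in $g$, replaced by $-1$.

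The first step is to decompose the complex by the support $S=a\cup I$. This set is constant along the differential, since the differential moves $j$ from $a$ into $I$. For fixed $S$, the $I$-indexed terms run over independent subsets $I\subseteq S$, and we recover an augmented simplicial cochain complex. The frozen label $g$ causes a complication, because on a face it gets partially evaluated to signs. To handle this, I would change basis frozen-clique by frozen-clique. For each $a\in[q]$, replace $z^{f_a}$ by the pair $1$ and $(1+z^{f_a})$ in the group algebra of $\Z/2$. Then $z^{f_a}+1$ becomes $0$ whenever $I$ meets $V_a$, while the summand $1$ survives everywhere. After tensoring, the surviving pieces are indexed by $F\subseteq[q]$, the cliques carrying the factor $(1+z^{f_a})$. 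On such a piece the term $R_I$ is nonzero only if $I$ avoids $c^{-1}(F)$. The $S$-piece then becomes the augmented cochain complex of $\Ind(\Gamma[S\setminus c^{-1}(F)])$, shifted so that $\widetilde C^{p-1}$ sits in degree $p$. Vertices of $S$ lying in $c^{-1}(F)$ only contribute fixed $a$-factors, with no face to collapse. I still need to check that these pieces form a genuine subcomplex rather than a triangular filtration; if they do not, I would filter by $|F|$ and observe that the associated graded already splits. This gives $\mathcal M_p(\Gamma,c;k)$, with cohomology taken with coefficients in $\Witt(k)$ by universal coefficients. Using cohomology rather than the naive tensor keeps any torsion.

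The second step is the compression to the second form. Group the pairs $(S,F)$ by $T=S\setminus c^{-1}(F)$ and count them. For $a\in c(T)$, we need $a\notin F$, and $S\cap V_a$ is determined by $T$, which gives one choice. For $a\notin c(T)$, either $a\notin F$ and $S\cap V_a=\varnothing$, or $a\in F$ and $S\cap V_a$ is an arbitrary subset of $V_a$. That gives $1+2^{|V_a|}$ choices, and hence the multiplicity $w_c(T)$. Under \eqref{eq:graph-TF}, the universal coefficient theorem gives $\widetilde H^{p-1}(\,\cdot\,;\Witt(k))\simeq\Witt(k)^{\beta_{p-1}}$, which yields $N_p$. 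Principal coefficients have $|V_a|=1$, so each factor is $3$. Compatibility with field extension is inherited from Theorem~\ref{thm:universal-witt}, since the integral complex is fixed.

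The main obstacle is the sign bookkeeping in the first step. The incidence signs $(-1)^{\nu_J(j)}$ and the sign $-1$ from $z^{f_{c(j)}}\mapsto-1$ must combine into a standard simplicial coboundary, and the basis change $1,\,1+z^{f}$ must be compatible with the faces. For the second point, I would check that $1+z^{f_{c(j)}}$ maps to $0$ exactly when $j$ is added, which is the mechanism that kills $c^{-1}(F)$. The remaining signs should be absorbed by rescaling basis elements with $\pm1$, using a parity function of $a$, in the same way as in the path computation.
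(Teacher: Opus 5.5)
Your proposal is correct and is essentially the paper's proof. Your basis element $1+z^{f_a}$ is, up to sign, the paper's Witt class $e_{z_a}=\la -t_a\ra-1$, and your mutable character monomials play the role of the paper's products of $e_{x_j}$. The paper likewise splits the complex by $(S,F)$, identifies each block with the shifted augmented cochain complex of $\Ind(\Gamma[T(S,F)])$, and counts $1+2^{|V_a|}$ choices for each clique outside $c(T)$.

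Both of your worries resolve immediately. A face either kills a block basis element (when $c(j)\in F$) or sends it to a basis element of the same $(S,F)$ block, so the decomposition is an honest direct sum and no filtration is needed. Since cliques outside $F$ carry the factor $1$, no sign other than $(-1)^{\nu_J(j)}$ appears, and that is already the simplicial coboundary sign, so no rescaling is needed.
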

\begin{proof}
We compute the differential in coordinates adapted to the coefficient partition.
Put $z_a=-t_a$. On the stratum with zero set $I$, the free torus
coordinates are $x_j$, $j\notin I$, and $z_a$, $a\notin c(I)$.
For a face $J=I\cup\{i\}$ the new equation is
\begin{equation}\label{eq:even-face-equation}
z_{c(i)}=\prod_{j\notin J}x_j^{-B_{ji}}.
\end{equation}
Since every exponent is even, the right-hand side has
one-dimensional class $1$ in the Witt ring. We write $e_u=\la u\ra-1$; these elements
commute and satisfy $e_u^2=-2e_u$. By the torus decomposition in
Theorem~\ref{thm:universal-witt}, the monomials
\[
\prod_{j\in A_x}e_{x_j}\prod_{a\in F}e_{z_a},\qquad
A_x\subseteq[n]\setminus I,\quad F\subseteq[q]\setminus c(I),
\]
form a $\Witt(k)$-basis of the stratum term. Its differential contracts
$e_{x_i}$ with coefficient one and the normal incidence sign
$(-1)^{\#\{j\in I:j<i\}}$. A monomial containing $e_{z_{c(i)}}$
restricts to zero, while all other symbols remain unchanged.

Consequently the cochain complex splits by $S=I\cup A_x$ and $F$.
Within this block the possible $I$ are exactly the independent subsets
of $T(S,F)$. A simplex with $|I|=p$ lies in cellular degree $p$, so the
block is the augmented simplicial cochain complex of
$\Ind(\Gamma[T(S,F)])$, shifted up by one. Theorem~\ref{thm:universal-witt}
identifies its positive-degree cohomology with $\CW^p$.
The constant summand of weight zero contributes only in degree zero.

For a fixed $T$, the allowable $F$ are subsets of $[q]\setminus c(T)$,
and then $S=T\cup A$ with arbitrary $A\subseteq c^{-1}(F)$. There are
therefore
\[
\sum_{F\subseteq[q]\setminus c(T)}2^{|c^{-1}(F)|}
=\prod_{a\notin c(T)}(1+2^{|V_a|})
\]
such blocks. Under \eqref{eq:graph-TF}, integral unit cancellation reduces
each finite free simplicial complex to its homology. Tensoring
this reduction with $\Witt(k)$ proves the last assertion. When torsion
is present, the uncompressed formula retains the universal-coefficient contribution.
\end{proof}

The uncompressed decomposition also describes the unit action, which
is not determined by the multiplicities $N_p$ alone. For $a\notin F$, we denote by
\[
r_a:\widetilde H^{p-1}(\Ind(\Gamma[T]);\Witt(k))
\longrightarrow
\widetilde H^{p-1}(\Ind(\Gamma[T\setminus V_a]);\Witt(k))
\]
the restriction induced by inclusion of complexes, where $T=T(S,F)$.

\begin{proposition}\label{prop:even-unit-action}
On $\mathcal M_p(\Gamma,c;k)$, multiplication by $\la z_a\ra$ is the
commuting involution $A_a$ defined on the $(S,F)$ summand by
\begin{equation}\label{eq:graph-unit-action}
A_a(v)=
\begin{cases}
-v,&a\in F,\\
v+r_a(v),&a\notin F.
\end{cases}
\end{equation}
In the second line the two terms belong to the $(S,F)$ and
$(S,F\cup\{a\})$ summands, respectively. Moreover
\[
\mathcal O(X_{B,c})^\times
=k^\times\times\prod_{a=1}^q t_a^{\Z}.
\]
Thus the full unit action is generated by the $A_a$ and scalar
multiplication by $\la b\ra$ for $b\in k^\times$.
\end{proposition}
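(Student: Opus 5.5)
We plan to work directly in the uncompressed decomposition of the proof of Theorem~\ref{thm:even-graph-CW}, where the stratum term indexed by $I$ has $\Witt(k)$-basis the monomials $\prod_{j\in A_x}e_{x_j}\prod_{b\in F}e_{z_b}$ with $A_x\subseteq[n]\setminus I$ and $F\subseteq[q]\setminus c(I)$. Multiplication by $\la z_a\ra=1+e_{z_a}$ is a cochain endomorphism, since $z_a$ is a frozen unit and unramified along every normal direction, as noted after Corollary~\ref{cor:universal-witt-UCT}. It suffices to compute it on these basis monomials and pass to cohomology block by block. The identity $e_u^2=-2e_u$ gives $\la z_a\ra e_{z_a}=e_{z_a}+e_{z_a}^2=-e_{z_a}$. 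Hence on every $(S,F)$ block with $a\in F$, the action is multiplication by $-1$, which is the first case of \eqref{eq:graph-unit-action}.

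For $a\notin F$, the product of a basis monomial with $1+e_{z_a}$ is the same monomial plus the monomial indexed by $(S,F\cup\{a\})$. The only subtlety is the strata $I$ with $a\in c(I)$. On such a stratum, $z_a$ is not a free coordinate: the face equation \eqref{eq:even-face-equation} makes $z_a$ an even monomial in the $x_j$, so $\la z_a\ra=1$ there. The added term therefore vanishes exactly on the simplices $I$ meeting $V_a$. On cochains, the second component is thus the map from the augmented simplicial cochains of $\Ind(\Gamma[T(S,F)])$ to those of $\Ind(\Gamma[T(S,F)\setminus V_a])=\Ind(\Gamma[T(S,F\cup\{a\})])$ given by restricting to simplices avoiding $V_a$. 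This is the restriction map $r_a$. We will check that the normal incidence signs agree, since both blocks use the same ordered frame. The involution and commutation properties then follow from $\la z_a\ra^2=1$ and commutativity of the Witt ring.

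For the unit group, we restrict a unit to the initial torus, where it is $\lambda x^{f}$. Its valuation along the dense open singleton stratum of each divisor $V(x_i)$ (Lemma~\ref{lem:AS-framed-freezing}) is the exponent of $x_i$, so that exponent must vanish. Conversely, constants and the $t_a^{\pm1}$ are units. Writing $t_a=-z_a$, we get $\la t_a\ra=\la-1\ra\la z_a\ra$, so the full action is generated by the $A_a$ and the scalars $\la b\ra$.

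The main obstacle we expect is the bookkeeping in the second paragraph. We must verify that the vanishing of $e_{z_a}$ on strata meeting $V_a$ is compatible with the block splitting, so that the extra term lands precisely in the $(S,F\cup\{a\})$ summand and equals simplicial restriction with correct signs. In particular, we must confirm that no correction term arises from the constant $\Witt(k)$-coefficient of $z_a$ on such strata, since that constant is the square class $\la1\ra$ from the even exponents.
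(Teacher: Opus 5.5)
Your proposal is correct and follows essentially the same route as the paper. The paper also obtains the $-1$ on blocks containing $e_{z_a}$ from $e_u^2=-2e_u$. It identifies the inserted $e_{z_a}$-term with the simplicial restriction $r_a$ because this term vanishes on strata meeting $V_a$ by \eqref{eq:even-face-equation}. It computes the unit group by the same valuation argument along the singleton strata.

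The concern you flag at the end is harmless. On a stratum meeting $V_a$, the equation $\chi_i=-1$ makes $z_a$ exactly an even Laurent monomial in the $x_j$, with no constant factor, so $\la z_a\ra=1$ there.
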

\begin{proof}
If the symbol $e_{z_a}$ already occurs, multiplication by
$1+e_{z_a}$ is $-1$ on that monomial. Otherwise the additional term inserts
$e_{z_a}$. This insertion vanishes on strata whose independent zero
set meets $V_a$, by \eqref{eq:even-face-equation}, and preserves the mutable
symbols on every other stratum. It therefore agrees with the simplicial restriction
avoiding $V_a$. This proves \eqref{eq:graph-unit-action} on cochains.
The same description proves that the $A_a$ commute and square to one.

It remains to identify the global units. A global unit restricts to a Laurent unit on the initial torus, and hence has the form
$b x_1^{a_1}\cdots x_n^{a_n}t^f$. The singleton zero stratum of $x_i$
is nonempty and irreducible. Along its closure, $x_i$ has valuation
one, and all other initial coordinates have valuation zero. Since a unit
has valuation zero, every $a_i=0$. Conversely, constants and
frozen monomials are units. Since $\la t_a\ra=-\la z_a\ra$ on Witt
coefficients and the constant $-1$ already acts by $-1$, the stated
generators give precisely the full action.
\end{proof}

\subsection{The full differential and its cohomological dimension}
To bound cohomology for all strictly invariant sheaves, we return to the
full cellular differential and retain multiplication by the hyperbolic
form, which vanishes in the preceding Witt calculation. We put
\[
g=\la-1\ra,\qquad h=1+g,\qquad
R_h=\Z[h]/(h^2-2h).
\]
For $s\ge1$, let $\mathscr E_s^p$ be the free $R_h$-module on triples
$(I,A_x,F)$ such that
\[
I\in\Ind(\Gamma),\quad |I|=p,\quad
A_x\subseteq[n]\setminus I,\quad F\subseteq[q]\setminus c(I),
\quad p+|A_x|+|F|=s.
\]
Introduce formal exterior symbols $e_1,\ldots,e_n$ and
$f_1,\ldots,f_q$, distinct from the Witt classes $e_u=\la u\ra-1$
used above. For fixed $I$, we represent $(I,A_x,F)$ by $e_{A_x}f_F$,
with increasing indices in each group and all mutable symbols before
the frozen symbols.
For $J=I\cup\{i\}$ independent, the face map is left contraction in
$e_i$, followed by the substitution
\begin{equation}\label{eq:even-formal-face}
f_{c(i)}\longmapsto-h\sum_{j\notin J}(B_{ji}/2)e_j.
\end{equation}
If the contracted monomial lacks $f_{c(i)}$, the substitution leaves it unchanged.
All other symbols map identically, and repeated exterior
symbols vanish. We denote the resulting cochain complex by
$\mathscr E_s^\bullet$, and its transpose chain complex by
$\mathscr D_s$, and write $D_s=\mathscr D_s\otimes_{R_h,h\mapsto2}\Z$.
The integer matrices of $D_s$ are used in Section~\ref{app:even-scalar}.

\begin{theorem}\label{thm:even-model}
The differential on $\mathscr E_s^\bullet$ defined by these face maps squares to zero over $R_h$.
After homogeneous changes of basis, the full cellular complex is
\begin{equation}\label{eq:even-Rh-model}
C_*^\Sigma(X_{B,c})\cong
\Z[0]\oplus\bigoplus_{s=1}^{d}
\mathscr D_s\otimes_{R_h}\KMW_s,
\end{equation}
where $h$ acts on $\KMW_s$ by the hyperbolic form. The tensor notation
means replacing each free $R_h$ basis term by $\KMW_s$ and evaluating
the matrix entries in this action; it preserves chain homotopies.
\end{theorem}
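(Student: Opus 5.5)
We will prove Theorem~\ref{thm:even-model} in two stages: first verify $d^2=0$ on the formal complex $\mathscr E_s^\bullet$ directly, and then identify it with the full cellular differential of Theorem~\ref{thm:full-differential} after a homogeneous change of torus basis.

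\emph{Step 1: $d^2=0$.} Take two vertices $i<\ell$ with $I\cup\{i,\ell\}$ independent. The face map for $i$ consists of left contraction $\iota_{e_i}$ followed by the substitution $\sigma_i$ of \eqref{eq:even-formal-face}. The key point is that $\sigma_i$ introduces only symbols $e_j$ with $j\notin I\cup\{i\}$ and $B_{ji}\neq0$. Independence gives $B_{\ell i}=0$, so $\sigma_i$ never produces $e_\ell$. It also commutes with $\iota_{e_\ell}$ up to the usual graded sign. The clique partition is needed for the $f$-symbols: $c(i)\neq c(\ell)$, because $i$ and $\ell$ lie in an independent set and so cannot share a clique. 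The substitutions therefore act on distinct symbols $f_{c(i)}$ and $f_{c(\ell)}$. The substituted expression for one involves neither the other $f$-symbol nor the other contracted $e$. Hence the two composites agree up to the anticommutation of the two contractions. The incidence signs $(-1)^{\nu}$ differ by one, so the terms cancel over $\Z[h]$. This works a fortiori over $R_h$.

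\emph{Step 2: coordinates and basis change.} On $T_I$ we use the coordinates $x_j$ ($j\notin I$) and $z_a=-t_a$ ($a\notin c(I)$). The face equation is then \eqref{eq:even-face-equation}, which writes $z_{c(i)}$ as a Laurent monomial with even exponents $-B_{ji}$ in the remaining $x_j$. The decomposition \eqref{eq:thom-torus} gives a basis of $C_p$ by symbols $[x_{A_x}][z_F]$, placed in weight $p+|A_x|+|F|$. We compute the face map with \eqref{eq:full-matrix-entry}. The contraction in $x_i$ gives left contraction, and the factor $g^{\nu}$ together with the reordering should reduce to the sign $(-1)^\nu$ after rescaling basis elements by suitable powers of $g$.

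The restriction substitutes $[z_{c(i)}]\mapsto[\prod x_j^{-B_{ji}}]$, which we expand by $[ab]=[a]+[b]+\eta[a][b]$ and $[a^{2m}]=(2m)_\epsilon[a]$. Since $g^2=1$, we have $(2m)_\epsilon=m h$ up to sign conventions (for negative exponents the formula gives $-g\,m h=-mh$ since $gh=h$). This yields the linear part $-h\sum (B_{ji}/2)[x_j]$. The $\eta$-terms raise weight and involve products such as $\eta h[x_j][x_{j'}]$. We kill them using $\eta h=0$ in $K^{MW}_*$ (since $\eta(1+\la-1\ra)=0$) and $h[a]^2=h\rho[a]=0$. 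This is the reason exponents are even: every correction carries a factor of $h$.

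\emph{Step 3: weight splitting.} The remaining concern is the constants coming from signs, since $z_a=-t_a$ was chosen precisely so that base points have coordinate value $1$. With these choices the matrix entries are homogeneous of weight zero, with coefficients in $\Z[g]$ acting through $h$. The complex therefore splits by total weight $s$. The weight-$s$ piece is $\mathscr D_s\otimes_{R_h}\KMW_s$, and the constant $\Z$ splits off in degree zero as in Corollary~\ref{cor:full-cellular-matrix}.

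The main obstacle is Step 2, specifically checking that no residual weight-raising terms survive, such as $\rho$ from $[-u]=\rho+g[u]$ or $\eta$ from products. If some $\rho$-terms do survive, we will instead cancel them by a unipotent, weight-raising change of basis, using the $1+N$ inversion from Theorem~\ref{thm:integral-path-vanishing}. That explains the wording ``after homogeneous changes of basis'' in the statement.
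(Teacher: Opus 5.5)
Your overall strategy is the paper's: linearize the substitution using $\eta h=\rho h=0$, $gh=h$, $[u^{2a}]=ah[u]$; rescale generators by powers of $g$; and get commuting substitutions from $B_{\ell i}=0$ and $c(i)\ne c(\ell)$. However, the sign bookkeeping, which is where the content of the theorem lies, is wrong in both steps.

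\emph{Step~1.} The face maps of $\mathscr E_s^\bullet$ carry no incidence sign. Each is left contraction in $e_i$ followed by the substitution. Each substitution commutes \emph{exactly} with the other substitution and with the other contraction, so $d^2=0$ comes solely from $\iota_{e_\ell}\iota_{e_i}=-\iota_{e_i}\iota_{e_\ell}$. If you additionally insert $(-1)^{\nu}$, as your Step~1 does, the two sign flips cancel each other and the two composites add. Concretely, take $I=\varnothing$, nonadjacent $i<\ell$, and the generator $(\varnothing,\{i,\ell\},\varnothing)$. Both paths to $(\{i,\ell\},\varnothing,\varnothing)$ contribute $-1$, so $d^2$ has entry $-2$ there.

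\emph{Step~2.} The same confusion recurs. The raw coefficient of a pure face is $g^{\nu}\epsilon^{r}=(-1)^rg^{\nu+r}$, with $r=\#\{j\in A_x:j<i\}$.
\begin{itemize}
\item Rescaling by powers of $g$ cannot turn this into $(-1)^\nu$, since no power of $g$ equals $-1$ (compare ranks).
\item No rescaling at all turns it into left contraction times $(-1)^\nu$, because that differential does not square to zero. In effect your target specializes both $g$ and $\epsilon$ to $-1$, contradicting $\epsilon=-g$; yet $g+\epsilon=0$ is exactly what kills $d^2$ in Theorem~\ref{thm:full-differential}.
\end{itemize}

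The missing ingredient is the explicit homogeneous rescaling. Multiply the generator $(I,A_x,F)$ by $g^{\mu(I,A_x)}$, where $\mu=\sum_{j\in I}\#\{v\in I\cup A_x:v<j\}$. A pure face adding $i$ leaves $I\cup A_x$ unchanged and raises $\mu$ by $\nu+r$. The coefficient therefore becomes $(-1)^r$, precisely the left-contraction sign in $\mathscr E_s^\bullet$. Terms divisible by $h$ are insensitive to $g$-factors. The $\epsilon$-commutations needed to place the substituted $[x_j]$ become ordinary exterior signs because $\epsilon h=-h$; you should also check this.

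Finally, your fallback is unnecessary. With $z_a=-t_a$ the face maps are sign-free monomial maps, so no $\rho$-terms arise. Moreover, a weight-raising unipotent change of basis would not be the $s$-preserving (homogeneous) change of basis the statement asserts.
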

\begin{proof}
The Milnor--Witt identities, with $\rho=[-1]$ and $\epsilon=-g$, give
\[
\eta h=0,\qquad \rho h=0,\qquad gh=h,\qquad
\epsilon h=-h,\qquad [u^{2a}]=ah[u]\quad(a\in\Z).
\]
These follow from the standard symbol relations
\cite[Section 1]{Fasel}. Thus \eqref{eq:even-face-equation} gives
\[
[z_{c(i)}]=-h\sum_{j\notin J}(B_{ji}/2)[x_j].
\]
Every nonlinear product correction contains $\eta h$ and therefore vanishes.
Likewise, a repeated symbol satisfies $[x]^2=\rho[x]$, so it vanishes in a term
multiplied by $h$. Substitution therefore has precisely the exterior
form \eqref{eq:even-formal-face}.

We next normalize the pure contractions. Put $S=I\cup A_x$ and
\[
\mu(I,A_x)=\sum_{j\in I}\#\{v\in S:v<j\}\pmod2.
\]
We multiply the corresponding homogeneous generator by $g^{\mu(I,A_x)}$.
For a face adding $i$, we put $r=\#\{j\in A_x:j<i\}$ and
$\nu=\#\{j\in I:j<i\}$. The original coefficient is
$g^\nu\epsilon^r=(-1)^rg^{\nu+r}$, and the difference of the two
$\mu$ values is $\nu+r$. The new pure coefficient is therefore
$(-1)^r$, the ordinary left-contraction sign. Terms already divisible
by $h$ are unchanged by the $g$ factors, and their $\epsilon$
commutations give ordinary exterior signs because $\epsilon h=-h$.
This change of basis preserves $s=p+|A_x|+|F|$.

For two successively added independent vertices $i,j$, their colors
are distinct and $B_{ij}=B_{ji}=0$. The substitutions commute with
one another and with the other normal contraction; the two exterior
contractions anticommute. Hence the square-zero identity holds even
over $\Z[H]$ with $H$ in place of $h$. Applying the torus
decomposition to Theorem~\ref{thm:full-differential} now gives
\eqref{eq:even-Rh-model}; all nonconstant summands are copies of the
indicated projective $\KMW_s$.
\end{proof}

\begin{lemma}\label{lem:even-reduction}
Assume \eqref{eq:graph-TF}. For each $s$, there is a chain-homotopy
equivalence over $R_h$ from $\mathscr D_s$ to a finite free complex
whose differential is $hA_s$, with $A_s$ an integer chain complex.
Its degree-$p$ generators form a basis of
\[
\bigoplus_{\substack{S\subseteq[n],\,F\subseteq[q]\\|S|+|F|=s}}
\widetilde H_{p-1}(\Ind(\Gamma[T(S,F)]);\Z).
\]
In particular no generator has degree above $\lambda(\Gamma)$.
Every degree-$p$ generator has weight $s\ge2p$.
\end{lemma}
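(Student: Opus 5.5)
We reduce $\mathscr D_s$ one layer at a time using the $h$-adic filtration. Write the differential of $\mathscr D_s$ as $\partial=\partial_0+h\partial_1$. Here $\partial_0$ is the integer matrix of pure left contractions after the $g^{\mu}$ normalization of Theorem~\ref{thm:even-model}, and $\partial_1$ records the substitution \eqref{eq:even-formal-face}. Such a splitting exists because $R_h$ is free over $\Z$ on $1,h$. By the proof of Theorem~\ref{thm:even-graph-CW}, the pure part $\partial_0$ decomposes into blocks indexed by $(S,F)$ with $|S|+|F|=s$. Each block is the shifted augmented simplicial chain complex of $\Ind(\Gamma[T(S,F)])$, tensored with the free symbols of $S\setminus T(S,F)$ and $F$, which do not participate. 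Under \eqref{eq:graph-TF}, every such finite free integer complex admits an integral splitting: a basis that decomposes it as a direct sum of its homology, placed in the appropriate degrees, and elementary unit-pivot two-term pieces. We choose such bases blockwise. Transported to $\mathscr D_s$, these bases make $\partial_0$ into a matrix with unit pivots on the contractible pieces and zero on the homology generators.

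Next we cancel all unit pivots of $\partial$ over $R_h$ by Gaussian elimination. The argument is the triangular cancellation used in the path appendix. Each pivot entry of $\partial$ has the form $1+h(\cdot)$. Such an element is a unit in $R_h$: from $h^2=2h$ one checks that $(1+ah)$ has inverse $1-\frac{a}{1+2a}h$ whenever $1+2a=\pm1$, and in general we may need to argue differently. So the more robust route is to filter by $h$. Since $h^2=2h$, $R_h$ is not complete. Instead we note that $R_h\cong\{(m,n)\in\Z^2:m\equiv n\bmod 2\}$ via $h\mapsto(0,2)$. Under this identification, $1+ah\mapsto(1,1+2a)$, which is a unit only if $a\in\{0,-1\}$. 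This is the main obstacle. We must show that the $h$-corrections to pivot entries actually vanish, or that they can be removed by the choice of basis.

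I would try first to show that substitution never touches pivot entries. A pure contraction removes $e_i$ with $i\in S$. The substitution term instead removes $f_{c(i)}$ and introduces $e_j$ with $j\notin J$. It therefore changes the block index $(S,F)$ to $(S\cup\{j\},F\setminus\{c(i)\})$ and never maps within a single block. Hence $\partial_1$ is block-off-diagonal, while the pivots are block-diagonal, so the pivots equal exactly $\pm1$.

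The elimination then produces the reduced differential $\partial_1'=h\partial_1+h^2(\cdots)$ together with higher correction terms. All of these are divisible by $h$, because every correction contains a factor of $\partial_1$. Since $h^k=2^{k-1}h$, each correction equals $h$ times an integer matrix. This gives the differential $hA_s$. The identity $\partial^2=0$, together with the fact that the entries lie in $h\Z\cong\Z$, which is a torsion-free ideal, implies $A_s^2=0$ in the form $h^2A_s^2=2hA_s^2=0$. Thus $A_s$ is an integer chain complex.

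The surviving generators are the homology bases. They lie in degree $p$ exactly when $\widetilde H_{p-1}(\Ind(\Gamma[T(S,F)]);\Z)\neq 0$, which forces $p\le\lambda(\Gamma)$. Finally, such a generator comes from an independent set $I\subseteq T(S,F)$ with $|I|=p$. In the matching block, $F$ avoids $c(I)$ and each $i\in I$ carries its own $e_i$; the independent set $I$ also has distinct colors. We still need the weight bound $s=|S|+|F|\ge2p$. For it, observe that $\widetilde H_{p-1}\neq0$ requires $\Ind(\Gamma[T])$ to be non-cone-like, so every vertex of $T$ must have a neighbor in $T$ (a vertex with no neighbor in $T$ is a cone point). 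Then a $(p-1)$-dimensional nonzero class needs at least $2p$ vertices. This is the standard bound for independence complexes: a vertex $v$ with neighbor $u$ gives $\Ind(T)$ as a union of $\Ind(T\setminus\{u\})$ and a cone over $\Ind(T\setminus N[v])$, and induction gives $|T|\ge 2p$. Hence $s\ge|T|\ge2p$.
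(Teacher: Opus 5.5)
Your route is the paper's: split the pure part blockwise by $(S,F)$ using \eqref{eq:graph-TF}, cancel unit pivots inside $\partial_0+h\partial_1$, use $h^\ell=2^{\ell-1}h$ and $\Z$-freeness of $R_h$ to get $hA_s$ with $A_s^2=0$, then bound weights by counting vertices of flag complexes. However, the step you yourself identify as the main obstacle is not closed by your argument. Showing that $\partial_1$ never maps a block to itself controls only the entries of the \emph{original} matrix. Gaussian elimination is iterative. Cancelling a pivot $b'\to a'$ changes every remaining entry $x\to y$ by $-(\text{entry }b'\to y)(\pm1)(\text{entry }x\to a')$, and these corrections compound along zigzag paths. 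Suppose $\partial_1$ could carry a block $\beta$ to $\beta'$ and also carry $\beta'$ back to $\beta$. Then cancelling a pivot in $\beta'$ would add a term in $h^2\Z=2h\Z$ to a pivot of $\beta$, turning it into $\pm1+2hc$. By your own computation in $\{(m,n):m\equiv n\bmod 2\}$, this is not a unit when $c\neq0$. So ``block-off-diagonal'' does not imply that the pivots stay $\pm1$.

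The missing idea is directionality, which the paper uses explicitly as a filtration by frozen cardinality. In the chain direction, $h\partial_1$ raises $|F|$ by exactly one. By contrast, $\partial_0$, its pivots, and the chosen blockwise splittings all preserve $(S,F)$. Hence $|F|$ is nondecreasing along every zigzag, and strictly increasing once an $h\partial_1$ step occurs, so such a zigzag can never return to a pivot in its starting block. A purely $\partial_0$ zigzag cannot leave a unit-pivot pair of the direct-sum splitting. Therefore no pivot is ever corrected. Every residual entry between homology generators is a sum of paths containing at least one $h\partial_1$ factor, and hence lies in $h\Z$. You already recorded $F\mapsto F\setminus\{c(i)\}$, so the repair is one sentence, but that sentence is what makes the elimination valid.

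Separately, the decomposition in your weight bound is garbled: $\Ind(\Gamma[T\setminus\{u\}])$ together with a cone over $\Ind(\Gamma[T\setminus N[v]])$ misses every simplex containing $u$. Use instead
\[
\Ind(\Gamma[T])=\Ind(\Gamma[T\setminus\{v\}])\cup\bigl(v*\Ind(\Gamma[T\setminus N[v]])\bigr),
\]
whose two pieces intersect in $\Ind(\Gamma[T\setminus N[v]])$. Mayer--Vietoris then gives either nonzero $\widetilde H_{p-1}$ on $T\setminus\{v\}$ or nonzero $\widetilde H_{p-2}$ on $T\setminus N[v]$. Together with $|N[v]|\ge2$, this completes the induction. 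This argument works with integral coefficients, so it does not need \eqref{eq:graph-TF}.
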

\begin{proof}
We write the differential as $U+hV$. The pure part preserves $(S,F)$.
Within each such block the additional sign change $(-1)^{\mu(I,A_x)}$
changes its left-contraction signs to simplicial incidence signs.
After transposition this is the augmented simplicial chain complex,
with simplex cardinality as cellular degree. By \eqref{eq:graph-TF}
it splits over $\Z$ into unit contractible pairs and its free homology.
We choose this splitting separately for each $(S,F)$, so that all chosen
contractions preserve $F$.

In cochains $V$ removes exactly one frozen symbol; in chains it adds
one. It therefore strictly increases a finite filtration by frozen
cardinality, while $U$ and its chosen contraction preserve it.
We now perform the chosen integer unit cancellations in $U+hV$. A correction
to a pivot would have to follow a path containing $V$ and return to
the same frozen cardinality; no such path exists. Thus every pivot remains
exactly $1$ or $-1$, and the finite cancellation leaves precisely the
free $U$-homology generators. Each residual boundary is a sum of paths
containing at least one $hV$, so its entries have zero constant term
in $h$. Since $h^\ell=2^{\ell-1}h$ for $\ell\ge1$, the remaining
differential is $hA_s$ with integer entries. Its square is
$2hA_s^2=0$. The additive group of $R_h$ is free on $1,h$, so
$A_s^2=0$ integrally. The construction gives chain-homotopy equivalences before
evaluation in any field.

For the weight bound, a flag complex with nonzero reduced homology
in degree $p-1$ over a field has at least $2p$ vertices.
For $p=1$ this follows from disconnectedness. For $p>1$, choose
an induced subcomplex with minimal vertex set supporting nonzero
homology. Deleting any vertex kills that homology, so the
Mayer--Vietoris sequence gives nonzero reduced homology in degree
$p-2$ in its link. The link is flag and has at least $2p-2$
vertices by induction. There is also a nonneighbor of the chosen
vertex, since otherwise the complex would be a cone. This proves
the bound. Independence complexes are flag, and under
\eqref{eq:graph-TF} their nonzero integral homology remains
nonzero over $\Q$. Thus a degree-$p$ generator satisfies
$2p\le|T(S,F)|\le|S|+|F|=s$.
\end{proof}

\begin{theorem}\label{thm:even-cd}
Under \eqref{eq:graph-TF},
\[
\operatorname{cd}_{\A^1}(X_{B,c})=\lambda(\Gamma).
\]
\end{theorem}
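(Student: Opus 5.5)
We prove the two inequalities separately. Throughout we use the decomposition \eqref{eq:even-Rh-model} of Theorem~\ref{thm:even-model} and the reduction of Lemma~\ref{lem:even-reduction}.

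\emph{Upper bound.} By Theorem~\ref{thm:even-model}, the full cellular complex is
\[
\Z[0]\oplus\bigoplus_s\mathscr D_s\otimes_{R_h}\KMW_s .
\]
Lemma~\ref{lem:even-reduction} gives, for each $s$, an $R_h$-chain-homotopy equivalence from $\mathscr D_s$ to a finite free complex whose generators lie in degrees at most $\lambda(\Gamma)$. Theorem~\ref{thm:even-model} states that the tensor notation preserves chain homotopies, so each $\mathscr D_s\otimes_{R_h}\KMW_s$ is chain-homotopy equivalent to a complex of projective objects of $\calA$ concentrated in degrees $\le\lambda(\Gamma)$. Theorem~\ref{thm:acyclic-model} says the cellular complex is a strict resolution, so $\Hom(C^\Sigma_*,M)$ computes $H^*_{\Nis}(X,M)$. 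Applying $\Hom_{\calA}(-,M)$ to the reduced complex therefore gives $H^q_{\Nis}(X,M)=0$ for every $q>\lambda(\Gamma)$ and every strictly invariant $M$.

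\emph{Lower bound.} If $\lambda=0$ there is nothing to prove, so assume $\lambda=\lambda(\Gamma)>0$. We must find some $M$ with $H^\lambda_{\Nis}(X,M)\ne0$. The natural choice is $M=\Wsh$, or equivalently $\KMW_\lambda$ via \eqref{eq:universal-CW}.

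Choose $T$ with $\widetilde H_{\lambda-1}(\Ind(\Gamma[T]);\Z)\ne0$. By \eqref{eq:graph-TF} this group is free, hence of positive rank. The block $(S,F)=(T,\varnothing)$ in Theorem~\ref{thm:even-graph-CW} then contributes
\[
\widetilde H^{\lambda-1}(\Ind(\Gamma[T]);\Witt(k)).
\]
Because the next homology group is also free, the universal coefficient theorem identifies this with $\Hom(\widetilde H_{\lambda-1},\Witt(k))\ne0$, using $\Witt(k)\ne0$ since $\la1\ra\ne0$. Hence $\CW^\lambda(X_{B,c})=H^\lambda(X,\KMW_\lambda)\ne0$, which gives equality.

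Alternatively, one can argue directly on the reduced complex of Lemma~\ref{lem:even-reduction}. Its top differential is $hA_s$, and $h$ acts by zero on the Witt sheaf, so $\Hom(-,\Wsh)$ of the top generators survives.

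The main point needing care is the upper bound. Specifically, the Hom-complex of the reduced complex must compute the same cohomology for arbitrary strictly invariant $M$, not just for $M=\Wsh$. This reduces to two facts: the chain-homotopy equivalences of Lemma~\ref{lem:even-reduction} are $R_h$-linear and built before evaluation, and $\KMW_s$ is projective in $\calA$ (Lemma~\ref{lem:torus-strict}). Homotopy equivalent bounded projective complexes then have isomorphic $\Hom(-,M)$ cohomology.
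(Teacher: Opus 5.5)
Your proof is correct and follows the paper's argument. The upper bound comes from the projective model of Theorem~\ref{thm:even-model} and Lemma~\ref{lem:even-reduction}, and the lower bound from a nonzero $\Witt(k)$ summand in \eqref{eq:graph-CW}, equivalently in $H^{\lambda}(X_{B,c},\Wsh)$ via \eqref{eq:universal-CW}.

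There are two cosmetic points to fix. The $\Ext$ term in the universal coefficient theorem involves $\widetilde H_{\lambda-2}$, not the next homology group; this is harmless, since all groups are free under \eqref{eq:graph-TF}. Also, the case $\lambda=0$ is not quite "nothing to prove": it needs the one-line observation $H^0(X_{B,c},\Z)\neq0$, as the paper notes.
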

\begin{proof}
Theorem~\ref{thm:even-model} and Lemma~\ref{lem:even-reduction} give
a projective cellular model concentrated in degrees zero through
$\lambda(\Gamma)$. Applying $\Hom_{\calA}(-,M)$ for every strictly
invariant $M$ proves the upper bound. If $\lambda(\Gamma)>0$, some
induced independence complex has nonzero free homology in degree
$\lambda(\Gamma)-1$. Every such subset occurs in
\eqref{eq:graph-CW}, so $H^{\lambda(\Gamma)}(X_{B,c},\Wsh)$ has a
nonzero summand $\Witt(k)$. This group is nonzero over every field of
characteristic different from two, since $\la1\ra$ has odd dimension
modulo two. This proves the lower bound. The case $\lambda=0$
follows from the upper bound and $H^0(X_{B,c},\Z)\ne0$.
\end{proof}

Condition~\eqref{eq:graph-TF} permits integral unit cancellation
of the pure part. Under this condition, Section~\ref{app:even-scalar}
gives every cellular homology sheaf as a sum of Milnor--Witt sheaves
and kernels and cokernels of scalar hyperbolic multiplication.
For induced independence complexes with torsion, the pure part has
nonunit Smith blocks, which enter the general model
\eqref{eq:even-Rh-model}.

\subsection{Graph criteria for the rank-two range}
We now give graph-theoretic sufficient conditions for
\eqref{eq:graph-TF} and $\lambda(\Gamma)\le2$.

\begin{lemma}\label{lem:graph-cover}
Let $C_0$ be a vertex cover of a finite graph $\Gamma$, and put
$a=\alpha(\Gamma[C_0])$. For every $T$, the augmented integral
chains of $\Ind(\Gamma[T])$ are chain-homotopy equivalent to a
finite free complex concentrated in degrees $-1,\ldots,a-1$.
If $a\le2$, condition \eqref{eq:graph-TF} holds and
$\lambda(\Gamma)\le2$.
\end{lemma}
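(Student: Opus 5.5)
We argue by induction on the number of vertices outside the vertex cover, using the standard link/deletion cofiber sequence for independence complexes. Put $T\subseteq V(\Gamma)$ and write $K_T=\Ind(\Gamma[T])$. If $T\subseteq C_0$, then $K_T$ is a simplicial complex whose faces have at most $a$ vertices, so its augmented chains already live in degrees $-1,\ldots,a-1$. Otherwise choose $v\in T\setminus C_0$. Since $C_0$ is a vertex cover, $T\setminus C_0$ is independent, and all neighbours of $v$ lie in $C_0$.

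The key decomposition is
\[
K_T=\operatorname{del}(v)\cup\operatorname{star}(v),
\]
with $\operatorname{del}(v)=K_{T\setminus\{v\}}$ and $\operatorname{star}(v)=v*K_{T\setminus N[v]}$. The augmented chains of $K_T$ are therefore the mapping cone of the inclusion $\widetilde C_*(K_{T\setminus N[v]})\to\widetilde C_*(K_{T\setminus\{v\}})$, with the link shifted up by one. This is an exact short sequence of free complexes, split in each degree, and the cone of a map between complexes homotopy equivalent to bounded free complexes is again homotopy equivalent to one.

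Induction on $|T\setminus C_0|$ alone does not give the degree bound directly, since the link term is shifted up by one. We therefore strengthen the statement. For $T$ with $U=T\setminus C_0$, we claim the augmented chains are equivalent to a free complex in degrees $-1$ through $\alpha(\Gamma[T\cap C_0])-1$ when... This fails naively, because the vertices of $U$ contribute simplices. The better route is to eliminate all of $U$ at once. Consider the map of posets, or equivalently the Mayer–Vietoris/discrete Morse argument, that pairs a face $\sigma$ with $\sigma\triangle\{u\}$ for the least $u\in U$ that is not adjacent to $\sigma\cap C_0$. For a face with $\sigma\cap C_0=\tau$, the vertices of $U$ addable to $\tau$ are exactly $U\setminus N(\tau)$. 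This set is independent, so the faces over $\tau$ form the full simplex on $U\setminus N(\tau)$ joined with $\tau$. These faces cancel in pairs via $u_{\min}$ unless $U\setminus N(\tau)=\varnothing$. The pairing is an acyclic matching, since it is the standard element matching applied fibrewise over the face poset of $K_{T\cap C_0}$ ordered by $\tau$. Integral discrete Morse theory then gives a free chain complex, homotopy equivalent to the augmented chains, generated by the critical faces. These are the $\tau\in K_{T\cap C_0}$, including $\varnothing$ if $U=\varnothing$, with $U\subseteq N(\tau)$. Each such face has at most $a$ vertices, so the critical complex lies in degrees $-1,\ldots,a-1$. The acyclicity check for the matching is the main technical point, and we expect it to be routine.

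For $a\le2$ the critical complex lives in degrees $-1,0,1$. Its top homology $H_1$ is a subgroup of a free group and hence free. Its $\widetilde H_0$ is the reduced $H_0$ of the honest space $K_T$, which is free for any simplicial complex, and $\widetilde H_{-1}$ is free or zero. This gives (TF) for every $T$. Moreover $\widetilde H_{p-1}=0$ for $p-1>1$, so $\lambda(\Gamma)\le2$.
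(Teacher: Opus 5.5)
Your argument is correct and is essentially the paper's: your critical cells (independent subsets of $T\cap C_0$ that dominate $U=T\setminus C_0$) are exactly the basis of the paper's quotient complex. Your fibrewise element matching over $\sigma\cap C_0$ is the discrete-Morse form of the paper's filtration by the number of cover vertices, in which each graded piece is the augmented simplex on $S_0\setminus N(I)$. The paper avoids algebraic Morse theory and the patchwork lemma by observing directly that the cancelled simplices form a bounded acyclic free subcomplex, which is therefore contractible. You can simply delete the abandoned link/deletion attempt at the start of your write-up.
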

\begin{proof}
We replace the cover by $C_0\cap T$ and let $S_0=T\setminus C_0$,
which is independent. A subset of $C_0$ dominates $S_0$ if every vertex
of $S_0$ has a neighbor in that subset. Consider the quotient of the
augmented chains by all simplices meeting $S_0$ and all simplices in
$C_0$ that do not dominate $S_0$.
These simplices form a subcomplex: deleting a cover vertex cannot turn a
nondominating subset into a dominating one, and deleting a sole
$S_0$ vertex leaves a nondominating cover subset. The quotient has
as basis the independent subsets of $C_0$ that dominate $S_0$;
its boundary deletes a vertex only when domination persists.

To compare this quotient with the original complex, we filter by the number of cover vertices. For a
fixed independent cover subset $I$, the remaining direction is the
augmented simplex on $S_0\setminus N(I)$. It is contractible unless
this set is empty, in which case it consists of one $\Z$ in degree
$|I|-1$. It follows that the quotient map is a quasi-isomorphism. Its kernel is
a bounded acyclic complex of free abelian groups, and is therefore contractible.
The quotient has maximum degree $a-1$. If $a\le2$, its top possible
homology $H_1$ is free, and reduced $H_0$ and $H_{-1}$ are always
free. All higher groups vanish, proving the last assertion.
\end{proof}

\begin{corollary}\label{cor:graph-chordal}
If $\Gamma$ is chordal, condition \eqref{eq:graph-TF} holds and
\[
\lambda(\Gamma)=\operatorname{indmatch}(\Gamma),
\]
the largest size of an induced matching. If the complement of
$\Gamma$ is chordal, condition \eqref{eq:graph-TF} holds and
$\lambda(\Gamma)\le1$. Conversely, $\lambda(\Gamma)\le1$
implies that its complement is chordal.
\end{corollary}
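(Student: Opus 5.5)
We prove the three assertions of Corollary~\ref{cor:graph-chordal} separately. Each uses a standard structural fact about chordal graphs or their complements, combined with the vertex-cover reduction of Lemma~\ref{lem:graph-cover} or a direct computation of independence complexes.

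\emph{Chordal $\Gamma$.} Induced subgraphs of chordal graphs are chordal, so it suffices to show that for chordal $G=\Gamma[T]$ the complex $\Ind(G)$ is homotopy equivalent to a wedge of spheres of dimensions $m-1$, where $m$ is the size of an induced matching of $G$. Take a simplicial vertex $v$ of $G$, whose closed neighborhood is a clique. Here I would use the standard fold argument: if $N[w]\supseteq N[v]$ then deleting $w$ does not change the homotopy type of $\Ind$. If $v$ is isolated, $\Ind(G)$ is a cone and hence contractible. Otherwise, pick a neighbor $u$ of $v$. Every other neighbor $w$ of $v$ satisfies $N[w]\supseteq N[v]$ because $N[v]$ is a clique, so all such $w$ may be deleted. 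This reduces $v$ to a leaf attached to $u$, and then $\Ind(G)\simeq\Sigma\,\Ind(G\setminus N[u])$. Inducting on the number of vertices, the homology is free and concentrated in degrees $m-1$, where $m$ is the size of the induced matching built greedily from the edges $uv$; this proves \eqref{eq:graph-TF}. For $\lambda$, the quantity $\lambda(\Gamma)$ is the largest $m$ achieved over induced subgraphs. Restricting to $T$ equal to the vertex set of a maximum induced matching gives $\Ind$ homotopy equivalent to $S^{m-1}$, since the independence complex of a matching is the join of copies of $S^0$. This shows $\lambda\ge\operatorname{indmatch}$. Conversely, each suspension step above uses a new edge $uv$ whose vertices are deleted together with $N[u]$, so the edges produced form an induced matching of $\Gamma[T]$, which gives $\lambda\le\operatorname{indmatch}$.

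\emph{Chordal complement.} The simplices of $\Ind(\Gamma[T])$ are exactly the cliques of the complement $\overline\Gamma[T]$, so $\Ind(\Gamma[T])$ is the clique complex of a chordal graph. Such a complex has contractible components: by Dirac's theorem there is a simplicial vertex, and removing it is a collapse onto the remaining complex. Reduced homology is therefore concentrated in degree $0$ and is free. This proves \eqref{eq:graph-TF} and $\lambda\le1$.

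\emph{Converse.} If $\overline\Gamma$ contains an induced cycle $C_\ell$ with $\ell\ge4$ on a vertex set $T$, then $\Ind(\Gamma[T])$ is the clique complex of $C_\ell$, which is the cycle itself, a circle. Its $\widetilde H_1$ is $\Z$, so $\lambda(\Gamma)\ge2$. The main obstacle is the chordal case, specifically ensuring that the homology lives in a single degree equal to the size of a genuine induced matching. To handle this, I would invoke the known result that independence complexes of chordal graphs are homotopy equivalent to wedges of spheres, and justify the fold lemma at the point where it is used.
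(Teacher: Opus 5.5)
Your arguments for the cochordal case and for the converse are the paper's: a simplicial vertex collapses the clique complex, and an induced $C_\ell$ with $\ell\ge4$ in the complement gives a circle. Your lower bound $\lambda(\Gamma)\ge\operatorname{indmatch}(\Gamma)$, obtained by taking $T$ to be the vertex set of a maximum induced matching, is also correct.

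The chordal case, however, rests on a false lemma. For $v\ne w$, the condition $N[v]\subseteq N[w]$ forces $v$ and $w$ to be adjacent. So the fold lemma, which needs $N(v)\subseteq N(w)$ for open neighbourhoods and hence nonadjacent vertices, does not apply. Deleting $w$ can then change the homotopy type. For the path with edges $vw,wx$, one has $\Ind(G)\simeq S^0$, while $\Ind(G\setminus w)$ is a $1$-simplex.

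What is true is this. Since $N(v)\setminus\{w\}\subseteq N(w)$, the link $\Ind(G\setminus N[w])$ lies in the cone $v*\Ind(G\setminus N[w])\subseteq\Ind(G\setminus w)$. The link inclusion is therefore null-homotopic, and
\[
\Ind(G)\simeq\Ind(G\setminus w)\vee\Sigma\Ind(G\setminus N[w]).
\]
By deleting the other neighbours of your simplicial vertex, you discard these summands. Your induction therefore only ever produces a single sphere, and that conclusion is false. For example, the bowtie (two triangles sharing a vertex) is chordal and has $\Ind\simeq S^1\vee S^0$. For the same reason, your argument for $\lambda\le\operatorname{indmatch}$ is incomplete, because the discarded summands are never examined. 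Falling back on the wedge-of-spheres theorem gives \eqref{eq:graph-TF}, but it does not by itself give the sphere dimensions.

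The repair is short. If $v$ is simplicial, every $w\in N(v)$ satisfies $N[v]\subseteq N[w]$, and the other neighbours of $v$ lie in $N[w]$. Iterating the correct splitting over all of $N(v)$ then gives
\[
\Ind(G)\simeq\bigvee_{w\in N(v)}\Sigma\Ind(G\setminus N[w]),
\]
with $\Ind(G)$ a cone if $v$ is isolated, and $\Ind$ of the empty graph equal to $S^{-1}$. By induction, $\Ind(G)$ is a wedge of spheres $S^{m-1}$, indexed by recursively chosen edge sequences $vw,\dots$ of length $m$. Later edges avoid $N[w]\supseteq N[v]$, so each such sequence is an induced matching. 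This yields both \eqref{eq:graph-TF} and $\lambda\le\operatorname{indmatch}$.

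With this correction, your route is a self-contained alternative to the paper's. The paper cites Kawamura for the wedge decomposition. It then identifies $\lambda(\Gamma)$ with the regularity of the independence Stanley--Reisner ring via Hochster's formula, and uses H\`a--Van Tuyl (and Woodroofe) for the equality of regularity and induced matching number for chordal graphs. The paper's route needs \eqref{eq:graph-TF} to pass from field to integral coefficients, whereas the corrected recursion gives the integral statement directly.
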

\begin{proof}
The independence complex of a nonempty chordal graph is contractible or
homotopy equivalent to a finite wedge of spheres \cite{Kawamura}.
Every nonempty induced subgraph is again chordal; the empty induced
subgraph contributes only $\widetilde H_{-1}=\Z$ under our convention.
Thus \eqref{eq:graph-TF} holds. Hochster's formula identifies
the maximum in \eqref{eq:graph-lambda} with the regularity of the
independence Stanley--Reisner ring over any field in this torsion-free
case. For chordal graphs that regularity is the induced matching
number \cite[Corollary 6.9]{HVT}; see also \cite[Section 3 and Theorem 13]{WoodroofeMatchings}.

If the complement is chordal, perfect elimination shows that each
connected component of each induced clique complex is contractible.
These clique complexes are exactly the independence complexes of
induced subgraphs of $\Gamma$. Consequently their reduced homology is free and
concentrated in degree zero, apart from the empty complex. Conversely,
a nonchordal complement contains an induced cycle of length at least
four. Its clique complex is a circle, giving $\lambda(\Gamma)\ge2$.
\end{proof}

For example, $\Gamma=K_{a,b}$ with $a,b\ge1$ has
$\lambda(\Gamma)=1$: its independence complex is the disjoint union
of two simplices, and every induced subgraph has the same form or
is edgeless. This can be much smaller than
$\min_{C_0\text{ vertex cover}}\alpha(\Gamma[C_0])=\min(a,b)$.

\subsection{Vector bundles determined by independence complexes}\label{subsec:graph-bundles}
We combine the cellular reduction of
Lemma~\ref{lem:even-reduction} with stable freeness to classify
vector bundles of rank
$\lambda(\Gamma)$.

\begin{theorem}\label{thm:graph-bundles}
Suppose \eqref{eq:graph-TF} holds, and put $r=\lambda(\Gamma)$.
Every vector bundle on $X_{B,c}$ of rank greater than $r$ is free.
If $r\le1$, all vector bundles are free. If $r\ge3$ is odd,
every rank-$r$ vector bundle is also free.

If $r\ge2$ is even, the Euler class gives bijections
\begin{align}
\operatorname{Vect}^{\mathrm{or}}_r(X_{B,c})
&\simeq\CW^r(X_{B,c})\simeq\mathcal M_r(\Gamma,c;k)
\simeq\Witt(k)^{N_r(\Gamma,c)},\label{eq:graph-oriented-bundles}\\
\operatorname{Vect}_r(X_{B,c})
&\simeq\mathcal M_r(\Gamma,c;k)/
\langle A_1,\ldots,A_q,\ \la b\ra\ (b\in k^\times)\rangle.
\label{eq:graph-unoriented-bundles}
\end{align}
Here $A_a$ is the involution defined in
\eqref{eq:graph-unit-action}, and the last quotient is an orbit set.
Every rank-$r$ vector bundle $E$ then satisfies
$E\oplus\mathcal O\cong\mathcal O^{r+1}$, and nonfree rank-$r$
vector bundles exist over every such field.
\end{theorem}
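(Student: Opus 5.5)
The plan is to feed the cellular model of Lemma~\ref{lem:even-reduction} into the Postnikov obstruction theory of Proposition~\ref{prop:bundle-range}, now with $h=r=\lambda(\Gamma)$. Stable freeness comes from \eqref{eq:even-stable}, which gives $K_0(X_{B,c})=\Z$ and $\Pic=0$. Theorem~\ref{thm:even-cd} gives $\operatorname{cd}_{\A^1}(X_{B,c})=r$.

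\textbf{Ranks above $r$, and $r\le1$.} Proposition~\ref{prop:bundle-range} applies verbatim and shows that bundles of rank greater than $r$ are free, and that all bundles are free when $r\le1$.

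\textbf{Rank exactly $r\ge2$.} I would again stabilize along $BSL_r\to BSL_{r+1}$, whose fiber is $\A^{r+1}\setminus0$. Both existence and uniqueness obstructions lie in degrees $\ge r+1$, except one uniqueness group in degree $r$: it is $H^r(X,\pi_r^{\A^1}(\A^{r+1}\setminus0))=H^r(X,\KMW_{r+1})$. Since $\operatorname{cd}=r$, stabilization is surjective. Because every bundle of rank $r+1$ is free, every rank-$r$ bundle satisfies $E\oplus\mathcal O\cong\mathcal O^{r+1}$. Oriented rank-$r$ bundles are therefore classified by the orbit set of $H^r(X,\KMW_{r+1})$ under the action of $[X,SL_{r+1}]$.

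To identify this orbit set I would use the reduced model of Lemma~\ref{lem:even-reduction}. It gives
\[
H^r(X,M)=\bigoplus_s \operatorname{coker}\bigl(hA_s^{\mathsf T}\bigr)\otimes_{\KMW_s} M,
\]
and the top generators have weight $s\ge 2r$, so only the summands $M_{-s}(k)$ with $s\ge2r$ appear. For $M=\KMW_{r+1}$ these are $K^{MW}_{r+1-s}(k)$ with $r+1-s\le 1-r<0$, i.e.\ Witt groups. For $M=\KMW_r$ the same computation gives $\CW^r$.

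\textbf{Odd $r\ge3$.} The key input is that the $k$-invariant / Euler class of the top-cell fiber is multiplication by $\eta$ on Witt-type coefficients, and for odd rank it becomes a multiple of $h$, which acts by zero on $\Witt$. I would check this against \cite[Sections~2 and~6]{AFThreefolds}. Concretely, the action map $H^{r-1}(X,\ldots)\to H^r(X,\KMW_{r+1})$ should be surjective in the odd case. I expect this to be the \textbf{main obstacle}: one must show that the relevant map of Witt groups is onto, using the parity of $r$ and the fact that $\pi_r^{\A^1}$ of $SL_{r+1}$ surjects onto the $\Witt$-part. If that fails, a fallback is to show directly that the Euler class in $\CW^r$ vanishes for odd rank because $\la-1\ra$ acts as $-1$, and then use the rank-$r$ Postnikov tower of $BSL_r$.

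\textbf{Even $r\ge2$.} The Postnikov tower of $BSL_r$ has first homotopy sheaf $\KMW_r$ in degree $r$, so the first obstruction is the Euler class, and the higher stages contribute groups in degrees $>r$, which vanish. This gives the bijection $\operatorname{Vect}^{\mathrm{or}}_r\simeq\CW^r$. Then:
\begin{itemize}
\item Theorem~\ref{thm:even-graph-CW} identifies $\CW^r$ with $\mathcal M_r$ and, under \eqref{eq:graph-TF}, with $\Witt(k)^{N_r}$.
\item Proposition~\ref{prop:even-unit-action} gives the unit action, hence \eqref{eq:graph-unoriented-bundles}.
\item Nonfree bundles exist because $N_r>0$ by definition of $\lambda$. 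A class such as $\la1\ra$ on a basis vector cannot be moved to $0$, since the $A_a$ and $\la b\ra$ are automorphisms.
\end{itemize}
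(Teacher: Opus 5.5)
Your treatment of ranks above $r$ and of $r\le1$ agrees with the paper. So does your observation that stabilization gives a surjection from $[X,\A^{r+1}\setminus0]_{\A^1}\cong H^r(X,\KMW_{r+1})$ onto oriented rank-$r$ bundles. The odd case, however, is not proved. You leave transitivity of the $[X,SL_{r+1}]$-action open, and the fallback cannot work: for $r\ge3$ the Chow--Witt Euler class is not the complete obstruction. The coefficient sheaf at stage $r$ of $BSL_r$ is $\Pi_r=\pi_r^{\A^1}(BSL_r)$, not $\KMW_r$. Information about the composite $\KMW_{r+1}\to\Pi_r\to\KMW_r$ (multiplication by $\eta$, or a multiple of $h$) therefore does not control it. The missing input is the Asok--Fasel computation \cite[Corollary~3.11 and Theorem~3.14]{AFSpheres}. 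For odd $r$ there is an extension $0\to\mathbf S_{r+1}\to\Pi_r\to\mathbf K^Q_r\to0$ with $\KM_{r+1}/r!\twoheadrightarrow\mathbf S_{r+1}$, so exactness of contraction gives $(\Pi_r)_{-s}=0$ for $s>r+1$. Lemma~\ref{lem:even-reduction} puts every degree-$r$ cellular generator in weight $s\ge2r>r+1$, so $H^r(X,\Pi_r)=0$. Since $\tau_{\le r-1}BSL_r\simeq\tau_{\le r-1}BSL_\infty$, stable freeness trivializes the class at that stage. The lift to stage $r$ is then unique, being a quotient of $H^r(X,\Pi_r)=0$, and $\operatorname{cd}_{\A^1}(X)=r$ kills all higher obstructions. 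Hence odd-rank-$r$ bundles are free.

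The even case has a second gap. Your claim that $BSL_r$ has first homotopy sheaf $\KMW_r$ in degree $r$ holds only for $r=2$, where Proposition~\ref{prop:bundle-range} already suffices. For $r\ge3$, $BSL_r$ has the stable sheaves $\mathbf K^Q_i$ in degrees $2,\ldots,r-1$, and $\pi_r^{\A^1}(BSL_r)$ surjects onto $\mathbf K^Q_r$. So for even $r\ge4$ the Euler class is not the first Postnikov invariant, and the bijection with $\CW^r$ does not follow as written. The paper instead uses your surjection $q:H^r(X,\KMW_{r+1})\twoheadrightarrow\operatorname{Vect}^{\mathrm{or}}_r(X)$. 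By \cite[Lemma~3.5]{AFSpheres} and \cite[Theorem~5.3.2]{AFEuler}, $e\circ q$ equals multiplication by $\eta$ up to a unit of $\GW(k)$. Moreover, $\eta:H^r(X,\KMW_{r+1})\to\CW^r(X)$ is an isomorphism, for two reasons: only weights $s\ge2r$ contribute in degree $r$, and $\eta:K^{MW}_{r+1-s}(k)\to K^{MW}_{r-s}(k)$ is bijective when both indices are negative. Thus $e\circ q$ is bijective, and surjectivity of $q$ forces $e$ to be bijective. You computed both groups as Witt-type but never linked them through $\eta$ and the universal Euler class. That link is what replaces a direct computation of the $[X,SL_{r+1}]$-orbits. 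Your identification with $\mathcal M_r$, the unit action, and the existence of nonfree bundles then go through as you describe.
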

\begin{proof}
By \eqref{eq:even-stable} and Theorem~\ref{thm:even-cd}, the hypotheses of
Proposition~\ref{prop:bundle-range} hold with $h=r$. This proves
freeness above rank $r$, and in every rank if $r\le1$.
We write $X=X_{B,c}$ for the remaining argument.

Suppose first that $r\ge3$ is odd. Put
$\Pi_r=\pi_r^{\A^1}(BSL_r)$. By
\cite[Corollary~3.11 and Theorem~3.14]{AFSpheres}, there are
an exact sequence and an epimorphism
\[
0\longrightarrow\mathbf S_{r+1}\longrightarrow\Pi_r
\longrightarrow\mathbf K_r^Q\longrightarrow0,
\qquad \KM_{r+1}/r!\twoheadrightarrow\mathbf S_{r+1}.
\]
Exactness of contraction and vanishing of negative Milnor and
Quillen $K$-theory imply $(\Pi_r)_{-s}=0$ for $s>r+1$.
By Lemma~\ref{lem:even-reduction}, the degree-$r$ cellular terms
have weights $s\ge2r>r+1$, so $H^r(X,\Pi_r)=0$.
Stabilization induces an equivalence on the $(r-1)$st Postnikov
truncations of $BSL_r$ and $BSL_\infty$
\cite[Theorem~3.2]{AFSpheres}. Stable freeness therefore makes
the class of an oriented bundle trivial at this stage. Its lifts
to the next stage form a quotient of $H^r(X,\Pi_r)$, hence a
singleton. Higher obstructions and lifting ambiguities vanish
because $\operatorname{cd}_{\A^1}X=r$; convergence is as in
Proposition~\ref{prop:bundle-range}. Thus the bundle is free.

Now let $r\ge2$ be even. Since every rank-$(r+1)$ bundle is free,
the fiber sequence
$\A^{r+1}\setminus0\to BSL_r\to BSL_{r+1}$ gives a surjection
\[
q:[X,\A^{r+1}\setminus0]_{\A^1}
\longrightarrow\operatorname{Vect}^{\mathrm{or}}_r(X).
\]
A free trivialization can be adjusted by a diagonal unit to
respect the determinant orientation. The first Postnikov map
identifies the source with $H^r(X,\KMW_{r+1})$, since the fiber
is $(r-1)$-connected and $\operatorname{cd}_{\A^1}X=r$.
The reduced complex of Lemma~\ref{lem:even-reduction} preserves
weight, and only weights $s\ge2r$ contribute to degree-$r$
cohomology. On each such weight complex, multiplication by
$\eta$ induces the coefficient isomorphism
\[
K_{r+1-s}^{MW}(k)\xrightarrow{\ \eta\ }K_{r-s}^{MW}(k),
\]
since both indices are negative. It follows that
$H^r(X,\KMW_{r+1})\xrightarrow{\eta}\CW^r(X)$ is an isomorphism.
The universal Euler-class calculation
\cite[Lemma~3.5]{AFSpheres}, together with
\cite[Theorem~5.3.2]{AFEuler}, identifies $e\circ q$ with this
map up to a fixed unit in $\GW(k)$. Thus $e\circ q$ is bijective.
Since $q$ is surjective, both $q$ and $e$ are bijective.

Theorem~\ref{thm:even-graph-CW} computes $\CW^r(X)$, and
Proposition~\ref{prop:even-unit-action} gives the unit action for
forgetting orientations. Since $r=\lambda(\Gamma)>0$, we have
$N_r>0$ and $\Witt(k)\ne0$. Units act invertibly and fix zero,
so nonzero Euler classes give nonfree bundles after forgetting
orientation. Finally $E\oplus\mathcal O$ is free because its
rank is greater than $r$.
\end{proof}

Theorem~\ref{thm:graph-bundles} and Corollary~\ref{cor:graph-chordal}
in particular show that every vector bundle is free for an even seed of this
coefficient form whose complementary graph is chordal. For chordal
$\Gamma$, this sufficient freeness bound is expressed by its induced
matching number.

The punctured-space calculations of Asok--Bachmann--Hopkins and
Liu--Yang extend odd-rank freeness and even-rank Euler classification
below $\lambda(\Gamma)$ in characteristic zero.

\begin{corollary}\label{cor:graph-later-ranks}
Assume \eqref{eq:graph-TF} and $\operatorname{char}k=0$.
Let $r\ge4$ and $\lambda(\Gamma)\le r+2$.
If $r$ is odd, every rank-$r$ vector bundle on $X_{B,c}$ is free.
If $r$ is even, the Euler class and the unit action give the
classifications \eqref{eq:graph-oriented-bundles} and
\eqref{eq:graph-unoriented-bundles}, with this value of $r$,
and every rank-$r$ bundle $E$ satisfies
$E\oplus\mathcal O\cong\mathcal O^{r+1}$.
Moreover, every rank-three vector bundle is free if
$\lambda(\Gamma)\le4$.
\end{corollary}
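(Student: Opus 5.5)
The plan is to extend the obstruction-theoretic argument of Theorem~\ref{thm:graph-bundles} from the fiber sequence $\A^{r+1}\setminus0\to BSL_r\to BSL_{r+1}$ to the range $\lambda(\Gamma)\le r+2$. Now the relevant homotopy sheaves of $\A^{r+1}\setminus0$ occur in degrees $r$, $r+1$ and $r+2$. By \eqref{eq:even-stable}, every bundle is stably free, and by Theorem~\ref{thm:even-cd} the $\A^1$-cohomological dimension is $\lambda\le r+2$. So only the groups $H^q(X,\pi^{\A^1}_{q}(\A^{r+1}\setminus0))$ and $H^{q}(X,\pi^{\A^1}_{q+1}(\A^{r+1}\setminus0))$ with $q\le r+2$ can enter. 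The fibers of $BSL_{s}\to BSL_{s+1}$ for $s\ge r+1$ are more highly connected, with first homotopy in degree $s\ge r+1$. The weight bound of Lemma~\ref{lem:even-reduction} (a degree-$p$ generator has weight $s\ge2p$) then has to kill the relevant obstruction groups. Concretely, a strictly invariant sheaf $M$ contributes to $H^p(X,M)$ only through contractions $M_{-s}(k)$ with $s\ge2p$. The task is therefore to show $(\pi)_{-s}=0$ for these large $s$, or that such contractions are of the form handled by $\eta$-periodicity.

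First step: stabilization. Show that $\operatorname{Vect}^{\mathrm{or}}_{r+1}(X)\to\operatorname{Vect}^{\mathrm{or}}_{r+2}(X)$ is injective, and that higher ranks are free. The obstruction groups here are $H^{p}(X,\pi_{p}(\A^{r+2}\setminus0))$ for $p\ge r+1$, together with lifting groups in degree $p=r+1$. The first sheaf is $\KMW_{r+2}$, whose $(-s)$-contraction for $s\ge 2(r+1)$ is $\Witt(k)$-periodic. Odd/even behavior then has to be handled as in Theorem~\ref{thm:graph-bundles}. Where that is insufficient, I use the known next homotopy sheaf of punctured affine space (Asok--Fasel, and Asok--Bachmann--Hopkins in characteristic zero). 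It is an extension of $\KM_{r+3}/24$ or $\mathbf{GW}$-type sheaves by $\eta$-torsion pieces. The point is that all of its contractions beyond weight $r+4$ vanish or are $\eta$-local. In characteristic zero, Liu--Yang/ABH give the third homotopy sheaf similarly, and its high contractions vanish. With $2p\ge 2r+2>r+4$ for $r\ge4$, the weight bound leaves only $\eta$-periodic Witt contributions.

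Second step: rank $r$ itself. For odd $r$, repeat the odd-rank argument of Theorem~\ref{thm:graph-bundles}. This uses $\Pi_r$ with $(\Pi_r)_{-s}=0$ for $s>r+1$, together with the analogous vanishing for the next two homotopy sheaves of $BSL_r$ beyond weight $r+3$. Cells of degree $p\ge r$ have weight $\ge2r>r+3$, so every obstruction and lifting group vanishes. For even $r$, I would follow the surjection $q:[X,\A^{r+1}\setminus0]\to\operatorname{Vect}^{\mathrm{or}}_r$ argument. The Postnikov tower of $\A^{r+1}\setminus0$ now has up to three layers in degrees $r,r+1,r+2$. The weight bound shows that the higher layers contribute only $\eta$-multiples that map isomorphically, or vanish, when composed with the Euler class. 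Thus $e\circ q$ is again identified with $\eta$-multiplication onto $\CW^r$, which gives the bijection. The unit action is as in Proposition~\ref{prop:even-unit-action}, and $E\oplus\mathcal O$ is free by the first step.

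Third step: rank three with $\lambda\le4$. Use the sheaves $\pi_3,\pi_4,\pi_5$ of $BSL_3$, i.e. of $\A^3\setminus0$ and the stable range, on cells of degree $p\le4$ with weights $\ge2p$. I expect the \textbf{main obstacle} to be exactly the precise vanishing of high contractions of the second and third unstable homotopy sheaves of punctured affine spaces. There the characteristic-zero input of Asok--Bachmann--Hopkins and Liu--Yang is essential, and I would first try to verify it weight by weight against the bound $s\ge2p$.
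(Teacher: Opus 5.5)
There is a genuine gap. Your architecture matches the paper's: Postnikov obstruction theory for $BSL_r$ and for $\A^{r+1}\setminus0\to BSL_r\to BSL_{r+1}$, the weight bound $s\ge2p$ of Lemma~\ref{lem:even-reduction}, the Asok--Bachmann--Hopkins and Liu--Yang bounds, and $e\circ q=\eta$. But your first step is false as stated, and the central step of the odd-rank argument is missing.

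\textbf{The stabilization step.} For odd $r$, take $\lambda=r+1$, for instance $r+1$ disjoint edges of weight two, each its own clique. Theorem~\ref{thm:graph-bundles} then puts $\operatorname{Vect}^{\mathrm{or}}_{r+1}(X)$ in bijection with the nonzero group $\CW^{r+1}(X)$, while every such bundle becomes free after adding $\mathcal O$. So $\operatorname{Vect}^{\mathrm{or}}_{r+1}(X)\to\operatorname{Vect}^{\mathrm{or}}_{r+2}(X)$ is not injective. The lifting group $H^{r+1}(X,\KMW_{r+2})\simeq\CW^{r+1}(X)$ that you noticed is genuinely nonzero and genuinely acts. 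For even $r$ with $\lambda=r+2$, rank-$(r+2)$ bundles are not all free, so ``higher ranks are free'' also fails. What the even case actually needs is freeness in the odd rank $r+1$. This follows directly from the odd-rank argument, since $\lambda\le(r+1)+1$. That is how the paper obtains both the surjection $q$ and $E\oplus\mathcal O\cong\mathcal O^{r+1}$, with no separate stabilization step.

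\textbf{The missing idea.} You need contraction vanishing for $\pi_{r+1}^{\A^1}BSL_r$ and $\pi_{r+2}^{\A^1}BSL_r$, which you only assert as ``analogous''. The bounds for $F_m=\A^m\setminus0$ are simply quoted, so they are not the real obstacle. The difficulty is that the long exact sequence of $F_{r+1}\to BSL_r\to BSL_{r+1}$ also involves $\pi_{r+1}^{\A^1}BSL_{r+1}$. When $r+1$ is even, its high contractions are nonzero: they carry exactly the $\eta$-periodic Witt part. The paper proves that $\ker(\beta_N:\pi_N^{\A^1}BSL_N\to\KMW_N)$ has vanishing $s$-fold contractions for $N\ge4$ and $s>N+2$:
\begin{itemize}
\item for odd $N$, the source already vanishes in that range;
\item for even $N$, the map $\alpha_N:\KMW_{N+1}\to\pi_N^{\A^1}BSL_N$ becomes an isomorphism after such contractions, and $\beta_N\alpha_N=\eta$ is invertible in negative Milnor--Witt degrees.
\end{itemize}
Only with this does one get $(\pi_{r+1}^{\A^1}BSL_r)_{-s}=0$ for $s>r+3$ and $(\pi_{r+2}^{\A^1}BSL_r)_{-s}=0$ for $s>r+5$. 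Note the second bound is $r+5$, not $r+3$ as you state.

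\textbf{Further corrections.} For rank three, the relevant fiber sequence is $\A^4\setminus0\to BSL_3\to BSL_4$, not one involving $\A^3\setminus0$, and only $\pi_3$ and $\pi_4$ of $BSL_3$ enter. The hypothesis $\lambda\le4$ there reflects that the Liu--Yang bound needs $m\ge5$. In the even case, the groups $H^{r+1}(X,\pi_{r+1}^{\A^1}F_{r+1})$, $H^{r+2}(X,\pi_{r+1}^{\A^1}F_{r+1})$ and $H^{r+2}(X,\pi_{r+2}^{\A^1}F_{r+1})$ vanish outright. Hence $[X,F_{r+1}]_{\A^1}\simeq H^r(X,\KMW_{r+1})$, and no $\eta$-multiples survive from the higher layers.
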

\begin{proof}
Write $X=X_{B,c}$ and $F_m=\A^m\setminus0$.
By \cite[Theorem~7.2.1]{ABH} and
\cite[Theorem~4.2.1]{LiuYang}, respectively,
\[
\begin{aligned}
(\pi_m^{\A^1}F_m)_{-s}&=0 &&(m\ge4,\ s>m+2),\\
(\pi_{m+1}^{\A^1}F_m)_{-s}&=0 &&(m\ge5,\ s>m+4).
\end{aligned}
\]
For the first assertion, the contracted Milnor term is zero,
and the Grothendieck--Witt term has negative lower index and
shifted Witt degree $-1$, hence vanishes on fields.
For the second, the $(m+4)$th contraction is $\KM_0/2$, whose
next contraction is zero.

Let $\beta_N:\pi_N^{\A^1}(BSL_N)\to\KMW_N$ be induced by
$SL_N\to F_N$. We claim that
$(\ker\beta_N)_{-s}=0$ for $N\ge4$ and $s>N+2$.
For odd $N$, the source vanishes by the first unstable sheaf
calculation used in Theorem~\ref{thm:graph-bundles}.
For even $N$, the adjacent fiber sequence gives
\[
\pi_{N+1}^{\A^1}(BSL_{N+1})\longrightarrow\KMW_{N+1}
\xrightarrow{\alpha_N}\pi_N^{\A^1}(BSL_N)
\longrightarrow\pi_N^{\A^1}(BSL_{N+1}).
\]
After $s>N+2$ contractions the outer terms vanish, so
$\alpha_N$ is an isomorphism. The composite
$\beta_N\alpha_N$ is multiplication by $\eta$
\cite[Lemma~3.5]{AFSpheres}, which is an isomorphism in negative
Milnor--Witt degrees. This proves the claim.
The long exact sequence of $F_{r+1}\to BSL_r\to BSL_{r+1}$
now gives
\[
\begin{aligned}
(\pi_{r+1}^{\A^1}BSL_r)_{-s}&=0 &&(r\ge3,\ s>r+3),\\
(\pi_{r+2}^{\A^1}BSL_r)_{-s}&=0 &&(r\ge4,\ s>r+5).
\end{aligned}
\]
Indeed, the first assertion uses the bound for $\pi_{r+1}^{\A^1}F_{r+1}$
and the claim for $\beta_{r+1}$. One degree higher, the adjacent
terms are $\pi_{r+2}^{\A^1}F_{r+1}$ and
$\pi_{r+2}^{\A^1}BSL_{r+1}$; the latter is covered by the first assertion.

Lemma~\ref{lem:even-reduction} therefore kills the cohomology
with these coefficients in degrees $r+1$ and $r+2$, respectively.
For odd $r$, the argument of Theorem~\ref{thm:graph-bundles}
also gives $H^r(X,\pi_r^{\A^1}BSL_r)=0$.
Stable freeness makes the lower Postnikov class trivial, and
these vanishings give uniqueness of the subsequent lifts.
The remaining groups vanish by the cohomological-dimension
bound; convergence is as in Proposition~\ref{prop:bundle-range}.
Using only the first bound proves odd-rank freeness for
$r\ge3$ and $\lambda\le r+1$, including the rank-three assertion.
Using both proves it for $r\ge4$ and $\lambda\le r+2$.

For even $r$, the odd-rank case just proved makes every
rank-$(r+1)$ bundle free. The fiber sequence therefore gives
a surjection $q:[X,F_{r+1}]_{\A^1}\twoheadrightarrow
\operatorname{Vect}^{\mathrm{or}}_r(X)$, with orientations
adjusted by diagonal units. The punctured-space bounds and
Lemma~\ref{lem:even-reduction} give
\[
H^{r+1}(X,\pi_{r+1}^{\A^1}F_{r+1})=
H^{r+2}(X,\pi_{r+1}^{\A^1}F_{r+1})=
H^{r+2}(X,\pi_{r+2}^{\A^1}F_{r+1})=0.
\]
These vanishings remove both the obstructions and the ambiguities
above the first Postnikov stage; higher groups vanish by
cohomological dimension. Thus
$[X,F_{r+1}]_{\A^1}\simeq H^r(X,\KMW_{r+1})$.
Only weights $s\ge2r$ contribute in degree $r$, and $\eta$ is
an isomorphism on each entire fixed-weight coefficient complex.
The Euler-class comparison in Theorem~\ref{thm:graph-bundles}
therefore makes $e\circ q$ bijective, hence also $e$.
The Chow--Witt formula and the unit action are those of
Theorem~\ref{thm:even-graph-CW} and
Proposition~\ref{prop:even-unit-action}. Finally,
$E\oplus\mathcal O$ is free by the odd-rank case.
\end{proof}

\begin{corollary}\label{cor:graph-stabilization-realization}
Assume \eqref{eq:graph-TF}, and let $k$ be perfect of
characteristic different from two. If $r\ge2$ and
$\lambda(\Gamma)\le r+1$, every rank-$r$ vector bundle $E$ on
$X_{B,c}$ satisfies $E\oplus\mathcal O\cong\mathcal O^{r+1}$.
For even $r$ in this range, the Euler class gives a surjection
\[
\operatorname{Vect}^{\mathrm{or}}_r(X_{B,c})
\twoheadrightarrow\CW^r(X_{B,c}).
\]
In characteristic zero, the stabilization assertion also holds
for $r\ge3$ with $\lambda(\Gamma)\le r+3$, and for $r=2$ with
$\lambda(\Gamma)\le4$. The Euler-class map is also surjective
for even $r\ge4$ with $\lambda(\Gamma)\le r+3$.
\end{corollary}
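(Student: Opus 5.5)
We will follow the obstruction-theoretic template of Theorem~\ref{thm:graph-bundles} and Corollary~\ref{cor:graph-later-ranks}. Throughout, $X=X_{B,c}$ has $K_0(X)=\Z$ by \eqref{eq:even-stable}, so every bundle is stably free. By Theorem~\ref{thm:even-cd}, $\operatorname{cd}_{\A^1}X=\lambda(\Gamma)$. Lemma~\ref{lem:even-reduction} gives a weight-graded projective model in which every degree-$p$ generator has weight $s\ge2p$.

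\emph{Stabilization.} A rank-$r$ bundle is free after adding $\mathcal O$ as soon as the map $[X,BSL_r]\to[X,BSL_{r+1}]$ can be lifted to the trivial class, i.e. as soon as all stages of the relative Postnikov tower of $BSL_{r+1}$ have trivial existence data above the first stage. Concretely, we use the fiber sequence $F_{r+1}\to BSL_r\to BSL_{r+1}$ with $F_{r+1}=\A^{r+1}\setminus0$, which is $(r-1)$-connected. We then show that the rank-$(r+1)$ bundle $E\oplus\mathcal O$, being stably free, is trivial. Its obstruction to destabilizing triviality from $BSL_{r+2}$ lies in $H^{r+1}(X,\pi_{r+1}^{\A^1}BSL_{r+1})$ and higher groups.

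When $\lambda\le r+1$, the only possibly nonzero groups are in degree $r+1=\lambda$. Their coefficients are $\pi_{r+1}^{\A^1}(BSL_{r+1})$, and weights $s\ge2(r+1)$ contribute. We will argue as in the odd-rank step of Theorem~\ref{thm:graph-bundles}. If $r+1$ is odd, then $(\Pi_{r+1})_{-s}=0$ for $s>r+2$, and $2(r+1)>r+2$. If $r+1$ is even, the lifting ambiguity is a quotient of $H^{r+1}(X,\KMW_{r+2})$. This group maps isomorphically by $\eta$ to $\CW^{r+1}$, and the Euler class of the free bundle is zero, so the ambiguity class is trivial. In both cases $E\oplus\mathcal O$ is free. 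This step, relating the lifting ambiguity to Euler classes in the even case and checking that the trivial-Euler-class lift corresponds to the free bundle, is the main obstacle. We will first try using the surjection $q$ from $[X,F_{r+2}]$ exactly as in Theorem~\ref{thm:graph-bundles}, with $r+1$ in place of $r$.

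\emph{Surjectivity of Euler classes.} For even $r$, every rank-$(r+1)$ bundle is free by the previous step (applied in rank $r+1$ when needed, or directly, since $r+1>\lambda-0$ once $\lambda\le r+1$ makes rank $r+1$ at least $\lambda$). Hence $q:[X,F_{r+1}]\to\operatorname{Vect}^{\mathrm{or}}_r(X)$ is surjective. The first Postnikov map $[X,F_{r+1}]\to H^r(X,\KMW_{r+1})$ is surjective, because the higher obstruction groups lie in degrees $\ge r+2>\lambda$. Composing with the isomorphism $\eta:H^r(X,\KMW_{r+1})\to\CW^r(X)$, valid because only weights $s\ge2r$ appear, shows that $e\circ q$ is surjective by \cite[Lemma~3.5]{AFSpheres} and \cite[Theorem~5.3.2]{AFEuler}.

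\emph{Characteristic zero.} For the extended ranges we replace the vanishing inputs with those of Corollary~\ref{cor:graph-later-ranks}, namely $(\pi_{r+1}BSL_r)_{-s}=0$ for $s>r+3$ and $(\pi_{r+2}BSL_r)_{-s}=0$ for $s>r+5$, shifted to rank $r+1$. The groups in degrees $r+2$ and $r+3$ then meet only weights $\ge2(r+2)$ and $\ge2(r+3)$, which exceed these bounds. This handles $\lambda\le r+3$. For $r=2$ and $\lambda\le4$, we use the rank-three freeness statement of Corollary~\ref{cor:graph-later-ranks}.
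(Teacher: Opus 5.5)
\textbf{Gap: the even-rank stabilization case never shows that $E\oplus\mathcal O$ has zero Euler class.}

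Most of your argument is sound. The surjectivity argument is the paper's: the existence obstructions for $\varphi:[X,F_{r+1}]\to H^r(X,\KMW_{r+1})$ lie in degrees $\ge r+2>\lambda$, or are killed by the weight bound in characteristic zero, and $e\circ q=u\eta\varphi$ with $\eta$ an isomorphism. Surjectivity of $q$ is not needed, since surjectivity of $e\circ q$ already gives surjectivity of $e$. The stabilization cases where $E\oplus\mathcal O$ has rank above $\lambda$, or odd rank, are also fine. They amount to applying Theorem~\ref{thm:graph-bundles}, or Corollary~\ref{cor:graph-later-ranks} in characteristic zero, to $E\oplus\mathcal O$.

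The problem is the case where $E\oplus\mathcal O$ has even rank $r+1=\lambda(\Gamma)$. The characteristic-zero case of even rank $r+1\ge4$ with $\lambda\le r+3$ has the same problem. You write the lift of the trivial class as $q(\alpha)$ with $\alpha\in H^{r+1}(X,\KMW_{r+2})$. You then note that $\eta$ is injective on this group and that ``the free bundle'' has Euler class zero. That only says $e(q(0))=0$. To conclude $\alpha=0$ you need $e(E\oplus\mathcal O)=0$, and nothing in your argument supplies it.

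As written, your reasoning would apply to every stably free oriented bundle of rank $\lambda$ and prove it free. That contradicts Theorem~\ref{thm:graph-bundles}: nonfree rank-$\lambda$ bundles exist there, and all of them are stably free because $K_0=\Z$.

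The missing input is that $E\oplus\mathcal O$ has a nowhere-vanishing section. Equivalently, its classifying map factors through $BSL_r$. Hence its Euler class vanishes for any orientation. The injectivity of $e$ in rank $r+1$, already proved in Theorem~\ref{thm:graph-bundles} (respectively Corollary~\ref{cor:graph-later-ranks}), then gives $E\oplus\mathcal O\cong\mathcal O^{r+1}$.

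The ``main obstacle'' you flag, that the lift with trivial Euler class is the free bundle, is exactly this injectivity. You can cite it rather than re-derive it. The actual content you are missing is the vanishing of $e(E\oplus\mathcal O)$.
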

\begin{proof}
When $\lambda\le r+1$, apply Theorem~\ref{thm:graph-bundles}
to $E\oplus\mathcal O$. It is free if its rank exceeds $\lambda$
or equals an odd $\lambda$. In the remaining case, its Euler
class vanishes because it has a nowhere-zero section, so the
Euler-class classification makes it free as well.
The same argument with Corollary~\ref{cor:graph-later-ranks}
gives the wider characteristic-zero ranges, including rank two
when $\lambda\le4$.

For the Euler-class assertions, let $r$ be even, write
$X=X_{B,c}$ and $F=\A^{r+1}\setminus0$.
The first Postnikov map gives
\[
\varphi:[X,F]_{\A^1}\longrightarrow H^r(X,\KMW_{r+1}).
\]
Its existence obstructions begin in degree $r+2$, so all vanish
if $\lambda\le r+1$.
In characteristic zero, for $r\ge4$ and $\lambda\le r+3$,
the first two lie in
\[
H^{r+2}(X,\pi_{r+1}^{\A^1}F),\qquad
H^{r+3}(X,\pi_{r+2}^{\A^1}F).
\]
The contraction bounds in the proof of
Corollary~\ref{cor:graph-later-ranks} and
Lemma~\ref{lem:even-reduction} make both groups zero:
their cellular weights are at least $2r+4$ and $2r+6$,
respectively. All further existence obstructions have degree
at least $r+4>\lambda$. Thus $\varphi$ is surjective in both ranges.
Let $q:[X,F]_{\A^1}\to\operatorname{Vect}^{\mathrm{or}}_r(X)$
be the fiber map. As in the proof of Theorem~\ref{thm:graph-bundles},
$e\circ q=u\eta\varphi$ for a fixed $u\in\GW(k)^\times$,
and $\eta:H^r(X,\KMW_{r+1})\to\CW^r(X)$ is an isomorphism.
Thus $e$ is surjective.
\end{proof}

\subsection{Integral cellular homology}\label{app:even-scalar}
Let $k$ be perfect of characteristic different from two. We retain the
even acyclic matrix $B$, the clique partition $c:[n]\to[q]$, and the
formal homogeneous complex of Theorem~\ref{thm:even-model}, and assume
\eqref{eq:graph-TF}. We put $d=n+q$ and
$h=\la1\ra+\la-1\ra\in\GW(k)$.
For $1\le s\le d$, let $D_s$ be the finite free integer chain complex
obtained by transposing the formal weight-$s$ cochain complex and
specializing $h$ to $2$. Thus its cochain face maps are left exterior
contraction in $e_i$, followed by
\[
 f_{c(i)}\longmapsto-\sum_j B_{ji}e_j.
\]
We write its homology in invariant-factor form as
\begin{equation}\label{eq:even-integer-invariants}
 H_p(D_s)=\Z^{\gamma_{p,s}}\oplus
 \bigoplus_{m\ge2}(\Z/m)^{t_{p,s,m}}.
\end{equation}
Only finitely many multiplicities are nonzero. To describe the corresponding cellular homology sheaves, we define strictly
$\A^1$-invariant sheaves
\[
 \mathcal Q_s(a)=\operatorname{coker}(ah:\KMW_s\to\KMW_s),
 \qquad
 \mathcal T_s(a)=\ker(ah:\KMW_s\to\KMW_s)
 \quad(a\ge1).
\]

\begin{theorem}\label{thm:even-scalar}
Every nonzero $t_{p,s,m}$ in \eqref{eq:even-integer-invariants} has
$m$ even. There are noncanonical isomorphisms of cellular homology
sheaves
\begin{equation}\label{eq:even-scalar-homology}
\begin{split}
 H_p(C_*^\Sigma(X_{B,c}))\simeq {}&
 \Z^{\delta_{p0}}\oplus
 \bigoplus_{s=1}^d(\KMW_s)^{\oplus \gamma_{p,s}}\\
 &\oplus\bigoplus_{\substack{1\le s\le d\\m\ge2\text{ even}}}
     \mathcal Q_s(m/2)^{\oplus t_{p,s,m}}\\
 &\oplus\bigoplus_{\substack{1\le s\le d\\m\ge2\text{ even}}}
     \mathcal T_s(m/2)^{\oplus t_{p-1,s,m}},
\end{split}
\end{equation}
where $t_{-1,s,m}=0$. Moreover,
\begin{align}
 \mathcal Q_s(a)&\simeq
 \Ish^s\mathbin{\times}_{\KM_s/2}\bigl(\KM_s/(2a)\bigr),
 \label{eq:even-Q-pullback}\\
 \mathcal T_s(a)&\simeq
 \Ish^s\mathbin{\times}_{\KM_s/2}\KM_s[2a].
 \label{eq:even-T-pullback}
\end{align}
Here $\KM_s[2a]$ denotes the kernel of multiplication by $2a$.
The map from $\Ish^s$ is reduction modulo $\Ish^{s+1}$ followed by
the Milnor isomorphism; the other maps are reduction modulo $2$,
and inclusion followed by reduction modulo $2$, respectively.
\end{theorem}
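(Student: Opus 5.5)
Starting from the model \eqref{eq:even-Rh-model} of Theorem~\ref{thm:even-model}, I would compute the homology of each weight-$s$ summand $\mathscr D_s\otimes_{R_h}\KMW_s$ separately. Under \eqref{eq:graph-TF}, Lemma~\ref{lem:even-reduction} gives a chain-homotopy equivalence over $R_h$ from $\mathscr D_s$ to a finite free complex with differential $hA_s$, where $A_s$ is an integer complex. Tensoring preserves chain homotopies, so the weight-$s$ summand is equivalent to the complex with terms $\KMW_s^{\oplus}$ and differential $hA_s$, with $h$ acting by the hyperbolic form. Specializing $h\mapsto2$ sends $\mathscr D_s$ to $D_s$ and the reduced complex to one with differential $2A_s$. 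Hence $D_s$ is integrally homotopy equivalent to $(\text{free},2A_s)$.

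Next I would put $A_s$ in Smith normal form over $\Z$. Since $A_s^2=0$, integral row and column operations that respect the chain structure split the reduced complex into three kinds of pieces: single free generators with zero differential, and two-term pieces $\Z\xrightarrow{a}\Z$ with $a\ge1$. The same unimodular changes of basis act on $\KMW_s$-valued terms, because integer matrices act on direct sums of $\KMW_s$. Each two-term piece becomes $\KMW_s\xrightarrow{ah}\KMW_s$. In $D_s$ the corresponding piece is $\Z\xrightarrow{2a}\Z$, and it contributes $\Z/2a$ to homology. So the torsion of $H_p(D_s)$ consists of even invariant factors $m=2a$, while the free rank counts the zero-differential generators. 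This proves the evenness claim. The one subtlety is that $\Z/1$ pieces cannot arise, since $2a\ge2$. The sheaf homology of a two-term piece located in degrees $p$ (target) and $p+1$ (source) is $\mathcal Q_s(a)$ in degree $p$ and $\mathcal T_s(a)$ in degree $p+1$. Meanwhile $\Z/2a$ sits in $H_p(D_s)$, which gives the indexing $t_{p,s,m}$ and $t_{p-1,s,m}$ in \eqref{eq:even-scalar-homology}. The constant $\Z[0]$ supplies $\delta_{p0}$.

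It remains to identify $\mathcal Q_s(a)$ and $\mathcal T_s(a)$, and I expect this to be the main obstacle. I would use the fiber-square description $\KMW_s\simeq\Ish^s\times_{\KM_s/2}\KM_s$ for $s\ge1$. Since $\eta h=0$, $h$ kills $\Ish^{s+1}$-type contributions appropriately, and in the fiber product $h$ acts as $0$ on the $\Ish^s$ component, because $h\Wsh=0$. It acts as $2$ on the $\KM_s$ component. Thus $ah$ acts as $(0,2a)$. Its kernel is $\Ish^s\times_{\KM_s/2}\KM_s[2a]$, which gives \eqref{eq:even-T-pullback} directly. For the cokernel, the image of $ah$ is $\{(0,2ax)\}$; this is well defined in the fiber product since $2ax\equiv0\bmod2$. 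Quotienting then gives $\Ish^s\times_{\KM_s/2}\KM_s/2a$. Two checks are needed here. First, the projection is surjective onto the new fiber product, which uses surjectivity of $\Ish^s\to\KM_s/2$ (Milnor conjecture). Second, the quotient map is exact in $\calA$, which can be checked on stalks at fields. The isomorphisms are noncanonical only because the Smith bases are.
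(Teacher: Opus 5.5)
Your proposal is correct and follows the paper's proof essentially step by step. You reduce weightwise to the differential $hA_s$ via Lemma~\ref{lem:even-reduction}, split integrally into singletons and blocks $\KMW_s\xrightarrow{ah}\KMW_s$ whose specialization at $h=2$ gives the even invariant factors $2a$, and identify kernel and cokernel through $\KMW_s\simeq\Ish^s\times_{\KM_s/2}\KM_s$. One small correction: surjectivity of $\KMW_s\to\Ish^s\times_{\KM_s/2}\KM_s/(2a)$ is automatic by lifting along $\KM_s\to\KM_s/(2a)$; the surjectivity you invoke is needed elsewhere. Surjectivity of $\Ish^s\to\KM_s/2$ is equivalent, via the fiber square, to the epimorphism $\KMW_s\to\KM_s$, and that is what shows the image of $ah$ is all of $\{0\}\times 2a\KM_s$, i.e.\ exactness in the middle, exactly as the paper uses it.
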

\begin{proof}
The formal cancellation of Lemma~\ref{lem:even-reduction} gives,
separately in each positive weight $s$, a chain-homotopy equivalent
complex with differential $hA_s$, where $A_s$ is a bounded complex
of finite free abelian groups. All changes of basis preserve $s$.

A bounded free complex over $\Z$ decomposes by integral changes of
basis into singleton terms and two-term complexes
\[
 \Z\xrightarrow{a}\Z,\qquad a\ge1.
\]
This decomposition can be constructed degree by degree by putting a nonzero differential
in Smith normal form. The preceding and following differentials
are zero on its nonzero diagonal coordinates, since their
compositions with that differential vanish over the torsion-free
ring $\Z$. Each diagonal entry therefore splits off as a two-term
complex, and induction gives the decomposition. We apply the
same changes of basis to $hA_s$. The resulting blocks are singleton
copies of $\KMW_s$ and
\begin{equation}\label{eq:even-scalar-block}
 \KMW_s\xrightarrow{ah}\KMW_s.
\end{equation}
The entries $a=1$ also contribute, since multiplication by
$h$ need not be invertible.

Specialization of the formal cancellation at $h=2$ identifies
$D_s$, up to unit contractible pairs, with the integer complex
$2A_s$. The block with entry $a$ now contributes $\Z/(2a)$ in
its target degree and has zero homology in its source degree.
The singleton terms contribute the free summands. Multiplying Smith
entries by $2$ preserves their divisibility order, so these are
precisely the invariant factors in
\eqref{eq:even-integer-invariants}. This proves their evenness
and identifies all block multiplicities. The cokernel of
\eqref{eq:even-scalar-block} occurs in the target degree and its
kernel in the next degree, proving
\eqref{eq:even-scalar-homology}. The exceptional constant summand
$\Z$ occurs only in degree zero. The decomposition comes from
chain isomorphisms after contractible summands have been removed,
so it involves no extension between the displayed summands.

To identify the individual kernels and cokernels, we use the Milnor--Witt coefficient square
\[
 \KMW_s\simeq\Ish^s\mathbin{\times}_{\KM_s/2}\KM_s
 \qquad\text{\cite[Theorem 1.4]{Fasel}}.
\]
Under this identification, multiplication by $ah$ is
$(u,z)\mapsto(0,2az)$: the Witt image of $h$ is zero and its
rank is two. Its kernel is therefore
\eqref{eq:even-T-pullback}. Since
$\KMW_s\to\KM_s$ is an epimorphism of sheaves, its image is
$\{0\}\times 2a\KM_s$. Taking the quotient therefore gives
\eqref{eq:even-Q-pullback}. All kernels, cokernels, and pullbacks
are taken in the sheaf category. The pullbacks retain the
quadratic extension data inside each scalar summand.
\end{proof}

If $-1$ is a square in $k$, then $h=2$ in $\GW(k)$, so
$\mathcal Q_s(a)=\KMW_s/(2a)$ and
$\mathcal T_s(a)=\KMW_s[2a]$. Over a general field, the hyperbolic
form $h$ retains the quadratic information in
\eqref{eq:even-Q-pullback}--\eqref{eq:even-T-pullback}.

\section{The orthant complex and integral signature lattices}\label{sec:signatures}
Let $\widetilde B$ be an acyclic skew-symmetrizable matrix satisfying \AS, let $\Gamma$ be its mutable graph, and let $X/\R$ be the associated cluster variety. The integer Witt complex computes the positive-degree Chow--Witt groups, including $\CW^2(X)$, which receives the Euler classes of oriented rank-two vector bundles. We now use signatures to describe its comparison with real cohomology, including the integral image of the real cycle class map.

\subsection{Sign labels and the differential}
For $I\in\Ind(\Gamma)$, we define the affine $\F_2$-space
\begin{equation}\label{eq:signlabels}
\Esgn_I=\left\{\varepsilon\in\F_2^{[d]\setminus I}:\
\sum_{a\notin I}\Btil_{ai}\varepsilon_a=1\pmod2
\text{ for all }i\in I\right\}.
\end{equation}
We use the sign bit $0$ for a positive coordinate and $1$ for a negative coordinate. On $\mathcal O_I$, the exchange relation for $i\in I$ is the character equation $\prod_{a\notin I}x_a^{\Btil_{ai}}=-1$. By saturation, this character map is split. It follows that \eqref{eq:signlabels} has $2^{d-2|I|}$ elements; they label the real connected components of $\mathcal O_I$, each of which is contractible.

For a coefficient ring $R$, we put
\[
C^p_{\mathrm{sgn}}(\Btil;R)=\bigoplus_{\substack{I\in\Ind(\Gamma)\\|I|=p}}R^{\Esgn_I}.
\]
If $J=\{j_1<\cdots<j_{p+1}\}$ and $\varepsilon\in\Esgn_J$, let $\varepsilon^{j_a,b}$ be its extension to $[d]\setminus(J\setminus\{j_a\})$ with value $b$ at $j_a$. We define
\begin{equation}\label{eq:sign-differential}
(dc)_{J,\varepsilon}=\sum_{a=1}^{p+1}(-1)^{a-1}
\left(c_{J\setminus\{j_a\},\varepsilon^{j_a,0}}
-c_{J\setminus\{j_a\},\varepsilon^{j_a,1}}\right).
\end{equation}
Both extensions are allowed because there is no edge between vertices of $J$.

\begin{theorem}\label{thm:sign}
The maps \eqref{eq:sign-differential} make $C^\bullet_{\mathrm{sgn}}(\Btil;R)$ a complex and give natural group isomorphisms
\[
H^q(C^\bullet_{\mathrm{sgn}}(\Btil;R))\simeq H^q(X(\R),R).
\]
In particular the real cohomology vanishes above $\alpha(\Gamma)$. The complex is determined by $\Ind(\Gamma)$ and $\Btil\bmod2$.
\end{theorem}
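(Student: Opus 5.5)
The plan is to run the same anticlique purity filtration $\Omega_\bullet$ of Proposition~\ref{cc:prop:geometry}, but now in the real points with ordinary (sheaf) cohomology of the topological space $X(\R)$ in place of strictly $\A^1$-invariant Nisnevich cohomology. Write $X(\R)=\bigsqcup_I\mathcal O_I(\R)$, where $\mathcal O_I(\R)\cong\R^{|I|}\times T_I(\R)$. Because the character map defining $T_I$ is split under \AS, $T_I(\R)\cong(\R^\times)^{d-2|I|}$ up to translation. Its components are therefore contractible and are labelled exactly by the sign vectors $\Esgn_I$: a component is determined by the signs of the coordinates $x_a$, $a\notin I$, subject to the parity condition coming from $\chi_i=-1$. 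The normal parameters $x_i$, $i\in I$, give a real tubular framing. Hence the Thom isomorphism for the real codimension-$|I|$ stratum holds with untwisted $R$ coefficients, with orientation fixed by the ordered frame. The filtration spectral sequence
\[
E_1^{p,q}=\bigoplus_{|I|=p}H^{q}(\mathcal O_I(\R),R)\Longrightarrow H^{p+q}(X(\R),R)
\]
then has only the row $q=0$, since each stratum is a disjoint union of contractible cells times $\R^{|I|}$. It therefore degenerates at $E_2$ with no extension problems, and $H^*(X(\R),R)$ is the cohomology of its $d_1$-row, whose terms are $\bigoplus_{|I|=p}R^{\Esgn_I}$. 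This already gives vanishing above $\alpha(\Gamma)$.

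The remaining task is to identify $d_1$ with \eqref{eq:sign-differential}. For $J=I\cup\{j\}$ I would use the transverse slice $s:\A^1_u\times T_J\to W_{I,j}$ from the proof of Theorem~\ref{thm:full-differential}, now on real points. Near a component $\varepsilon$ of $T_J(\R)$, the slice is a real line $u\in\R$ crossing the divisor $x_j=0$. The two half-lines $u>0$ and $u<0$ lie in the components $\varepsilon^{j,0}$ and $\varepsilon^{j,1}$ of $T_I(\R)$. These extensions are admissible because $\Btil_{ji}=0$ for $i\in I$, so the parity conditions do not involve $\varepsilon_j$. The connecting map of the pair (tube, tube minus divisor) in degree zero takes a locally constant function to its value on the positive side minus its value on the negative side, with the chosen orientation of $u$. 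This matches the description of normalized signature applied to framed contraction noted after Corollary~\ref{cor:full-cellular-matrix}, where $g\mapsto-1$.

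The incidence sign comes from comparing the frame $(j,i_1,\ldots,i_p)$ with the increasing frame. In real orientations this is $(-1)^{\nu(I,j)}=(-1)^{a-1}$ when $j=j_a$, which is exactly the realization of $g^{\nu}$ under $g\mapsto-1$. Blocks between non-adjacent strata vanish by the closure relation. I expect the main obstacle to be this sign bookkeeping, specifically making sure the real Thom orientation agrees with the quadratic frame convention. I would settle it by checking the basic rank-one model $xy=1+\chi$ directly. Alternatively, $d^2=0$ can be verified independently: commuting two face restrictions at distinct $j<\ell$ gives opposite signs, so the formula defines a complex even before the comparison.

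Naturality in $R$ follows because every step is functorial in the coefficient group. Finally, $\Esgn_I$ and the differential are defined using only $\Ind(\Gamma)$ and $\Btil\bmod 2$.
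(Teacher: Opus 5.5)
Your proposal is correct and follows the paper's proof essentially step for step. Both arguments run the real-locus support spectral sequence for $\Omega_\bullet$, with Thom orientations taken from the ordered normal frames. Both collapse it to the row $q=0$ because the orthant components of the strata are contractible, identify $d_1$ as ``positive side minus negative side'' with incidence sign $(-1)^{a-1}$, and check $d^2=0$ directly by pairwise cancellation. Your use of the real points of the transverse slice from Theorem~\ref{thm:full-differential} only spells out the local residue computation that the paper asserts directly.
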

\begin{proof}
To verify $d^2=0$, we fix two vertices to be removed from an independent set. The two possible orders have opposite incidence signs, and extension of sign labels in the two coordinates commutes. The corresponding four terms cancel in pairs.

We compute real cohomology by applying the real-locus functor to $\Omega_\bullet$. A stratum of algebraic codimension $p$ has real codimension $p$, and its ordered normal frame gives an integral Thom orientation. The support spectral sequence has terms
\[
\bigoplus_{|I|=p}H^{q}(\mathcal O_I(\R),R)
\]
with support shift $p$. Every connected component of a stratum is contractible, so only the row $q=0$ is nonzero. A framed normal coordinate has local residue ``positive side minus negative side''. At a higher intersection the ordered product orientation gives $(-1)^{a-1}$. The signs of the remaining coordinates are unchanged. The differential is therefore \eqref{eq:sign-differential}, and the one-row spectral sequence gives the asserted isomorphism.
\end{proof}

\subsection{The exact signature lattice of a torus}
For the integral comparison, we use the convention $\Ish^s=\Wsh$ for $s\leq0$ and the Pfister convention $\pf u=1-\la u\ra$. The normalized signature in degree $s$ is $2^{-s}\sig$; for negative $s$ this means multiplication by $2^{-s}$.

\begin{lemma}\label{lem:lattice}
Let $T=(\Gm^t)_{\R}$ and let $\eta_a$ be the indicator that its $a$th coordinate is negative. Normalized signature is injective on $\Ish^s(T)$ and its image is
\begin{equation}\label{eq:lattice}
\Lambda_s(\F_2^t)=\bigoplus_{A\subseteq[t]}
2^{\max(|A|-s,0)}\Z\prod_{a\in A}\eta_a
\ \subseteq\ \Z^{\F_2^t}.
\end{equation}
The direct sum denotes the lattice basis given by Boolean monomials. In particular,
\begin{equation}\label{eq:lattice-bound}
2^{\max(t-s,0)}\Z^{\F_2^t}\subseteq\Lambda_s(\F_2^t)
\subseteq\Z^{\F_2^t},
\end{equation}
and the lattice is the full function lattice if $s\geq t$. The lattice is invariant under affine changes of the Boolean coordinates.
\end{lemma}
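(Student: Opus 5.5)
We use the Pfister decomposition of $\Ish^s(T)$ from the proof of Theorem~\ref{thm:universal-witt} and Remark~\ref{rem:universal-I-powers}. With coordinates $t_1,\ldots,t_t$, evaluation at $1$ and successive contraction give
\[
\Ish^s(T)=\bigoplus_{A\subseteq[t]}I^{\max(s-|A|,0)}(\R)\,\pf{t_A},
\]
where $I^j(\R)=\Ish^j(\R)$ and $I^j=\Witt$ for $j\le0$. Over $\R$ we have $\Witt(\R)=\Z$ via signature, and $I^j(\R)=2^{j}\Z$ for $j\ge0$. A point of $T(\R)$ is recorded only through its sign vector, since the signature of $\la u\ra$ at a real point is the sign of $u$. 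So total signature defines a map $\Ish^s(T)\to\Z^{\F_2^t}$, and $\pf{t_a}=1-\la t_a\ra$ has signature $2\eta_a$. Multiplicativity of signature at each real point gives $\sig\pf{t_A}=2^{|A|}\prod_{a\in A}\eta_a$.

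\emph{Image.} A coefficient in $I^{\max(s-|A|,0)}(\R)$ has signature in $2^{\max(s-|A|,0)}\Z$. Hence the $A$-summand maps onto
\[
2^{\max(s-|A|,0)+|A|}\Z\prod_{a\in A}\eta_a .
\]
Multiplying by $2^{-s}$ gives the exponent
\[
\max(s-|A|,0)+|A|-s=\max(|A|-s,0),
\]
which is \eqref{eq:lattice}. For $s\le0$ every coefficient lies in $\Witt(\R)=\Z$. The normalized factor $2^{-s}$ then gives exponent $|A|-s=\max(|A|-s,0)$, consistent with the convention.

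\emph{Injectivity.} The Boolean monomials $\prod_{a\in A}\eta_a$, for $A\subseteq[t]$, form a $\Z$-basis of $\Z^{\F_2^t}$, by Möbius inversion on the Boolean lattice, which is unitriangular. The map on each summand is injective because signature is injective on $\Witt(\R)$. Together these give injectivity and show the sum in \eqref{eq:lattice} is direct. This triangularity is the one point needing care; it is standard.

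\emph{Bounds.} The exponents satisfy $0\le\max(|A|-s,0)\le\max(t-s,0)$, and this gives \eqref{eq:lattice-bound}. When $s\ge t$ all exponents vanish, so the lattice is the full function lattice.

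\emph{Affine invariance.} This follows intrinsically: an affine change of Boolean coordinates comes from a monomial change of torus coordinates $t_a\mapsto\pm\prod t_b^{m_{ab}}$ with $(m_{ab})\in GL_t(\Z)$ and suitable parity. Such a change is an automorphism of $T$ and preserves $\Ish^s(T)$. Pullback along it commutes with signature and permutes $\F_2^t$ by the induced affine map. So the image lattice is carried to itself.

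Every affine automorphism of $\F_2^t$ arises this way: lift a matrix in $GL_t(\F_2)$ to $GL_t(\Z)$, which is possible since $SL_t(\Z)\to SL_t(\F_2)$ is surjective, and realize translations by sign changes $t_a\mapsto -t_a$.

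The main obstacle is keeping the normalization exponent consistent across $s\le0$ and $s>0$. The triangular change of basis between Pfister monomials and the indicator Boolean monomials is the other point to check carefully.
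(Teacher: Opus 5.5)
Your proof is correct and follows the paper's argument. Both use the evaluation--contraction (Pfister) splitting of $\Ish^s(T)$, the signature computation $\sig\pf{t_A}=2^{|A|}\prod_{a\in A}\eta_a$, the unitriangularity of the Boolean monomial basis of $\Z^{\F_2^t}$, and affine invariance by lifting affine maps of $\F_2^t$ to signed unimodular monomial automorphisms of $T$. The only cosmetic difference is that you treat $s\le0$ through the same exponent formula, whereas the paper rescales the $s=0$ lattice by $2^{-s}$.
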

\begin{proof}
For $s\geq0$, successive evaluation at $1$ and contraction split the sections as
\[
\Ish^s(T)=\bigoplus_{A\subseteq[t]}
\pf{t_a:a\in A}\, I^{s-|A|}(\R).
\]
The displayed multiplication identifies each contraction summand with its image. Since $I^u(\R)=2^{\max(u,0)}\Z$, the coefficients are explicit. When $|A|>s$, contraction reaches the Witt group, while the product still lies in $I^{|A|}\subseteq I^s$. The signature of the product of $|A|$ Pfister forms is $2^{|A|}\prod_{a\in A}\eta_a$. After division by $2^s$, the generator has coefficient
$2^{|A|+\max(s-|A|,0)-s}=2^{\max(|A|-s,0)}$. For $s<0$, the sheaf is $\Wsh$ and normalized signature is $2^{-s}$ times its ordinary signature. Multiplying the $s=0$ lattice by this factor therefore gives the same formula.
The Boolean monomials form an integral basis of all functions on $\F_2^t$: their evaluation matrix is triangular with diagonal entries $1$ after ordering subsets. This proves injectivity, the lattice formula, and \eqref{eq:lattice-bound}.

Translations of Boolean coordinates are induced by replacing torus coordinates by their negatives. Every invertible linear change over $\F_2$ lifts to an integral unimodular monomial change of torus coordinates, using elementary matrices and permutations. Since these changes preserve the intrinsic signature image, the lattice is affine invariant.
\end{proof}
A translated split torus has the same description on its affine space of sign components. Thus $\Lambda_s(\Esgn_I)$ is well defined after any choice of affine Boolean coordinates; different choices give the same lattice.

\subsection{An integral lattice model for all \texorpdfstring{$\Ish^j$}{I-j}-cohomology}
We can now describe the integral comparison on the whole cellular complex. For $j\geq0$, set
\begin{equation}\label{eq:I-complex}
C_I^p(X,j)=\bigoplus_{|I|=p}\Lambda_{j-p}(\Esgn_I)
\ \subseteq\ C^p_{\mathrm{sgn}}(\Btil;\Z).
\end{equation}

\begin{theorem}\label{thm:I-model}
The differential \eqref{eq:sign-differential} preserves the lattices \eqref{eq:I-complex}. There are isomorphisms
\begin{equation}\label{eq:I-cohom}
H^q_{\Nis}(X_{\R},\Ish^j)\simeq H^q(C_I^\bullet(X,j)),\qquad j\geq0.
\end{equation}
Under these identifications and the isomorphism of Theorem~\ref{thm:sign},
the inclusion of complexes induces the real cycle class map
\[
\cl^q_j:H^q_{\Nis}(X_{\R},\Ish^j)\longrightarrow H^q(X(\R),\Z).
\]
Thus integral $\Ish^j$-cohomology and its comparison with real cohomology reduce to finite integer linear algebra.
\end{theorem}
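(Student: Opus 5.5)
The approach is to compare the $\Ish^j$ version of the cellular cochain complex with the sign complex. Proposition~\ref{prop:AR-coefficients} and Theorem~\ref{thm:full-differential} compute $H^q_{\Nis}(X_\R,\Ish^j)$ by the complex with terms $\bigoplus_{|I|=p}(\Ish^j)_{-p}(T_I)$ and with differential given by framed contraction followed by restriction. The standard identification $(\Ish^j)_{-p}=\Ish^{j-p}$ holds, with the convention $\Ish^s=\Wsh$ for $s\le0$. Each term is therefore $\Ish^{j-p}(T_I)$, and $T_I$ is a translated split torus of dimension $d-2p$ whose real sign components are labeled by $\Esgn_I$. By Lemma~\ref{lem:lattice}, applied on the affine space $\Esgn_I$ in the well-defined form noted after that lemma, normalized signature $2^{-(j-p)}\sig$ embeds $\Ish^{j-p}(T_I)$ into $\Z^{\Esgn_I}$. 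The image is exactly $\Lambda_{j-p}(\Esgn_I)$. This gives degreewise isomorphisms onto the terms of \eqref{eq:I-complex}.

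The key step is to show that these signature embeddings intertwine the Milnor--Witt cellular differential \eqref{eq:full-cellular-differential} with the sign differential \eqref{eq:sign-differential}. Once this holds, preservation of the lattices is automatic, since the image of a subcomplex is a subcomplex, and \eqref{eq:I-cohom} follows. Compatibility can be checked one face $I\subset J=I\cup\{j\}$ at a time, using $T_I=\Gm\times S_{I,j}$ with first coordinate $x_j$. For a class $\alpha=\beta+\pf{x_j}\gamma$ in the contraction splitting, the framed contraction $\rho_{I,j}$ returns $\gamma$, up to the left-residue normalization. The signature of $\alpha$ on the side $x_j>0$ is $\sig\beta$, and on the side $x_j<0$ it is $\sig\beta+2\sig\gamma$. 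With the normalization shift from $2^{-(j-p)}$ to $2^{-(j-p-1)}$, the normalized value of $\gamma$ therefore equals the difference of the normalized values of $\alpha$ on the two sides of $x_j=0$. This reproduces the ``positive minus negative'' term of \eqref{eq:sign-differential}, possibly up to one global sign convention fixed by $\pf u=1-\la u\ra$. Restriction along $\iota_{I,j}$ corresponds to restricting sign labels to $\Esgn_J$.

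The factor $g^{\nu}$ remains. Since $\sig\la-1\ra=-1$, it becomes $(-1)^{\nu(J\setminus\{j_a\},j_a)}=(-1)^{a-1}$, which matches \eqref{eq:sign-differential}. This is the remark after Corollary~\ref{cor:full-cellular-matrix}, and I would verify it by this local computation. The main obstacle is bookkeeping in negative degrees $j-p<0$, where the terms are Witt groups and the normalization multiplies by $2^{p-j}$. There I would check that the factor $2$ coming from passing from $\pf{x_j}\gamma$ to the side difference is absorbed consistently, so that the normalization shift by one still works. Naturality of signature in residues for real closed fields (the second residue of a form corresponds to its sign jump) is the input I would cite.

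Finally, for the cycle class statement, the real cycle class map $\Ish^j\to\Wsh\to\Z$ on real points is signature, suitably normalized, and it is compatible with purity and Thom isomorphisms. The filtration $\Omega_\bullet$ yields one-row spectral sequences on both sides, namely Proposition~\ref{prop:AR-coefficients} and the proof of Theorem~\ref{thm:sign}. The cycle map induces a morphism of these spectral sequences, which on $E_1$ is the lattice inclusion described above. Hence the induced map on cohomology is induced by the inclusion of complexes $C_I^\bullet(X,j)\subseteq C^\bullet_{\mathrm{sgn}}(\Btil;\Z)$.
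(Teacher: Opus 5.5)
Your proposal is correct and follows essentially the paper's proof. You identify each cochain group with $\Lambda_{j-p}(\Esgn_I)$ by Lemma~\ref{lem:lattice}, and check face by face that the normalized residue equals the difference of the normalized side signatures, with $g^{\nu}$ becoming $(-1)^{a-1}$. You then get lattice preservation and \eqref{eq:I-cohom} from termwise injectivity, and identify $\cl^q_j$ with the inclusion via compatibility of the real cycle map with purity and Thom classes. Your two hedges resolve cleanly. Since $\partial_{x_j}(\la x_j\ra\gamma)=\gamma$, the residue of $\pf{x_j}\gamma$ is $-\gamma$, which matches ``positive minus negative'' with no global sign. Negative degrees need no separate bookkeeping, because the normalization $2^{-s}$ and the shift by one are uniform in $s$.
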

\begin{proof}
By Theorem~\ref{thm:acyclic-model}, the cellular complex for $\Ish^j$ has degree-$p$ cochain group
\[
\bigoplus_{|I|=p}H^0(\mathcal O_I,\Ish^{j-p}),
\]
where the normal twist is trivialized by the ordered mutable coordinates. After removing the affine factor by homotopy invariance, Lemma~\ref{lem:lattice} identifies each group injectively with the corresponding lattice in \eqref{eq:I-complex}.

Compatibility of the real cycle map with localization, Thom classes, and pushforward is established in \cite{Jacobson} and \cite[Appendix A, especially A.9 and Corollary A.11]{HWXZ}. We check the normalization explicitly to identify the resulting differential. For a discrete valuation with real normal parameter $u$, a Witt class with signatures $a_+,a_-$ on the two normal sides has second-residue signature $(a_+-a_-)/2$. At a codimension-$p$ term we divide the source signatures by $2^{j-p}$ and the residue signature by $2^{j-p-1}$. Hence the normalized residue is exactly the difference of the two normalized signatures. Ordered normal coordinates give the incidence signs in \eqref{eq:sign-differential}.

The algebraic cellular differential therefore maps the lattice into the next lattice, and injectivity on every term identifies it with the restriction of \eqref{eq:sign-differential}. This proves that the lattices form a complex and gives \eqref{eq:I-cohom}. The same compatibility identifies the induced map with $\cl^q_j$.
\end{proof}

\begin{corollary}\label{cor:real-bounds}
Let $d=\dim X$ and $0\leq i\leq\alpha(\Gamma)$. For every $j\geq0$, the cokernel of $\cl^i_j$ is annihilated by
\begin{equation}\label{eq:cycle-exp}
2^{\max(d-i-j,0)}.
\end{equation}
The map is surjective if $j\geq d-i$. For $i>0$ it is an isomorphism if $j\geq d-i+1$, and for $i=0$ it is an isomorphism if $j\geq d$. Above $\alpha(\Gamma)$ both groups vanish.
\end{corollary}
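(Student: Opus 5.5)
We work entirely with the lattice model of Theorem~\ref{thm:I-model}. Write $C^\bullet=C^\bullet_{\mathrm{sgn}}(\Btil;\Z)$ and $L^\bullet=C_I^\bullet(X,j)$, so that $L^p=\bigoplus_{|I|=p}\Lambda_{j-p}(\Esgn_I)\subseteq C^p$. Under the identifications of Theorems~\ref{thm:sign} and~\ref{thm:I-model}, the map $\cl^i_j$ is the map $H^i(L^\bullet)\to H^i(C^\bullet)$ induced by inclusion. Each $\Esgn_I$ is an affine space of dimension $t_I=d-2|I|$. Lemma~\ref{lem:lattice}, specifically \eqref{eq:lattice-bound}, therefore gives
\[
2^{\max(d-2p-j+p,0)}C^p_I\subseteq\Lambda_{j-p}(\Esgn_I)\subseteq C^p_I,
\]
where $C^p_I=\Z^{\Esgn_I}$. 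So $2^{e_p}C^p\subseteq L^p$ with $e_p=\max(d-p-j,0)$. Moreover $L^p=C^p$ as soon as $j\ge d-p$.

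\emph{Cokernel bound.} Let $z\in C^i$ be a cocycle. Then $2^{e_i}z$ lies in $L^i$ and is still a cocycle there. Hence $2^{e_i}[z]$ lies in the image of $\cl^i_j$, and $e_i=\max(d-i-j,0)$ is exactly \eqref{eq:cycle-exp}. If $j\ge d-i$, then $e_i=0$ and $L^i=C^i$, so every cocycle already lies in $L^i$ and the map is surjective.

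\emph{Injectivity.} Suppose $z\in L^i$ becomes a coboundary in $C^\bullet$, say $z=dy$ with $y\in C^{i-1}$. When $j\ge d-(i-1)=d-i+1$ we have $L^{i-1}=C^{i-1}$, so $y\in L^{i-1}$ and $[z]=0$ in $H^i(L^\bullet)$. For $i>0$, this hypothesis also gives $j\ge d-i$, so we get both surjectivity and injectivity, hence an isomorphism. For $i=0$ there is no degree $-1$ term, so injectivity is automatic from the inclusion $L^0\subseteq C^0$ (indeed $H^0$ is just the kernel), and surjectivity needs $j\ge d$, giving an isomorphism for $j\ge d$.

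\emph{Vanishing.} Above $\alpha(\Gamma)$ both complexes have no terms, so both groups vanish; this is also recorded in Theorem~\ref{thm:sign} and Proposition~\ref{prop:AR-coefficients}.

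The only point requiring care is the dimension count $t_I=d-2|I|$ for the sign-label space, together with the index shift $s=j-p$ in Lemma~\ref{lem:lattice}, so that $t_I-s=d-p-j$. The count $t_I$ comes from saturation under \AS, as noted after \eqref{eq:signlabels}. We also rely on $\Ish^s=\Wsh$ for $s\le0$ being covered by the lemma's convention for negative $s$. Beyond this, the argument is elementary.
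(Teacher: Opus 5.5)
Your proof is correct and follows essentially the same route as the paper: the termwise inclusions $2^{\max(d-p-j,0)}C^p_{\mathrm{sgn}}\subseteq C_I^p(X,j)\subseteq C^p_{\mathrm{sgn}}$ from Lemma~\ref{lem:lattice}, multiplication of topological cocycles by $2^{\max(d-i-j,0)}$ for the cokernel bound, the equality $C_I^{i-1}=C^{i-1}_{\mathrm{sgn}}$ for $j\ge d-i+1$ for injectivity, termwise injectivity in degree zero, and the length of the complexes for vanishing. Your explicit check of the index shift $t_I-s=(d-2p)-(j-p)=d-p-j$ is exactly the count the paper leaves implicit.
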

\begin{proof}
The bounds follow from the inclusions in each cochain degree. In degree $p$, Lemma~\ref{lem:lattice} gives
\[
2^{\max(d-p-j,0)}C^p_{\mathrm{sgn}}\subseteq C_I^p(X,j)\subseteq C^p_{\mathrm{sgn}}.
\]
Given an integral topological cocycle in degree $i$, multiplication by \eqref{eq:cycle-exp} gives an element of the source lattice. It is still a cocycle, since the differential on the subcomplex is the restriction of the same differential. This proves the cokernel assertion and surjectivity range.

For injectivity in positive degree it suffices that $C_I^{i-1}=C^{i-1}_{\mathrm{sgn}}$: a source cocycle which is a target boundary then already has a primitive in the source. This equality holds for $j\geq d-i+1$. In degree zero, termwise injectivity suffices, and surjectivity holds for $j\geq d$. The vanishing assertion follows from the length of the two complexes.
\end{proof}
These bounds depend on the degree because the torus dimension decreases to $d-2p$. Computing the individual lattice matrices can give stronger bounds. Related bounds for affine-cellular and linear schemes appear in \cite{HWXZ,Hennig}.

\begin{corollary}\label{cor:sign-W}
For the real cluster variety of an acyclic seed satisfying \AS,
\[
H^q_{\Nis}(X,\Wsh_\Q)\simeq H^q(X(\R),\Q).
\]
Moreover,
\[
\CW^p(X)_\Q\simeq\CH^p(X)_\Q\oplus H^p(X(\R),\Q).
\]
In particular, if $p>0$, the right side is $H^p(X(\R),\Q)$.
\end{corollary}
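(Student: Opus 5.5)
The first isomorphism will come from comparing the integer Witt complex of Theorem~\ref{thm:universal-witt} with the sign complex of Theorem~\ref{thm:sign}, after tensoring with $\Q$. By \eqref{eq:universal-witt-cohomology} with $k=\R$ and $\Witt(\R)=\Z$, the groups $H^q_{\Nis}(X,\Wsh)$ are computed by $C^\bullet_{\Witt}(\Btil;\Z)$. Since each stratum torus has contractible real components and finite cohomological length, rational Witt cohomology is computed by $C^\bullet_{\Witt}(\Btil;\Z)\otimes\Q$; here we use that $\Wsh_\Q$ is obtained termwise on the cellular complex, because the cellular terms are projective of finite type. Rather than rebuild that comparison, we use the lattice model of Theorem~\ref{thm:I-model} with $j=0$, where $\Ish^0=\Wsh$. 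Its degree-$p$ term $\Lambda_{-p}(\Esgn_I)$ sits inside $\Z^{\Esgn_I}$ with finite index, bounded by \eqref{eq:lattice-bound}, and the inclusion into the sign complex induces the cycle map. Tensoring with $\Q$ turns each finite-index inclusion into an equality, so the inclusion of complexes becomes an isomorphism $C_I^\bullet(X,0)_\Q\cong C^\bullet_{\mathrm{sgn}}(\Btil;\Q)$. Taking cohomology, and using exactness of $-\otimes\Q$, gives $H^q_{\Nis}(X,\Wsh)_\Q\simeq H^q(X(\R),\Q)$. The identification $H^q(X,\Wsh)_\Q=H^q(X,\Wsh_\Q)$ follows from the same exactness applied to the finite cellular cochain complex.

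For the Chow--Witt statement we will use the standard rational splitting of the Milnor--Witt sheaf. After inverting $2$, the fibre square $\KMW_p\simeq\Ish^p\times_{\KM_p/2}\KM_p$ of \cite[Theorem 1.4]{Fasel} becomes a direct sum, since $\KM_p/2$ is killed. This gives $(\KMW_p)_\Q\simeq(\KM_p)_\Q\oplus(\Ish^p)_\Q$. Because the cellular complex is finite and projective, cohomology commutes with rationalization, and Bloch's formula then yields
\[
\CW^p(X)_\Q\simeq\CH^p(X)_\Q\oplus H^p(X,\Ish^p)_\Q .
\]
Next, $(\Ish^p)_\Q\to\Wsh_\Q$ is an isomorphism of sheaves, since $I^{p}(F)$ has $2$-primary-torsion-free finite index image in $W(F)$ after inverting $2$. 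Alternatively, this follows termwise from Remark~\ref{rem:universal-I-powers} and Lemma~\ref{lem:lattice}: the lattices $\Lambda_{p-q}$ and $\Lambda_{-q}$ differ only by powers of $2$. Combining this with the first part gives $H^p(X,\Ish^p)_\Q\simeq H^p(X(\R),\Q)$.

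Finally, for $p>0$, Proposition~\ref{prop:kzero} gives $\CH^p(X)=0$ under \AS, so only $H^p(X(\R),\Q)$ remains.

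The main obstacle is justifying that rationalization commutes with Nisnevich cohomology, and that the fibre-square splitting holds after $\otimes\Q$ at the level of sheaves. The plan is to avoid sheaf-level subtleties entirely. Everything is computed through the finite cellular cochain complexes $\Hom(C^\Sigma_*,M)$, with $M=\KMW_p,\KM_p,\Ish^p$, and we then check that these finite complexes of abelian groups split after $\otimes\Q$. On each torus term this is immediate from the explicit decompositions \eqref{eq:thom-torus} and Lemma~\ref{lem:lattice}.
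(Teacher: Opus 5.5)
Your proposal is correct and follows essentially the same route as the paper. Rationalizing turns the finite-index signature lattices of Theorem~\ref{thm:I-model} into the full sign complex, and the rationally split Milnor--Witt fibre square together with Chow vanishing gives the Chow--Witt statement; one minor slip is that $I^p(F)$ need not have finite index in $\Witt(F)$ (already $I(F)/I^2(F)\cong F^\times/F^{\times2}$ is infinite for $F=\R(t)$), so the correct reason $(\Ish^p)_\Q\to\Wsh_\Q$ is an isomorphism is that $\Witt(F)/I^p(F)$ is annihilated by $2^p$, which your lattice alternative also supplies.
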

\begin{proof}
After tensoring with $\Q$, all the lattices become the full function spaces. This gives the first isomorphism, which also follows from real \'etale comparison. After inverting $2$, the Milnor--Witt pullback square splits into its Milnor and Witt parts; rationally,
$\KMW_p\otimes\Q\simeq\KM_p\otimes\Q\oplus\Wsh_\Q$.
Taking degree-$p$ cohomology and applying Proposition~\ref{prop:acyclic-chow} gives the remaining assertions.
\end{proof}

\subsection{All positive-degree integral Chow--Witt groups}
\begin{theorem}\label{thm:all-CW}
Let $X$ be the real cluster variety of an acyclic seed satisfying \AS. For every $p>0$ there are natural isomorphisms
\begin{equation}\label{eq:all-CW}
\CW^p(X)\xrightarrow{\sim}H^p_{\Nis}(X,\Ish^p)
\xrightarrow{\sim}H^p(C_I^\bullet(X,p)).
\end{equation}
The integral real cycle map is induced by the inclusion $C_I(X,p)\subseteq C_{\mathrm{sgn}}$. Its cokernel is annihilated by $2^{\max(d-2p,0)}$, and its image is contained in $2H^p(X(\R),\Z)$.
\end{theorem}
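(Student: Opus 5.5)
The plan is to assemble the statement from results already proved, with one new observation for the image containment.

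\emph{The two isomorphisms.} The first isomorphism in \eqref{eq:all-CW} is the first map of \eqref{eq:universal-CW} in Theorem~\ref{thm:universal-witt}, specialized to $k=\R$. That argument uses the Milnor--Witt fiber square together with positive Chow vanishing from Corollary~\ref{cc:cor:universal}, and both hold under \AS. The second isomorphism is \eqref{eq:I-cohom} of Theorem~\ref{thm:I-model} with $j=p$. Naturality of both isomorphisms follows from their construction through the cellular complex with fixed ordered normal frames.

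\emph{The real cycle map and the cokernel bound.} Theorem~\ref{thm:I-model} says that the real cycle map $\cl^p_p$ is induced by the inclusion $C_I(X,p)\subseteq C_{\mathrm{sgn}}$. Composing with the first isomorphism identifies this inclusion as the integral cycle map on $\CW^p(X)$. The cokernel bound is Corollary~\ref{cor:real-bounds} with $i=j=p$, which gives the exponent $2^{\max(d-2p,0)}$.

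\emph{The containment in $2H^p(X(\R),\Z)$.} This is the only step that needs an argument beyond citation, and I expect it to be the main obstacle. The idea is to show that every cocycle in degree $p$ of the lattice complex lies in $2C^p_{\mathrm{sgn}}$, up to a coboundary. Such a cocycle then represents a class in $2H^p$. In degree $p$ the lattice is $\Lambda_0(\Esgn_I)$ on tori of dimension $d-2p$. This lattice is spanned by Boolean monomials, with the monomial on $A$ carrying a factor $2^{|A|}$. Hence only the constant function $1$ is not divisible by $2$.

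The step I am unsure of is where the normalization convention places the constant. In the $j=p$, degree-$p$ Witt term, the constant summand is $\Witt$ rather than $I^0$. Its normalized signature is therefore the ordinary signature. That is odd for $\la1\ra$, so the naive divisibility fails.

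I would resolve this using the image of $\CW^p$ rather than of $H^p(\Ish^p)$. The Milnor--Witt square \cite[Proposition~2.11]{HW} shows that the image of $\CW^p\to H^p(\Ish^p)$ reduces to zero in $\CH^p/2=0$. Alternatively, I would check directly that the constant component of a cocycle is a coboundary. To do this, I would write the constant indicator on a stratum $J$ as $d$ of the monomial $\eta_j$ on $J\setminus\{j\}$, as in the residue computation of Proposition~\ref{prop:acyclic-chow}. By \eqref{eq:sign-differential}, the monomial $\eta_j$ maps to $-1$ times the constant, plus terms on other faces. Those extra terms are monomials containing a further $\eta$, and so they are divisible by $2$ in degree $p$.

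Subtracting these coboundaries reduces every cocycle to one with even values. This would give the containment in $2H^p(X(\R),\Z)$.
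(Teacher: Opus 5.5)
Your treatment of the two isomorphisms, the identification of the cycle map with the inclusion of lattice complexes, and the cokernel bound is the same as the paper's. The paper uses Theorem~\ref{thm:universal-witt} at $k=\R$, Theorem~\ref{thm:I-model} with $j=p$, and Corollary~\ref{cor:real-bounds} with $i=j=p$.

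For the containment in $2H^p(X(\R),\Z)$ the paper argues differently. By \cite[Proposition~3.21]{HWXZ}, the real cycle map reduced modulo $2$ factors through $\CH^p(X)/2$ followed by the Borel--Haefliger map. So it vanishes by Chow vanishing, and the Bockstein sequence identifies the kernel of reduction with $2H^p(X(\R),\Z)$.

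Your first suggested fix is only half of this argument. Vanishing in $\CH^p/2$ says nothing about $H^p(X(\R),\Z)$ without that compatibility. Also, restricting to the image of $\CW^p$ changes nothing, since $\CW^p\to H^p_{\Nis}(X,\Ish^p)$ is already an isomorphism.

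Your second, cochain-level argument is correct and gives a genuinely different proof. It is even cleaner than you state. Take $|J|=p$, $j\in J$ and $I=J\setminus\{j\}$. The bit $\varepsilon_j$ is a free coordinate on $\Esgn_I$. For $k\neq j$, the two extensions at $k$ of a label in $\Esgn_{I\cup\{k\}}$ have the same $j$th bit. Hence, by \eqref{eq:sign-differential}, the coboundary of $\eta_j$ placed on the $I$ summand is exactly $\pm\mathbf 1_J$, with no other terms. Your justification that the extra terms would be even holds only for lattice elements $2^{|A|}\eta_A$, not for the functions $\eta_A$ themselves; here it is moot because those terms vanish.

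Every element of $\Lambda_0(\Esgn_J)$ is a constant modulo $2C^p_{\mathrm{sgn}}$. So subtracting these coboundaries turns a cocycle $c$ into $2c''$. Torsion-freeness of $C^{p+1}_{\mathrm{sgn}}(\Z)$ makes $c''$ a cocycle, so $[c]=2[c'']$.

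What your route buys: it avoids the mod-$2$ Borel--Haefliger compatibility, and it shows more. The constants are already coboundaries inside $C_I^\bullet(X,p)$, because $\eta_j\in\Lambda_1(\Esgn_I)$ is the normalized signature of $\pf{x_j}$. This is the lattice counterpart of $d^{p-1}\{x_j\}_I=e_J$ in Proposition~\ref{prop:acyclic-chow}. The paper's argument is shorter, and for the containment it needs only $\CH^p(X)/2=0$, not the explicit lattice model.
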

\begin{proof}
The first isomorphism is \eqref{eq:universal-CW} from
Theorem~\ref{thm:universal-witt}; Theorem~\ref{thm:I-model} gives
the second and its compatibility with the real cycle map.
Corollary~\ref{cor:real-bounds}, with $i=j=p$, gives the cokernel bound.

The real cycle map modulo $2$ factors through the reduction to $\CH^p(X)/2$ and the Borel--Haefliger map, by \cite[Proposition 3.21]{HWXZ}. The reduction vanishes by Chow vanishing. The coefficient sequence $0\to\Z\xrightarrow{2}\Z\to\Z/2\to0$ shows that the kernel of reduction on $H^p(X(\R),\Z)$ is exactly $2H^p(X(\R),\Z)$. This proves the image assertion.
\end{proof}

\begin{corollary}\label{cor:real-middle}
If $d=2a>0$, then $H^a(X(\R),\Z)$ is a finite group of odd order. For every $d>0$, the nonzero real Betti numbers occur only in degrees $r<d/2$.
\end{corollary}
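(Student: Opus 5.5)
The plan is to combine the image bound of Theorem~\ref{thm:all-CW} with the cokernel bound for the integral real cycle map.

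For the middle degree, take $d=2a$ and $p=a$. Theorem~\ref{thm:all-CW} says the cokernel of
\[
\cl:\CW^a(X)\longrightarrow H^a(X(\R),\Z)
\]
is annihilated by $2^{\max(d-2a,0)}=1$, so $\cl$ is surjective. The same theorem says the image lies in $2H^a(X(\R),\Z)$. Hence $H^a(X(\R),\Z)=2H^a(X(\R),\Z)$. Since $X(\R)$ is computed by the finite complex $C^\bullet_{\mathrm{sgn}}(\Btil;\Z)$ of Theorem~\ref{thm:sign}, this group is finitely generated. A finitely generated abelian group $H$ with $H=2H$ has $H/2H=0$, so it has no free part and no $2$-primary torsion. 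Therefore $H^a(X(\R),\Z)$ is finite of odd order.

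For the Betti number statement, I would rerun the same argument in every degree $p$ with $0<p$ and $2p\ge d$. Then $\max(d-2p,0)=0$, so the cycle map into $H^p(X(\R),\Z)$ is again surjective with image in $2H^p$. Exactly as before, $H^p(X(\R),\Z)$ is finite of odd order, and its rational Betti number vanishes.

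Two degrees need separate care.
\begin{itemize}
\item Degree $p=0$ with $d>0$: here $0<d/2$, so it already lies in the allowed range and needs no argument.
\item Degrees $p>\alpha(\Gamma)$: cohomology vanishes by Theorem~\ref{thm:sign}, which is consistent with the claim.
\end{itemize}
Thus nonzero Betti numbers occur only for $r<d/2$.

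The main point to check is that Theorem~\ref{thm:all-CW} applies for every $p>0$, not only in the middle degree. It is stated for all $p>0$. Its image statement rests on Chow vanishing, given by Proposition~\ref{prop:acyclic-chow} under \AS, and on the Borel--Haefliger factorization, and neither depends on $p$. The cokernel exponent $2^{\max(d-2p,0)}$ equals $1$ exactly when $2p\ge d$, which is the range used. I expect no real obstacle beyond confirming finite generation, which follows from the finite complex.
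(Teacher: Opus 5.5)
Your proof is correct. The middle-degree part is the same as the paper's argument: Theorem~\ref{thm:all-CW} with $p=a$ gives a surjective real cycle map whose image lies in $2H^a(X(\R),\Z)$. The finitely generated group $H^a$ is therefore $2$-divisible, hence finite of odd order.

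For the Betti-number claim you take a different route. You rerun the surjectivity-plus-divisibility argument in every degree $p\ge d/2$. The paper instead uses \AS\ directly. Since $B_I$ has $d-|I|$ rows and rank $|I|$, every independent set satisfies $2|I|\le d$, so $\alpha(\Gamma)\le\lfloor d/2\rfloor$. Theorem~\ref{thm:sign} then gives $H^r(X(\R),\Z)=0$ for all $r>\lfloor d/2\rfloor$.

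The paper's route gives the stronger statement of integral vanishing above the middle degree. It also handles odd $d$ without the cycle map at all. Your route avoids the independence-number bound, but above the middle it only shows the groups are finite of odd order. By that bound, your uniform argument has content only at $p=a$ for even $d$; in every other degree $p\ge d/2$ the groups are already zero.
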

\begin{proof}
Taking $p=a$ in Theorem~\ref{thm:all-CW}, we find that the real cycle map is surjective and its image is contained in $2H^a$. Consequently multiplication by $2$ on this finitely generated abelian group is surjective. It follows that both its free part and its $2$-primary torsion vanish. Condition \AS\ gives $2|I|\le d$ for every independent set $I$, hence
$\alpha(\Gamma)\le\lfloor d/2\rfloor$. Theorem~\ref{thm:sign} therefore gives
vanishing in degrees greater than $a$. For odd $d$, the same bound places
all nonzero real Betti numbers in degrees $r\le\lfloor d/2\rfloor<d/2$.
\end{proof}

\section{Real summands and rational stable motivic types}\label{sec:rational}
The rational plus/minus decomposition separates the ordinary
motive and real realization, as in \cite[Proposition~4.3]{LiuAbelian}.
We will use this separation to combine the motive splitting with the
real cohomology computed in Appendix~\ref{sec:signatures}.

\subsection{The rational stable decomposition}
We use the projectors
\[
e_+=\frac{1+\langle-1\rangle}{2},\qquad
e_-=\frac{1-\langle-1\rangle}{2},\qquad
\one_\pm=e_\pm\one_\Q.
\]
For a rational motivic spectrum $E$, write $E_\pm=e_\pm E$.
The two parts are described by the standard equivalences
\begin{equation}\label{eq:plusminus}
\SH(k)_\Q^+\simeq\DM(k;\Q),\qquad
\SH(k)_\Q^-\simeq D((\Spec k)_{\ret},\Q).
\end{equation}
These are due to Cisinski--D\'eglise \cite[Section 16]{CD} and Bachmann \cite[Theorem 35 and Proposition 41]{Bachmann}. The minus category vanishes for a nonreal field. For $k=\Q$, the real spectrum has a single point, so
\begin{equation}\label{eq:Q-minus}
\SH(\Q)_\Q^-\simeq D(\Q).
\end{equation}
For a smooth variety $X/\Q$, base change to $\R$ identifies the minus image of $\Sigma^\infty X_+$ under \eqref{eq:Q-minus} with the rational chains of $X(\R)$ \cite[Proposition 36 and Corollary 42]{Bachmann}.

\begin{proposition}\label{thm:main-rational}
Let $X_0/\mathbb Q$ be smooth. Suppose that its suspension spectrum is finite cellular and that every $H^j(X_0(\mathbb C),\mathbb Q)$ is a split mixed Tate Hodge structure. Set
\[
b_{j,s}=\dim_{\mathbb Q}\gr^W_{2s}H^j(X_0(\mathbb C),\mathbb Q),\qquad
\beta_r=\dim_{\mathbb Q}H^r(X_0(\mathbb R),\mathbb Q).
\]
For every field extension $k/\mathbb Q$ there is a noncanonical equivalence
\begin{equation}\label{eq:main-split}
\Sigma^\infty_{\mathbb P^1}X_{k,+}\otimes\mathbb Q\simeq
\bigoplus_{j,s}(S^{j,s}\one_{+,k})^{\oplus b_{j,s}}
\oplus\bigoplus_r(S^{r,0}\one_{-,k})^{\oplus\beta_r}.
\end{equation}
Consequently, for every rational motivic spectrum $E$,
\begin{equation}\label{eq:main-coeff}
E^{a,b}(X_k)\simeq
\bigoplus_{j,s}E_+^{a-j,b-s}(k)^{\oplus b_{j,s}}
\oplus\bigoplus_r E_-^{a-r,b}(k)^{\oplus\beta_r}.
\end{equation}
\end{proposition}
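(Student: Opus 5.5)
We will split along the plus/minus decomposition \eqref{eq:plusminus}. The idempotents $e_\pm$ give
\[
\Sigma^\infty X_{k,+}\otimes\Q\simeq(\Sigma^\infty X_{k,+})_+\oplus(\Sigma^\infty X_{k,+})_-,
\]
and it suffices to treat each part separately, over $\Q$ first and then by base change to $k$. Base change $\SH(\Q)\to\SH(k)$ is symmetric monoidal, commutes with $e_\pm$, and sends $S^{j,s}\one_{\pm,\Q}$ to $S^{j,s}\one_{\pm,k}$, so it preserves any splitting obtained over $\Q$. The coefficient formula \eqref{eq:main-coeff} then follows at once by mapping into $\Sigma^{a,b}E$. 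Since $E$ is rational, $\Hom(e_\pm\one,E)=\Hom(\one,E_\pm)$, and the shifts $S^{j,s}$ account for the bidegree offsets.

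\emph{Plus part.} Under $\SH(\Q)^+_\Q\simeq\DM(\Q;\Q)$, finite cellularity places $M(X_0)_\Q$ in the thick subcategory generated by the Tate objects $\Q(s)[j]$, that is, in the category of mixed Tate motives over $\Q$. We aim to show that $M(X_0)_\Q$ is a direct sum of pure Tate objects with multiplicities $b_{j,s}$.

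First, the weight (Chow) filtration on a finite cellular rational motive over $\Q$ gives a finite tower whose graded pieces are sums of $\Q(s)[2s]$ shifted. The possible nonsplit extensions are measured by $\Ext^1$ between Tate objects, namely $\Hom(\Q(s),\Q(s')[1])=K_{2(s'-s)-1}(\Q)_\Q^{(s'-s)}$. This group is nonzero in general (for example $\Q^\times\otimes\Q$, and Borel classes in odd weight). The splitting therefore has to come from the Hodge hypothesis.

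Second, the Hodge realization on mixed Tate motives over $\Q$ is faithful on these extension classes. We would invoke injectivity of the Beilinson regulator $K_{2n-1}(\Q)\otimes\Q\to\Ext^1_{\mathrm{MHS}}(\Q(0),\Q(n))$ for $n\ge2$ (Borel), together with the relevant real-Frobenius-compatible variant for $n=1$. We would then use that the connecting maps in the weight tower realize to the Hodge extension classes. Splitness of every $H^j(X_0(\C),\Q)$ then forces each weight step of the tower to split. Iterating gives $M(X_0)_\Q\simeq\bigoplus(\Q(s)[j])^{b_{j,s}}$, with multiplicities read from $\gr^W_{2s}H^j$.

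This step is the main obstacle. One needs the realization of higher Ext and weight-tower classes to be controlled, not just $\Ext^1$. We would first try to organize the tower so that only adjacent-weight $\Ext^1$ obstructions occur, since $\Ext^{\ge2}$ vanishes in mixed Tate motives over a number field rationally. We would also check that weights $n=1$ (units $\Q^\times$) are detected, using the $\Q$-rational Hodge/Betti structure with its real Frobenius rather than $\C$ alone. If necessary, we would note that these are exactly the hypotheses used in \cite[Theorem~4.29]{Hlavinka}.

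\emph{Minus part.} By \eqref{eq:Q-minus} and the cited result of Bachmann, $(\Sigma^\infty X_{0,+})_-$ corresponds to the rational chains of $X_0(\R)$ in $D(\Q)$. Every object of $D(\Q)$ is formal, so this object is $\bigoplus_r\Q[r]^{\beta_r}$, which is $\bigoplus_r(S^{r,0}\one_-)^{\beta_r}$. Here we use that $\Gm$ acts invertibly in the minus part with $\langle-1\rangle=-1$, so only the simplicial degree matters. Combining the two parts and base changing to $k$ gives \eqref{eq:main-split}.
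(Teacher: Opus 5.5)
Your outline follows the paper's proof. It uses the same plus/minus splitting, treats the minus part by Bachmann's identification with rational chains on $X_0(\R)$ plus formality in $D(\Q)$, base changes to $k$, and applies $\Hom(-,\Sigma^{a,b}E)$ to get \eqref{eq:main-coeff}.

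The one divergence is the plus-part step you flag as the main obstacle. The paper does not reprove it. It applies the splitting criterion \cite[Theorem~4.28]{Hlavinka} to $M^c(X_0)$, transferring splitness from $H^j(X_0(\C),\Q)$ to Borel--Moore homology by Poincar\'e duality, and then returns to $M(X_0)$ by smooth duality. If you fall back on a citation, it should be that criterion. Theorem~4.29 there is the motive decomposition for the RFR sink-recurrent cluster class and does not cover an arbitrary finite cellular $X_0$ with split Hodge structures.

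Your direct route does close, and more easily than you fear.
\begin{enumerate}
\item No higher Ext enters. Rationally, $\Hom_{\DM(\Q;\Q)}(\Q(a),\Q(b)[i])$ vanishes except for $a=b,\ i=0$ and $b>a,\ i=1$, by Borel's computation of $K_*(\Q)\otimes\Q$.
\item Use Levine's weight filtration by Tate twists and the resulting motivic t-structure on mixed Tate motives over $\Q$, not a Chow weight tower. Chow weight truncations are not functorial, and they place Kummer extensions in non-adjacent weights.
\item The heart of that t-structure has vanishing $\Ext^2$. Hence $M(X_0)$ is the sum of its shifted t-cohomology objects.
\item Hodge realization is t-exact, and it sends those objects to the duals of the split structures $H^j(X_0(\C),\Q)$.
\item Inside the heart, induction on the top weight needs only injectivity of $\Ext^1(\Q(0),\Q(n))\to\C/(2\pi i)^n\Q$.
\begin{enumerate}
\item For $n\ge2$ this is Borel's theorem together with the comparison of Hodge realization with the Beilinson regulator.
\item For $n=1$ it is just the $\Q$-linear independence of the numbers $\log p$, so no real Frobenius is needed.
\end{enumerate}
\end{enumerate}

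The citation buys brevity and Hlavinka's compact-support framework. Your argument is self-contained and works directly on $M(X_0)$, at the cost of invoking Levine's t-structure and the regulator comparison.
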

\begin{proof}
We first determine the plus part. Finite cellularity makes $M(X_0)$ mixed Tate, and smooth duality gives the same property for $M^c(X_0)$. By duality, the assumed splitting of the mixed Hodge structures on ordinary cohomology gives the corresponding splitting on Borel--Moore homology. The splitting criterion \cite[Theorem~4.28]{Hlavinka}, followed again by smooth duality, therefore gives
\[
M(X_0)\simeq\bigoplus_{j,s}\Q(s)[j]^{\oplus b_{j,s}}.
\]
The first equivalence in \eqref{eq:plusminus} identifies its summands with $S^{j,s}\one_+$. For the minus part, the rational chains of the real locus split into their homology. Thus \eqref{eq:Q-minus} gives the displayed minus summands. Base change preserves these equivalences and the projectors, proving \eqref{eq:main-split}. Applying $\Hom(-,\Sigma^{a,b}E)$ proves \eqref{eq:main-coeff}.
\end{proof}
For an acyclic cluster variety $X_0/\Q$ satisfying \AS,
Theorem~\ref{thm:acyclic-model} supplies finite cellularity. Taking
motives shows that $M(X_0)$ and, by smooth duality, $M^c(X_0)$
are mixed Tate. Theorem~\ref{thm:gysin} computes the multiplicities as
\[
 b_{j,s}=\dim_{\Q}H^{j-s}(G_{\mathrm{Gys}}^{\bullet,s}).
\]
If the seed is RFR, its Hodge cohomology is split by
\cite[Theorem~8.3]{LS1} in the skew-symmetric case and by
\cite[Proposition~4.4]{Hlavinka} in general. Applying the preceding
proof and then base change gives, for every field extension $k/\Q$,
\[
 M(X_k)\simeq\bigoplus_{j,s}\Q_k(s)[j]^{\oplus b_{j,s}}.
\]
Here $\Q_k(s)$ denotes the Tate motive over $k$. The cellular
filtration and its oriented differential therefore determine the
mixed Tate property and the multiplicities, while Hodge splitting
gives the direct-sum decomposition. The orthant complex of
Appendix~\ref{sec:signatures} computes the minus multiplicities in
\eqref{eq:main-split}.

For the full RFR sink-recurrent class over $\Q$, the same rational
stable decomposition follows from the motive splitting in
\cite[Theorem~4.29]{Hlavinka} and the preceding real-realization
argument.

An integral splitting also exists in the elementary rank-one case
$X_1=\Spec k[x,x',t^{\pm1}]/(xx'-1-t)$:
\[
 \Sigma^\infty X_{1,+}\simeq\one\oplus S^{1,1}\oplus S^{2,2}.
\]
Indeed, the complement of $V(x)\cong\A^1$ is the torus with coordinates
$(x,t)$. Rotating its purity triangle gives the attaching map
$S^{1,1}\to\Sigma^\infty(\Gm^2)_+$.
Its component in the $S^{1,1}$ summand corresponding to $x$ is the
identity: compare with the purity triangle for $0\subset\A^1$ using
$\A^1\to X_1$, $u\mapsto(u,0,-1)$, and naturality of purity.
A triangular automorphism therefore cancels that summand and gives
the displayed cofiber.

\end{document}